\numberwithin{equation}{section}
\newcommand{\st}{\,:\,}
\newcommand{\Real}{\mathbb{R}}
\newcommand{\Natural}{\mathbb{N}}
\DeclareRobustCommand{\bvec}[1]{\boldsymbol{#1}}
  \renewcommand{\bvec}[1]{#1}%
\newcommand{\uvec}[1]{\underline{\bvec{#1}}}
\newcommand{\cvec}[1]{\bvec{\mathcal{#1}}}
\newcommand{\rotation}[1]{\varrho_{#1}}
\DeclareMathOperator{\GRAD}{\bf grad}
\DeclareMathOperator{\CURL}{\bf curl}
\DeclareMathOperator{\DIV}{div}
\DeclareMathOperator{\ROT}{rot}
\DeclareMathOperator{\VROT}{\bf rot}
\newcommand{\compl}{{\rm c}}
\newcommand{\Hcurl}[1]{\bvec{\mathrm{H}}(\CURL;#1)}
\newcommand{\Hrot}[1]{\bvec{\mathrm{H}}(\ROT;#1)}
\newcommand{\Hdiv}[1]{\bvec{\mathrm{H}}(\DIV;#1)}
\newcommand{\HDcurl}[1]{\bvec{\mathrm{H}}_0(\CURL;#1)}
\newcommand{\HDdiv}[1]{\bvec{\mathrm{H}}_0(\DIV;#1)}
\newcommand{\Xgrad}[1]{\underline{X}_{\GRAD,#1}^k}
\newcommand{\Xcurl}[1]{\underline{\bvec{X}}_{\CURL,#1}^k}
\newcommand{\Xdiv}[1]{\underline{\bvec{X}}_{\DIV,#1}^k}
\newcommand{\Xbullet}[1]{\underline{X}_{\bullet,#1}^k}
\newcommand{\Igrad}[1]{\underline{I}_{\GRAD,#1}^k}
\newcommand{\Icurl}[1]{\uvec{I}_{\CURL,#1}^k}
\newcommand{\Idiv}[1]{\uvec{I}_{\DIV,#1}^{k}}
\newcommand{\Icurlz}[1]{\uvec{I}_{\CURL,#1}^0}
\newcommand{\NE}[1]{\boldsymbol{\mathcal{N}}^{#1}}
\newcommand{\RT}[1]{\boldsymbol{\mathcal{RT}}^{#1}}
\newcommand{\lproj}[2]{\pi_{\mathcal{P},#2}^{#1}}
\newcommand{\vlproj}[2]{\boldsymbol{\pi}_{\cvec{P},#2}^{#1}}
\newcommand{\Rproj}[2]{\bvec{\pi}_{\cvec{R},#2}^{#1}}
\newcommand{\Rcproj}[2]{\bvec{\pi}_{\cvec{R},#2}^{\compl,#1}}
\newcommand{\Gproj}[2]{\bvec{\pi}_{\cvec{G},#2}^{#1}}
\newcommand{\Gcproj}[2]{\bvec{\pi}_{\cvec{G},#2}^{\compl,#1}}
\newcommand{\Xproj}[2]{\bvec{\pi}_{\cvec{X},#2}^{#1}}
\newcommand{\Xcproj}[2]{\bvec{\pi}_{\cvec{X},#2}^{\compl,#1}}
\newcommand{\uGT}[1][k]{\uvec{G}_T^{#1}}
\newcommand{\uGF}[1][k]{\uvec{G}_F^{#1}}
\newcommand{\uCT}[1][k]{\uvec{C}_T^{#1}}
\newcommand{\uGh}[1][k]{\uvec{G}_h^{#1}}
\newcommand{\uCh}[1][]{\uvec{C}_h^k}
\newcommand{\Dh}[1][]{D_h^k}
\newcommand{\GE}[1][k]{G_E^{#1}}
\newcommand{\cGF}[1][k]{\boldsymbol{\mathsf G}_F^{#1}}
\newcommand{\cGT}[1][k]{\boldsymbol{\mathsf G}_T^{#1}}
\newcommand{\CF}{C_F^k}
\newcommand{\cCT}{\boldsymbol{\mathsf C}_T^k} 
\newcommand{\DT}{D_T^k}
\newcommand{\trF}{\gamma_F^{k+1}}
\newcommand{\trFt}{\bvec{\gamma}_{{\rm t},F}^k}
\newcommand{\trFtR}{\bvec{\gamma}_{{\rm t},\cvec{R},F}^k}
\newcommand{\Pgrad}[1][T]{P_{\GRAD,#1}^{k+1}}
\newcommand{\Pcurl}[1][T]{\bvec{P}_{\CURL,#1}^k}
\newcommand{\Pdiv}[1][T]{\bvec{P}_{\DIV,#1}^k}
\newcommand{\Pcurlz}[1][T]{\bvec{P}_{\CURL,#1}^0}
\newcommand{\PcurlR}{\bvec{P}_{\CURL,\cvec{R},T}^k}
\newcommand{\PdivG}{\bvec{P}_{\DIV,\cvec{G},T}^k}
\newcommand{\RcurlT}{\bvec{R}_{\CURL,T}}
\newcommand{\RcurlF}[1][F]{\bvec{R}_{\CURL,#1}}
\newcommand{\faces}[1]{\mathcal{F}_{#1}}
\newcommand{\edges}[1]{\mathcal{E}_{#1}}
\newcommand{\vertices}[1]{\mathcal{V}_{#1}}
\newcommand{\FT}{\faces{T}}
\newcommand{\ET}{\edges{T}}
\newcommand{\EF}{\edges{F}}
\newcommand{\FE}{\mathcal{F}_E}
\newcommand{\VE}{\vertices{E}}
\newcommand{\normal}{\bvec{n}}
\newcommand{\tangent}{\bvec{t}}
\newcommand{\Poly}[2][]{\mathcal{P}_{#1}^{#2}}
\newcommand{\vPoly}[2][]{\cvec{P}_{#1}^{#2}}
\newcommand{\Roly}[1]{\cvec{R}^{#1}}
\newcommand{\Goly}[1]{\cvec{G}^{#1}}
\newcommand{\cRoly}[1]{\cvec{R}^{\compl,#1}}
\newcommand{\cGoly}[1]{\cvec{G}^{\compl,#1}}
\newcommand{\Qoly}[2][]{\mathcal{Q}_{#1}^{#2}}
\newcommand{\Xoly}[1]{\cvec{X}^{#1}}
\newcommand{\cXoly}[1]{\cvec{X}^{\compl,#1}}
\newcommand{\Leb}{\mathrm{L}^2}
\newcommand{\Sob}[1]{\mathrm{H}^{#1}}
\newcommand{\rC}[1]{\mathrm{C}^{#1}}
\newcommand{\vSob}[1]{\boldsymbol{\mathrm{H}}^{#1}}
\newcommand{\vLeb}{\boldsymbol{\mathrm{L}}^2}
\newcommand{\vC}[1]{\boldsymbol{\mathrm{C}}^{#1}}
\newcommand{\rec}[3]{\mathfrak{R}_{#1}(#2,#3)}
\newcommand{\Xrec}[5]{\mathfrak{R}_{\cvec{#1},#2}^{#3}(#4,#5)}
\newcommand{\recRT}[3]{\Xrec{\cvec{R}}{T}{#1}{#2}{#3}}
\newcommand{\recGT}[3]{\Xrec{\cvec{G}}{T}{#1}{#2}{#3}}
\newcommand{\norm}[2][]{\|#2\|_{#1}}
\newcommand{\seminorm}[2][]{|#2|_{#1}}
\newcommand{\vvvert}{\vert\kern-0.25ex\vert\kern-0.25ex\vert}
\newcommand{\tnorm}[2][]{\vvvert #2\vvvert_{#1}}
\newcommand{\term}{\mathfrak{T}}
\DeclareMathOperator{\Ker}{Ker}
\DeclareMathOperator{\Image}{Im}
\DeclareMathOperator{\card}{card}
\newcommand{\Mh}[1][h]{\mathcal{M}_{#1}}
\newcommand{\Th}[1][h]{\mathcal{T}_{#1}}
\newcommand{\Fh}[1][h]{\mathcal{F}_{#1}}
\newcommand{\Eh}[1][h]{\mathcal{E}_{#1}}
\newcommand{\Vh}{\mathcal{V}_h}
\newcommand{\dEgrad}{\tilde{\mathcal{E}}_{\GRAD,h}}
\newcommand{\dEcurl}{\tilde{\mathcal{E}}_{\CURL,h}}
\newcommand{\dEdiv}{\tilde{\mathcal{E}}_{\DIV,h}}
\newcommand{\CurlCorr}{\bvec{\delta}_T}
\newcommand{\logLogSlopeTriangle}[5]
{
    \pgfplotsextra
    {
        \pgfkeysgetvalue{/pgfplots/xmin}{\xmin}
        \pgfkeysgetvalue{/pgfplots/xmax}{\xmax}
        \pgfkeysgetvalue{/pgfplots/ymin}{\ymin}
        \pgfkeysgetvalue{/pgfplots/ymax}{\ymax}

        \pgfmathsetmacro{\xArel}{#1}
        \pgfmathsetmacro{\yArel}{#3}
        \pgfmathsetmacro{\xBrel}{#1-#2}
        \pgfmathsetmacro{\yBrel}{\yArel}
        \pgfmathsetmacro{\xCrel}{\xArel}

        \pgfmathsetmacro{\lnxB}{\xmin*(1-(#1-#2))+\xmax*(#1-#2)} 
        \pgfmathsetmacro{\lnxA}{\xmin*(1-#1)+\xmax*#1} 
        \pgfmathsetmacro{\lnyA}{\ymin*(1-#3)+\ymax*#3} 
        \pgfmathsetmacro{\lnyC}{\lnyA+#4*(\lnxA-\lnxB)}
        \pgfmathsetmacro{\yCrel}{\lnyC-\ymin)/(\ymax-\ymin)}

        \coordinate (A) at (rel axis cs:\xArel,\yArel);
        \coordinate (B) at (rel axis cs:\xBrel,\yBrel);
        \coordinate (C) at (rel axis cs:\xCrel,\yCrel);

        \draw[#5]   (A)-- node[pos=0.5,anchor=north] {\scriptsize{1}}
                    (B)-- 
                    (C)-- node[pos=0.,anchor=west] {\scriptsize{#4}} 
                    cycle;
    }
}
\begin{document}

\title{An arbitrary-order discrete de Rham complex on polyhedral meshes: Exactness, Poincar\'e inequalities, and consistency\thanks{Communicated by Douglas Arnold.\\
The authors acknowledge the support of \emph{Agence Nationale de la Recherche} through the grant NEMESIS (ANR-20-MRS2-0004-01).
Daniele Di Pietro's work was also partially supported by the fast4hho grant (ANR-17-CE23-0019).
J\'er\^ome Droniou was partially supported by the Australian Government through the \emph{Australian Research Council}'s Discovery Projects funding scheme (grant number DP170100605). The authors also wish to thank Daniel Matthews for suggesting an explicit basis for $\cGoly{\ell}(T)$, which was used in the implementation of DDR in versions 4.0+ of the \texttt{HArDCore3D} library.}}
\author{Daniele A. Di Pietro \and J\'er\^ome Droniou}

\authorrunning{D. A. Di Pietro \and J. Droniou}
\titlerunning{An arbitrary-order discrete de Rham complex on polyhedral meshes}

\institute{%
  D. A. Di Pietro \at
  IMAG, Univ Montpellier, CNRS, Montpellier, France \\
  \email{daniele.di-pietro@umontpellier.fr}
  \and
  J. Droniou \at
  School of Mathematics, Monash University, Melbourne, Australia \\
  \email{jerome.droniou@monash.edu}
}


\date{Received: 28 January 2021 / Revised: 06 August 2021 / Accepted: 07 August 2021}

\maketitle

\begin{abstract}
  In this paper we present a novel arbitrary-order discrete de Rham (DDR) complex on general polyhedral meshes based on the decomposition of polynomial spaces into ranges of vector calculus operators and complements linked to the spaces in the Koszul complex.
  The DDR complex is fully discrete, meaning that both the spaces and discrete calculus operators are replaced by discrete counterparts, and satisfies suitable exactness properties depending on the topology of the domain.
  In conjunction with bespoke discrete counterparts of $\Leb$-products, it can be used to design schemes for partial differential equations that benefit from the exactness of the sequence but, unlike classical (e.g., Raviart--Thomas--N\'ed\'elec) finite elements, are nonconforming.
  We prove a complete panel of results for the analysis of such schemes: exactness properties, uniform Poincar\'e inequalities, as well as primal and adjoint consistency.
  We also show how this DDR complex enables the design of a numerical scheme for a magnetostatics problem, and use the aforementioned results to prove stability and optimal error estimates for this scheme.
  
  \keywords{Discrete de Rham complex \and compatible discretisations \and polyhedral methods \and arbitrary order}
  \subclass{65N30 \and 65N99 \and 78A30}
\end{abstract}

\setcounter{tocdepth}{3}
\tableofcontents


\section{Introduction}

The design of stable and convergent schemes for the numerical approximation of certain classes of partial differential equations (PDEs) requires to reproduce, at the discrete level, the underlying geometric, topological, and algebraic structures.
This leads to the notion of \emph{compatibility}, which can be achieved either in a conforming or non-conforming setting.
Relevant examples include PDEs that relate to the de Rham complex.
For an open connected polyhedral domain $\Omega\subset\Real^3$, this complex reads
\begin{equation}\label{eq:continuous.de.rham}
  \begin{tikzcd}
    \Real\arrow{r}{i_\Omega} & \Sob{1}(\Omega)\arrow{r}{\GRAD} & \Hcurl{\Omega}\arrow{r}{\CURL} & \Hdiv{\Omega}\arrow{r}{\DIV} & \Leb(\Omega)\arrow{r}{0} & \{0\},
  \end{tikzcd}
\end{equation}
where $i_\Omega$ denotes the operator that maps a real value to a constant function over $\Omega$,
$\Sob{1}(\Omega)$ the space of scalar-valued functions over $\Omega$ that are square integrable along with their gradient,
$\Hcurl{\Omega}$ (resp.\ $\Hdiv{\Omega}$) the space of vector-valued functions over $\Omega$ that are square integrable along with their curl (resp.\ divergence).
In order to serve as a basis for the numerical approximation of PDEs, discrete counterparts of this sequence of spaces and operators should enjoy the following key properties:
\begin{enumerate}[\bf (P1)]
\item \emph{Complex and exactness properties.} For the sequence to form a complex, the image of each discrete vector calculus operator should be contained in the kernel of the next one.
  Moreover, the following exactness properties should be reproduced at the discrete level: $\Image i_\Omega = \Ker\GRAD$ (since $\Omega$ is connected); $\Image\GRAD = \Ker\CURL$ if the first Betti number of $\Omega$ is zero; $\Image\CURL=\Ker\DIV$ if the second Betti number of $\Omega$ is zero; $\Image\DIV = \Leb(\Omega)$ (since we are in dimension three).
\item \emph{Uniform Poincar\'e inequalities.} Whenever a function from a space in the sequence lies in some orthogonal complement of the kernel of the vector calculus operator defined on this space, its (discrete) $\Leb$-norm should be controlled by the (discrete) $\Leb$-norm of the operator up to a multiplicative constant independent of the mesh size.
\item \emph{Primal and adjoint consistency.}
  The discrete vector calculus operators should satisfy appropriate commutation properties with the interpolators and their continuous counterparts.
  Additionally, these operators along with the corresponding (scalar or vector) potentials should approximate smooth fields with sufficient accuracy.
  Finally, whenever a formal integration by parts is used in the weak formulation of the problem at hand, the vector calculus operators should also enjoy suitable adjoint consistency properties.
  The notion of adjoint consistency accounts for the failure, in non-conforming settings, to verify global integration by parts formulas exactly.
\end{enumerate}

In the context of Finite Element (FE) approximations, discrete counterparts of the de Rham complex are obtained replacing each space in the sequence with a finite-dimensional subspace.
These subspaces are built upon a conforming mesh of the domain, whose elements are restricted to a small number of shapes and, in practice, are most often tetrahedra; see \cite{Arnold:18} for a complete and extremely general exposition including an exhaustive bibliography, and also \cite{Christiansen.Rapetti:16} on the link between Raviart--Thomas--N\'ed\'elec differential forms and FE systems.
The restriction to conforming meshes made of standard elements can be a major shortcoming in advanced applications, limiting, for example, the capacity for local refinement or mesh agglomeration; see, e.g., the preface of \cite{Di-Pietro.Droniou:20}.
The extension of the FE approach to more general meshes including, e.g., polyhedral elements and non-matching interfaces, is not straightforward.
Recent efforts in this direction have been made in \cite{Gillette.Rand.ea:16,Chen.Wang:17} (see also references therein), focusing mainly on the lowest-order case and with some limitations on the element shapes in three dimensions.
The extension to specific element shapes has also been considered in \cite{Duran.Devloo.ea:19,Devloo.Duran.ea:19}.
  A recent generalisation of FE methods is provided by the Isogeometric Analysis, which is designed to facilitate exchanges with Computer Assisted Design software.
  In this framework, spline spaces and projection operators that verify a de Rham diagram have been developed in \cite{Buffa.Rivas.ea:11}; see also \cite{Buffa.Sangalli.ea:14}.

General polytopal meshes can be handled by several lowest-order methods grounded, to a different extent, in the seminal work of Whitney on geometric integration \cite{Whitney:57}.
These methods share the common feature that discrete de Rham complexes are obtained by replacing both the spaces and operators with discrete counterparts.
Specifically, the spaces consist of vectors of real numbers attached to mesh entities of dimension equal to the index of the space in the sequence (vertices for $\Sob{1}(\Omega)$, edges for $\Hcurl{\Omega}$, faces for $\Hdiv{\Omega}$, and elements for $\Leb(\Omega)$).
In Mimetic Finite Differences, discrete vector calculus operators and $\Leb$-products are obtained by mimicking the Stokes theorem; see \cite{Beirao-da-Veiga.Lipnikov.ea:14} for a complete exposition.
Their extension to polytopal meshes has first been carried out in \cite{Kuznetsov.Lipnikov.ea:04,Lipnikov.Shashkov.ea:06}, then analysed in \cite{Brezzi.Lipnikov.ea:05,Brezzi.Buffa.ea:09}; see also \cite{Droniou.Eymard.ea:10} for a link with the Mixed Hybrid Finite Volume methods of \cite{Droniou.Eymard:06,Eymard.Gallouet.ea:10} and \cite[Section 2.5]{Di-Pietro.Ern.ea:14} along with \cite[Section 3.5]{Di-Pietro.Ern:17} and \cite{Aghili.Boyaval.ea:15} for links with Hybrid High-Order methods.
In the Discrete Geometric Approach, originally introduced in \cite{Codecasa.Specogna.ea:07} and extended to polyhedral meshes in \cite{Codecasa.Specogna.ea:09,Codecasa.Specogna.ea:10}, as well as in Compatible Discrete Operators \cite{Bonelle.Ern:14,Bonelle.Di-Pietro.ea:15}, the key notions are topological vector calculus operators (expressed in terms of incidence matrices) along with the Hodge operator.
The role of the latter is to establish a link, through the introduction of physical parameters, between quantities defined on primal and dual mesh entities.
All of the above schemes are limited to the lowest-order, and their analysis often relies on an interplay of functional and topological arguments that is not required in our approach.

Discretisation methods that provide arbitrary-order approximations on general polyhedral meshes have only recently appeared in the literature.
A first example is provided by the Virtual Element Method, which can be described as a FE method where explicit expressions for the basis functions are not available at each point.
A de Rham complex of virtual spaces on polyhedra has been recently proposed in \cite{Beirao-da-Veiga.Brezzi.ea:16};
important evolutions of this original virtual complex are contained in \cite{Beirao-da-Veiga.Brezzi.ea:18*1,Beirao-da-Veiga.Brezzi.ea:18*2}, which also include applications to the Kikuchi formulation of magnetostatics, and in \cite{Beirao-da-Veiga.Mascotto:20}, which contains a detailed study of the interpolation and stability properties of the low-order VEM spaces.
In order to derive an actual discretisation scheme starting from the sequence of virtual spaces, a variational crime involving projections is required.
A different approach is pursued in \cite{Di-Pietro.Droniou.ea:20,Di-Pietro.Droniou:20*1}, where a discrete de Rham (DDR) complex is presented, based on decompositions of full polynomial spaces into the range of vector calculus operators and their $\Leb$-orthogonal complements.
This complex involves discrete spaces and operators that appear, through discrete $\Leb$-products, in the formulation of discretisation methods.
The analysis in \cite{Di-Pietro.Droniou.ea:20,Di-Pietro.Droniou:20*1} focuses on a subset of properties {\bf (P1)}--{\bf (P2)} involved in the stability analysis of numerical schemes: local exactness (\cite[Theorems 4.1 and 5.1]{Di-Pietro.Droniou.ea:20}), global complex property, discrete counterparts of $\Image\GRAD = \Ker\CURL$ for domains that do not enclose voids and $\Image\DIV = \Leb(\Omega)$ (\cite[Theorem 3]{Di-Pietro.Droniou:20*1}), as well as Poincar\'e inequalities for the divergence and the curl (\cite[Theorems 18 and 20, respectively]{Di-Pietro.Droniou:20*1}).
This approach completely avoids, both in the construction and in the analysis, the use of (virtual or piecewise polynomial) functions with global regularity, and is closer in spirit to Mimetic Finite Differences and Mixed Hybrid Finite Volume methods.

Regarding consistency properties \textbf{(P3)} for polytopal methods, and starting from low-order methods, results for Compatible Discrete Operator approximations of the Poisson problem based on nodal unknowns can be found in \cite{Bonelle.Ern:14}; see in particular the proof of Theorem 3.3 therein, which contains an adjoint consistency result for a gradient reconstructed from vertex values.
In the same framework, an adjoint consistency estimate for a discrete curl constructed from edge values can be found in \cite[Lemma 2.3]{Bonelle.Ern:15}.
A rather complete set of consistency results for Mimetic Finite Difference operators can be found in \cite{Beirao-da-Veiga.Lipnikov.ea:14}, where they appear as intermediate steps in the error analyses of Chapters 5--7.
A notable exception is provided by the adjoint consistency of the curl operator, which is not needed in the error estimate of \cite[Theorem 7.3]{Beirao-da-Veiga.Lipnikov.ea:14} since the authors consider an approximation of the current density based on the knowledge of a vector potential.

Moving to consistency properties for arbitrary-order polytopal methods, error estimates that involve the adjoint consistency of a gradient and the consistency of the corresponding potential have been recently derived in \cite{Brenner.Guan.ea:17} in the framework of the $\Sob{1}$-conforming Virtual Element method.
The same method is considered in \cite[Section 3.2]{Di-Pietro.Droniou:18}, where a different analysis is proposed based on the third Strang lemma. The estimate of the consistency error in \cite[Theorem 19]{Di-Pietro.Droniou:18} involves, in particular, the adjoint consistency of a discrete gradient reconstructed as the gradient of a scalar polynomial rather than a vector-valued polynomial.
We note, in passing, that the concept of adjoint consistency for (discrete) gradients is directly related to the notion of limit-conformity in the Gradient Discretisation Method \cite{Droniou.Eymard.ea:18}, a generic framework which encompasses several polytopal
methods.
Primal and dual consistency estimates for a discrete divergence and the corresponding vector potential similar (but not identical) to the ones considered here have been established in \cite{Di-Pietro.Ern:17} in the framework of Mixed High-Order methods.
Note that these methods (the $\Sob{1}$-conforming Virtual Element method and the Mixed High-Order method) do not lead to a discrete de Rham complex. In the framework of arbitrary-order compatible discretisations, on the other hand, primal consistency results for the curl appear as intermediate results in \cite{Beirao-da-Veiga.Brezzi.ea:18*2}, where an error analysis for a Virtual Element approximation of magnetostatics is carried out assuming interpolation estimates for three-dimensional vector valued virtual spaces; see Remark 4.4 therein. However, \cite{Beirao-da-Veiga.Brezzi.ea:18*2} does not establish any adjoint consistency property of the discrete curl (the formulation of magnetostatics considered in this reference does not require it).
\medskip

\paragraph{Content of the paper.}

We present a new DDR sequence based, contrary to \cite{Di-Pietro.Droniou.ea:20,Di-Pietro.Droniou:20*1}, on explicit complements of the ranges of vector calculus operators inspired by the ones used in \cite{Beirao-da-Veiga.Brezzi.ea:18*2}; these complements are easier to implement, and enable a complete proof of the full set of properties {\bf (P1)}--{\bf (P3)}.
To the best of our knowledge, this is the first time that such a complete panel of results is available for an arbitrary-order polyhedral method compatible with the de Rham complex.
The complements considered here are linked to the spaces appearing in the Koszul complex (see, e.g., \cite[Chapter 7]{Arnold:18}) and enjoy two key properties on general polyhedral meshes: they are hierarchical (see Remark \ref{rem:hierarchical.complements} below) and their traces on polyhedral faces or edges lie in appropriate polynomial spaces (cf.\ Proposition \ref{prop:traces.NE.RT}).
These properties make it possible to prove discrete integration by parts formulas for the discrete potentials (see Remarks \ref{rem:validity.trF}, \ref{rem:validity.Pgrad}, \ref{rem:ibp.PcurlT}, and \ref{rem:ibp.PdivT} below) which, in turn, are essential to the proof of the adjoint consistency properties.

The key ingredients to establish primal consistency are the polynomial consistency of discrete vector calculus operators along with the corresponding potentials, and their boundedness when applied to the interpolates of smooth functions.
The proofs of adjoint consistency, on the other hand, rely on operator-specific techniques and are all grounded in the above-mentioned discrete integration by parts formulas for the corresponding potential reconstructions.
Specifically, the key point for the adjoint consistency of the gradient are estimates for local $\Sob{1}$-like seminorms of the scalar potentials.
The adjoint consistency of curl requires, on the other hand, the construction of liftings of the discrete face potentials that satisfy an orthogonality and a boundedness condition.
These reconstructions are inspired by the minimal reconstruction operators of \cite[Chapter 3]{Beirao-da-Veiga.Lipnikov.ea:14}, with a key novelty provided by a curl correction which ensures the well-posedness of the reconstruction inside mesh elements and relies on fine results from \cite{Dauge:88,Assous.Ciarlet.ea:18}.

In order to showcase the theoretical results derived here, we carry out a full convergence analysis for a DDR approximation of magnetostatics.
This is, to the best of our knowledge, the first full theoretical result of this kind for arbitrary-order polytopal methods.

The key innovation of the DDR complex presented here, compared to the one in \cite{Di-Pietro.Droniou.ea:20,Di-Pietro.Droniou:20*1}, precisely lies in the fact that it enables all mathematical results required to prove error estimates for schemes built from this sequence. The only analytical results available in \cite{Di-Pietro.Droniou.ea:20,Di-Pietro.Droniou:20*1} are Poincar\'e inequalities for the curl and the divergence and, as a matter of fact, it seems that the sequence in these references does not satisfy the critical discrete integration by parts formulas mentioned above, and is therefore not amenable to an adjoint consistency analysis.

\medskip

The rest of the paper is organised as follows.
In Section \ref{sec:setting} we establish the general setting.
Section \ref{sec:ddr} contains the definition of the DDR sequence along with key intermediate results for the discrete vector calculus operators (including the commutation property in {\bf (P3)}) and the proof of {\bf (P1)}.
In Section \ref{sec:potentials.L2products}, we introduce tools for the design and analysis of schemes based on the DDR sequence: polynomial potential reconstructions and $\Leb$-products on the discrete spaces.
Discrete Poincar\'e inequalities corresponding to {\bf (P2)} are covered in Section \ref{sec:poincare}.
Section \ref{sec:consistency.results} contains the statement and proofs of the primal and adjoint consistency results corresponding to {\bf (P3)}.
The application of the theoretical tools to the error analysis of a DDR approximation of magnetostatics is considered in Section \ref{sec:application}, where numerical evidence supporting the error estimates is also provided.
The paper is completed by three appendices.
Appendix \ref{sec:results} contains results on local polynomial spaces including those on the traces of the trimmed spaces constructed from the Koszul complements.
Appendix \ref{appen:RcurlT} contains an in-depth and novel study of the div-curl problems defining the curl liftings on polytopal elements: well-posedness, orthogonality and boundedness properties.
Finally, Appendix \ref{appen:notations} details the conventions of notation adopted throughout the paper, and lists the main spaces and operators of the DDR complex.


\section{Setting}\label{sec:setting}

\subsection{Domain and mesh}

For any (measurable) set $Y\subset\Real^3$, we denote by $h_Y\coloneq\sup\{|\bvec{x}-\bvec{y}|\st \bvec{x},\bvec{y}\in Y\}$ its diameter and by $|Y|$ its Hausdorff measure.
We consider meshes $\Mh\coloneq\Th\cup\Fh\cup\Eh\cup\Vh$, where:
$\Th$ is a finite collection of open disjoint polyhedral elements such that $\overline{\Omega} = \bigcup_{T\in\Th}\overline{T}$ and $h=\max_{T\in\Th}h_T>0$;
$\Fh$ is a finite collection of open planar faces;
$\Eh$ is the set collecting the open polygonal edges (line segments) of the faces;
$\Vh$ is the set collecting the edge endpoints.
It is assumed, in what follows, that $(\Th,\Fh)$ matches the conditions in \cite[Definition 1.4]{Di-Pietro.Droniou:20}, so that the faces form a partition of the mesh skeleton $\bigcup_{T\in\Th}\partial T$.
We additionally assume that the polytopes in $\Th\cup\Fh$ are simply connected and have connected Lipschitz-continuous boundaries.
This notion of mesh is related to that of cellular (or CW) complex from algebraic topology; see, e.g., \cite[Chapter 7]{Spanier:94}.

The set collecting the mesh faces that lie on the boundary of a mesh element $T\in\Th$ is denoted by $\FT$.
For any mesh element or face $Y\in\Th\cup\Fh$, we denote, respectively, by $\edges{Y}$ and $\vertices{Y}$ the set of edges and vertices of $Y$.

Throughout the paper, unless otherwise specified, we write $a\lesssim b$ in place of $a\le Cb$ with $C$ depending only on $\Omega$, the mesh regularity parameter $\rho$ of \cite[Definition 1.9]{Di-Pietro.Droniou:20}, and the considered polynomial degree.
We note that this mesh regularity parameter $\rho\in (0,1)$ is bounded away from $0$ when a shape-regular matching simplicial submesh of $\Th$ exists such that each $T\in\Th$ is partitioned into simplices of size uniformly comparable to $T$.
We also use $a\simeq b$ as a shorthand for ``$a\lesssim b$ and $b\lesssim a$''.

\subsection{Orientation of mesh entities and vector calculus operators on faces}

For any face $F\in\Fh$, an orientation is set by prescribing a unit normal vector $\normal_F$ and, for any mesh element $T\in\Th$ sharing $F$, we denote by $\omega_{TF}\in\{-1,1\}$ the orientation of $F$ relative to $T$, that is, $\omega_{TF}=1$ if $\normal_F$ points out of $T$, $-1$ otherwise.
With this choice, $\omega_{TF}\normal_F$ is the unit vector normal to $F$ that points out of $T$.
For any edge $E\in\Eh$, an orientation is set by prescribing the unit tangent vector $\tangent_E$.
Denoting by $F\in\Fh$ a face such that $E\in\EF$, its boundary $\partial F$ is oriented counter-clockwise with respect to $\normal_F$, and we denote by $\omega_{FE}\in\{-1,1\}$ the (opposite of the) orientation of $E$ relative to that $\partial F$: $\omega_{FE}=1$ if $\tangent_E$ points on $E$ in the opposite orientation to $\partial F$, $\omega_{FE}=-1$ otherwise.
We also denote by $\normal_{FE}$ the unit vector normal to $E$ lying in the plane of $F$ such that $(\tangent_E,\normal_{FE})$ forms a system of right-handed coordinates in the plane of $F$, so that the system of coordinates $(\tangent_E,\normal_{FE},\normal_F)$ is right-handed in $\Real^3$.
It can be checked that $\omega_{FE}\normal_{FE}$ is the normal to $E$, in the plane where $F$ lies, pointing out of $F$.

For any mesh face $F\in\Fh$, we denote by $\GRAD_F$ and $\DIV_F$ the tangent gradient and divergence operators acting on smooth enough functions.
Moreover, for any $r:F\to\Real$ and $\bvec{z}:F\to\Real^2$ smooth enough, we define the two-dimensional vector and scalar curl operators such that
\begin{equation} \label{eq:def:VROTF:ROTF}
  \VROT_F r\coloneq \rotation{-\nicefrac\pi2}(\GRAD_F r)\quad\mbox{ and }\quad \ROT_F\bvec{z}=\DIV_F(\rotation{-\nicefrac\pi2}\bvec{z}),
\end{equation}
where $\rotation{-\nicefrac\pi2}$ is the rotation of angle $-\frac\pi2$ in the oriented tangent space to $F$.

\subsection{Lebesgue and Sobolev spaces}

Let $Y$ be a measurable subset of $\Real^3$.
We denote by $\Leb(Y)$ the Lebesgue space spanned by functions that are square-integrable over $Y$.
When $Y$ is a subset of an $n$-dimensional variety, we will use the boldface notation $\vLeb(Y)\coloneq\Leb(Y)^n$ for the space of vector-valued fields over $Y$ with square-integrable components.
Given an integer $l$ and $Y\in\{\Omega\}\cup\Th\cup\Fh$, $\Sob{l}(Y)$ will denote the Sobolev space spanned by square-integrable functions whose partial derivatives of order up to $l$ are also square-integrable.
Denoting again by $n$ the dimension of $Y$, we let $\vSob{l}(Y)\coloneq\Sob{l}(Y)^n$ and $\vC{l}(Y)\coloneq\rC{l}(Y)^n$.
For all $F\in\Fh$, we let $\Hrot{F}\coloneq\left\{\bvec{v}\in\vLeb(F)\st\ROT_F\bvec{v}\in \Leb(F)\right\}$.
Similarly, for all $Y\in\{\Omega\}\cup\Th$, $\Hcurl{Y}\coloneq\left\{\bvec{v}\in\vLeb(Y)\st\CURL\bvec{v}\in\vLeb(Y)\right\}$ and
$\Hdiv{Y}\coloneq\left\{\bvec{w}\in\vLeb(Y)\st\DIV\bvec{w}\in \Leb(Y)\right\}$.

\subsection{Polynomial spaces and decompositions}\label{sec:polynomial.spaces}

For a given integer $\ell\ge 0$, $\mathbb{P}_n^\ell$ denotes the space of $n$-variate polynomials of total degree $\le\ell$, with the convention that $\mathbb{P}_0^\ell = \Real$ for any $\ell$ and that $\mathbb{P}_n^{-1} \coloneq \{ 0 \}$ for any $n$.
For any $Y\in\Th\cup\Fh\cup\Eh$, we denote by $\Poly{\ell}(Y)$ the space spanned by the restriction to $Y$ of the functions in $\mathbb{P}_3^\ell$.
Denoting by $1\le n\le 3$ the dimension of $Y$, $\Poly{\ell}(Y)$ is isomorphic to $\mathbb{P}_n^\ell$ (see \cite[Proposition 1.23]{Di-Pietro.Droniou:20}).
In what follows, with a little abuse of notation, both spaces are denoted by $\Poly{\ell}(Y)$.
We additionally denote by $\lproj{\ell}{Y}$ the corresponding $\Leb$-orthogonal projector and let $\Poly{0,\ell}(Y)$ denote the subspace of $\Poly{\ell}(Y)$ made of polynomials with zero average over $Y$.
For the sake of brevity, we also introduce the boldface notations $\vPoly{\ell}(T)\coloneq\Poly{\ell}(T)^3$ for all $T\in\Th$ and $\vPoly{\ell}(F)\coloneq\Poly{\ell}(F)^2$ for all $F\in\Fh$.

Let again an integer $\ell\ge 1$ be given, and denote by $\mathfrak{E}\subset\Eh$ a collection of edges such that $S_{\mathfrak{E}}\coloneq\bigcup_{E\in\mathfrak{E}}\overline{E}$ forms a connected set.
We denote by $\Poly[\rm c]{\ell}(\mathfrak{E})\coloneq\left\{ q_{\mathfrak{E}}\in \rC{0}(S_{\mathfrak{E}})\st\text{$(q_{\mathfrak{E}})_{|E}\in\Poly{\ell}(E)$ for all $E\in\mathfrak{E}$}\right\}$ the space of functions over $S_\mathfrak{E}$ whose restriction to each edge $E\in\mathfrak{E}$ is a polynomial of total degree $\le\ell$ and that are continuous at the edges endpoints; these endpoints are collected in the set $\vertices{\mathfrak{E}}\subset\Vh$.
Denoting by $\bvec{x}_V$ the coordinates vector of a vertex $V\in\Vh$, it can be easily checked that the following mapping is an isomorphism:
\begin{equation}\label{eq:Poly.c.ell:isomorphism}
  \Poly[\rm c]{\ell}(\mathfrak{E})\ni q_{\mathfrak{E}}
  \mapsto
  \big( (\lproj{\ell-2}{E} (q_{\mathfrak{E}})_{|E})_{E\in\mathfrak{E}}, (q_{\mathfrak{E}}(\bvec{x}_V))_{V\in\vertices{\mathfrak{E}}}\big)
  \in\left(
  \bigtimes_{E\in\mathfrak{E}}\Poly{\ell-2}(E)
  \right)\times\Real^{\vertices{\mathfrak{E}}}.
\end{equation}

For all $Y\in\Th\cup\Fh$, denote by $\bvec{x}_Y$ a point inside $Y$ such that $Y$ contains a ball centered at $\bvec{x}_Y$ of radius $\rho h_Y$, where $\rho$ is the mesh regularity parameter in \cite[Definition 1.9]{Di-Pietro.Droniou:20}.
For any mesh face $F\in\Fh$ and any integer $\ell\ge 0$, we define the following relevant subspaces of $\vPoly{\ell}(F)$:
\begin{subequations}\label{eq:spaces.F}
  \begin{alignat}{2}\label{eq:Goly.cGoly.F}
    \Goly{\ell}(F)&\coloneq\GRAD_F\Poly{\ell+1}(F),
    &\qquad
    \cGoly{\ell}(F)&\coloneq(\bvec{x}-\bvec{x}_F)^\perp\Poly{\ell-1}(F),
    \\ \label{eq:Roly.cRoly.F}
    \Roly{\ell}(F)&\coloneq\VROT_F\Poly{\ell+1}(F),
    &\qquad
    \cRoly{\ell}(F)&\coloneq(\bvec{x}-\bvec{x}_F)\Poly{\ell-1}(F),
  \end{alignat}
\end{subequations}
(where $\bvec{y}^\perp$ is a shorthand for the rotated vector $\varrho_{-\pi/2}\bvec{y}$) so that
\begin{equation}\label{eq:decomposition:Poly.ell.F}
  \vPoly{\ell}(F)
  = \Goly{\ell}(F) \oplus \cGoly{\ell}(F)
  = \Roly{\ell}(F) \oplus \cRoly{\ell}(F).
\end{equation}
These decompositions of $\vPoly{\ell}(F)$ (as well as those of $\vPoly{\ell}(T)$ in \eqref{eq:decomposition:Poly.ell.T} below) result from \cite[Corollary 7.4]{Arnold:18}.
Notice that the direct sums in the above expression are not $\Leb$-orthogonal in general.
The $\Leb$-orthogonal projectors on the spaces \eqref{eq:spaces.F} are, with obvious notation, $\Gproj{\ell}{F}$, $\Gcproj{\ell}{F}$, $\Rproj{\ell}{F}$, and $\Rcproj{\ell}{F}$.
Similarly, for any mesh element $T\in\Th$ and any integer $\ell\ge 0$ we introduce the following subspaces of $\vPoly{\ell}(T)$:
\begin{subequations}\label{eq:spaces.T}
  \begin{alignat}{2} \label{eq:Goly.cGoly}
    \Goly{\ell}(T)&\coloneq\GRAD\Poly{\ell+1}(T),
    &\qquad 
    \cGoly{\ell}(T)&\coloneq(\bvec{x}-\bvec{x}_T)\times \vPoly{\ell-1}(T),
    \\ \label{eq:Roly.cRoly}
    \Roly{\ell}(T)&\coloneq\CURL\vPoly{\ell+1}(T),
    &\qquad
    \cRoly{\ell}(T)&\coloneq(\bvec{x}-\bvec{x}_T)\Poly{\ell-1}(T),
  \end{alignat}
\end{subequations}
so that
\begin{equation}\label{eq:decomposition:Poly.ell.T}
  \vPoly{\ell}(T)
  = \Goly{\ell}(T) \oplus \cGoly{\ell}(T)
  = \Roly{\ell}(T) \oplus \cRoly{\ell}(T).
\end{equation}
Also in this case, the direct sums above are not $\Leb$-orthogonal in general.
The $\Leb$-orthogonal projectors on the spaces \eqref{eq:spaces.T} are $\Gproj{\ell}{T}$, $\Gcproj{\ell}{T}$, $\Rproj{\ell}{T}$, and $\Rcproj{\ell}{T}$.

\begin{remark}[Hierarchical complements]\label{rem:hierarchical.complements}
  Unlike the $\Leb$-orthogonal complements considered in \cite{Di-Pietro.Droniou.ea:20}, the Koszul complements in \eqref{eq:decomposition:Poly.ell.F} and \eqref{eq:decomposition:Poly.ell.T} satisfy, for all $Y\in\Th\cup\Fh$ and all $\ell\ge 1$,
  \begin{equation}\label{eq:hierarchical.complements}
    \text{
      $\cGoly{\ell-1}(Y)\subset\cGoly{\ell}(Y)$\quad and \quad
      $\cRoly{\ell-1}(Y)\subset\cRoly{\ell}(Y)$.
    }
  \end{equation}
\end{remark}

\begin{remark}[Vector calculus isomorphisms on local polynomial spaces]
  For any polygon $F$, polyhedron $T$, and polynomial degree $\ell\ge 0$, a consequence of the polynomial exactness \cite[Corollary 7.3]{Arnold:18} is that the following mappings are isomorphisms: 
  \begin{alignat}{2}
    \VROT_F&:\Poly{0,\ell}(F)\xrightarrow{\cong}\Roly{\ell-1}(F)
    \label{eq:iso:VROTF.GRAD}\\
    \DIV_F &:\cRoly{\ell}(F)\xrightarrow{\cong} \Poly{\ell-1}(F)\,,\qquad&\DIV&:\cRoly{\ell}(T)\xrightarrow{\cong}\Poly{\ell-1}(T),
    \label{eq:iso:DIV}\\
    &&\CURL &:\cGoly{\ell}(T)\xrightarrow{\cong}\Roly{\ell-1}(T).
    \label{eq:iso:CURL}
  \end{alignat}
  An estimate of the norms of the inverses of these differential isomorphisms is provided in Lemma \ref{lem:norm.isomorphisms} in Appendix \ref{sec:results}.
\end{remark}

\begin{remark}[Composition of $\Leb$-orthogonal projectors]
  Let $\cvec{X}\in\{\cvec{G},\cvec{R}\}$, $\ell\ge-1$, and $Y\in\Th\cup\Fh$.
  Using the definition of the $\Leb$-orthogonal projectors, and denoting by $\vlproj{\ell}{Y}$ the $\Leb$-orthogonal projector on $\vPoly{\ell}(Y)$, it holds
  \begin{equation}\label{eq:projector.composition}
    \text{
      $\Xproj{\ell}{Y} = \Xproj{\ell}{Y}\circ\vlproj{\ell}{Y}$\quad and\quad
      $\Xcproj{\ell}{Y} = \Xcproj{\ell}{Y}\circ\vlproj{\ell}{Y}$.
    }
  \end{equation}
\end{remark}

In what follows, we will need the local N\'ed\'elec and Raviart--Thomas spaces: For $Y\in\Th\cup\Fh$,
\begin{equation}\label{eq:NE.RT}
  \NE{\ell}(Y)\coloneq\Goly{\ell-1}(Y) \oplus \cGoly{\ell}(Y),\qquad
  \RT{\ell}(Y)\coloneq\Roly{\ell-1}(Y) \oplus \cRoly{\ell}(Y).
\end{equation}
These spaces sit between $\vPoly{\ell-1}(Y)$ and $\vPoly{\ell}(Y)$ and are therefore referred to as \emph{trimmed} in the FE literature.
Notice that we have selected the index in \eqref{eq:NE.RT} so as to reflect the maximum polynomial degrees of functions in each space and, as a result, it is shifted by $+1$ with respect to \cite{Di-Pietro.Droniou.ea:20,Di-Pietro.Droniou:20*1}.

\subsection{Recovery operator}

As mentioned above, the direct sums in \eqref{eq:decomposition:Poly.ell.F} and \eqref{eq:decomposition:Poly.ell.T} are not $\Leb$-orthogonal. The following lemma however shows that, for any of these decompositions, a given polynomial can be recovered from its orthogonal projections on each space in the sum.

\begin{lemma}[Recovery operator]\label{lem:recovery.operator}
Let $E$ be a Euclidean space, $S$ be a subspace of $E$, and $S^\compl$ be a complement (not necessarily orthogonal) of $S$ in $E$. Let $\pi_S$ and $\pi_S^\compl$ be, respectively, the orthogonal projections on $S$ and $S^\compl$.
Then, the mappings ${\rm Id}-\pi_S\pi_S^\compl:E\to E$ and ${\rm Id}-\pi_S^\compl\pi_S:E\to E$ are isomorphisms.

We can therefore define the \emph{recovery operator} $\rec{S,S^\compl}{\cdot}{\cdot}:S\times S^\compl\to E$ such that
\begin{multline}\label{eq:recovery.operator}
  \rec{S,S^\compl}{\bvec{b}}{\bvec{c}}
  \coloneq
  ({\rm Id}-\pi_S\pi_S^\compl)^{-1}(\bvec{b}-\pi_S\bvec{c})
  +({\rm Id}-\pi_S^\compl\pi_S)^{-1}(\bvec{c}-\pi_S^\compl\bvec{b})
  \\
  \forall(\bvec{b},\bvec{c})\in S\times S^\compl.
\end{multline}
This operator satisfies the following properties:
\begin{equation}\label{eq:recovery.proj}
  \pi_S \big( \rec{S,S^\compl}{\bvec{b}}{\bvec{c}}\big) = \bvec{b}\quad\mbox{ and }\quad
  \pi_S^\compl \big( \rec{S,S^\compl}{\bvec{b}}{\bvec{c}}\big) = \bvec{c}\qquad\forall (\bvec{b},\bvec{c})\in S\times S^\compl,
\end{equation}
  \begin{equation}\label{eq:recovery.rec}
    \bvec{a}=\rec{S,S^\compl}{\pi_S\bvec{a}}{\pi_S^\compl\bvec{a}}\qquad\forall \bvec{a}\in E.
  \end{equation}
\end{lemma}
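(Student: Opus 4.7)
The plan is to interpret the recovery operator as the inverse of the natural map $\Phi\colon E\to S\times S^\compl$ defined by $\Phi(\bvec{a})=(\pi_S\bvec{a},\pi_S^\compl\bvec{a})$, and to prove everything in three short steps. Since $E$ is finite-dimensional and $\dim(S\times S^\compl)=\dim S+\dim S^\compl=\dim E$, bijectivity reduces in each case to injectivity.

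First I would establish that ${\rm Id}-\pi_S\pi_S^\compl$ is an isomorphism. Any $\bvec{v}$ in its kernel satisfies $\bvec{v}=\pi_S\pi_S^\compl\bvec{v}\in S$, so the contraction chain
\begin{equation*}
  \|\bvec{v}\|=\|\pi_S\pi_S^\compl\bvec{v}\|\le\|\pi_S^\compl\bvec{v}\|\le\|\bvec{v}\|
\end{equation*}
must hold with equality at every step. The equality case for orthogonal projections forces $\pi_S^\compl\bvec{v}\in S$ (saturation of the first inequality) and $\bvec{v}\in S^\compl$ (saturation of the second), so $\bvec{v}\in S\cap S^\compl=\{0\}$. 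The companion operator ${\rm Id}-\pi_S^\compl\pi_S$ is handled symmetrically. This equality-case argument is the main (and essentially the only) subtle point of the proof.

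Next I would show that $\Phi$ is bijective. For $\bvec{a}\in\Ker\Phi$, decompose $\bvec{a}=\bvec{a}_S+\bvec{a}_{S^\compl}$ along the (not necessarily orthogonal) direct sum $E=S\oplus S^\compl$. Expanding $0=\pi_S\bvec{a}=\bvec{a}_S+\pi_S\bvec{a}_{S^\compl}$ and $0=\pi_S^\compl\bvec{a}=\pi_S^\compl\bvec{a}_S+\bvec{a}_{S^\compl}$, then eliminating $\bvec{a}_S$, yields $({\rm Id}-\pi_S^\compl\pi_S)\bvec{a}_{S^\compl}=0$; the first step then forces $\bvec{a}_{S^\compl}=0$ and hence $\bvec{a}_S=0$.

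Finally, performing the same elimination on the inhomogeneous system $\pi_S\bvec{a}=\bvec{b}$, $\pi_S^\compl\bvec{a}=\bvec{c}$ gives the two decoupled equations $({\rm Id}-\pi_S\pi_S^\compl)\bvec{a}_S=\bvec{b}-\pi_S\bvec{c}$ and $({\rm Id}-\pi_S^\compl\pi_S)\bvec{a}_{S^\compl}=\bvec{c}-\pi_S^\compl\bvec{b}$, whose unique solutions are exactly the summands in the definition \eqref{eq:recovery.operator}. Hence $\Phi^{-1}(\bvec{b},\bvec{c})=\rec{S,S^\compl}{\bvec{b}}{\bvec{c}}$, and both \eqref{eq:recovery.proj} and \eqref{eq:recovery.rec} are simply the two directions of the statement that $\Phi$ and $\rec{S,S^\compl}{\cdot}{\cdot}$ are mutual inverses.
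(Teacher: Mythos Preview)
Your proof is correct and takes a genuinely different, more conceptual route than the paper. You frame the recovery operator as the inverse of $\Phi\colon\bvec{a}\mapsto(\pi_S\bvec{a},\pi_S^\compl\bvec{a})$ and derive the explicit formula by block-elimination on the decomposition $\bvec{a}=\bvec{a}_S+\bvec{a}_{S^\compl}$; both \eqref{eq:recovery.proj} and \eqref{eq:recovery.rec} then drop out as $\Phi\circ\Phi^{-1}={\rm Id}$ and $\Phi^{-1}\circ\Phi={\rm Id}$. The paper instead proves the stronger operator-norm bound $\|\pi_S\pi_S^\compl\|<1$, inverts via the Neumann series $\sum_{n\ge 0}(\pi_S\pi_S^\compl)^n$, and verifies \eqref{eq:recovery.proj} by a telescoping computation on that series; \eqref{eq:recovery.rec} is then obtained separately by showing that $\pi_S\bvec{z}=\pi_S^\compl\bvec{z}=\bvec{0}$ forces $\bvec{z}=\bvec{0}$ through an inner-product argument. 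Your kernel/equality-case argument for invertibility is cleaner for this lemma alone, but note that the paper's quantitative Neumann-series viewpoint is what feeds directly into the next result (the uniform norm estimate \eqref{eq:estimate.norm.rec}), where an explicit bound $\|({\rm Id}-\pi_S\pi_S^\compl)^{-1}\|\le(1-\alpha)^{-1}$ is needed; your route would require that norm bound to be supplied separately at that point.
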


\begin{proof}
Let us denote by $\norm{{\cdot}}$ the norm in $E$. To prove that ${\rm Id}-\pi_S\pi_S^\compl$ is invertible, we show that the mapping $\pi_S\pi_S^\compl$ has a norm $<1$, which implies
\begin{equation}\label{eq:neumann.series}
({\rm Id}-\pi_S\pi_S^\compl)^{-1}=\sum_{n\ge 0}(\pi_S\pi_S^\compl)^n.
\end{equation}
The space $E$ being finite dimensional, it suffices to see that, for any $\bvec{x}\in E$ with $\norm{\bvec{x}}=1$, we have $\norm{\pi_S(\pi_S^\compl\bvec{x})}<1$.
Since $\pi_S$ is an orthogonal projector, by Pythagoras' theorem we have $\norm{\pi_S(\pi_S^\compl\bvec{x})}\le \norm{\pi_S^\compl\bvec{x}}$, with equality only if $\pi_S^\compl\bvec{x}\in S$, that is, only
if $\pi_S^\compl\bvec{x}=\bvec{0}$ since $\pi_S^\compl\bvec{x}\in S^\compl$. In this case, $\norm{\pi_S(\pi_S^\compl\bvec{x})}=0<1$.
Otherwise, $\norm{\pi_S(\pi_S^\compl\bvec{x})}<\norm{\pi_S^\compl\bvec{x}}\le\norm{\bvec{x}}=1$, where the second inequality is a consequence of the fact that $\pi_S^\compl$ is an orthogonal projection.
This concludes the proof that ${\rm Id}-\pi_S\pi_S^\compl$ is an isomorphism. The invertibility of ${\rm Id}-\pi_S^\compl\pi_S$ is obtained similarly, exchanging the roles of $S$ and $S^\compl$.
\smallskip

Let us prove the first relation in \eqref{eq:recovery.proj}. The second follows using the same arguments. We expand $({\rm Id}-\pi_S\pi_S^\compl)^{-1}$ in \eqref{eq:recovery.operator} using the series \eqref{eq:neumann.series} (and similarly for $({\rm Id}-\pi_S^\compl\pi_S)^{-1}$) to write
  \[
  \begin{aligned}
    \pi_S\big(\rec{S,S^\compl}{\bvec{b}}{\bvec{c}}\big)
    &=\pi_S\sum_{n\ge 0}(\pi_S\pi_S^\compl)^n(\bvec{b}-\pi_S\bvec{c})
    +\pi_S\sum_{n\ge 0}(\pi_S^\compl\pi_S)^n(\bvec{c}-\pi_S^\compl\bvec{b})
    \\
    &=\left[\pi_S\sum_{n\ge 0}(\pi_S\pi_S^\compl)^n-\pi_S\sum_{n\ge 0}(\pi_S^\compl\pi_S)^n\pi_S^\compl\right]\bvec{b}
    \\
    &\quad 
    +\left[\pi_S\sum_{n\ge 0}(\pi_S^\compl\pi_S)^n-\pi_S\sum_{n\ge 0}(\pi_S\pi_S^\compl)^n\pi_S\right]\bvec{c}.
  \end{aligned}
  \]
We have $\pi_S\sum_{n\ge 0}(\pi_S^\compl\pi_S)^n\pi_S^\compl=\pi_S\sum_{n\ge 1}(\pi_S\pi_S^\compl)^n$ (we have used $\pi_S\pi_S=\pi_S$ to introduce the pre-factor $\pi_S$) and the operator acting on $\bvec{b}$ above therefore reduces to $\pi_S$, and returns $\bvec{b}$ since $\bvec{b}\in S$. As for the operator acting on $\bvec{c}$, using again $\pi_S\pi_S=\pi_S$ shows that it is equal to 0. This concludes the proof of the first relation in \eqref{eq:recovery.proj}. 
\smallskip

Fix now $\bvec{a}\in E$ and set $\bvec{z}\coloneq\bvec{a}-\rec{S,S^\compl}{\pi_S\bvec{a}}{\pi_S^\compl\bvec{a}}$. Applying \eqref{eq:recovery.proj} to $\bvec{b}=\pi_S\bvec{a}$ and $\bvec{c}=\pi_S^\compl\bvec{a}$ shows that $\pi_S\bvec{z}=\pi_S^\compl\bvec{z}=\bvec{0}$. Since $E=S\oplus S^\compl$, we can write $\bvec{z}=\bvec{z}_S+\bvec{z}_S^c$ with $\bvec{z}_S\in S$
and $\bvec{z}_S^\compl\in S^\compl$, and the definition of the orthogonal projectors on $S$ and $S^\compl$ therefore yields, with $(\cdot,\cdot)_E$ the scalar product on $E$,
\[
\norm{\bvec{z}}^2=(\bvec{z},\bvec{z})_E=(\bvec{z},\bvec{z}_S)_E+(\bvec{z},\bvec{z}_S^\compl)_E=
(\pi_S\bvec{z},\bvec{z}_S)_E+(\pi_S^\compl\bvec{z},\bvec{z}_S^\compl)_E=0.
\]
Hence, $\bvec{z}=\bvec{0}$ and \eqref{eq:recovery.rec} is established.\qed
\end{proof}

The following lemma shows that the norm of the recovery operator for the decompositions \eqref{eq:decomposition:Poly.ell.F} and \eqref{eq:decomposition:Poly.ell.T} is equivalent to the sum of the norms of its arguments, uniformly in $h$.
In other words, it states that the decompositions are not just algebraic but also topological (uniformly in $h$).
Since the recovery operator will mostly be of interest to us for these pairs of spaces, to alleviate the notations from here on we will write
\begin{equation}\label{eq:def.Xrec}
  \Xrec{X}{Y}{\ell}{\cdot}{\cdot}\coloneq\rec{\Xoly{\ell}(Y),\cXoly{\ell}(Y)}{\cdot}{\cdot}\quad\forall\cvec{X}\in\{\cvec{R},\cvec{G}\}\,,\;\forall Y\in\Th\cup\Fh.
\end{equation}

\begin{lemma}[Estimate on the norm of the recovery operator]\label{lem:piS.piSc}
  For all $\ell\ge 0$, there exists $\alpha<1$ depending only on the mesh regularity parameter in \cite[Definition 1.9]{Di-Pietro.Droniou:20} such that, for all $\cvec{X}\in\{\cvec{R},\cvec{G}\}$ and all $Y\in\Th\cup\Fh$,
  \begin{equation}\label{eq:estimates.projections}
    \norm[Y]{\Xproj{\ell}{Y}\Xcproj{\ell}{Y}}\le \alpha\quad\mbox{ and }\quad\norm[Y]{\Xcproj{\ell}{Y}\Xproj{\ell}{Y}}\le \alpha,
  \end{equation}
  where $\norm[Y]{{\cdot}}$ denotes the norm induced by $\norm[\vLeb(Y)]{{\cdot}}$ on the space of endomorphisms of $\vPoly{\ell}(Y)$.
  As a result,
  \begin{equation}\label{eq:estimate.norm.rec}
    \norm[\vLeb(Y)]{\Xrec{X}{Y}{\ell}{\bvec{v}}{\bvec{w}}}
    \simeq \norm[\vLeb(Y)]{\bvec{v}} +  \norm[\vLeb(Y)]{\bvec{w}}
    \qquad\forall(\bvec{v},\bvec{w})\in \Xoly{\ell}(Y)\times\cXoly{\ell}(Y).
  \end{equation}
\end{lemma}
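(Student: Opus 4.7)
The plan is to first establish the uniform operator-norm bounds \eqref{eq:estimates.projections}, then read off the norm equivalence \eqref{eq:estimate.norm.rec} from the explicit formula \eqref{eq:recovery.operator}.

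For \eqref{eq:estimates.projections}, I would combine a scaling reduction with a compactness argument at the level of Gram matrices. The affine change of variables $\bvec{x}\mapsto\tilde{\bvec{x}}\coloneq h_Y^{-1}(\bvec{x}-\bvec{x}_Y)$ sends $Y$ to $\tilde Y$ with unit diameter and $B(\bvec{0},\rho)\subset\tilde Y\subset B(\bvec{0},1)$; under pullback it maps each Koszul space in \eqref{eq:decomposition:Poly.ell.F}--\eqref{eq:decomposition:Poly.ell.T} bijectively onto its counterpart on $\tilde Y$ (up to an irrelevant positive scalar, since $\GRAD$, $\CURL$, $\VROT_F$ scale as $h_Y^{-1}$ while multiplication by $\bvec{x}-\bvec{x}_Y$ scales as $h_Y$) and rescales the $\vLeb$-inner product by the constant factor $h_Y^n$. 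Orthogonal projectors and their operator norms are therefore preserved under this reduction, so it suffices to produce a uniform $\alpha<1$ for the admissible $\tilde Y$. View $\vPoly{\ell}$ as a fixed finite-dimensional vector space $V$ and the Koszul spaces $\Xoly{\ell},\cXoly{\ell}$ as fixed subspaces $S,S^\compl$ of $V$; the only $\tilde Y$-dependence then lies in the Gram matrix $M_{\tilde Y}$ of the $\vLeb(\tilde Y)$-inner product in a chosen basis of $V$. The inclusions $B(\bvec{0},\rho)\subset\tilde Y\subset B(\bvec{0},1)$ together with norm equivalence on $V$ yield two-sided bounds $c_\rho I\le M_{\tilde Y}\le C_\rho I$ with $c_\rho,C_\rho>0$ depending only on $\rho$ and $\ell$, so every $M_{\tilde Y}$ lies in the compact set $K\coloneq\{M\st c_\rho I\le M\le C_\rho I\}$ of symmetric positive-definite matrices. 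For each $M\in K$, the orthogonal projectors on $S$ and $S^\compl$ and the corresponding operator norm depend continuously on $M$, and by Lemma~\ref{lem:recovery.operator} the norm of their composition is strictly less than $1$ at every point of $K$; by compactness this maximum is some $\alpha<1$ depending only on $\rho$ and $\ell$. The second bound in \eqref{eq:estimates.projections} follows from $\norm{\pi_S\pi_S^\compl}=\norm{\pi_S^\compl\pi_S}$, the identity obtained by taking adjoints of self-adjoint projectors.

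The equivalence \eqref{eq:estimate.norm.rec} is then a direct consequence of \eqref{eq:recovery.operator}. The Neumann series \eqref{eq:neumann.series} combined with $\alpha<1$ gives $\norm{({\rm Id}-\pi_S\pi_S^\compl)^{-1}}\le(1-\alpha)^{-1}$ and, by the same argument, $\norm{({\rm Id}-\pi_S^\compl\pi_S)^{-1}}\le(1-\alpha)^{-1}$. Since $\Leb$-orthogonal projectors are contractions, substituting these bounds into \eqref{eq:recovery.operator} yields $\norm[\vLeb(Y)]{\Xrec{X}{Y}{\ell}{\bvec{v}}{\bvec{w}}}\le\frac{2}{1-\alpha}\big(\norm[\vLeb(Y)]{\bvec{v}}+\norm[\vLeb(Y)]{\bvec{w}}\big)$, with the constant depending only on $\rho$ and $\ell$. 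The reverse inequality is immediate from \eqref{eq:recovery.proj}: the identities $\bvec{v}=\Xproj{\ell}{Y}\big(\Xrec{X}{Y}{\ell}{\bvec{v}}{\bvec{w}}\big)$ and $\bvec{w}=\Xcproj{\ell}{Y}\big(\Xrec{X}{Y}{\ell}{\bvec{v}}{\bvec{w}}\big)$, together with the contraction property, give $\norm[\vLeb(Y)]{\bvec{v}}+\norm[\vLeb(Y)]{\bvec{w}}\le 2\norm[\vLeb(Y)]{\Xrec{X}{Y}{\ell}{\bvec{v}}{\bvec{w}}}$.

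The main obstacle is the first stage: although $\norm{\pi_S\pi_S^\compl}<1$ holds pointwise by Lemma~\ref{lem:recovery.operator}, upgrading it to a uniform $\alpha<1$ across the entire family of mesh entities calls for a compactness argument that must accommodate polytopes of arbitrary combinatorial type. The observation that unlocks the proof is that, once one pulls back to the reference configuration, the subspaces themselves are literally the same for every $Y$ and only the Gram matrix of the inner product varies; compactness can then be carried out in the space of SPD matrices with controlled spectrum, where it is routine, rather than in the unwieldy space of polytopal shapes.
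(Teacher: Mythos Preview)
Your proposal is correct, and your second stage (deriving \eqref{eq:estimate.norm.rec} from \eqref{eq:estimates.projections} via the Neumann series bound and the projection identities \eqref{eq:recovery.proj}) is essentially identical to the paper's.

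For the first stage, however, you take a genuinely different and arguably cleaner route than the paper. The paper also begins with the scaling reduction to $\widehat{T}$ with $B(\rho)\subset\widehat{T}\subset B(1)$, but then argues by contradiction: it assumes a sequence of domains $\widehat{T}_n$ and vectors $\bvec{v}_n\in\cXoly{\ell}$ violating the bound, extracts subsequential limits of the polynomials (in a fixed finite-dimensional space) and a weak-$\star$ limit $\theta$ of the characteristic functions $\mathbf{1}_{\widehat{T}_n}$, passes to the limit in the inequality, and reaches a contradiction via the equality case of Cauchy--Schwarz, which forces the limit vectors to be colinear and hence zero. Your approach instead isolates the domain dependence entirely in the Gram matrix $M_{\tilde Y}$ of the $\vLeb(\tilde Y)$-inner product on the \emph{fixed} space $\vPoly{\ell}$, confines $M_{\tilde Y}$ to a compact set of SPD matrices using the sandwich $B(\rho)\subset\tilde Y\subset B(1)$, and invokes continuity of $M\mapsto\norm{\pi_S^M\pi_{S^\compl}^M}$ together with the pointwise strict bound from Lemma~\ref{lem:recovery.operator}. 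This sidesteps the weak-$\star$ machinery and the Cauchy--Schwarz equality analysis; the paper's argument, in return, is perhaps more explicit about \emph{why} equality fails (namely $\Xoly{\ell}\cap\cXoly{\ell}=\{\bvec{0}\}$ on the reference ball), whereas your argument packages that fact inside the abstract Lemma~\ref{lem:recovery.operator}.
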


\begin{remark}[Recovery operator and $\Leb$-orthogonal complements]
  When working with $\Leb$-orthogonal complements to $\Goly{\ell}(Y)$ and $\Roly{\ell}(Y)$, instead of the Koszul complements in \eqref{eq:spaces.F} and \eqref{eq:spaces.T}, the recovery operator is trivial since it consists in the sum of its two arguments (its topological property \eqref{eq:estimate.norm.rec} is also obvious).
  As mentioned in the introduction, however, the Koszul complements enable proofs of commutation and consistency properties that do not seem straightforward with orthogonal complements; the trade-off lies in having to deal with a less trivial recovery operator (although it remains a purely theoretical tool, see Remark \ref{rem:validity:trFt:Roly.k}), whose topological properties are more complex to establish.
\end{remark}

\begin{proof}
  \underline{1. \emph{Proof of \eqref{eq:estimates.projections}.}}
  We estimate $\norm[T]{\Gproj{\ell}{T}\Gcproj{\ell}{T}}$ for an element $T\in\Th$, the other cases being identical.
  The linear mapping $\Real^3\ni\bvec{x}\mapsto h_T^{-1}(\bvec{x}-\bvec{x}_T)\in\Real^3$ maps $T$ onto a polyhedron $\widehat{T}$ of diameter 1, transports the spaces $\vPoly{\ell}(T)$, $\Goly{\ell}(T)$ and $\cGoly{\ell}(T)$ on their equivalent over $\widehat{T}$, and simply scales the $\Leb$-norm of functions.
  As a consequence, $\norm[T]{\Gproj{\ell}{T}\Gcproj{\ell}{T}}=\norm[\widehat{T}]{\Gproj{\ell}{\widehat{T}}\Gcproj{\ell}{\widehat{T}}}$, and we only have to estimate the latter quantity.
  
  Assume that we establish the existence of $\alpha <1$, depending only on the mesh regularity parameter, such that
  \begin{equation}\label{eq:est.proj.todo}
    \int_{\widehat{T}}\bvec{v}\cdot\Gproj{\ell}{\widehat{T}}\bvec{v}
    \le \alpha^2 \norm[\vLeb(\widehat{T})]{\bvec{v}}\norm[\vLeb(\widehat{T})]{\Gproj{\ell}{\widehat{T}}\bvec{v}}
    \quad\forall \bvec{v}\in \cGoly{\ell}(\widehat{T})=\bvec{x}\times\vPoly{\ell-1}(\widehat{T}).
  \end{equation}
  Notice that, with the selected mapping, $\bvec{x}_T$ is mapped onto $\bvec{0}\in\widehat{T}$.
  Then, for all $\bvec{w}\in \vPoly{\ell}(T)$,
  \begin{align}
    \int_{\widehat{T}}\Gproj{\ell}{\widehat{T}}(\Gcproj{\ell}{\widehat{T}}\bvec{w})\cdot\Gproj{\ell}{\widehat{T}}(\Gcproj{\ell}{\widehat{T}}\bvec{w})
    ={}&\int_{\widehat{T}}\Gcproj{\ell}{\widehat{T}}\bvec{w}\cdot\Gproj{\ell}{\widehat{T}}(\Gcproj{\ell}{\widehat{T}}\bvec{w})
    \nonumber\\
    \le{}& \alpha^2\norm[\vLeb(\widehat{T})]{\Gcproj{\ell}{\widehat{T}}\bvec{w}}\norm[\vLeb(\widehat{T})]{\Gproj{\ell}{\widehat{T}}\Gcproj{\ell}{\widehat{T}}\bvec{w}}
    \nonumber\\
    \le{}& \alpha^2\norm[\vLeb(\widehat{T})]{\bvec{w}}^2,
  \label{eq:est.proj.done}
  \end{align}
  where the equality comes from the definition of $\Gproj{\ell}{\widehat{T}}$,
  the first inequality is obtained applying \eqref{eq:est.proj.todo} to $\bvec{v}=\Gcproj{\ell}{\widehat{T}}\bvec{w}$, and the conclusion is obtained using the fact that $\Gproj{\ell}{\widehat{T}}$ and $\Gcproj{\ell}{\widehat{T}}$ are both $\vLeb(\widehat{T})$-orthogonal projectors and have thus norm $1$. The bound \eqref{eq:est.proj.done} shows that $\norm[\widehat{T}]{\Gproj{\ell}{\widehat{T}}\Gcproj{\ell}{\widehat{T}}}\le\alpha$ and concludes the proof.

  We therefore only have to establish \eqref{eq:est.proj.todo}. Note that, in the rest of the proof, polynomials are indifferently considered over $\Real^3$ or some of its open subsets.
  We also remark that, by choice of $\bvec{x}_T$ in $T$ and of the mapping $T\mapsto\widehat{T}$, we have $B(\rho)\subset \widehat{T}\subset B(1)$, where $B(r)$ is the ball in $\Real^d$ centered at $\bvec{0}$ and of radius $r$. The proof of \eqref{eq:est.proj.todo} is done by contradiction: if this relation does not hold, there exists a sequence $(\widehat{T}_n)_{n\in\Natural}$ of open sets between $B(\rho)$ and $B(1)$, a sequence $(\alpha_n)_{n\in\Natural}$ converging to $1$, and a sequence $(\bvec{v}_n)_{n\in\Natural}$ in $\bvec{x}\times\Poly{\ell-1}(\Real^3)$ such that
  \begin{equation}\label{eq:est.proj.contradiction}
    \int_{\widehat{T}_n}\bvec{v}_n\cdot\Gproj{\ell}{\widehat{T}_n}\bvec{v}_n> \alpha_n^2 \norm[\vLeb(\widehat{T}_n)]{\bvec{v}_n}\norm[\vLeb(\widehat{T}_n)]{\Gproj{\ell}{\widehat{T}_n}\bvec{v}_n}.
  \end{equation}
  Upon replacing $\bvec{v}_n$ by $\bvec{v}_n/\norm[\vLeb(\widehat{T}_n)]{\bvec{v}_n}$, we can assume that $\norm[\vLeb(\widehat{T}_n)]{\bvec{v}_n}=1$. Since $B(\rho)\subset\widehat{T}_n$, we infer that $\norm[\vLeb(B(\rho))]{\bvec{v}_n}\le\norm[\vLeb(\widehat{T}_n)]{\bvec{v}_n}=1$; hence, $(\bvec{v}_n)_{n\in\Natural}$ is bounded for the $\vLeb(B(\rho))$-norm in the finite-dimensional space $\bvec{x}\times\Poly{\ell-1}(\Real^3)$, and converges up to a subsequence to some $\bvec{v}\in \bvec{x}\times\Poly{\ell-1}(\Real^3)$. Likewise, we can assume that $\Gproj{\ell}{\widehat{T}_n}\bvec{v}_n\to \bvec{w}$ in $\Goly{\ell}(\Real^3)$. The characteristic function $\mathbf{1}_{\widehat{T}_n}$ satisfies $\mathbf{1}_{B(\rho)}\le \mathbf{1}_{\widehat{T}_n}\le \mathbf{1}_{B(1)}$ and converges therefore, up to a subsequence, in $L^\infty(B(1))$ weak-$\star$ towards some function $\theta$ satisfying $\mathbf{1}_{B(\rho)}\le \theta\le \mathbf{1}_{B(1)}$. Noting that
  \[
  \begin{gathered}
    \int_{\widehat{T}_n}\bvec{v}_n\cdot\Gproj{\ell}{\widehat{T}_n}\bvec{v}_n=\int_{B(1)}\mathbf{1}_{\widehat{T}_n}\bvec{v}_n\cdot\Gproj{\ell}{\widehat{T}_n}\bvec{v}_n\,,
    \\
    \norm[\vLeb(\widehat{T}_n)]{\bvec{v}_n}^2=\int_{B(1)}\mathbf{1}_{\widehat{T}_n}|\bvec{v}_n|^2,
    \quad\mbox{ and }\quad
    \norm[\vLeb(\widehat{T}_n)]{\Gproj{\ell}{\widehat{T}_n}\bvec{v}_n}^2=\int_{B(1)}\mathbf{1}_{\widehat{T}_n}|\Gproj{\ell}{\widehat{T}_n}\bvec{v}_n|^2,
  \end{gathered}
  \]
  the aforementioned convergences enable us to take the limit $n\to\infty$ of \eqref{eq:est.proj.contradiction} and find
  \begin{equation}\label{eq:piS.piSc:intermediate}
    \int_{B(1)}\theta \bvec{v}\cdot\bvec{w}\ge \norm[\vLeb(B(1))]{\sqrt{\theta} \bvec{v}}\norm[\vLeb(B(1))]{\sqrt{\theta} \bvec{w}}.
  \end{equation}
  The Cauchy--Schwarz inequality, on the other hand, gives
  \[
   \int_{B(1)}\theta \bvec{v}\cdot\bvec{w}= \int_{B(1)}\sqrt{\theta} \bvec{v}\cdot\sqrt{\theta}\bvec{w}\le\norm[\vLeb(B(1))]{\sqrt{\theta} \bvec{v}}\norm[\vLeb(B(1))]{\sqrt{\theta} \bvec{w}},
  \]
  which, combined with \eqref{eq:piS.piSc:intermediate}, shows that,
  \[
   \int_{B(1)}\theta \bvec{v}\cdot\bvec{w}=\norm[\vLeb(B(1))]{\sqrt{\theta} \bvec{v}}\norm[\vLeb(B(1))]{\sqrt{\theta} \bvec{w}}.
  \]
  Hence, $\sqrt{\theta} \bvec{v}$ and $\sqrt{\theta} \bvec{w}$ are co-linear. Restricted to $B(\rho)$, over which $\theta=1$, this proves that $\bvec{v}$ and $\bvec{w}$ are co-linear. Since $\bvec{v}\in \cGoly{\ell}(B(\rho))$ and $\bvec{w}\in\Goly{\ell}(B(\rho))$, we infer that $\bvec{v}=\bvec{w}=\bvec{0}$ on $B(\rho)$, and thus on $\Real^3$. This leads to $0=\norm[\vLeb(B(1))]{\sqrt{\theta} \bvec{v}}=\lim_{n\to\infty}\norm[\vLeb(\widehat{T}_n)]{\bvec{v}_n}=1$, which yields the sought contradiction.
  \medskip\\
  \underline{2. \emph{Proof of \eqref{eq:estimate.norm.rec}.}}
  By \eqref{eq:recovery.operator}, recalling the abridged notation \eqref{eq:def.Xrec}, we have
  \begin{multline*}
    \norm[\vLeb(Y)]{\Xrec{X}{Y}{\ell}{\bvec{v}}{\bvec{w}}}
    \le\norm[Y]{({\rm Id}-\Xproj{\ell}{Y}\Xcproj{\ell}{Y})^{-1}}\left(\norm[\vLeb(Y)]{\bvec{v}} + \norm[\vLeb(Y)]{\Xproj{\ell}{Y}\bvec{w}}\right)
    \\
    + \norm[Y]{({\rm Id}-\Xcproj{\ell}{Y}\Xproj{\ell}{Y})^{-1}}\left(\norm[\vLeb(Y)]{\Xcproj{\ell}{Y}\bvec{v}} + \norm[\vLeb(Y)]{\bvec{w}}\right).
  \end{multline*}
  The expansion \eqref{eq:neumann.series} and the estimates \eqref{eq:estimates.projections} show that 
  \[
  {\norm[Y]{({\rm Id}-\Xproj{\ell}{Y}\Xcproj{\ell}{Y})^{-1}}}\le\sum_{n\ge 0}\norm[Y]{\Xproj{\ell}{Y}\Xcproj{\ell}{Y}}^n\le
  \sum_{n\ge 0}\alpha^n=\frac{1}{1-\alpha}
  \]
  and, similarly, ${\norm[Y]{({\rm Id}-\Xcproj{\ell}{Y}\Xproj{\ell}{Y})^{-1}}}\le\frac{1}{1-\alpha}$. Since $\norm[\vLeb(Y)]{\Xproj{\ell}{Y}\bvec{w}}\le\norm[\vLeb(Y)]{\bvec{w}}$ and $\norm[\vLeb(Y)]{\Xcproj{\ell}{Y}\bvec{v}}\le\norm[\vLeb(Y)]{\bvec{v}}$ as both $\Xproj{\ell}{Y}$ and $\Xcproj{\ell}{Y}$ are $\Leb$-orthogonal projectors, we conclude that
  \[
  \norm[\vLeb(Y)]{\Xrec{X}{Y}{\ell}{\bvec{v}}{\bvec{w}}}
  \le\frac{2}{1-\alpha}\left(\norm[\vLeb(Y)]{\bvec{v}} + \norm[\vLeb(Y)]{\bvec{w}}\right).
  \]

  To prove the converse inequality, we use \eqref{eq:recovery.proj} along with the $\Leb$-bounded\-ness of $\Xproj{\ell}{Y}$ and $\Xcproj{\ell}{Y}$ to write
  \[
  \begin{aligned}
    \norm[\vLeb(Y)]{\bvec{v}}
    + \norm[\vLeb(Y)]{\bvec{w}}
    &= \norm[\vLeb(Y)]{\Xproj{\ell}{Y}\Xrec{X}{Y}{\ell}{\bvec{v}}{\bvec{w}}}    
    + \norm[\vLeb(Y)]{\Xcproj{\ell}{Y}\Xrec{X}{Y}{\ell}{\bvec{v}}{\bvec{w}}}
    \\
    &\le 2\norm[\vLeb(Y)]{\Xrec{X}{Y}{\ell}{\bvec{v}}{\bvec{w}}}.
  \end{aligned}
  \]
  This concludes the proof of the norm equivalence \eqref{eq:estimate.norm.rec}.\qed
\end{proof}

\section{Discrete de Rham complex}\label{sec:ddr}

We define a discrete counterpart of the de Rham complex \eqref{eq:continuous.de.rham}.
Throughout the rest of this section, we fix an integer $k\ge 0$ corresponding to the polynomial degree of the discrete sequence.
The rules used in the notations are detailed in Appendix \ref{appen:notations}, and the main DDR-related notations are summarised in Table \ref{tab:notations}.

\subsection{Discrete spaces}

The DDR spaces are spanned by vectors of polynomials whose components, each attached to a mesh entity, are selected in order to:
\begin{enumerate}[1)]
\item enable the reconstruction of consistent local discrete vector calculus operators and (scalar or vector) potentials in full polynomial spaces of total degree $\le k$ (or $\le k+1$ for the potentials associated with the gradient);
\item give rise to exact local sequences on mesh elements and faces.
\end{enumerate}
Specifically, the discrete counterparts of $\Sob{1}(\Omega)$, $\Hcurl{\Omega}$, $\Hdiv{\Omega}$ and $\Leb(\Omega)$ are respectively defined as follows:
\begin{equation}\label{eq:Xgrad.h}
\Xgrad{h}\coloneq\Big\{
\begin{aligned}[t]
  \underline{q}_h&=\big((q_T)_{T\in\Th},(q_F)_{F\in\Fh},q_{\Eh}\big)\st
  \\
  &\text{$q_T\in \Poly{k-1}(T)$ for all $T\in\Th$,}
  \\
  &\text{$q_F\in\Poly{k-1}(F)$ for all $F\in\Fh$,}
  \\
  &\text{and $q_{\Eh}\in\Poly[\rm c]{k+1}(\Eh)$}\Big\},
\end{aligned}
\end{equation}
\begin{equation}\label{eq:Xcurl.h}
  \Xcurl{h}\coloneq\Big\{
  \begin{aligned}[t]
    \uvec{v}_h
    &=\big(
    (\bvec{v}_{\cvec{R},T},\bvec{v}_{\cvec{R},T}^\compl)_{T\in\Th},(\bvec{v}_{\cvec{R},F},\bvec{v}_{\cvec{R},F}^\compl)_{F\in\Fh}, (v_E)_{E\in\Eh}
    \big)\st
    \\
    &\qquad\text{$\bvec{v}_{\cvec{R},T}\in\Roly{k-1}(T)$ and $\bvec{v}_{\cvec{R},T}^\compl\in\cRoly{k}(T)$ for all $T\in\Th$,}
    \\
    &\qquad\text{$\bvec{v}_{\cvec{R},F}\in\Roly{k-1}(F)$ and $\bvec{v}_{\cvec{R},F}^\compl\in\cRoly{k}(F)$ for all $F\in\Fh$,}
    \\
    &\qquad\text{and $v_E\in\Poly{k}(E)$ for all $E\in\Eh$}\Big\},
  \end{aligned}
\end{equation}
\begin{equation}\label{eq:Xdiv.h}
  \Xdiv{h}\coloneq\Big\{
  \begin{aligned}[t]
    \uvec{w}_h
    &=\big((\bvec{w}_{\cvec{G},T},\bvec{w}_{\cvec{G},T}^\compl)_{T\in\Th}, (w_F)_{F\in\Fh}\big)\st
    \\
    &\qquad\text{$\bvec{w}_{\cvec{G},T}\in\Goly{k-1}(T)$ and $\bvec{w}_{\cvec{G},T}^\compl\in\cGoly{k}(T)$ for all $T\in\Th$,}
    \\
    &\qquad\text{and $w_F\in\Poly{k}(F)$ for all $F\in\Fh$}
    \Big\},
  \end{aligned}
\end{equation}
and
\[
\Poly{k}(\Th)\coloneq\left\{
q_h\in \Leb(\Omega)\st\text{$(q_h)_{|T}\in\Poly{k}(T)$ for all $T\in\Th$}
\right\}.
\]

\begin{remark}[{Component of $\Xgrad{h}$ on the mesh edge skeleton}]\label{rem:Poly.c.k+1:isomorphism}
  By the isomorphism \eqref{eq:Poly.c.ell:isomorphism} with $\mathfrak{E}=\Eh$, we can replace the space $\Poly[\rm c]{k+1}(\Eh)$ in the definition of $\Xgrad{h}$ by the Cartesian product space $\left(\bigtimes_{E\in\Eh}\Poly{k-1}(E)\right)\times\Real^{\Vh}$.
  This product space is easier to manipulate in practical implementations of the DDR complex.
\end{remark}

\begin{remark}[{Components of $\Xcurl{h}$ and $\Xdiv{h}$}]
  For each mesh element or face $Y\in\Th\cup\Fh$, the pair of components $(\bvec{v}_{\cvec{R},Y},\bvec{v}_{\cvec{R},Y}^\compl)$ of a vector in $\Xcurl{h}$ defines an element in $\RT{k}(Y)$.
  Similarly, for any $T\in\Th$, each pair of element components $(\bvec{w}_{\cvec{G},T},\bvec{w}_{\cvec{G},T}^\compl)$ of a vector in $\Xdiv{h}$ defines an element in $\NE{k}(T)$.
  In the exposition, we prefer to distinguish these components as they play very different roles in the construction.
\end{remark}

\begin{table}\centering
  \renewcommand{\arraystretch}{1.3}  
  \begin{tabular}{cc|cccc}
    \toprule
    Index & Space & $V$ & $E$ & $F$ & $T$ \\
    \midrule
    0 & $\Xgrad{h}$ & $\Real = \Poly{k}(V)$ & $\Poly{k-1}(E)$ & $\Poly{k-1}(F)$ & $\Poly{k-1}(T)$ \\
    1 & $\Xcurl{h}$ & & $\Poly{k}(E)$ & $\Roly{k-1}(F)\times\cRoly{k}(F)$ & $\Roly{k-1}(T)\times\cRoly{k}(T)$ \\
    2 & $\Xdiv{h}$  & & & $\Poly{k}(F)$ & $\Goly{k-1}(T)\times\cGoly{k}(T)$ \\
    3 & $\Poly{k}(\Th)$ & & & & $\Poly{k}(T)$ \\
    \bottomrule
  \end{tabular}  
  \caption{Polynomial components attached to each mesh vertex $V\in\Vh$, edge $E\in\Eh$, face $F\in\Fh$, and element $T\in\Th$ for each of the DDR spaces. The space $\Poly[\rm c]{k+1}(\Eh)$ in the definition of $\Xgrad{h}$ has been replaced by $\left(\bigtimes_{E\in\Eh}\Poly{k-1}(E)\right)\times\Real^{\Vh}$, see Remark \ref{rem:Poly.c.k+1:isomorphism}.
  \label{tab:local.spaces}}    
\end{table}
\begin{table}\centering
    \begin{tabular}{c|cc|cc|cc|cc}
      \toprule
      \multirow{2}{*}{$k$} & \multicolumn{2}{c|}{$\Xgrad{T}$} & \multicolumn{2}{c|}{$\Xcurl{T}$} & \multicolumn{2}{c|}{$\Xdiv{T}$} & \multicolumn{2}{c}{$\mathrm{L}^k(T)$} \\
      & Tetra & Hexa & Tetra & Hexa & Tetra & Hexa & Tetra & Hexa \\
      \midrule
      0 & 4 (4) & 8 (8) & 6 (6) & 12 (12) & 4 (4) & 6 (6) & 1 (1) & 1 (1) \\
      1 & 15 (10) & 27 (27) & 28 (20) & 46 (54) & 18 (15) & 24 (36) & 4 (4) & 4 (8) \\
      2 & 32 (20) & 54 (64) & 65 (45) & 99 (144) & 44 (36) & 56 (108) & 10 (10) & 10 (27) \\
      \bottomrule    
    \end{tabular}
    \caption{Number of degrees of freedom of the local DDR spaces for tetrahedral and hexahedral elements, and comparison with Raviart--Thomas--N\'ed\'elec (RTN) finite element spaces (in parentheses). For the discrete $\Leb$-space, we have $\mathrm{L}^k(T) = \Qoly{k}(T)$ ($d$-variate polynomials of degree $\le k$ in each variable) for hexahedral RTN finite elements, $\mathrm{L}^k(T) = \Poly{k}(T)$ otherwise.}\label{tab:comparison.fem}  
\end{table}
The polynomial components attached to mesh vertices, edges, faces, and elements for each of the DDR spaces are summarised in Table \ref{tab:local.spaces} (notice that we have accounted for Remark \ref{rem:Poly.c.k+1:isomorphism} for $\Xgrad{h}$).
An inspection of Table \ref{tab:local.spaces} reveals that its diagonal contains full polynomial spaces on the mesh entities of dimension corresponding to the index of the space in the sequence (with the convection that $\Poly{k}(V)\coloneq\Real$ for any vertex $V\in\Vh$).
The components collected in the upper triangular portion of the table are non-zero only for $k\ge 1$, and encode additional information required for the reconstruction of high-order discrete vector calculus operators and potentials.
In particular, the complements $\cRoly{k}(F)$, $\cRoly{k}(T)$, and $\cGoly{k}(T)$ complete the information contained, respectively, in the face curl, element curl and tangential trace, and element divergence to construct the corresponding face or element vector potentials; see Sections \ref{sec:curl}, \ref{sec:Pcurl}, and \ref{sec:Pdiv}.

In what follows, given $\bullet\in\{\GRAD,\CURL,\DIV\}$ and a mesh entity $Y$ of dimension greater than or equal to the index of $\underline{X}_{\bullet,h}^k$, we denote by $\underline{X}_{\bullet,Y}^k$ the restriction of this space to $Y$, i.e., $\underline{X}_{\bullet,Y}^k$ contains the polynomial components attached to $Y$ and to all the mesh entities that lie on its boundary.

\begin{remark}[Comparison with Raviart--Thomas--N\'ed\'elec finite elements]
  When $T$ is a tetrahedron or a hexahedron, the local spaces in the DDR sequence can be compared to classical (Raviart--Thomas--N\'ed\'elec) FE spaces.
  The number of degrees of freedom in each case for polynomial degrees $k\in\{0,1,2\}$ (the most commonly used) is reported in Table \ref{tab:comparison.fem}.
  For $k\ge 1$, the DDR construction leads to slightly larger spaces on tetrahedra and to significantly smaller spaces on hexahedra.
  The number of degrees of freedom for the DDR spaces could be further reduced adapting the serendipity techniques of Virtual Elements \cite{Beirao-da-Veiga.Brezzi.ea:18}; this topic is left for a future work.
  \smallskip

  For codes aiming at general meshes, the implementation of the DDR spaces requires the local (element-by-element) computation of discrete vector operators and potentials, which is an additional cost with respect to traditional FE codes.
  It should be noticed, however, that:
  1) these computations are an embarrassingly parallel task that scales with the number of mesh elements, and are therefore asymptotically less expensive than the resolution of the algebraic systems (see, e.g., Figure \ref{fig:times.vs.elements});
  2) this cost can be substantially reduced when dealing with meshes composed of a finite number of element shapes using standard reference element techniques;
  3) it possible to combine the FE and DDR approaches on a given mesh (using the former on elements of standard shape and the latter on elements of more general shape, possibly resulting from local mesh refinement).
\end{remark}

\subsection{Interpolators}

In the following, for all $\underline{q}_h\in\Xgrad{h}$, we set
  \begin{equation}\label{eq:qE}
    q_E\coloneq(q_{\Eh})_{|E}\in\Poly{k+1}(E).
\end{equation}
The interpolators on the DDR spaces are defined collecting component-wise $\Leb$-projections.
Specifically $\Igrad{h}:\rC{0}(\overline{\Omega})\to\Xgrad{h}$ is such that, for all $q\in \rC{0}(\overline{\Omega})$,
\begin{equation}\label{eq:Igradh}
  \begin{gathered}
    \Igrad{h} q \coloneq \big((\lproj{k-1}{T} q_{|T})_{T\in\Th},(\lproj{k-1}{F} q_{|F})_{F\in\Fh},q_{\Eh}\big)\in\Xgrad{h}
    \\
    \text{
      where $\lproj{k-1}{E} (q_{\Eh})_{|E}=\lproj{k-1}{E} q_{|E}$ for all $E\in\Eh$
    }
    \\
    \text{
      and $q_{\Eh}(\bvec{x}_V)=q(\bvec{x}_V)$ for all $V\in\Vh$.
    }
  \end{gathered}
\end{equation}
$\Icurl{h}:\vC{0}(\overline{\Omega})\to\Xcurl{h}$ is defined setting, for all $\bvec{v}\in\vC{0}(\overline{\Omega})$,
\begin{equation}\label{eq:Icurlh}
  \begin{aligned}
  \Icurl{h}\bvec{v}\coloneq\big({}&
  (\Rproj{k-1}{T}\bvec{v}_{|T},\Rcproj{k}{T}\bvec{v}_{|T})_{T\in\Th},\\
  &(\Rproj{k-1}{F}\bvec{v}_{{\rm t},F},\Rcproj{k}{F}\bvec{v}_{{\rm t},F})_{F\in\Fh},\\
  &(\lproj{k}{E}(\bvec{v}_{|E}\cdot\tangent_E)_{E\in\Eh}
  \big),
  \end{aligned}
\end{equation}
where $\bvec{v}_{{\rm t},F}\coloneq\normal_F\times(\bvec{v}_{|F}\times\normal_F)$ denotes the tangent trace of $\bvec{v}$ over $F$. Finally, $\Idiv{h}:\vSob{1}(\Omega)\to\Xdiv{h}$ is such that, for all $\bvec{w}\in\vSob{1}(\Omega)$,
\begin{equation}\label{eq:Idivh}
\Idiv{h}\bvec{w}\coloneq\big(
(\Gproj{k-1}{T}\bvec{w}_{|T},\Gcproj{k}{T}\bvec{w}_{|T})_{T\in\Th},
(\lproj{k}{F}(\bvec{w}_{|F}\cdot\normal_F)_{F\in\Fh}
\big).
\end{equation}
The restriction of the above interpolators to a mesh entity $Y$ of dimension larger than or equal to the index of the corresponding space in the sequence (see Table \ref{tab:local.spaces}) is denoted replacing the subscript $h$ by $Y$.
Finally, we let $\lproj{k}{h}:\Leb(\Omega)\to\Poly{k}(\Th)$ denote the global $\Leb$-orthogonal projector such that, for all $q\in \Leb(\Omega)$, $(\lproj{k}{h}q)_{|T} = \lproj{k}{T} q_{|T}$ for all $T\in\Th$.

\subsection{Discrete vector calculus operators}\label{sec:ddr:operators}

We define in this section the discrete vector calculus operators that appear in the DDR sequence, obtained collecting the $\Leb$-orthogonal projections of local discrete operators mapping on full polynomial spaces.
In what follows, the operators that only appear in the discrete sequence \eqref{eq:global.sequence.3D} through projections are denoted in sans serif font, while those appearing verbatim (without projection) in the sequence are in standard font.

\subsubsection{Gradient}

The discrete counterpart of the gradient operator in the DDR sequence maps on $\Xcurl{h}$, and therefore requires to define local gradients on mesh edges, faces, and elements.

For any $E\in\Eh$, the \emph{edge gradient} $\GE:\Xgrad{E}\to\Poly{k}(E)$ is defined as: For all $q_E\in\Xgrad{E}=\Poly{k+1}(E)$,
\begin{equation}\label{eq:GE}
  \GE q_E \coloneq q_E',
\end{equation}
where the derivative is taken along $E$ according to the orientation of $\tangent_E$.

For any $F\in\Fh$, the \emph{face gradient} $\cGF:\Xgrad{F}\to\vPoly{k}(F)$ is such that, for all $\underline{q}_F = (q_F, q_{\EF})\in\Xgrad{F}$ and all $\bvec{w}_F\in\vPoly{k}(F)$,
\begin{align}\label{eq:cGF}
  \int_F\cGF\underline{q}_F\cdot\bvec{w}_F
  &= -\int_F q_F\DIV_F\bvec{w}_F
  + \sum_{E\in\EF}\omega_{FE}\int_E q_{\EF}(\bvec{w}_F\cdot\normal_{FE})
  \\ 
  &= \int_F \GRAD_F q_F\cdot\bvec{w}_F
  + \sum_{E\in\EF}\omega_{FE}\int_E (q_{\EF} - q_F) (\bvec{w}_F\cdot\normal_{FE}).\nonumber
\end{align}
The existence and uniqueness of $\cGF\underline{q}_F$ in $\vPoly{k}(F)$ follow from the Riesz representation theorem applied to this space equipped with the usual $\Leb$-product.
Similar considerations hold for the other discrete vector calculus operators defined below, and will not be repeated.

The \emph{scalar trace} $\trF:\Xgrad{F}\to\Poly{k+1}(F)$ is such that, for all $\underline{q}_F\in\Xgrad{F}$,
\begin{multline}\label{eq:trF}
\int_F\trF\underline{q}_F\DIV_F\bvec{v}_F
= -\int_F\cGF\underline{q}_F\cdot\bvec{v}_F
+ \sum_{E\in\EF}\omega_{FE}\int_E q_{\EF}(\bvec{v}_F\cdot\normal_{FE})
\\
\forall\bvec{v}_F\in\cRoly{k+2}(F).
\end{multline}
This relation defines $\trF \underline{q}_F$ uniquely in $\Poly{k+1}(F)$ owing to the isomorphism property \eqref{eq:iso:DIV} with $\ell=k+2$.

\begin{remark}[Validity of \eqref{eq:trF}]\label{rem:validity.trF}
    The relation \eqref{eq:trF} holds, in fact, for any $\bvec{v}_F\in\Roly{k}(F)\oplus\cRoly{k+2}(F)$.
    To check it, take $\bvec{v}_F\in\Roly{k}(F)$ and notice that the left-hand side vanishes owing to $\DIV_F\bvec{v}_F=0$, while the right-hand side vanishes owing to the definition \eqref{eq:cGF} of $\cGF\underline{q}_F$ and again $\DIV_F\bvec{v}_F=0$.
    This means, in particular, that \eqref{eq:trF} holds for any $\bvec{v}_F\in\RT{k+1}(F)\subset\Roly{k}(F)\oplus\cRoly{k+2}(F)$ (see Remark \ref{rem:hierarchical.complements}).
\end{remark}

For all $T\in\Th$, the \emph{element gradient} $\cGT:\Xgrad{T}\to\vPoly{k}(T)$ is defined such that, for all $\underline{q}_T\in\Xgrad{T}$ and all $\bvec{w}_T\in\vPoly{k}(T)$,
\begin{align}\label{eq:cGT}
  \int_T\cGT\underline{q}_T\cdot\bvec{w}_T
  &= -\int_T q_T\DIV\bvec{w}_T
  + \sum_{F\in\FT}\omega_{TF}\int_F\trF\underline{q}_F(\bvec{w}_T\cdot\normal_F)
  \\ \nonumber
  &= \int_T \GRAD q_T\cdot\bvec{w}_T
  + \sum_{F\in\FT}\omega_{TF}\int_F(\trF\underline{q}_F-q_T)(\bvec{w}_T\cdot\normal_F),
\end{align}
where we have performed an integration by parts on the first term in the right-hand side to pass to the second line.

\begin{lemma}[Consistency properties]\label{lem:consistency.grad.trace}
  The edge, face, and element gradients, and scalar trace satisfy the following consistency properties:
  \begin{alignat}{6} \label{eq:GE:consistency}
    &\forall E\in\Eh &\qquad \GE\big(\Igrad{E} q\big) &= \lproj{k}{E} (q') &\qquad& \forall q\in \Sob{1}(E),
    \\ \label{eq:cGF:consistency}
    &\forall F\in\Fh &\qquad \cGF\big(\Igrad{F} q\big) &= \GRAD_F q &\qquad& \forall q\in\Poly{k+1}(F),
    \\ \label{eq:trF:consistency}
    &\forall F\in\Fh &\qquad \trF\big(\Igrad{F} q\big) &= q &\qquad& \forall q\in\Poly{k+1}(F),
    \\ \label{eq:lproj.k-1.trF}
    &\forall F\in\Fh &\qquad \lproj{k-1}{F}\big(\trF\underline{q}_F\big) &= q_F &\qquad& \forall\underline{q}_F\in\Xgrad{F},
    \\ \label{eq:cGT:consistency}
    &\forall T\in\Th &\qquad \cGT\big(\Igrad{T} q\big) &= \GRAD q &\qquad& \forall q\in\Poly{k+1}(T).
  \end{alignat}
\end{lemma}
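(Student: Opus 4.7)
The five identities are proved in the order they are stated; each uses the defining integration-by-parts formula for the operator together with the preceding identities and the behaviour of $\Igrad{Y}$ on polynomials.

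For \eqref{eq:GE:consistency}, I would note that by the isomorphism \eqref{eq:Poly.c.ell:isomorphism} applied to $\mathfrak{E}=\{E\}$, the edge component $(\Igrad{E} q)$ is the unique $q_E\in\Poly{k+1}(E)$ with prescribed vertex values $q(\bvec{x}_V)$ and prescribed projection $\lproj{k-1}{E}q_E=\lproj{k-1}{E}q$. Testing $q_E'$ against an arbitrary $w\in\Poly{k}(E)$, integrating by parts, and using $w'\in\Poly{k-1}(E)$ together with the vertex matching yields $\int_E q_E'\, w=\int_E q'\,w$, which identifies $q_E'$ with $\lproj{k}{E}(q')$.

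For \eqref{eq:cGF:consistency} and \eqref{eq:trF:consistency}, the starting observation is that when $q\in\Poly{k+1}(F)$, its edge restriction $q_{|E}\in\Poly{k+1}(E)$ satisfies both defining conditions of $(\Igrad{F} q)_{\EF|E}$, and thus $q_{\EF|E}=q_{|E}$. Moreover $q_F=\lproj{k-1}{F}q$. For \eqref{eq:cGF:consistency}, I would test $\cGF(\Igrad{F} q)$ against $\bvec{w}_F\in\vPoly{k}(F)$ via the first line of \eqref{eq:cGF}; the polynomial degree of $\DIV_F\bvec{w}_F$ lets me drop $\lproj{k-1}{F}$, and a single integration by parts on $F$ recovers $\int_F\GRAD_F q\cdot\bvec{w}_F$. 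For \eqref{eq:trF:consistency}, I test the defining relation \eqref{eq:trF} against $\bvec{v}_F\in\cRoly{k+2}(F)$: using \eqref{eq:cGF:consistency} just proved and $q_{\EF|E}=q_{|E}$, the right-hand side integrates by parts to $\int_F q\,\DIV_F\bvec{v}_F$, and the isomorphism \eqref{eq:iso:DIV} with $\ell=k+2$ concludes.

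The identity \eqref{eq:lproj.k-1.trF} is the one step that requires extending the class of admissible test functions in \eqref{eq:trF}: given $r\in\Poly{k-1}(F)$, the isomorphism \eqref{eq:iso:DIV} with $\ell=k$ gives $\bvec{v}_F\in\cRoly{k}(F)$ with $\DIV_F\bvec{v}_F=r$; by the hierarchical inclusion \eqref{eq:hierarchical.complements}, $\bvec{v}_F\in\cRoly{k+2}(F)$ and, by Remark \ref{rem:validity.trF}, relation \eqref{eq:trF} applies. Since $\bvec{v}_F$ also lies in $\vPoly{k}(F)$, the right-hand side of \eqref{eq:trF} can be rewritten through \eqref{eq:cGF} with $\bvec{w}_F=\bvec{v}_F$, producing exactly $\int_F q_F\,\DIV_F\bvec{v}_F=\int_F q_F\,r$, which is the desired projection identity. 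Finally, \eqref{eq:cGT:consistency} is obtained by the same template as \eqref{eq:cGF:consistency}: test \eqref{eq:cGT} against $\bvec{w}_T\in\vPoly{k}(T)$ with $\underline{q}_T=\Igrad{T} q$, use $\DIV\bvec{w}_T\in\Poly{k-1}(T)$ to drop $\lproj{k-1}{T}$, replace each face term $\trF(\Igrad{F} q)$ by $q_{|F}$ thanks to \eqref{eq:trF:consistency} applied to $q_{|F}\in\Poly{k+1}(F)$, and integrate by parts on $T$.

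The only mildly delicate step is \eqref{eq:lproj.k-1.trF}, whose success hinges on the hierarchical property \eqref{eq:hierarchical.complements} of the Koszul complements and on Remark \ref{rem:validity.trF}; the other four identities are routine once the edge-matching $q_{\EF|E}=q_{|E}$ for polynomial data is recorded and the operators are tested on polynomials of the appropriate degree.
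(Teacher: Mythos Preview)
Your proposal is correct and follows essentially the same route as the paper: integration by parts for \eqref{eq:GE:consistency}, the observation $q_{\EF|E}=q_{|E}$ plus integration by parts for \eqref{eq:cGF:consistency} and \eqref{eq:trF:consistency}, the hierarchical inclusion $\cRoly{k}(F)\subset\cRoly{k+2}(F)$ combined with \eqref{eq:cGF} for \eqref{eq:lproj.k-1.trF}, and the use of \eqref{eq:trF:consistency} in the face integrals for \eqref{eq:cGT:consistency}. The paper defers the details of \eqref{eq:cGF:consistency} and \eqref{eq:cGT:consistency} to earlier references, but your explicit argument is exactly the intended one.
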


\begin{proof}
Let us prove \eqref{eq:GE:consistency}. Take $q\in \Sob{1}(E)$. For all $r_E\in\Poly{k}(E)$, denoting by $\bvec{x}_{V_1}$ and $\bvec{x}_{V_2}$ the coordinates of the vertices $V_1$ and $V_2$ of $E$, oriented so that $\tangent_E$ points from $V_1$ to $V_2$, we have
\begin{align*}
  \int_E (\Igrad{E}q)'r_E
  &=(\Igrad{E}q\, r_E)(\bvec{x}_{V_2}) - (\Igrad{E}q\, r_E)(\bvec{x}_{V_1}) - \int_E (\Igrad{E}q) r_E'
  \\
  &=(q\, r_E)(\bvec{x}_{V_2})-(q\, r_E)(\bvec{x}_{V_1}) - \int_E q r_E'
  = \int_E q' r_E,
\end{align*}
where we have used an integration by parts in the first line, obtained the second equality applying the definition of $\Igrad{E}q\in\Poly{k+1}(E)$ (which satisfies $(\Igrad{E}q)(\bvec{x}_V)=q(\bvec{x}_V)$ for all $V\in\VE$ and $\lproj{k-1}{E}(\Igrad{E}q)=\lproj{k-1}{E}q$) together with $r_E'\in\Poly{k-1}(E)$, and used another integration by parts to conclude. This proves that $(\Igrad{E}q)'=\lproj{k}{E}(q')$.

Relation \eqref{eq:cGF:consistency} can be deduced as in \cite[Proposition 4.1]{Di-Pietro.Droniou.ea:20}. To prove \eqref{eq:trF:consistency}, we write \eqref{eq:trF} for $\underline{q}_F = \Igrad{F} q$ with $q\in\Poly{k+1}(T)$, use \eqref{eq:cGF:consistency}, and notice that $q_{\EF} = q_{|\partial F}$ (since $q_{|\partial F}\in\Poly[c]{k+1}(\EF)$)
to get, for all $\bvec{v}_F\in\cRoly{k+2}(F)$,
\[
\begin{aligned}
  \int_F\trF(\Igrad{F} q)\DIV_F\bvec{v}_F
  &=-\int_F\GRAD_F q\cdot\bvec{v}_F
  + \sum_{E\in\EF}\omega_{FE}\int_E q_{|\partial F}(\bvec{v}_F\cdot\normal_{FE})
  \\
  &=\int_F q\DIV_F\bvec{v}_F.
\end{aligned}
\]
The isomorphism property \eqref{eq:iso:DIV} with $\ell=k+2$ then concludes the proof that $\trF(\Igrad{F} q)=q$.

The equality \eqref{eq:lproj.k-1.trF} follows from \eqref{eq:trF} written for $\bvec{v}_F\in\cRoly{k}(F)$ (this choice is made possible by \eqref{eq:hierarchical.complements}) after replacing the full face gradient $\cGF$ by its definition \eqref{eq:cGF}, simplifying the boundary terms, and invoking again the isomorphism property \eqref{eq:iso:DIV}, this time with $\ell=k$.

Finally, \eqref{eq:cGT:consistency} can be established from \eqref{eq:trF:consistency} following the ideas in \cite[Lemma 5.1]{Di-Pietro.Droniou.ea:20}.\qed
\end{proof}

The following proposition contains a stronger version of \cite[Eq. (5.16)]{Di-Pietro.Droniou.ea:20}, with test function taken in the N\'ed\'elec space $\NE{k+1}(T)$ instead of $\vPoly{k}(T)$.

\begin{proposition}[Link between element and face gradients]\label{prop:link.GT.GF}
  For all $T\in\Th$ and all $(\underline{q}_T,\bvec{z}_T)\in\Xgrad{T}\times\NE{k+1}(T)$,
  \begin{equation}\label{eq:link.GT.GF}
    \int_T\cGT\underline{q}_T\cdot\CURL\bvec{z}_T
    = -\sum_{F\in\FT}\omega_{TF}\int_F\cGF\underline{q}_F\cdot(\bvec{z}_T\times\normal_F).
  \end{equation}
\end{proposition}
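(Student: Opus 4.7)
The plan is to apply the definition \eqref{eq:cGT} of $\cGT$ to the test function $\bvec{w}_T = \CURL\bvec{z}_T$, which lies in $\vPoly{k}(T)$ since $\bvec{z}_T\in\NE{k+1}(T)\subset\vPoly{k+1}(T)$. As $\DIV(\CURL\bvec{z}_T)=0$, the volumetric term vanishes and we are left with
\begin{equation*}
  \int_T\cGT\underline{q}_T\cdot\CURL\bvec{z}_T
  = \sum_{F\in\FT}\omega_{TF}\int_F\trF\underline{q}_F\,(\CURL\bvec{z}_T\cdot\normal_F).
\end{equation*}

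Next, the classical surface identity $(\CURL\bvec{z})_{|F}\cdot\normal_F = \DIV_F(\bvec{z}_{|F}\times\normal_F)$, easily verified in local coordinates adapted to $F$ (or, equivalently, noting $\bvec{z}\times\normal_F = \varrho_{-\pi/2}\bvec{z}_{{\rm t},F}$ so that $\DIV_F(\bvec{z}\times\normal_F)=\ROT_F\bvec{z}_{{\rm t},F}$), rewrites each face integral as $\int_F\trF\underline{q}_F\,\DIV_F(\bvec{z}_T\times\normal_F)$. I would then apply the defining relation \eqref{eq:trF} of $\trF$ with test function $\bvec{v}_F = (\bvec{z}_T\times\normal_F)_{|F}$. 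This is the delicate step: a priori, $\bvec{v}_F$ has degree $k+1$ and need not belong to $\cRoly{k+2}(F)$, so \eqref{eq:trF} cannot be invoked directly. However, the trace result for N\'ed\'elec spaces (Proposition~\ref{prop:traces.NE.RT}) ensures that $(\bvec{z}_T\times\normal_F)_{|F}\in\RT{k+1}(F) = \Roly{k}(F)\oplus\cRoly{k+1}(F) \subset \Roly{k}(F)\oplus\cRoly{k+2}(F)$, so the extended validity of \eqref{eq:trF} provided by Remark~\ref{rem:validity.trF} applies. This yields, for each $F\in\FT$,
\begin{equation*}
  \int_F\trF\underline{q}_F\,\DIV_F(\bvec{z}_T\times\normal_F)
  = -\int_F\cGF\underline{q}_F\cdot(\bvec{z}_T\times\normal_F)
  + \sum_{E\in\EF}\omega_{FE}\int_E q_{\EF}\,(\bvec{z}_T\times\normal_F)\cdot\normal_{FE}.
\end{equation*}

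Multiplying by $\omega_{TF}$ and summing over $F\in\FT$, the first right-hand term already produces the right-hand side of \eqref{eq:link.GT.GF}, and it remains to show that the edge contributions cancel. Using the triple product identity and the right-handed convention $(\tangent_E,\normal_{FE},\normal_F)$, which gives $\normal_F\times\normal_{FE}=-\tangent_E$, one has $(\bvec{z}_T\times\normal_F)\cdot\normal_{FE} = -\bvec{z}_T\cdot\tangent_E$, a quantity independent of $F$. Swapping the order of summation, the edge contribution becomes
\begin{equation*}
  -\sum_{E\in\ET}\left(\sum_{F\in\FT,\,E\in\EF}\omega_{TF}\omega_{FE}\right)\int_E q_E\,(\bvec{z}_T\cdot\tangent_E),
\end{equation*}
and for each $E\in\ET$ the inner sum vanishes by the standard compatibility relation $\omega_{TF_1}\omega_{F_1E}+\omega_{TF_2}\omega_{F_2E}=0$ for the two faces $F_1,F_2\in\FT$ sharing $E$. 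The main obstacle is therefore the justification of the applicability of \eqref{eq:trF} to the trace $\bvec{z}_T\times\normal_F$, which hinges on the trace property of the Koszul-based N\'ed\'elec complement highlighted in Proposition~\ref{prop:traces.NE.RT}.
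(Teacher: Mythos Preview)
Your proof is correct and follows essentially the same route as the paper: apply \eqref{eq:cGT} with $\bvec{w}_T=\CURL\bvec{z}_T$, use the surface identity $(\CURL\bvec{z}_T)\cdot\normal_F=\DIV_F(\bvec{z}_T\times\normal_F)$, invoke \eqref{eq:trF} via Remark~\ref{rem:validity.trF} and the trace result \eqref{eq:NE.T.trace}, and cancel the edge terms. Your explicit handling of the edge cancellation via $(\bvec{z}_T\times\normal_F)\cdot\normal_{FE}=-\bvec{z}_T\cdot\tangent_E$ and the orientation relation $\omega_{TF_1}\omega_{F_1E}+\omega_{TF_2}\omega_{F_2E}=0$ is exactly what the paper defers to \cite[Eqs.~(5.13) and (5.14)]{Di-Pietro.Droniou.ea:20}.
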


\begin{proof}
  Writing \eqref{eq:cGT} with $\bvec{w}_T = \CURL\bvec{z}_T\in\vPoly{k}(T)$ and recalling the relation $\DIV\CURL\bvec{z}_T=0$, we have
  \[
  \begin{aligned}
    \int_T\cGT\underline{q}_T\cdot\CURL\bvec{z}_T
    &= \sum_{F\in\FT}\omega_{TF}\int_F\trF\underline{q}_F(\CURL\bvec{z}_T\cdot\normal_F)
    \\
    &= \sum_{F\in\FT}\omega_{TF}\int_F\trF\underline{q}_F\DIV_F(\bvec{z}_T\times\normal_F),
  \end{aligned}
  \]
  the last equality being a consequence of \cite[Eq. (3.7)]{Di-Pietro.Droniou.ea:20}.
  To conclude, we invoke \eqref{eq:trF} with $\bvec{v}_F=(\bvec{z}_T)_{|F}\times\normal_F\in\RT{k+1}(F)$ (cf.\ \eqref{eq:NE.T.trace} and Remark \ref{rem:validity.trF}) and cancel the edge terms using \cite[Eqs. (5.13) and (5.14)]{Di-Pietro.Droniou.ea:20}.\qed
\end{proof}

The \emph{global discrete gradient} $\uGh:\Xgrad{h}\to\Xcurl{h}$ is obtained collecting the projections of each local gradient on the space attached to the corresponding mesh entity: For all $\underline{q}_h\in\Xgrad{h}$,
\begin{equation}\label{eq:uGh}
  \uGh\underline{q}_h\coloneq
  \begin{aligned}[t]
    \Big(
    &\big( \Rproj{k-1}{T}\big(\cGT\underline{q}_T\big),\Rcproj{k}{T}\big(\cGT\underline{q}_T\big) \big)_{T\in\Th},
    \\
    &\big( \Rproj{k-1}{F}\big(\cGF\underline{q}_F\big),\Rcproj{k}{F}\big(\cGF\underline{q}_F\big) \big)_{F\in\Fh},
    \\
    &( \GE q_E )_{E\in\Eh}
    \Big).
  \end{aligned}
\end{equation}

\begin{remark}[Practical implementation]
  In schemes based on the DDR sequence, the discrete gradient \eqref{eq:uGh} only appears as an argument of the discrete $\Leb$-product on $\Xcurl{h}$ (see \eqref{eq:Xcurl:l2.prod} below), that is, composed with the scalar trace and potential reconstruction on this space.
  Thus, leveraging \eqref{eq:Pcurl.uGT=cGT} below, one never has to implement $\uGh$, as only the full element gradients $(\cGT)_{T\in\Th}$ are required.
  Similar considerations hold for the discrete curl defined by \eqref{eq:uCh} below (see also \cite[Remark 7]{Di-Pietro.Droniou:20*1} on this matter).

  Notice that this strategy differs from the one often pursued in the context of Virtual Elements, which consists in directly taking the appropriate components of $\uGh$ as degrees of freedom.
  This difference is linked to the fact that the present construction embeds what could be interpreted in Virtual Element terms as an \emph{enhancement}, enabling us to reduce the degree of certain internal polynomial components.
\end{remark}

\subsubsection{Curl}\label{sec:curl}

We next consider the DDR counterpart of the curl operator, which maps on $\Xdiv{h}$ and therefore has components at mesh faces and inside mesh elements.
For all $F\in\Fh$, the \emph{face curl} $\CF:\Xcurl{F}\to\Poly{k}(F)$ is such that, for all $\uvec{v}_F=\big(\bvec{v}_{\cvec{R},F},\bvec{v}_{\cvec{R},F}^\compl, (v_E)_{E\in\EF}\big)\in\Xcurl{F}$,
\begin{equation}\label{eq:CF}
  \int_F\CF\uvec{v}_F~r_F
  = \int_F\bvec{v}_{\cvec{R},F}\cdot\VROT_F r_F
  - \sum_{E\in\EF}\omega_{FE}\int_E v_Er_F\qquad
  \forall r_F\in\Poly{k}(F).
\end{equation}
Reasoning as in \cite[Proposition 4.3]{Di-Pietro.Droniou.ea:20}, we get
\begin{equation}\label{eq:CF:consistency}
  \CF\big(\Icurl{F}\bvec{v}\big) = \lproj{k}{F}\big(\ROT_F\bvec{v}\big)\qquad
  \forall\bvec{v}\in\vSob{1}(F).
\end{equation}

\begin{proposition}[Local complex property]\label{prop:2d.complex}
  Let $F\in\Fh$ and denote by $\uGF:\Xgrad{F}\to\Xcurl{F}$ the restriction to $F$ of the global gradient $\uGh$ defined by \eqref{eq:uGh}.
  Then, it holds
  \begin{equation}\label{eq:Im.uGF.subset.Ker.CF}
    \Image\uGF \subset \Ker\CF \qquad \forall F\in\Fh.
  \end{equation}
\end{proposition}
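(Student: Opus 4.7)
The plan is to test $\CF(\uGF\underline{q}_F)$ against an arbitrary $r_F\in\Poly{k}(F)$ and show the result is zero, by combining the definitions of $\CF$, $\cGF$ and $\GE$ with the vector-calculus identity $\DIV_F\VROT_F=0$ and an edge-wise integration by parts.

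\textbf{Step 1: Unfold the definition of $\CF$.} Let $\uvec{v}_F\coloneq\uGF\underline{q}_F$, so $\bvec{v}_{\cvec{R},F}=\Rproj{k-1}{F}(\cGF\underline{q}_F)$ and $v_E=\GE q_E=q_E'$ for all $E\in\EF$. Take $r_F\in\Poly{k}(F)$ in \eqref{eq:CF}; since $\VROT_F r_F\in\Roly{k-1}(F)$ (by the isomorphism \eqref{eq:iso:VROTF.GRAD}), the definition of the $\Leb$-orthogonal projector $\Rproj{k-1}{F}$ gives
\begin{equation*}
  \int_F \CF(\uGF\underline{q}_F)\,r_F
  = \int_F \cGF\underline{q}_F\cdot\VROT_F r_F
  - \sum_{E\in\EF}\omega_{FE}\int_E q_E'\,r_F.
\end{equation*}

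\textbf{Step 2: Apply the definition of $\cGF$ to the volumetric term.} Using \eqref{eq:cGF} with test function $\bvec{w}_F=\VROT_F r_F\in\vPoly{k}(F)$, and observing that $\DIV_F\VROT_F r_F=\ROT_F\GRAD_F r_F=0$ by \eqref{eq:def:VROTF:ROTF}, the volumetric term reduces to
\begin{equation*}
  \int_F \cGF\underline{q}_F\cdot\VROT_F r_F
  = \sum_{E\in\EF}\omega_{FE}\int_E q_E\,(\VROT_F r_F\cdot\normal_{FE}).
\end{equation*}

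\textbf{Step 3: Reduce to an edge tangential derivative.} Since $(\tangent_E,\normal_{FE})$ is a right-handed frame in the plane of $F$, we have $\rotation{\pi/2}(\normal_{FE})=-\tangent_E$, hence
\begin{equation*}
  \VROT_F r_F\cdot\normal_{FE}
  = \rotation{-\pi/2}(\GRAD_F r_F)\cdot\normal_{FE}
  = \GRAD_F r_F\cdot\rotation{\pi/2}(\normal_{FE})
  = -(r_F|_E)'.
\end{equation*}
Substituting in Step 1 yields
\begin{equation*}
  \int_F \CF(\uGF\underline{q}_F)\,r_F
  = -\sum_{E\in\EF}\omega_{FE}\int_E\bigl[q_E\,(r_F|_E)' + q_E'\,r_F\bigr]
  = -\sum_{E\in\EF}\omega_{FE}\int_E (q_E\,r_F|_E)'.
\end{equation*}

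\textbf{Step 4: Cancel the boundary contributions.} Each remaining integral evaluates, by the fundamental theorem of calculus, to the difference of the values of $q_E\,r_F|_E$ at the two endpoints of $E$. Since $q_{\EF}\in\Poly[\rm c]{k+1}(\EF)$ is continuous on $\partial F$, $r_F|_{\partial F}$ is continuous on $\partial F$, and the signs $\omega_{FE}$ encode the coherent orientation of $\partial F$, each vertex of $F$ is visited exactly twice with opposite signs; the total sum telescopes to zero. Hence $\int_F \CF(\uGF\underline{q}_F)\,r_F=0$ for every $r_F\in\Poly{k}(F)$, which gives $\CF(\uGF\underline{q}_F)=0$ and proves \eqref{eq:Im.uGF.subset.Ker.CF}.

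The only mildly delicate point is the geometric identity in Step 3 (matching the sign of $\VROT_F r_F\cdot\normal_{FE}$ with the tangential derivative of $r_F$ along $E$ oriented by $\tangent_E$); once this is in place, the argument reduces to a discrete analogue of the classical identity $\ROT_F(\GRAD_F q)=0$ combined with a telescoping sum on $\partial F$.
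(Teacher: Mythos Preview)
Your proof is correct and follows essentially the same route as the paper's own argument: unfold $\CF$, remove the projector $\Rproj{k-1}{F}$ against $\VROT_F r_F\in\Roly{k-1}(F)$, apply the definition of $\cGF$ with $\DIV_F\VROT_F r_F=0$, and then conclude via the identity $\VROT_F r_F\cdot\normal_{FE}=-(r_F|_E)'$ and the telescoping of vertex values. The only difference is that the paper defers Steps~3--4 to \cite[Proposition~4.4, Eqs.~(4.19)--(4.20)]{Di-Pietro.Droniou.ea:20}, whereas you spell out the geometric identity and the vertex cancellation explicitly.
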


\begin{remark}[Two-dimensional complex]
  The relations \eqref{eq:cGF:consistency} and \eqref{eq:Im.uGF.subset.Ker.CF} show that the following two-dimensional sequence forms a complex:
  \[
  \begin{tikzcd}
    \Real\arrow{r}{\Igrad{F}} & \Xgrad{F}\arrow{r}{\uGF} & \Xcurl{F}\arrow{r}{\CF} & \Poly{k}(F)\arrow{r}{0} & \{0\}.
  \end{tikzcd}
  \]
  Having assumed $F$ simply connected, adapting the arguments of \cite[Theorem 4.1]{Di-Pietro.Droniou.ea:20}, one can additionally prove that this complex is exact, that is, $\Ker\uGF = \Igrad{F}\Real$, $\Image\uGF = \Ker \CF$, and $\Image\CF = \Poly{k}(F)$.
\end{remark}

\begin{proof}[Proposition \ref{prop:2d.complex}]
  Let $\underline{q}_F\in\Xgrad{F}$.
  Using the definition \eqref{eq:CF} of $\CF$ and \eqref{eq:uGh} of $\uGh$ we have, for all $r_F\in\Poly{k}(F)$,
  \begin{align*}
    \int_F\CF\big(\uGF\underline{q}_F\big) r_F
    &=
    \int_F \Rproj{k-1}{F}\big(\cGF\underline{q}_F\big)\cdot\VROT_F r_F - \sum_{E\in\EF}\omega_{FE}\int_E \GE\underline{q}_F r_F
    \\
    &=
    \int_F \cGF\underline{q}_F\cdot\VROT_F r_F - \sum_{E\in\EF}\omega_{FE}\int_E \GE\underline{q}_F r_F
    \\
    &=
    \sum_{E\in\EF}\omega_{FE}\int_E\left[
      q_{\EF}(\VROT_F r_F \cdot\normal_{FE})-q_E' r_F
      \right]
    =0,
  \end{align*}
  where the suppression of $\Rproj{k-1}{F}$ in the second line is possible since $\VROT_F r_F\in\Roly{k-1}(F)$,
  the third line is obtained using the definitions \eqref{eq:cGF} of $\cGF$ with $\bvec{w}_F=\VROT_F r_F$ (additionally noticing that $\DIV_F(\VROT_F r_F)=0$) and \eqref{eq:GE} of $\GE$,
  while the conclusion is obtained reasoning as in \cite[Point 2.\ of Proposition 4.4]{Di-Pietro.Droniou.ea:20} (see in particular Eq. (4.19) therein).\qed
\end{proof}

The \emph{tangential trace} $\trFt:\Xcurl{F}\to\vPoly{k}(F)$ is such that, for all $\uvec{v}_F\in\Xcurl{F}$, recalling the notation \eqref{eq:def.Xrec},
\begin{equation}\label{eq:trFt}
  \trFt\uvec{v}_F \coloneq \Xrec{\cvec{R}}{\cvec{F}}{k}{\trFtR\uvec{v}_F}{\bvec{v}_{\cvec{R},F}^\compl},
\end{equation}
where $\trFtR\uvec{v}_F\in\Roly{k}(F)$ is defined, using the isomorphism property \eqref{eq:iso:VROTF.GRAD} with $\ell=k+1$, by
\begin{multline}\label{eq:trFt:Roly.k}
  \int_F\trFtR\uvec{v}_F\cdot\VROT_F r_F
  = \int_F\CF\uvec{v}_F~r_F
  + \sum_{E\in\EF}\omega_{FE}\int_E v_E r_F
  \\
  \forall r_F\in\Poly{0,k+1}(F).
\end{multline}

\begin{remark}[Validity of \eqref{eq:trFt:Roly.k}]\label{rem:validity:trFt:Roly.k}
  Observing that both sides of \eqref{eq:trFt:Roly.k} vanish when $r_F\in\Poly{0}(F)$, it is inferred that this relation holds in fact for any $r_F\in\Poly{k+1}(F)$.
  We also notice that, since $\Rproj{k}{F}\big(\trFt\uvec{v}_F\big)=\trFtR\uvec{v}_F$ (by virtue of \eqref{eq:trFt} and \eqref{eq:recovery.proj}), $\trFtR$ can be replaced by $\trFt$ in the left-hand side of \eqref{eq:trFt:Roly.k}.

    The actual computation of $\trFt$ does not require the implementation of the recovery operator in the right-hand side of \eqref{eq:trFt}, but rather hinges on the solution of the following equation:
    For all $(r_F,\bvec{w}_F)\in\Poly{0,k+1}(F)\times\cRoly{k}(F)$,
    \[
    \int_F\trFt\uvec{v}_F\cdot(\VROT_F r_F + \bvec{w}_F)
    = \int_F\CF\uvec{v}_F~r_F
    + \sum_{E\in\EF}\omega_{FE}\int_E v_E r_F
    + \int_F\bvec{v}_{\cvec{R},F}^\compl\cdot\bvec{w}_F.
    \]
    Indeed, the test functions of the form $(r_F,\bvec{0})$ with $r_F$ spanning $\Poly{0,k+1}(F)$ enforce that $\Rproj{k}{F}\big(\trFt\uvec{v}_F\big) = \trFtR\uvec{v}_F$ satisfies \eqref{eq:trFt:Roly.k},
    while the test functions of the form $(0,\bvec{w}_F)$ with $\bvec{w}_F$ spanning $\cRoly{k}(F)$ enforce that $\Rcproj{k}{F}\big(\trFt\uvec{v}_F\big) = \bvec{v}_{\cvec{R},F}^\compl$.
    These two conditions combined yield \eqref{eq:trFt}.
    Similar considerations hold for the three-dimensional potential reconstructions defined in Sections \ref{sec:Pcurl} and \ref{sec:Pdiv} below.
\end{remark}

\begin{proposition}[Properties of the tangential trace]\label{prop:trFt}
  It holds
  \begin{alignat}{2}\label{eq:Rproj.k-1.trFt}
    \Rproj{k-1}{F}\big(\trFt\uvec{v}_F\big) = \bvec{v}_{\cvec{R},F} \quad&\mbox{and}\quad \Rcproj{k}{F}\big(\trFt\uvec{v}_F\big) = \bvec{v}_{\cvec{R},F}^\compl &\qquad& \forall\uvec{v}_F\in\Xcurl{F},
    \\
    \label{eq:trFt.cons}
    \trFt(\Icurl{F}\bvec{v})&=\vlproj{k}{F}\bvec{v}&\qquad&\forall \bvec{v}\in\NE{k+1}(F),
    \\ \label{eq:trFt:GF}
    \trFt\big(\uGF\underline{q}_F\big) &= \cGF\underline{q}_F &\qquad& \forall\underline{q}_F\in\Xgrad{F}.
  \end{alignat}
\end{proposition}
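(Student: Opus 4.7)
The proof splits naturally along the three assertions, all of which ultimately reduce to identifying the two Koszul components of $\trFt\uvec{v}_F$ via the characterisation \eqref{eq:recovery.rec} of the recovery operator.

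\paragraph{Plan for \eqref{eq:Rproj.k-1.trFt}.}
The equality $\Rcproj{k}{F}(\trFt\uvec{v}_F)=\bvec{v}_{\cvec{R},F}^\compl$ is immediate from \eqref{eq:trFt} and property \eqref{eq:recovery.proj} of the recovery operator, which also yields $\Rproj{k}{F}(\trFt\uvec{v}_F)=\trFtR\uvec{v}_F$. Since $\Roly{k-1}(F)\subset\Roly{k}(F)$, composing orthogonal projectors gives $\Rproj{k-1}{F}(\trFt\uvec{v}_F) = \Rproj{k-1}{F}(\trFtR\uvec{v}_F)$. I would then test \eqref{eq:trFt:Roly.k} with $r_F\in\Poly{0,k}(F)\subset\Poly{0,k+1}(F)$ and compare with the definition \eqref{eq:CF} of $\CF$ used with the same polynomial $r_F$: the two right-hand sides coincide, so $\trFtR\uvec{v}_F-\bvec{v}_{\cvec{R},F}$ is $\Leb$-orthogonal to $\VROT_F\Poly{0,k}(F)=\Roly{k-1}(F)$ (using the isomorphism \eqref{eq:iso:VROTF.GRAD} with $\ell=k$). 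Since $\bvec{v}_{\cvec{R},F}\in\Roly{k-1}(F)$, this yields the result.

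\paragraph{Plan for \eqref{eq:trFt.cons}.}
By \eqref{eq:recovery.rec} applied to $\vlproj{k}{F}\bvec{v}$, it suffices to show that $\trFt(\Icurl{F}\bvec{v})$ and $\vlproj{k}{F}\bvec{v}$ have the same images under $\Rproj{k}{F}$ and $\Rcproj{k}{F}$. For $\Rcproj{k}{F}$ the identity follows from \eqref{eq:Rproj.k-1.trFt} (more precisely its $\cvec{R}^\compl$ part, already established above), the definition \eqref{eq:Icurlh} of $\Icurl{F}$, and \eqref{eq:projector.composition}. For $\Rproj{k}{F}$, I would plug $\uvec{v}_F=\Icurl{F}\bvec{v}$ into \eqref{eq:trFt:Roly.k} (extended to all $r_F\in\Poly{k+1}(F)$ per Remark \ref{rem:validity:trFt:Roly.k}) and use two polynomial-consistency facts: $\CF(\Icurl{F}\bvec{v})=\ROT_F\bvec{v}$, since $\ROT_F\bvec{v}\in\Poly{k}(F)$ for $\bvec{v}\in\NE{k+1}(F)$ (hence \eqref{eq:CF:consistency} gives exact equality), and $\lproj{k}{E}(\bvec{v}\cdot\tangent_E)=\bvec{v}\cdot\tangent_E$, which rests on the polynomial-trace property of trimmed N\'ed\'elec spaces (Proposition \ref{prop:traces.NE.RT}). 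The standard 2D Green's formula on $F$ then rewrites the resulting expression as $\int_F\bvec{v}\cdot\VROT_Fr_F$, proving $\trFtR(\Icurl{F}\bvec{v})=\Rproj{k}{F}\bvec{v}$.

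\paragraph{Plan for \eqref{eq:trFt:GF}.}
The same strategy applies: by \eqref{eq:recovery.rec} for $\cGF\underline{q}_F$ and by \eqref{eq:Rproj.k-1.trFt}, I only have to check $\Rproj{k}{F}(\trFt(\uGF\underline{q}_F))=\Rproj{k}{F}(\cGF\underline{q}_F)$, i.e. $\trFtR(\uGF\underline{q}_F)=\Rproj{k}{F}(\cGF\underline{q}_F)$. Writing \eqref{eq:trFt:Roly.k} for $\uvec{v}_F=\uGF\underline{q}_F$ and invoking the complex property $\CF\circ\uGF=0$ from \eqref{eq:Im.uGF.subset.Ker.CF} reduces the right-hand side to the edge sum $\sum_{E\in\EF}\omega_{FE}\int_E q_E'\,r_F$. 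On the other hand, using \eqref{eq:cGF} with $\bvec{w}_F=\VROT_F r_F$ (and $\DIV_F\VROT_F r_F=0$) expresses $\int_F\cGF\underline{q}_F\cdot\VROT_F r_F$ as $\sum_{E\in\EF}\omega_{FE}\int_E q_{\EF}(\VROT_F r_F\cdot\normal_{FE})$. The identity $\VROT_F r_F\cdot\normal_{FE}=-(r_F{}_{|E})'$, stemming directly from the definition \eqref{eq:def:VROTF:ROTF} and the right-handed orientation of $(\tangent_E,\normal_{FE})$, followed by a one-dimensional integration by parts on each edge, turns the second expression into the first up to the boundary contribution $\sum_{E\in\EF}\omega_{FE}[q_E r_F]_E$. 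This last sum vanishes by the continuity of $q_{\EF}\in\Poly[\rm c]{k+1}(\EF)$ at the vertices of $\partial F$ combined with the standard telescoping of the $\omega_{FE}$ signs around the oriented boundary (as in \cite[Eq.~(4.19)]{Di-Pietro.Droniou.ea:20}).

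The main technical obstacle is the boundary-term bookkeeping in \eqref{eq:trFt:GF}: one must get the $\pm$ sign in $\VROT_F r_F\cdot\normal_{FE}=-(r_F{}_{|E})'$ right, and then correctly match the orientations $\omega_{FE}$ with the endpoint conventions so that the vertex contributions cancel. The polynomial-trace fact used in \eqref{eq:trFt.cons} is also non-trivial but is handled by the external Proposition~\ref{prop:traces.NE.RT}.
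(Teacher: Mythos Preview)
Your proposal is correct and follows essentially the same approach as the paper's proof: for each assertion you identify the $\Roly{k}(F)$ and $\cRoly{k}(F)$ components separately and invoke the recovery property \eqref{eq:recovery.rec}, using precisely the same ingredients (comparison of \eqref{eq:trFt:Roly.k} with \eqref{eq:CF} for Part~1, polynomial consistency of $\CF$ plus the edge-trace property \eqref{eq:NE.F.trace} and integration by parts for Part~2, and the complex property \eqref{eq:Im.uGF.subset.Ker.CF} together with the edge identity $(r_F)_{|E}' = -\VROT_F r_F\cdot\normal_{FE}$ and vertex cancellation for Part~3). The only cosmetic difference is that the paper isolates the edge-to-face computation as a standalone identity \eqref{eq:exact.CF.middle} before using it, whereas you describe the same manipulation inline.
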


\begin{proof}
  \underline{1. \emph{Proof of \eqref{eq:Rproj.k-1.trFt}.}}
  Since $\Roly{k-1}(F)\subset\Roly{k}(F)$, we have $\Rproj{k-1}{F}=\Rproj{k-1}{F}\Rproj{k}{F}$ and thus, using \eqref{eq:recovery.proj} and Remark \ref{rem:validity:trFt:Roly.k}, we obtain 
  \[
  \Rproj{k-1}{F}\big(\trFt\uvec{v}_F\big)
  =\Rproj{k-1}{F}\big(\Rproj{k}{F}\trFt\uvec{v}_F\big)
  =\Rproj{k-1}{F}\big(\trFtR\uvec{v}_F\big).
  \]
  Applying the definitions \eqref{eq:trFt:Roly.k} of $\trFtR$ and \eqref{eq:CF} of $\CF$ with a generic $r_F\in\Poly{0,k}(F)$ leads to
  $\int_F{\trFtR\uvec{v}_F\cdot\VROT_F r_F} = \int_F{\bvec{v}_{\cvec{R},F}\cdot\VROT_F r_F}$, hence 
  \[
  \Rproj{k-1}{F}\big(\trFtR\uvec{v}_F\big)=\bvec{v}_{\cvec{R},F}.
  \]
  This proves the first relation in \eqref{eq:Rproj.k-1.trFt}. The second relation is a straightforward consequence of \eqref{eq:trFt} and \eqref{eq:recovery.proj}.
  \medskip\\
  \underline{2. \emph{Proof of \eqref{eq:trFt.cons}.}}
  Let $\bvec{v}\in\NE{k+1}(F)$.
  Writing \eqref{eq:trFt:Roly.k} for $\uvec{v}_F = \Icurl{F}\bvec{v}$, observing that $\CF\big(\Icurl{F}\bvec{v}\big) = \ROT_F\bvec{v}\in\Poly{k}(F)$ by \eqref{eq:CF:consistency} and that $v_E = \bvec{v}_{|E}\cdot\tangent_E$ for all $E\in\EF$ by \eqref{eq:NE.F.trace} with $\ell=k+1$, and integrating by parts the right-hand side, it is inferred that $\trFtR\big(\Icurl{F}\bvec{v}\big) = \Rproj{k}{F}\bvec{v}$. Thus, by \eqref{eq:trFt}, $\trFt\big(\Icurl{F}\bvec{v}\big) = \Xrec{\cvec{R}}{F}{k}{\Rproj{k}{F}\bvec{v}}{\Rcproj{k}{F}\bvec{v}} = \vlproj{k}{F}\bvec{v}$, where the conclusion results from \eqref{eq:projector.composition} with $(\cvec{X},Y,\ell) = (\cvec{R},F,k)$ followed by \eqref{eq:recovery.rec}.
  \medskip\\
  \underline{3. \emph{Proof of \eqref{eq:trFt:GF}.}}
  Let $\underline{q}_F\in\Xgrad{F}$.
  For all $r_F\in\Poly{k+1}(F)$, it holds
  \begin{equation}\label{eq:exact.CF.middle}
    \begin{aligned}
      \sum_{E\in\EF}\omega_{FE}\int_E \GE q_E~r_F
      &= \sum_{E\in\EF}\omega_{FE}\int_E q_{\EF}(\VROT_F r_F\cdot\normal_{FE}) \\
      &= \int_F \cGF\underline{q}_F \cdot\VROT_F r_F,
    \end{aligned}
  \end{equation}
  where the first equality follows recalling that $\GE q_E = q_E'$ on $E$, integrating by parts on each edge, noting that $(r_F)_{|E}'=-\VROT_F r_F\cdot\normal_{FE}$ (see \cite[Eq. (4.20)]{Di-Pietro.Droniou.ea:20}), and cancelling out the vertex values that appear twice with opposite sign,
  while the conclusion is obtained recalling the definition \eqref{eq:cGF} of $\cGF$ and observing that $\DIV_F(\VROT_F r_F) = 0$.
  Writing \eqref{eq:trFt:Roly.k} for $\uvec{v}_F=\uGF\underline{q}_F$, we obtain
  \[
  \begin{aligned}
    \int_F\trFtR\big(\uGF\underline{q}_F\big)\cdot\VROT_F r_F
    &= \int_F\cancel{\CF\big(\uGF\underline{q}_F\big)}~r_F
    + \sum_{E\in\EF}\omega_{FE}\int_E \GE q_E~r_F
    \\
    &= \int_F\cGF\underline{q}_F\cdot\VROT_F r_F,
  \end{aligned}
  \]  
  where we have used the inclusion \eqref{eq:Im.uGF.subset.Ker.CF} in the cancellation, while the conclusion follows from \eqref{eq:exact.CF.middle}.
  This implies $\trFtR\big(\uGF\underline{q}_F\big)=\Rproj{k}{F}\big(\cGF\underline{q}_F\big)$.
  By definition, the component of $\uGF\underline{q}_F$ on $\cRoly{k}(F)$ is $\Rcproj{k}{F}\big(\cGF\underline{q}_F\big)$.
  Plugging the above results into \eqref{eq:trFt} with $\uvec{v}_F=\uGF\underline{q}_F$ and using the recovery formula \eqref{eq:recovery.rec} with $(S,S^\compl) = (\Roly{k}(F),\cRoly{k}(F))$ and $\bvec{a} = \cGF\underline{q}_F$ concludes the proof.\qed
\end{proof}

For all $T\in\Th$, the \emph{element curl} $\cCT:\Xcurl{T}\to\vPoly{k}(T)$ is defined such that, for all $\uvec{v}_T=\big(\bvec{v}_{\cvec{R},T}, \bvec{v}_{\cvec{R},T}^\compl, (\bvec{v}_{\cvec{R},F}, \bvec{v}_{\cvec{R},F}^\compl)_{F\in\FT},(v_E)_{E\in\ET}\big)\in\Xcurl{T}$,
\begin{multline}\label{eq:cCT}
  \int_T\cCT\uvec{v}_T\cdot\bvec{w}_T
  = \int_T\bvec{v}_{\cvec{R},T}\cdot\CURL\bvec{w}_T
  + \sum_{F\in\FT}\omega_{TF}\int_F\trFt\uvec{v}_F\cdot(\bvec{w}_T\times\normal_F)
  \\
  \forall\bvec{w}_T\in\vPoly{k}(T).
\end{multline}
The following polynomial consistency property is proved as in \cite[Lemma 5.2]{Di-Pietro.Droniou.ea:20} (recall the shift of exponent in the notation of the N\'ed\'elec space with respect to this reference):
\begin{equation}\label{eq:CT:consistency}
\forall T\in\Th \qquad \cCT\big(\Icurl{T}\bvec{v}\big) = \CURL\bvec{v} \qquad \forall\bvec{v}\in\NE{k+1}(T).
\end{equation}

\begin{proposition}[Link between element and face curls]\label{prop:cCT.CF}
  For all $(\uvec{v}_T,r_T)\in\Xcurl{T}\times\Poly{k+1}(T)$, it holds
  \begin{equation}\label{eq:cCT.CF}
    \int_T \cCT\uvec{v}_T\cdot\GRAD r_T
    = \sum_{F\in\FT}\omega_{TF}\int_F \CF\uvec{v}_F~r_T.
  \end{equation} 
\end{proposition}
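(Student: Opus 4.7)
The plan is to apply the definition \eqref{eq:cCT} of $\cCT$ with the test function $\bvec{w}_T = \GRAD r_T \in \vPoly{k}(T)$. Since $\CURL \GRAD r_T = 0$, the volumetric term $\int_T \bvec{v}_{\cvec{R},T}\cdot\CURL\bvec{w}_T$ vanishes, leaving
\[
\int_T \cCT\uvec{v}_T\cdot\GRAD r_T
= \sum_{F\in\FT}\omega_{TF}\int_F\trFt\uvec{v}_F\cdot(\GRAD r_T\times\normal_F).
\]
The task is then to identify this right-hand side with $\sum_{F\in\FT}\omega_{TF}\int_F \CF\uvec{v}_F~r_T$.

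On each $F\in\FT$, I would write $(\GRAD r_T)\times\normal_F$ in terms of intrinsic face operators. Since $\normal_F$ is constant on $F$ and only the tangential component of $\GRAD r_T$ matters in the cross product with $\normal_F$, a direct check in local coordinates (where $\normal_F=\bvec{e}_3$) yields $\GRAD r_T\times\normal_F = \rotation{-\nicefrac\pi2}(\GRAD_F r_{T|F}) = \VROT_F r_{T|F}$, using the definition \eqref{eq:def:VROTF:ROTF} of $\VROT_F$. Thus, each face integral becomes $\int_F \trFt\uvec{v}_F\cdot\VROT_F r_{T|F}$, with $r_{T|F}\in\Poly{k+1}(F)$.

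Next I would invoke the extended form of \eqref{eq:trFt:Roly.k}, valid for any $r_F\in\Poly{k+1}(F)$ with $\trFtR$ replaced by $\trFt$ (see Remark \ref{rem:validity:trFt:Roly.k}), applied to $r_F = r_{T|F}$. This gives
\[
\int_F\trFt\uvec{v}_F\cdot\VROT_F r_T
= \int_F\CF\uvec{v}_F~r_T + \sum_{E\in\EF}\omega_{FE}\int_E v_E\, r_T.
\]
Summing over $F\in\FT$ after multiplication by $\omega_{TF}$, the element curl identity becomes
\[
\int_T \cCT\uvec{v}_T\cdot\GRAD r_T
= \sum_{F\in\FT}\omega_{TF}\int_F \CF\uvec{v}_F~r_T
+ \sum_{E\in\ET}\Bigl(\sum_{\substack{F\in\FT\\E\in\EF}}\omega_{TF}\omega_{FE}\Bigr)\int_E v_E\, r_T,
\]
after exchanging the order of summation over faces and edges.

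The conclusion follows from the standard orientation identity $\sum_{F\in\FT,\,E\in\EF}\omega_{TF}\omega_{FE}=0$ for every $E\in\ET$ (exactly the cancellation invoked in \cite[Eq.\ (4.19)]{Di-Pietro.Droniou.ea:20} and already used in the proof of Proposition \ref{prop:2d.complex}), which makes all edge contributions vanish. No serious obstacle is anticipated: the only subtle points are (i) extending the validity of \eqref{eq:trFt:Roly.k} from $\Poly{0,k+1}(F)$ to $\Poly{k+1}(F)$ and replacing $\trFtR$ by $\trFt$, both justified by Remark \ref{rem:validity:trFt:Roly.k}, and (ii) the orientation cancellation on edges shared by two faces of $T$.
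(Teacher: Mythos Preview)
Your proof is correct and follows essentially the same approach as the paper: apply the definition of $\cCT$ with $\bvec{w}_T=\GRAD r_T$, use $\CURL\GRAD=0$ and $(\GRAD r_T)_{|F}\times\normal_F=\VROT_F(r_{T|F})$, invoke Remark~\ref{rem:validity:trFt:Roly.k} to expand the face terms via $\CF$, and cancel the edge contributions by the orientation identity. The only cosmetic difference is that the paper cites \cite[Eq.~(5.13)]{Di-Pietro.Droniou.ea:20} for the edge cancellation rather than \cite[Eq.~(4.19)]{Di-Pietro.Droniou.ea:20}.
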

\begin{proof}
  For any $r_T\in\Poly{k+1}(T)$, writing \eqref{eq:cCT} for $\bvec{w}_T=\GRAD r_T\in\vPoly{k}(T)$ and using the fact that $\CURL(\GRAD r_T)=\bvec{0}$ and that $(\GRAD r_T)_{|F}\times\normal_F=\VROT_F(r_{T|F})$ for all $F\in\FT$ (see \cite[Eq. (3.6)]{Di-Pietro.Droniou.ea:20}), we infer 
  that 
  \[
  \int_T{\cCT\uvec{v}_T\cdot\GRAD r_T} = \sum_{F\in\FT}\omega_{TF}\int_F{\trFt\uvec{v}_F\cdot \VROT_F(r_{T|F})}.
  \]
  Using Remark \ref{rem:validity:trFt:Roly.k}, we arrive at
  \[
  \int_T \cCT\uvec{v}_T\cdot\GRAD r_T
  =\sum_{F\in\FT}\omega_{TF}\bigg[
    \int_F \CF\uvec{v}_F r_T + \sum_{E\in\EF}\omega_{FE}\int_E v_E r_T
    \bigg].
  \]
  By \cite[Eq. (5.13)]{Di-Pietro.Droniou.ea:20}, the edge terms in the above expression can be cancelled, thereby proving \eqref{eq:cCT.CF}.\qed
\end{proof}

The \emph{global discrete curl} $\uCh:\Xcurl{h}\to\Xdiv{h}$ is such that, for all $\uvec{v}_h\in\Xcurl{h}$,
\begin{equation}\label{eq:uCh}
  \uCh\uvec{v}_h\coloneq\big(
  \big( \Gproj{k-1}{T}\big(\cCT\uvec{v}_T\big),\Gcproj{k}{T}\big(\cCT\uvec{v}_T\big) \big)_{T\in\Th},
  ( \CF\uvec{v}_F )_{F\in\Fh}
  \big).
\end{equation}

\subsubsection{Divergence}

For all $T\in\Th$, the \emph{element divergence} $\DT:\Xdiv{T}\to\Poly{k}(T)$ is defined by: For all $\uvec{w}_T = \big(\bvec{w}_{\cvec{G},T}, \bvec{w}_{\cvec{G},T}^\compl, (w_F)_{F\in\FT}\big)\in\Xdiv{T}$,
\begin{equation}\label{eq:DT}
  \int_T\DT\uvec{w}_T~ q_T
  = -\!\int_T\bvec{w}_{\cvec{G},T}\cdot\GRAD q_T
  + \hspace{-0.5ex}\sum_{F\in\FT}\!\omega_{TF}\!\int_F w_F q_T
  \quad\forall q_T\in\Poly{k}(T).
\end{equation}
The \emph{global discrete divergence} $\Dh:\Xdiv{h}\to\Poly{k}(\Th)$ is obtained setting, for all $\uvec{w}_h\in\Xdiv{h}$,
\begin{equation}\label{eq:Dh}
  (\Dh\uvec{w}_h)_{|T}\coloneq\DT\uvec{w}_T
  \qquad\forall T\in\Th,
\end{equation}

\begin{proposition}[Local exactness property]\label{prop:ImCT.KerDT}
  It holds, for all $T\in\Th$,
  \begin{equation}\label{eq:Ker.DT}
    \Image\uCT = \Ker\DT,
  \end{equation}
  where $\uCT$ denotes the restriction to $T$ of the global curl $\uCh$ defined by \eqref{eq:uCh}
\end{proposition}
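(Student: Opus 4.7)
The inclusion $\Image\uCT \subset \Ker\DT$ is a short direct check: for $\uvec{v}_T \in \Xcurl{T}$, set $\uvec{w}_T = \uCT\uvec{v}_T$ and test \eqref{eq:DT} against an arbitrary $q_T \in \Poly{k}(T)$; since $\GRAD q_T \in \Goly{k-1}(T)$, the defining property of $\Gproj{k-1}{T}$ lets one replace $\bvec{w}_{\cvec{G},T}$ by $\cCT\uvec{v}_T$ in the volume integral, and Proposition \ref{prop:cCT.CF} with $r_T = q_T$ then cancels it against the face contribution $\sum_{F\in\FT}\omega_{TF}\int_F\CF\uvec{v}_F\,q_T$.

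\textbf{Converse inclusion: strategy.} Given $\uvec{w}_T \in \Ker\DT$, I construct a preimage $\uvec{v}_T$ in three stages: (i) choose edge values $(v_E)_{E\in\ET}$ in $\Poly{k}(E)$; (ii) on each face, choose $\bvec{v}_{\cvec{R},F}$ enforcing $\CF\uvec{v}_F = w_F$, setting the free complement $\bvec{v}_{\cvec{R},F}^\compl = 0$; (iii) choose $\bvec{v}_{\cvec{R},T}$ so that the element components of $\cCT\uvec{v}_T$ match $\bvec{w}_{\cvec{G},T}$ and $\bvec{w}_{\cvec{G},T}^\compl$, setting $\bvec{v}_{\cvec{R},T}^\compl = 0$ (this component does not appear in \eqref{eq:cCT}).

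\textbf{Edge/face step.} Decomposing $\Poly{k}(F) = \Real \oplus \Poly{0,k}(F)$ and testing \eqref{eq:CF} against each summand, the constraint $\CF\uvec{v}_F = w_F$ splits into (a) a scalar compatibility $\sum_{E\in\EF}\omega_{FE}\int_E v_E = -\int_F w_F$ for every $F \in \FT$ (from constant test functions), and (b) a uniquely solvable equation for $\bvec{v}_{\cvec{R},F}\in\Roly{k-1}(F)$ via the isomorphism \eqref{eq:iso:VROTF.GRAD}. The collection of the conditions (a), seen as a linear system on the scalars $\alpha_E := \int_E v_E$ with coefficient matrix $(\omega_{FE})_{F \in \FT, E \in \ET}$, is solvable iff its cokernel is spanned by $(\omega_{TF})_{F\in\FT}$; this rests on the topology $\partial T \simeq S^2$ and the orientation identity $\omega_{TF_1}\omega_{F_1 E} + \omega_{TF_2}\omega_{F_2 E} = 0$ whenever $E = F_1 \cap F_2$. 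The data $-\int_F w_F$ lies in the image of the incidence matrix precisely when $\sum_{F\in\FT} \omega_{TF}\int_F w_F = 0$, which is $\DT\uvec{w}_T = 0$ tested against $q_T = 1$. I expect this combinatorial/topological step to be the main obstacle of the proof.

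\textbf{Element step.} Once the face and edge data are fixed, $\Gproj{k-1}{T}(\cCT\uvec{v}_T) = \bvec{w}_{\cvec{G},T}$ holds automatically: for any $r_T \in \Poly{k}(T)$, Proposition \ref{prop:cCT.CF} gives $\int_T\cCT\uvec{v}_T\cdot\GRAD r_T = \sum_{F\in\FT}\omega_{TF}\int_F w_F\,r_T$, and $\DT\uvec{w}_T = 0$ supplies the same value for $\int_T\bvec{w}_{\cvec{G},T}\cdot\GRAD r_T$. For $\Gcproj{k}{T}(\cCT\uvec{v}_T) = \bvec{w}_{\cvec{G},T}^\compl$, rearranging \eqref{eq:cCT} produces an equation of the form $\int_T \bvec{v}_{\cvec{R},T}\cdot\CURL\bvec{w}_T = L(\bvec{w}_T)$ for $\bvec{w}_T \in \cGoly{k}(T)$, where $L$ is a known linear functional built from $\bvec{w}_{\cvec{G},T}^\compl$ and the face tangential traces $\trFt\uvec{v}_F$; the isomorphism $\CURL:\cGoly{k}(T) \to \Roly{k-1}(T)$ from \eqref{eq:iso:CURL} transports $L$ to a linear form on $\Roly{k-1}(T)$, whose Riesz representative is the required $\bvec{v}_{\cvec{R},T}$, completing the construction.
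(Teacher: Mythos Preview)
Your argument is correct and follows the same overall strategy as the paper: the forward inclusion via Proposition~\ref{prop:cCT.CF}, the converse by first constructing boundary data realising $w_F=\CF\uvec{v}_F$, then showing $\Gproj{k-1}{T}(\cCT\uvec{v}_T)=\bvec{w}_{\cvec{G},T}$ automatically from \eqref{eq:cCT.CF}, and finally solving for $\bvec{v}_{\cvec{R},T}$ via the isomorphism \eqref{eq:iso:CURL}. The only difference is that where you sketch the edge/face construction directly---identifying the cokernel of the face--edge incidence matrix with the span of $(\omega_{TF})_{F\in\FT}$ using $\partial T\simeq S^2$---the paper simply cites \cite[Lemma~5.3]{Di-Pietro.Droniou.ea:20}, which packages exactly this topological argument.
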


\begin{proof}
  Let us start by proving that $\DT\big(\uCT\uvec{v}_T\big)=0$ for all $\uvec{v}_T\in\Xcurl{T}$, that is,
  $\Image\uCT\subset\Ker(\DT)$.
  By Proposition \ref{prop:cCT.CF}, for all $q_T\in\Poly{k}(T)$,
  \begin{equation}\label{eq:CT.CF}
    \int_T\Gproj{k-1}{T}(\cCT\uvec{v}_T)\cdot\GRAD q_T
    = \sum_{F\in\FT}\omega_{TF}\int_F\CF\uvec{v}_F~q_T,
  \end{equation}
  where we have used $\GRAD q_T\in\Goly{k-1}(T)$ to introduce the projector $\Gproj{k-1}{T}$.
  Hence, using the definition \eqref{eq:DT} of $\DT$, we have, for all $q_T\in\Poly{k}(T)$,
  \[
  \int_T\DT\big(\uCT\uvec{v}_T\big) q_T
  = -\int_T\Gproj{k-1}{T}\big(\cCT\uvec{v}_T\big)\cdot\GRAD q_T + \sum_{F\in\FT}\omega_{TF}\int_F\CF\uvec{v}_T~q_T
  =0.
  \]
  Since $q_T$ is arbitrary in $\Poly{k}(T)$, this shows that $\DT\big(\uCT\uvec{v}_T\big)=0$.
  \smallskip

  Let us now prove the inclusion $\Ker(\DT)\subset\Image\uCT$.
  We fix an element $\uvec{w}_T\in\Xdiv{T}$ such that $\DT\uvec{w}_T=0$ and prove the existence of $\uvec{v}_T\in\Xcurl{T}$ such that $\uvec{w}_T=\uCT\uvec{v}_T$.
  Enforcing $\DT\uvec{w}_T=0$ in \eqref{eq:DT} with $q_T=1$, we infer that $\sum_{F\in\FT}\omega_{TF}\int_Fw_F=0$.
  Thus, \cite[Lemma 5.3]{Di-Pietro.Droniou.ea:20}, which remains valid in the present context, provides $(\bvec{v}_{\cvec{R},F},\bvec{v}_{\cvec{R},F}^\compl)_{F\in\FT}$ and $(v_E)_{E\in\ET}$ such that, for all $F\in\FT$, letting $\uvec{v}_F\coloneq\big(\bvec{v}_{\cvec{R},F},\bvec{v}_{\cvec{R},F}^\compl,(v_E)_{E\in\EF}\big)$, it holds $w_F=\CF\uvec{v}_F$.
  Enforcing again $\DT\uvec{w}_T=0$ in \eqref{eq:DT}, this time for a generic test function $q_T\in\Poly{k}(T)$, and accounting for the previous result, we can write, for all $\uvec{v}_T\in\Xcurl{T}$ with boundary values as above,
  \[
  \int_T\bvec{w}_{\cvec{G},T}\cdot\GRAD q_T
  = \sum_{F\in\FT}\omega_{TF}\int_F\CF\uvec{v}_F~q_T
  = \int_T\Gproj{k-1}{T}(\cCT\uvec{v}_T)\cdot\GRAD q_T,
  \]
  where the conclusion follows from the relation \eqref{eq:CT.CF} linking volume and face curls.
  Since $\GRAD q_T$ spans $\Goly{k-1}(T)$ as $q_T$ spans $\Poly{k}(T)$, this proves that $\Gproj{k-1}{T}\big(\cCT\uvec{v}_T\big) = \bvec{w}_{\cvec{G},T}$.
  Finally, we select $\bvec{v}_{\cvec{R},T}\in\Roly{k-1}(T)$ in such a way as to have $\Gcproj{k}{T}\big(\cCT\uvec{v}_T\big) = \bvec{w}_{\cvec{G},T}^\compl$, that is, recalling \eqref{eq:cCT},
  \begin{multline}\label{eq:vRT}
    \int_T\bvec{v}_{\cvec{R},T}\cdot\CURL\bvec{z}_T
    = \int_T\bvec{w}_{\cvec{G},T}^\compl\cdot\bvec{z}_T
    - \sum_{F\in\FT}\omega_{TF}\int_F\trFt\uvec{v}_F\cdot(\bvec{z}_T\times\normal_F)
    \\
    \forall\bvec{z}_T\in\cGoly{k}(T).
  \end{multline}
  By the isomorphism \eqref{eq:iso:CURL}, this condition defines $\bvec{v}_{\cvec{R},T}$ uniquely.\qed
\end{proof}

\subsection{Discrete sequence}

Recalling the definitions \eqref{eq:Igradh}, \eqref{eq:uGh}, \eqref{eq:uCh}, and \eqref{eq:Dh} of the global discrete operators, the DDR sequence reads:
\begin{equation}\label{eq:global.sequence.3D}
  \begin{tikzcd}
    \Real\arrow{r}{\Igrad{h}} & \Xgrad{h}\arrow{r}{\uGh} & \Xcurl{h}\arrow{r}{\uCh} & \Xdiv{h}\arrow{r}{\Dh} & \Poly{k}(\Th)\arrow{r}{0} & \{0\}.
  \end{tikzcd}
\end{equation}

\begin{remark}[Variations]
  In the spirit of \cite[Section 9]{Beirao-da-Veiga.Brezzi.ea:16}, one could consider alternatives of the DDR sequence \eqref{eq:global.sequence.3D} obtained varying certain couples of polynomial degrees in such a way as to preserve the exactness properties.
  Thus one could, e.g., replace $\Roly{k-1}(T)$ with $\Roly{k}(T)$ in the definition \eqref{eq:Xcurl.h} of $\Xcurl{h}$ and, correspondingly, $\cGoly{k}(T)$ with $\cGoly{k+1}(T)$ in the definition \eqref{eq:Xdiv.h} of $\Xdiv{h}$.
  With these changes, the results of Proposition \ref{prop:ImCT.KerDT} (and, in particular, \eqref{eq:vRT}) remain valid.
  Assessing the impact such and similar changes on the consistency is, however, more delicate.
  These developments are left for a future work.
\end{remark}

\subsection{Commutation properties}

\begin{lemma}[Local commutation properties]\label{lem:commutation}
  It holds, for all $T\in\Th$,
  \begin{alignat}{4} \label{eq:uGT:commutation}
    \uGT\big(\Igrad{T} q\big) &= \Icurl{T}\big(\GRAD q\big) &\qquad&\forall q\in \rC{1}(\overline{T}),
    \\ \label{eq:uCT:commutation}
    \uCT\big(\Icurl{T}\bvec{v}\big) &= \Idiv{T}\big(\CURL\bvec{v}\big) &\qquad&\forall\bvec{v}\in\vSob{2}(T),
    \\ \label{eq:DT:commutation}
    \DT\big(\Idiv{T}\bvec{w}\big) &= \lproj{k}{T}\big(\DIV\bvec{w}\big) &\qquad&\forall\bvec{w}\in\vSob{1}(T).
  \end{alignat}
\end{lemma}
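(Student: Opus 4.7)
\medskip
\noindent
\textbf{Plan of proof.}
The three identities are proved separately, each by testing the defining relation of the discrete operator against a carefully chosen polynomial space and reducing, via trace properties of the Koszul complements (Proposition \ref{prop:traces.NE.RT}) and classical integration by parts, to the corresponding continuous identity.

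The simplest is \eqref{eq:DT:commutation}. For any $q_T\in\Poly{k}(T)$, use the definition \eqref{eq:DT} of $\DT$ applied to $\Idiv{T}\bvec{w}$. Since $\GRAD q_T\in\Goly{k-1}(T)$, the orthogonality of $\Gproj{k-1}{T}$ allows one to replace $\Gproj{k-1}{T}\bvec{w}$ by $\bvec{w}$ in the element term; similarly, the face term $\int_F \lproj{k}{F}(\bvec{w}\cdot\normal_F)\,q_T$ reduces to $\int_F(\bvec{w}\cdot\normal_F)\,q_T$ because $q_{T|F}\in\Poly{k}(F)$. A standard integration by parts then yields $\int_T\DIV\bvec{w}\,q_T$, so that $\DT(\Idiv{T}\bvec{w})=\lproj{k}{T}(\DIV\bvec{w})$ since $q_T\in\Poly{k}(T)$ is arbitrary.

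For \eqref{eq:uGT:commutation} we check the equality component by component. The edge component is exactly \eqref{eq:GE:consistency}. For the face and volume components, we test against $\bvec{w}_F\in\Roly{k-1}(F)\cup\cRoly{k}(F)\subset\RT{k}(F)$ and $\bvec{w}_T\in\Roly{k-1}(T)\cup\cRoly{k}(T)\subset\RT{k}(T)$, respectively. Two properties must be combined: on one hand, for $\bvec{w}_F\in\RT{k}(F)$ one has $\DIV_F\bvec{w}_F\in\Poly{k-1}(F)$ and, by Proposition \ref{prop:traces.NE.RT}, $\bvec{w}_F\cdot\normal_{FE}\in\Poly{k-1}(E)$ on every $E\in\EF$; this allows one to strip both projectors in the definition \eqref{eq:cGF} of $\cGF\big(\Igrad{F}q\big)$ and conclude by integration by parts on $F$. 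On the other hand, for the volume component we use \eqref{eq:lproj.k-1.trF} together with the fact that $\bvec{w}_T\cdot\normal_F\in\Poly{k-1}(F)$ for $\bvec{w}_T\in\RT{k}(T)$ to replace $\trF(\Igrad{F}q)$ by $q$ in the face terms of \eqref{eq:cGT}, and then integrate by parts in $T$.

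The identity \eqref{eq:uCT:commutation} is the most technical. The face components are immediate: \eqref{eq:CF:consistency} applied to $\bvec{v}_{{\rm t},F}$ gives $\CF(\Icurl{F}\bvec{v})=\lproj{k}{F}(\ROT_F\bvec{v}_{{\rm t},F})=\lproj{k}{F}((\CURL\bvec{v})\cdot\normal_F)$. For the volume components, we test \eqref{eq:cCT} against $\bvec{w}_T\in\Goly{k-1}(T)$ and $\bvec{w}_T\in\cGoly{k}(T)$. In both cases the element term simplifies (because $\CURL\bvec{w}_T=\bvec{0}$ for $\bvec{w}_T\in\Goly{k-1}(T)$, and $\CURL\bvec{w}_T\in\Roly{k-1}(T)$ for $\bvec{w}_T\in\cGoly{k}(T)$, thanks to \eqref{eq:iso:CURL}). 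The face terms are handled by the following key observation: setting $\bvec{a}\coloneq\trFt(\Icurl{F}\bvec{v})-\vlproj{k}{F}\bvec{v}_{{\rm t},F}\in\vPoly{k}(F)$, relation \eqref{eq:Rproj.k-1.trFt} together with \eqref{eq:projector.composition} yields $\Rproj{k-1}{F}\bvec{a}=\bvec{0}$ and $\Rcproj{k}{F}\bvec{a}=\bvec{0}$, so that $\bvec{a}$ is $L^2(F)$-orthogonal to $\RT{k}(F)=\Roly{k-1}(F)\oplus\cRoly{k}(F)$. Since $\bvec{w}_T\in\cGoly{k}(T)\cup\Goly{k-1}(T)\subset\NE{k}(T)$, Proposition \ref{prop:traces.NE.RT} places $\bvec{w}_T\times\normal_F$ in $\RT{k}(F)$, so $\int_F\trFt(\Icurl{F}\bvec{v})\cdot(\bvec{w}_T\times\normal_F)=\int_F\bvec{v}_{{\rm t},F}\cdot(\bvec{w}_T\times\normal_F)=\int_F\bvec{v}\cdot(\bvec{w}_T\times\normal_F)$. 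A classical curl integration by parts on $T$ then matches $\int_T\cCT(\Icurl{T}\bvec{v})\cdot\bvec{w}_T$ with $\int_T\CURL\bvec{v}\cdot\bvec{w}_T$.

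The main obstacle is the curl case \eqref{eq:uCT:commutation}, and specifically the face integrals of $\trFt$: since $\bvec{v}$ is not polynomial, one cannot directly invoke the polynomial consistency \eqref{eq:trFt.cons}, and the non-orthogonality of the Koszul decomposition $\vPoly{k}(F)=\Roly{k-1}(F)\oplus\cRoly{k}(F)\oplus(\text{remainder})$ forces one to exploit carefully the orthogonality statements built into \eqref{eq:Rproj.k-1.trFt}, combined with the trace result of Proposition \ref{prop:traces.NE.RT} that guarantees $\bvec{w}_T\times\normal_F\in\RT{k}(F)$. All the remaining steps reduce to elementary integrations by parts.
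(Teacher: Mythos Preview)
Your proposal is correct and follows essentially the same route as the paper. The only cosmetic difference is in the handling of the face term for \eqref{eq:uCT:commutation}: the paper phrases the key identity via the recovery operator on $\RT{k}(F)$, writing $\boldsymbol{\pi}_{\cvec{RT},F}^k\big[\trFt(\Icurl{F}\bvec{v}_{{\rm t},F})\big]=\boldsymbol{\pi}_{\cvec{RT},F}^k\bvec{v}_{{\rm t},F}$, whereas you argue directly that $\trFt(\Icurl{F}\bvec{v})-\vlproj{k}{F}\bvec{v}_{{\rm t},F}$ is $L^2$-orthogonal to $\RT{k}(F)$; these are equivalent statements and both rely on \eqref{eq:Rproj.k-1.trFt} together with \eqref{eq:NE.T.trace}.
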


\begin{remark}[Global commutation properties]
  Global commutation properties can be readily inferred from the local ones stated in Lemma \ref{lem:commutation} when interpolating functions that have sufficient global regularity.
\end{remark}

\begin{remark}[Role of commutation properties in the design of robust methods]
  The commutation properties of Lemma \ref{lem:commutation} play a key role in the design of discretisation methods robust with respect to the variations of physical parameters.
  See, e.g., \cite{Di-Pietro.Droniou:21} concerning a DDR method for the Reissner--Mindlin plate bending problem robust with respect to plate thickness.
\end{remark}

\begin{proof}[Lemma \ref{lem:commutation}]
  We start by noticing that all the interpolates defined in \eqref{eq:uGT:commutation}--\eqref{eq:DT:commutation} are well-defined under the assumed regularities.
  \medskip\\  
  \underline{1. \emph{Proof of \eqref{eq:uGT:commutation}.}}
  By \eqref{eq:GE:consistency} it holds, for all $E\in\ET$, 
  \[
  \GE\big(\Igrad{E} q_{|E}\big) = \lproj{k}{E}(q_{|E}')={\lproj{k}{E}\big((\GRAD q)_{|E}\cdot\tangent_E\big)}.
  \]
  Let now $F\in\FT$. Writing the definition \eqref{eq:cGF} of $\cGF$ with $\underline{q}_F = \Igrad{F} q_{|F}$ and $\bvec{w}_F\in\RT{k}(F)$, and recalling \eqref{eq:RT.F.trace} to replace $q_{\EF}$ with $\lproj{k-1}{E}(q_{\EF})_{|E} = \lproj{k-1}{E} q_{|E}$ (see \eqref{eq:Igradh}) in each edge integral, we infer
  \[
  \begin{aligned}
    \int_F\cGF\big(\Igrad{F} q_{|F}\big)\cdot\bvec{w}_F
    &= -\int_F\lproj{k-1}{F} q_{|F}\DIV_F\bvec{w}_F
    \\
    &\quad
    + \sum_{E\in\EF}\omega_{FE}\int_E\lproj{k-1}{E} q_{|E}(\bvec{w}_F\cdot\normal_{FE})
    \\
    &= -\int_F q\DIV_F\bvec{w}_F
    + \sum_{E\in\EF}\omega_{FE}\int_E q(\bvec{w}_F\cdot\normal_{FE})
    \\
    &= \int_F\GRAD_F q_{|F}\cdot\bvec{w}_F,
  \end{aligned}
  \]
  where we have removed the projectors using their definition in the second equality and we have integrated by parts to conclude.
  Recalling the definition \eqref{eq:NE.RT} of $\RT{k}(F)$, we can first let $\bvec{w}_F$ span $\Roly{k-1}(F)$ to infer 
  \[
  \Rproj{k-1}{F}\big[\cGF\big(\Igrad{F} q_{|F}\big)\big] = \Rproj{k-1}{F}\big(\GRAD_F q_{|F}\big),
  \]
  and then $\cRoly{k}(F)$ to infer 
  \[
  \Rcproj{k}{F}\big[\cGF\big(\Igrad{F} q_{|F}\big)\big] = \Rcproj{k}{F}\big(\GRAD_F q_{|F}\big).
  \]
  The proof that $\Rproj{k-1}{T}\big[\cGT\big(\Igrad{T} q\big)\big] = \Rproj{k-1}{T}\big(\GRAD q\big)$ and $\Rcproj{k}{T}\big[\cGT\big(\Igrad{T} q\big)\big] = \Rcproj{k}{T}\big(\GRAD q\big)$ is similar: we write the definition \eqref{eq:cGT} of $\cGT$ for $\underline{q}_T = \Igrad{T} q$ and $\bvec{w}_T\in\RT{k}(T)$, use property \eqref{eq:RT.T.trace} along with \eqref{eq:lproj.k-1.trF} to replace the trace $\trF\big(\Igrad{F} q_{|F}\big)$ with $\lproj{k-1}{F}\big[\trF\big(\Igrad{F} q_{|F}\big)\big] = \lproj{k-1}{F} q_{|F}$ in each face integral, remove the projectors using their definitions, and integrate by parts.
  This concludes the proof of \eqref{eq:uGT:commutation}.
  \medskip\\
  \underline{2. \emph{Proof of \eqref{eq:uCT:commutation}.}}
  For all $F\in\FT$, by \eqref{eq:CF:consistency} it holds 
  \[
  \CF\big(\Icurl{F}\bvec{v}_{|F}\big) = \lproj{k}{F}\big((\CURL\bvec{v})_{|F}\cdot\normal_F\big),
  \]
  where we have used $\ROT_F \bvec{v}_{{\rm t},F}=(\CURL\bvec{v})_{|F}\cdot\normal_F$, see \cite[Eq. (3.7)]{Di-Pietro.Droniou.ea:20}.
  Writing the definition \eqref{eq:cCT} for $\bvec{w}_T\in\NE{k}(T)$, we have
  \begin{equation}\label{eq:uCT:commutation:1}
    \begin{aligned}
    \int_T\cCT\big(\Icurl{T}\bvec{v}{}&\big)\cdot\bvec{w}_T
    = \int_T\cancel{\Rproj{k-1}{T}}\bvec{v}\cdot\CURL\bvec{w}_T
    \\
    &
    + \sum_{F\in\FT}\omega_{TF}\int_F\boldsymbol{\pi}_{\cvec{RT},F}^k\big[\trFt\big(\Icurl{F}\uvec{v}_{{\rm t},F}\big)\big]\cdot(\bvec{w}_T\times\normal_F),
    \end{aligned}
  \end{equation}
  where we have removed $\Rproj{k-1}{T}$ using its definition and, recalling \eqref{eq:NE.T.trace}, we have introduced the $\Leb$-orthogonal projector $\boldsymbol{\pi}_{\cvec{RT},F}^k$ on $\RT{k}(F)$ in the boundary integral.
  By \eqref{eq:NE.RT} together with \eqref{eq:recovery.rec} written with the choices $(E,S,S^\compl) = (\RT{k}(F),\Roly{k-1}(F),\cRoly{k}(F))$ and \eqref{eq:Rproj.k-1.trFt},
  \[
  \boldsymbol{\pi}_{\cvec{RT},F}^k\big[\trFt\big(\Icurl{F}\bvec{v}_{{\rm t},F}\big)\big]
  = \rec{\Roly{k-1}(F),\cRoly{k}(F)}{\Rproj{k-1}{F}\bvec{v}_{{\rm t},F}}{\Rcproj{k}{F}\bvec{v}_{{\rm t},F}}
  = \boldsymbol{\pi}_{\cvec{RT},F}^k\bvec{v}_{{\rm t},F}.
  \]
  Plugging this relation into \eqref{eq:uCT:commutation:1}, we infer
  \[
  \begin{aligned}
    \int_T\cCT\big(\Icurl{T}\bvec{v}\big)\cdot\bvec{w}_T
    &= \int_T\bvec{v}\cdot\CURL\bvec{w}_T
    + \sum_{F\in\FT}\omega_{TF}\int_F\cancel{\boldsymbol{\pi}_{\cvec{RT},F}^k}\bvec{v}_{{\rm t},F}\cdot(\bvec{w}_T\times\normal_F)
    \\
    &= \int_T\CURL\bvec{v}\cdot\bvec{w}_T,
  \end{aligned}
  \]
  where we have used again \eqref{eq:NE.T.trace} to remove the projector in the boundary term and we have integrated by parts to conclude.
  Letting $\bvec{w}_T$ span $\Goly{k-1}(T)$ (respectively $\cGoly{k}(T)$), this yields $\Gproj{k-1}{T}\big[\cCT\big(\Icurl{T}\bvec{v}\big)\big] = \Gproj{k-1}{T}\big(\CURL\bvec{v}\big)$ (respectively $\Gcproj{k}{T}\big[\cCT\big(\Icurl{T}\bvec{v}\big)\big] = \Gcproj{k}{T}\big(\CURL\bvec{v}\big)$), thus concluding the proof of \eqref{eq:uCT:commutation}.
  \medskip\\
  \underline{3. \emph{Proof of \eqref{eq:DT:commutation}.}}
  The proof is done as in \cite[Lemma 5.4]{Di-Pietro.Droniou.ea:20}, noticing that the cancellation of the component in the complement of $\Goly{k}(T)$, obtained therein by orthogonality of this complement, is not required here since this component is absent from the definition \eqref{eq:DT} of $\DT$.\qed
\end{proof}

\subsection{Complex and exactness properties}

The properties collected in the following theorem show that the sequence \eqref{eq:global.sequence.3D} forms a (cochain) complex.

\begin{theorem}[Complex property]\label{thm:global.exactness}
  It holds
  \begin{align}\label{eq:Im.Igrad.equal.Ker.uGh}
    \Igrad{h}\Real &= \Ker\uGh,
    \\ \label{eq:Im.uGh.subset.Ker.uCh}
    \Image\uGh&\subset\Ker\uCh,
    \\ \label{eq:Im.uCh.subset.Ker.Dh}
    \Image\uCh&\subset\Ker\Dh,
    \\ \label{eq:Im.Dh.equal.Pk.Th}
    \Image\Dh&=\Poly{k}(\Th).
  \end{align}
\end{theorem}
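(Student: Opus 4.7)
\medskip

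\noindent\textbf{Proof plan for Theorem \ref{thm:global.exactness}.}

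The four assertions are tackled separately, and three out of four reduce quickly to local results already established in the excerpt. For the inclusion $\Igrad{h}\Real\subset\Ker\uGh$ in \eqref{eq:Im.Igrad.equal.Ker.uGh}, it suffices to apply $\uGh$ to $\Igrad{h}c$ for $c\in\Real$ and invoke the polynomial consistency properties \eqref{eq:GE:consistency}, \eqref{eq:cGF:consistency}, and \eqref{eq:cGT:consistency} of the discrete gradients. For the converse, assume $\uGh\underline{q}_h=0$. The vanishing of all edge gradients combined with \eqref{eq:GE} gives $q_E'=0$, i.e.\ $q_{\Eh}$ is constant on each edge; continuity at vertices together with the connectedness of the mesh $1$-skeleton (inherited from the connectedness of $\Omega$) then yields a single constant $c$ such that $q_{\Eh}\equiv c$. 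To recover $q_F=c$ on each face, I will write \eqref{eq:trF} with $\cGF\underline{q}_F=0$ and $q_{\EF}=c$, integrate by parts in the edge terms to obtain $\int_F(\trF\underline{q}_F-c)\DIV_F\bvec{v}_F=0$ for all $\bvec{v}_F\in\cRoly{k+2}(F)$, use the isomorphism \eqref{eq:iso:DIV} to deduce $\trF\underline{q}_F=c$, and then conclude via \eqref{eq:lproj.k-1.trF}. The same pattern (using \eqref{eq:cGT} and an analogous isomorphism argument) gives $q_T=c$, so $\underline{q}_h=\Igrad{h}c$.

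For \eqref{eq:Im.uGh.subset.Ker.uCh}, I split $\uCh$ into its face and element components according to \eqref{eq:uCh}. The face components vanish by Proposition \ref{prop:2d.complex}. For the element components, I will show the stronger statement $\cCT(\uGT\underline{q}_T)=0$. Starting from \eqref{eq:cCT} with test function $\bvec{w}_T\in\vPoly{k}(T)$, I observe that $\CURL\bvec{w}_T\in\Roly{k-1}(T)$ so the projector $\Rproj{k-1}{T}$ can be dropped; using $\vPoly{k}(T)\subset\NE{k+1}(T)$, Proposition \ref{prop:link.GT.GF} rewrites the volume term as a sum of face terms involving $\cGF\underline{q}_F$; finally, \eqref{eq:trFt:GF} identifies $\trFt(\uGF\underline{q}_F)$ with $\cGF\underline{q}_F$, and the two resulting face sums cancel exactly. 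Assertion \eqref{eq:Im.uCh.subset.Ker.Dh} is immediate from the local definition \eqref{eq:Dh} of $\Dh$ and the first half of Proposition \ref{prop:ImCT.KerDT}, which establishes $\Image\uCT\subset\Ker\DT$ element by element.

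For the surjectivity \eqref{eq:Im.Dh.equal.Pk.Th}, the strategy is to lift a given $q_h\in\Poly{k}(\Th)\subset\Leb(\Omega)$ to a sufficiently regular vector field and then interpolate. Specifically, extending $q_h$ by zero to an open ball $B\supset\overline{\Omega}$ and solving the Dirichlet problem $-\Delta\phi=\widetilde{q_h}$ in $B$ with $\phi=0$ on $\partial B$, elliptic regularity on the smooth domain $B$ gives $\phi\in\Sob{2}(B)$, whence $\bvec{w}\coloneq-\GRAD\phi_{|\Omega}\in\vSob{1}(\Omega)$ and $\DIV\bvec{w}=q_h$ in $\Omega$. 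Setting $\uvec{w}_h\coloneq\Idiv{h}\bvec{w}$, which is well-defined thanks to the $\vSob{1}$-regularity of $\bvec{w}$, the commutation property \eqref{eq:DT:commutation} applied element by element yields $\Dh\uvec{w}_h=\lproj{k}{h}(\DIV\bvec{w})=\lproj{k}{h}q_h=q_h$, where the last equality uses $q_h\in\Poly{k}(\Th)$.

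The main obstacle is the lifting argument in \eqref{eq:Im.Dh.equal.Pk.Th}: one must produce $\bvec{w}$ in $\vSob{1}(\Omega)$ (not merely in $\Hdiv{\Omega}$) so that the pointwise interpolator $\Idiv{h}$ can be applied, and the Lipschitz regularity of $\partial\Omega$ alone does not guarantee the necessary $\Sob{2}$-regularity of a Poisson solution. Enlarging the domain to a smooth $B$ bypasses this difficulty at the modest cost of working with an extension. The remaining delicate point is the connectedness argument within \eqref{eq:Im.Igrad.equal.Ker.uGh}, which uses that the mesh $1$-skeleton inherits connectedness from $\Omega$ — a standard but implicit property of the cellular structure.
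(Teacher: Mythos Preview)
Your plan is correct and close to the paper's. One step needs care: in \eqref{eq:Im.Igrad.equal.Ker.uGh} you assert $\cGF\underline{q}_F=0$, but $\uGh\underline{q}_h=\uvec{0}$ only gives the vanishing of $\Rproj{k-1}{F}(\cGF\underline{q}_F)$ and $\Rcproj{k}{F}(\cGF\underline{q}_F)$, and these two projections alone do not determine an element of $\vPoly{k}(F)=\Roly{k}(F)\oplus\cRoly{k}(F)$. The claim is nonetheless true once you have $q_{\EF}=c$: testing \eqref{eq:cGF} against any divergence-free $\bvec{w}_F\in\Roly{k}(F)$ then shows $\Rproj{k}{F}(\cGF\underline{q}_F)=0$ as well, and \eqref{eq:recovery.rec} gives $\cGF\underline{q}_F=0$. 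The paper sidesteps this by testing \eqref{eq:cGF} directly against $\bvec{w}_F\in\cRoly{k}(F)$ (using only $\Rcproj{k}{F}(\cGF\underline{q}_F)=0$) to obtain $\int_F(c-q_F)\DIV_F\bvec{w}_F=0$ and concluding $q_F=\lproj{k-1}{F}c$ via the isomorphism \eqref{eq:iso:DIV} with $\ell=k$; this is shorter and never needs the full $\cGF\underline{q}_F=0$.

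On the remaining parts: for \eqref{eq:Im.uGh.subset.Ker.uCh} your argument in fact yields the stronger $\cCT(\uGT\underline{q}_T)=0$, which the paper records separately as Remark~\ref{rem:ker.cCT} after treating the $\Goly{k-1}(T)$- and $\cGoly{k}(T)$-projections one at a time; your route is arguably cleaner. For \eqref{eq:Im.Dh.equal.Pk.Th} your explicit Poisson-on-a-ball construction of a $\vSob{1}$ right inverse of $\DIV$ is fine and makes the regularity requirement explicit; the paper simply cites the surjectivity of $\DIV:\vSob{1}(\Omega)\to\Leb(\Omega)$ as a known result.
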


\begin{proof}
  \underline{1. \emph{Proof of \eqref{eq:Im.Igrad.equal.Ker.uGh}.}}
  From the consistency properties \eqref{eq:GE:consistency}, \eqref{eq:cGF:consistency} and \eqref{eq:cGT:consistency} of the full gradients and the definition \eqref{eq:uGh} of $\uGh$, it is readily inferred that $\uGh\big(\Igrad{h}C\big) = \underline{\bvec{0}}$ for all $C\in\Real$, hence $\Igrad{h}\Real\subset\Ker\uGh$.

  To prove converse inclusion $\Ker\uGh\subset\Igrad{h}\Real$, let $\underline{q}_h\in\Xgrad{h}$ be such that $\uGh\underline{q}_h = \uvec{0}$.
  By the definitions \eqref{eq:uGh} of $\uGh$ and \eqref{eq:GE} of $\GE$, this means that $q_E' = 0$ for all $E\in\Eh$, that is, $(q_{\Eh})_{|E}$ is constant over $E$.
  Since $\Omega$ has only one connected component, accounting for the single-valuedness of $q_{\Eh}$ at vertices, we thus infer the existence of $C\in\Real$ such that $q_{\Eh} = C$.
  Let now $F\in\Fh$ and $\bvec{w}_F\in\cRoly{k}(F)$.
  We have $\Rcproj{k}{F}\big(\cGF\underline{q}_F\big)=\bvec{0}$, and thus
  \[
  \begin{aligned}
    0 &= \int_F\cGF\underline{q}_F\cdot\bvec{w}_F
    \\
    &= -\int_F q_F\DIV_F\bvec{w}_F
    + \sum_{E\in\EF}\omega_{FE}\int_E q_{\EF}(\bvec{w}_F\cdot\normal_{FE})
    \\
    &= \int_F (C - q_F)\DIV_F\bvec{w}_F,
  \end{aligned}
  \]
  where the second equality comes from the definition \eqref{eq:cGF} of $\cGF\underline{q}_F$, and the conclusion is obtained accounting for the fact that $q_{\EF}=C$ and integrating by parts.  
  Since $\bvec{w}_F$ is generic in $\cRoly{k}(F)$, recalling the isomorphism \eqref{eq:iso:DIV} this implies $\lproj{k-1}{F}(q_F - C)=0$, and thus $q_F=\lproj{k-1}{F}C$.
  As, for all $F\in\Fh$, the previous results give $\underline{q}_F=(q_F, q_{\EF})=\Igrad{F} C$, we also have $\trF\underline{q}_F=C$ by \eqref{eq:trF:consistency}.
  Similarly, let $T\in\Th$ and $\bvec{w}_T\in\cRoly{k}(T)$.
  Writing the definition \eqref{eq:cGT} of $\cGT\underline{q}_T$ for $\bvec{w}_T\in\cRoly{k}(T)$, and accounting for $\Rcproj{k}{T}\big(\cGT\underline{q}_T\big)=\bvec{0}$ and $\trF\underline{q}_F=C$, it is inferred
  \[
  \begin{aligned}
    0 &= \int_T\cGT\underline{q}_T\cdot\bvec{w}_T
    \\
    &= -\int_T q_T\DIV\bvec{w}_T
    + \sum_{F\in\FT}\omega_{TF}\int_F C (\bvec{w}_T\cdot\normal_F)
    \\
    &= \int_F (C - q_T)\DIV\bvec{w}_T,
  \end{aligned}
  \]
  which implies, invoking the isomorphism \eqref{eq:iso:DIV}, $\lproj{k-1}{T}(q_T - C) = 0$ since $\bvec{w}_T$ is generic in $\cRoly{k}(T)$. Hence $q_T=\lproj{k-1}{T}C$ for all $T\in\Th$, which concludes the proof that $\underline{q}_h=\Igrad{h}C$.
  \medskip\\
  \underline{2. \emph{Proof of \eqref{eq:Im.uGh.subset.Ker.uCh}.}}
  The inclusion \eqref{eq:Im.uGh.subset.Ker.uCh} follows from the local property:
  \begin{equation} \label{eq:Im.uGT.subset.Ker.uCT}
    \Image\uGT \subset \Ker\uCT \qquad \forall T\in\Th,
  \end{equation}
  i.e., $\uCT\big(\uGT\underline{q}_T\big)=\underline{\bvec{0}}$ for all $\underline{q}_T\in\Xgrad{T}$.
  Let $T\in\Th$.
  The relation \eqref{eq:Im.uGF.subset.Ker.CF} implies $\CF\big(\uGF\underline{q}_F\big)=0$ for all $F\in\FT$.
  The fact that $\Gproj{k-1}{T}\big[\cCT\big(\uGT\underline{q}_T\big)\big]=\bvec{0}$ then follows from \eqref{eq:CT.CF}.
  We next notice that it holds, for all $\bvec{w}_T\in\cGoly{k}(T)$,
  \[
  \begin{aligned}
    \int_T\cCT\big(\uGT\underline{q}_T\big)\cdot\bvec{w}_T
    &= \int_T\cancel{\Rproj{k-1}{T}}\big(\cGT\underline{q}_T\big)\cdot\CURL\bvec{w}_T
    \\
    &\quad+ \sum_{F\in\FT}\omega_{TF}\int_F\cGF\underline{q}_F\cdot(\bvec{w}_T\times\normal_F)
    = 0,
  \end{aligned}
  \]
  where we have used the definition \eqref{eq:cCT} of $\cCT$ and the property \eqref{eq:trFt:GF} of the tangential trace reconstruction in the first equality, the fact that $\CURL\bvec{w}_T\in\Roly{k-1}(T)$ to cancel the projector, and the link \eqref{eq:link.GT.GF} between volume and face gradients to conclude. This shows that $\Gcproj{k}{T}\big[\cCT\big(\uGT\underline{q}_T\big)\big]=\bvec{0}$ and concludes the proof of \eqref{eq:Im.uGT.subset.Ker.uCT}.
  \medskip\\
  \underline{3. \emph{Proof of \eqref{eq:Im.uCh.subset.Ker.Dh}.}}
  Immediate consequence of \eqref{eq:Ker.DT} after observing that $\uCT$ and $\DT$ are the restrictions of $\uCh$ and $\Dh$ to $T$, respectively.
  \medskip\\
  \underline{4. \emph{Proof of \eqref{eq:Im.Dh.equal.Pk.Th}.}}
  The inclusion $\Image\Dh\subset\Poly{k}(\Th)$ is an obvious consequence of the definition \eqref{eq:Dh} of the global divergence.
  To prove the converse inclusion, let $q_h\in\Poly{k}(\Th)$.
  Since the continuous divergence operator $\DIV:\vSob{1}(\Omega)\to \Leb(\Omega)$ is onto (see, e.g., \cite[Lemma 8.3]{Di-Pietro.Droniou:20}), there exists $\bvec{v}\in\vSob{1}(\Omega)$ such that $\DIV\bvec{v} = q_h$.
  Setting $\uvec{v}_h\coloneq\Idiv{h}\bvec{v}\in\Xdiv{h}$, the commutation property \eqref{eq:DT:commutation} and the definition \eqref{eq:Dh} of the global divergence yield $\Dh\uvec{v}_h = \lproj{k}{h}(\DIV\bvec{v}) = \lproj{k}{h} q_h = q_h$.
  This shows that $\Poly{k}(\Th)\subset\Image\Dh$, thereby concluding the proof of \eqref{eq:Im.Dh.equal.Pk.Th}.\qed
\end{proof}

\begin{remark}[Kernel of the full curl operator]\label{rem:ker.cCT}
  Combining the inclusion \eqref{eq:Im.uGF.subset.Ker.CF} with the relation \eqref{eq:cCT.CF} linking element and face curls, it is inferred that, for all $\underline{q}_T\in\Xgrad{T}$, $\Gproj{k}{T}\big[\cCT\big(\uGT\underline{q}_T\big)\big] = \bvec{0}$.
  On the other hand, \eqref{eq:Im.uGT.subset.Ker.uCT} implies $\Gcproj{k}{T}\big[\cCT\big(\uGT\underline{q}_T\big)\big] = \bvec{0}$. Hence $\cCT\big(\uGT\underline{q}_T\big) = \bvec{0}$ by \eqref{eq:recovery.rec} with $\bvec{a} = \cCT\big(\uGT\underline{q}_T\big)$ and $(S,S^\compl) = (\Goly{k}(T),\cGoly{k}(T))$.
  This shows that $\Image\uGT\subset\Ker\cCT$.
\end{remark}

The exactness properties of the DDR sequence, depending on the topology of the domain, are collected in the following theorem.

\begin{theorem}[Exactness]
  Denoting by $(b_0,b_1,b_2,b_3)$ the Betti numbers of $\Omega$ (with $b_0=1$ since $\Omega$ is connected and $b_3=0$ since $\Omega\subset\Real^3$), we have
  \begin{align}\label{eq:Im.uGh.equal.Ker.uCh}
    b_1=0 \implies \Image\uGh &= \Ker\uCh,
    \\ \label{eq:Im.uCh.equal.Ker.Dh}
    b_2=0 \implies \Image\uCh &= \Ker\Dh.
  \end{align}
\end{theorem}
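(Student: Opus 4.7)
The forward inclusions $\Image\uGh \subset \Ker\uCh$ and $\Image\uCh \subset \Ker\Dh$ are already established in Theorem \ref{thm:global.exactness}, so only the reverse inclusions require proof under the stated topological hypotheses. The strategy in both cases is to assemble local potentials (provided by the already-available local exactness results) into a global one, with the assumption on the Betti number serving to annihilate the obstruction to this assembly.

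For \eqref{eq:Im.uCh.equal.Ker.Dh} under $b_2 = 0$: Let $\uvec{w}_h \in \Ker\Dh$. For each $T \in \Th$, Proposition \ref{prop:ImCT.KerDT} supplies a local potential $\uvec{v}^T \in \Xcurl{T}$ such that $\uCT\uvec{v}^T = \uvec{w}_T$, defined only modulo $\Ker\uCT$. On any internal face $F$ shared by $T$ and $T'$, the face components satisfy $\CF(\uvec{v}_F^T - \uvec{v}_F^{T'}) = w_F - w_F = 0$; by the local exactness of the two-dimensional face sequence noted after Proposition \ref{prop:2d.complex}, there exists $\underline{q}_F\in\Xgrad{F}$ with $\uGF\underline{q}_F = \uvec{v}_F^T - \uvec{v}_F^{T'}$. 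The family $\{\underline{q}_F\}$ behaves as a 2-cocycle on the mesh CW-complex, and $b_2(\Omega)=0$ guarantees it is a coboundary: there exist $\underline{r}^T\in\Xgrad{T}$ whose face-differences realise the $\underline{q}_F$. Replacing $\uvec{v}^T$ by $\uvec{v}^T - \uGT\underline{r}^T$ leaves $\uCT\uvec{v}^T = \uvec{w}_T$ invariant (since $\uCT\circ\uGT=0$ by \eqref{eq:Im.uGT.subset.Ker.uCT}) while making the face data consistent across the mesh, yielding the sought global $\uvec{v}_h$.

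For \eqref{eq:Im.uGh.equal.Ker.uCh} under $b_1 = 0$: let $\uvec{v}_h \in \Ker\uCh$. We build $\underline{q}_h$ skeleton-by-skeleton. Fix a base vertex and define vertex values by path-integrals $q_V = \sum_i \omega_i\int_{E_i}v_{E_i}$. Setting $r_F = 1$ in \eqref{eq:CF}, we see $\sum_{E\in\EF}\omega_{FE}\int_E v_E = -\int_F\CF\uvec{v}_F = 0$, so every face-boundary loop has trivial circulation; since $b_1(\Omega)=0$ implies that the 1-skeleton cellular $H_1$ vanishes, every loop is a sum of face boundaries, giving path-independence. The edge polynomials $q_E\in\Poly{k+1}(E)$ are then determined uniquely by the vertex values and the ODE $q_E' = v_E$ (see \eqref{eq:GE}). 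The face components $q_F$ (and implicitly the scalar trace $\trF\underline{q}_F$) come from local 2D face exactness, while the element components $q_T$ come from local 3D exactness on each simply connected $T$; at each step the lower-dimensional data is already compatible, so the local potentials can be solved consistently.

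The main obstacle in both arguments is the gluing step, which is topological in nature: one must identify the cohomological obstruction to patching local potentials and match it against the Betti numbers of $\Omega$ via the cellular complex associated with $\Mh$. A cleaner alternative that avoids explicit cocycle bookkeeping is a discrete Euler--Poincar\'e count: establishing an identity of the form $\dim\Xgrad{h} - \dim\Xcurl{h} + \dim\Xdiv{h} - \dim\Poly{k}(\Th) = 1 - b_1 + b_2$, and combining it with the inclusions \eqref{eq:Im.uGh.subset.Ker.uCh}--\eqref{eq:Im.uCh.subset.Ker.Dh} and the equalities \eqref{eq:Im.Igrad.equal.Ker.uGh}, \eqref{eq:Im.Dh.equal.Pk.Th} already proved, forces the middle equalities by rank-nullity as soon as the relevant Betti number vanishes.
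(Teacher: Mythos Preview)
Your two main arguments are essentially those of the paper. For \eqref{eq:Im.uGh.equal.Ker.uCh} the paper does exactly your skeleton-by-skeleton construction: fix a base vertex, define vertex values by edge path-integrals, argue path-independence by bounding any closed edge path by a union of mesh faces (this is where $b_1=0$ enters) and summing the relations $\CF\uvec{v}_F=0$ over those faces, then recover $q_F$ and $q_T$ from the face and element gradient definitions. One phrasing issue: you write ``$b_1(\Omega)=0$ implies that the 1-skeleton cellular $H_1$ vanishes''. The 1-skeleton itself has plenty of first homology; what vanishes is $H_1$ of the full CW complex $\Mh$, i.e.\ every edge cycle is a $2$-boundary (a sum of face boundaries). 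This is exactly what you use, so the slip is cosmetic. For \eqref{eq:Im.uCh.equal.Ker.Dh} the paper simply invokes the argument of \cite[Theorem 3, Point 2b)]{Di-Pietro.Droniou:20*1}, which is a topological assembly of local potentials under $b_2=0$; your cocycle/coboundary sketch is a fair description of that argument, though the step ``there exist $\underline{r}^T\in\Xgrad{T}$ whose face-differences realise the $\underline{q}_F$'' hides nontrivial bookkeeping (the $\underline{q}_F$ carry edge components and are only determined up to constants, so the cocycle condition and its triviality need to be formulated with care).

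Your proposed Euler--Poincar\'e alternative, however, does not work as stated. The alternating-sum identity combined with $\dim H^0=1$ and $H^3=0$ only yields
\[
\dim H^1-\dim H^2=b_1-b_2,
\]
and there is no way to isolate $\dim H^1=0$ from $b_1=0$ alone (nor $\dim H^2=0$ from $b_2=0$ alone): the equation couples the two cohomology dimensions. Even under $b_1=b_2=0$ you only get $\dim H^1=\dim H^2$, not that both vanish. A dimension count can at best replace one of the two constructive arguments once the other has been carried out directly; it is not a standalone substitute for the gluing.
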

\begin{remark}[Meaning of vanishing Betti numbers]
  In broad terms, the condition $b_1=0$ means that $\Omega$ does not have any tunnel, while $b_2=0$ means that $\Omega$ does not enclose any void. A typical example of $\Omega$ that has $b_1\not=0$ is (the interior of) a torus, and an example of $\Omega$ with $b_2\not=0$ is a domain enclosed between two concentric spheres.
\end{remark}
\begin{proof}
  \underline{1. \emph{Proof of \eqref{eq:Im.uGh.equal.Ker.uCh}.}}
  Recalling \eqref{eq:Im.uGh.subset.Ker.uCh}, we only have to show the inclusion
  \begin{equation}\label{eq:Ker.uCh.subset.Im.uGh}
    \Ker\uCh\subset\Image\uGh,
  \end{equation}
  that is, for all $\uvec{v}_h\in\Xcurl{h}$ such that $\uCh\uvec{v}_h = \uvec{0}$, there exists $\underline{q}_h\in\Xgrad{h}$ such that $\uvec{v}_h = \uGh\underline{q}_h$.
  In what follows, we show how to construct such a $\underline{q}_h$.
  \smallskip
  
  We start by constructing a function $q_{\Eh}\in\Poly[\rm c]{k+1}(\Eh)$ such that $v_E = (q_{\Eh})'$ for all $E\in\Eh$.
  Let $V_0,V\in\Vh$ be two distinct mesh vertices of coordinates $\bvec{x}_{V_0}$ and $\bvec{x}_V$, respectively, and denote by $\edges{P}\subset\Eh$ a set of edges that form a connected path $P$ from $V_0$ to $V$ (such a path always exists since $\Omega$ is connected).
  By the fundamental theorem of calculus, there is a unique function $q_{\edges{P}}\in\Poly[\rm c]{k+1}(\edges{P})$ such that $q_{\edges{P}}(\bvec{x}_{V_0})=0$ and $(q_{\edges{P}})_{|E}' = v_E$ for all $E\in\edges{P}$, the derivative being taken in the direction of $E$ ($q_{\edges{P}}$ is obtained integrating, in the direction defined on each edge $E$ by $\tangent_E$, the functions $(v_E)_{E\in\edges{P}}$).
  We want to show that the value $q_{\edges{P}}(\bvec{x}_{V})$ taken at $V$ is independent of the choice of the path $P$.
  To this end, denote by $\widetilde{P}$ another path from $V_0$ to $V$ formed by the edges in $\edges{\widetilde{P}}$, and denote by $-\widetilde{P}$ the same path but with reversed orientation.
  We assume, for the moment, that $\edges{P}$ and $\edges{\widetilde{P}}$ are disjoint.
  By similar considerations as before, there exists a unique $q_{\edges{\widetilde{P}}}\in\Poly[\rm c]{k+1}(\edges{\widetilde{P}})$ such that $q_{\edges{\widetilde{P}}}(\bvec{x}_{V_0})=0$ and $(q_{\edges{\widetilde{P}}})_{|E}' = v_E$ for all $E\in\edges{\widetilde{P}}$.
  Since $b_1=0$ (i.e., there is no ``tunnel'' crossing $\Omega$), the path $B\coloneq P -P'$ formed by the edges in $\edges{B}\coloneq\edges{P}\cup\edges{\widetilde{P}}$ is a $1$-boundary, i.e., there is a set of faces $\faces{B}\subset\Fh$ giving rise to a connected surface $S_B\coloneq\bigcup_{F\in\faces{B}}\overline{F}$ such that $B=\partial S_B$.
  We fix an orientation for $S_B$ and, for all $F\in\faces{B}$, we denote by $\omega_{BF}\in\{-1,1\}$ the orientation of $F$ relative to $S_B$.
  For all $E\in\edges{B}$, there is a unique face $F\in\faces{B}$ such that $E\in\EF$, and we let $\omega_{BE}\coloneq\omega_{BF}\omega_{FE}$ denote the orientation of $E$ relative to $S_B$.
  Since $\CF\uvec{v}_F = 0$ for all $F\in\faces{B}$, it holds
  \[
    0 = \sum_{F\in\faces{B}}\omega_{BF}\int_F \CF\uvec{v}_F
    = - \sum_{F\in\faces{B}}\omega_{BF}\sum_{E\in\EF}\omega_{FE}\int_E v_E
    = - \sum_{E\in\edges{B}}\omega_{BE}\int_E v_E,
  \]
  where the second equality is obtained from \eqref{eq:CF} with $r_F$ identically equal to 1, while the conclusion follows observing that all the edges that are interior to $S_B$ appear exactly twice in the sum, with opposite signs.
  Thus, reasoning as in \cite[Proposition 4.2]{Di-Pietro.Droniou.ea:20}, there exists $q_{\edges{B}}\in\Poly[\rm c]{k+1}(\edges{B})$ such that $(q_{\edges{B}})'=v_E$ for all $E\in\edges{B}$, which we can be uniquely identified by additionally prescribing that $q_{\edges{B}}(\bvec{x}_{V_0}) = 0$.
  Under this condition, by uniqueness we infer $(q_{\edges{B}})_{|E} = (q_{\edges{P}})_{|E}$ for all $E\in\edges{P}$ and $(q_{\edges{B}})_{|E} = (q_{\edges{\widetilde{P}}})_{|E}$ for all $E\in\edges{\widetilde{P}}$.
  Since $q_{\edges{B}}$ is continuous at the vertices of $B$, this shows that $q_{\edges{P}}(\bvec{x}_V) = q_{\edges{\widetilde{P}}}(\bvec{x}_V)$.
  This argument can be extended to paths $P$ and $\widetilde{P}$ such that $\edges{P}\cap\edges{\widetilde{P}}\neq\emptyset$, the only difference being that one should reason, in this case, on each connected component of the manifold $S_B$ (corresponding to a ``loop'' inside the path $B = P - \widetilde{P}$).
  
  Repeating this reasoning for each vertex $V\in\Vh$ and all possible paths connecting $V_0$ and $V$, we conclude that there exists a unique $q_{\Eh}\in\Poly[\rm c]{k+1}(\Eh)$ such that $q_{\Eh}(\bvec{x}_{V_0})=0$ and, recalling the notation \eqref{eq:qE},
  \begin{equation}\label{eq:Im.uGh.equal.Ker.uCh:qEh}
    q_E' = \GE q_E = v_E\qquad\forall E\in\Eh.
  \end{equation}
  \smallskip

  Let now $F\in\Fh$.
  We look for a $q_F\in\Poly{k-1}(F)$ such that $\underline{q}_F\coloneq(q_F,q_{\EF})\in\Xgrad{F}$ satisfies $\uvec{v}_F = \uGF\underline{q}_F$.
  Plugging $\CF\uvec{v}_F = 0$ into \eqref{eq:CF}, we infer, for all $r_F\in\Poly{k}(F)$,
  \[
  \begin{aligned}
    \int_F\bvec{v}_{\cvec{R},F}\cdot\VROT_F r_F
    &= \sum_{E\in\EF}\omega_{FE}\int_E v_E r_F
    \\
    &= \sum_{E\in\EF}\omega_{FE}\int_E \GE q_E~r_F
    \\
    &= \int_F\cGF\underline{q}_F\cdot\VROT_F r_F,
  \end{aligned}
  \]
  where
  the second equality is a consequence of \eqref{eq:Im.uGh.equal.Ker.uCh:qEh}, while
  the conclusion follows from \eqref{eq:exact.CF.middle}.
  This shows that $\bvec{v}_{\cvec{R},F} = \Rproj{k-1}{F}\big(\cGF\underline{q}_F)$ for all $q_F\in\Poly{k-1}(F)$.
  Let us now enforce $\bvec{v}_{\cvec{R},F}^\compl = \Rcproj{k}{F}\big(\cGF\underline{q}_F\big)$, that is, for all $\bvec{w}_F\in\cRoly{k}(F)$,
  \[
  \begin{aligned}
  \int_F\bvec{v}_{\cvec{R},F}\cdot\bvec{w}_F
  ={}& \int_F\cGF\underline{q}_F\cdot\bvec{w}_F\\
  ={}& -\int_F q_F \DIV_F\bvec{w}_F + \sum_{E\in\EF}\omega_{FE}\int_E q_{\EF} (\bvec{w}_F\cdot\normal_{FE}),
  \end{aligned}
  \]
  where we have used the definition \eqref{eq:cGF} of $\cGF$ in the second equality.
  Recalling the isomorphism \eqref{eq:iso:DIV}, the above condition defines the sought $q_F\in\Poly{k-1}(F)$ uniquely.
  \smallskip

  Writing the definition \eqref{eq:cCT} of $\cCT\uvec{v}_T$ with $\bvec{w}_T\in\cGoly{k}(T)$, using $\uvec{v}_h\in\Ker\uCh$ to see that $\Gcproj{k}{T}\big(\cCT\uvec{v}_T\big)=\bvec{0}$,
  invoking \eqref{eq:trFt:GF} to write $\trFt\uvec{v}_F=\cGF\underline{q}_F$ and using the link \eqref{eq:link.GT.GF} between element and face gradients with $\bvec{z}_T=\bvec{w}_T$, we see that, for any $T\in\Th$,
  $\bvec{v}_{\cvec{R},T} = \Rproj{k-1}{T}\big(\cGT\underline{q}_T\big)$ with $\underline{q}_T\coloneq \big(q_T, (q_F)_{F\in\FT}, q_{\ET}\big)$ for all $q_T\in\Poly{k-1}(T)$.
  Proceeding then as for $q_F$ above, we can select $q_T\in\Poly{k-1}(T)$ to additionally have $\bvec{v}_{\cvec{R},T}^\compl = \Rcproj{k}{T}\big(\cGT\underline{q}_T\big)$.
  This concludes the proof of \eqref{eq:Ker.uCh.subset.Im.uGh}.
  \medskip\\
  \underline{2. \emph{Proof of \eqref{eq:Im.uCh.equal.Ker.Dh}.}}
  The proof can be obtained reasoning as in \cite[Point 2b) of Theorem 3]{Di-Pietro.Droniou:20*1}.
  As a matter of fact, this argument is based on a local exactness property analogous to \eqref{eq:Ker.DT} together with a topological assembly of the mesh valid for domains that do not enclose voids ($b_2 = 0$), and it therefore does not depend on the specific choice of the complements in \eqref{eq:spaces.T} and \eqref{eq:spaces.F}.\qed
\end{proof}

\section{Potential reconstructions and $\Leb$-products on discrete spaces}
\label{sec:potentials.L2products}

The definitions of the element gradient $\cGT$ and curl $\cCT$ required us to introduce discrete scalar and tangential traces on the mesh faces. In this section, for each $T\in\Th$ and $\bullet\in\{\GRAD,\CURL,\DIV\}$, we define discrete potential reconstructions inside $T$ acting on the discrete space $\Xbullet{T}$.
These potentials have consistency properties, and enable the design of discrete $\Leb$-inner products on DDR spaces that are also consistent.

\subsection{Scalar potential on $\Xgrad{T}$}

The scalar potential reconstruction $\Pgrad:\Xgrad{T}\to\Poly{k+1}(T)$ is such that, for all $\underline{q}_T\in\Xgrad{T}$,
\begin{multline}\label{eq:PgradT}
  \int_T\Pgrad\underline{q}_T\DIV\bvec{v}_T
  = -\int_T\cGT\underline{q}_T\cdot\bvec{v}_T
  + \sum_{F\in\FT}\omega_{TF}\int_F\trF\underline{q}_F(\bvec{v}_T\cdot\normal_F)
  \\
  \forall\bvec{v}_T\in\cRoly{k+2}(T),  
\end{multline}
with $\trF$ defined by \eqref{eq:trF}.
This relation defines $\Pgrad\underline{q}_T$ uniquely since $\DIV:\cRoly{k+2}(T)\to\Poly{k+1}(T)$ is an isomorphism by \eqref{eq:iso:DIV} with $\ell=k+2$.

\begin{remark}[Validity of \eqref{eq:PgradT}]\label{rem:validity.Pgrad}
  The definition \eqref{eq:cGT} of $\cGT$ and the identity $\DIV\CURL=0$ show that both sides of \eqref{eq:PgradT} vanish when $\bvec{v}_T\in\Roly{k}(T)$. Hence, \eqref{eq:PgradT} actually holds for any $\bvec{v}_T\in\Roly{k}(T)\oplus\cRoly{k+2}(T)=\vPoly{k}(T)+\cRoly{k+2}(T)$, the second equality following from $\cRoly{k}(T)\subset \cRoly{k+2}(T)$ and the decomposition \eqref{eq:decomposition:Poly.ell.T}.
\end{remark}

Using the polynomial consistency properties $\cGT\big(\Igrad{T} q\big) = \GRAD q$ and $\trF\big(\Igrad{F}q_{|F}\big) = q_{|F}$, valid for all $q\in\Poly{k+1}(T)$ (see \eqref{eq:cGT:consistency} and \eqref{eq:trF:consistency}, respectively), the following polynomial consistency of $\Pgrad$ is obtained:
\begin{equation}
  \label{eq:Pgrad:polynomial.consistency}
  \Pgrad\big(\Igrad{T} q\big) = q \qquad\forall q\in\Poly{k+1}(T).
\end{equation}
Moreover, applying \eqref{eq:PgradT} to $\bvec{v}_T\in\cRoly{k}(T)$ (see Remark \ref{rem:hierarchical.complements}), using the definition \eqref{eq:cGT} of $\cGT$ with $\bvec{w}_T=\bvec{v}_T$, and recalling that $\DIV:\cRoly{k}(T)\to\Poly{k-1}(T)$ is onto, we obtain
\begin{equation}
  \label{eq:Pgrad:projection}
  \lproj{k-1}{T}\big(\Pgrad\underline{q}_T\big) = q_T\qquad\forall \underline{q}_T\in\Xgrad{T}.
\end{equation}

\subsection{Vector potential on $\Xcurl{T}$}\label{sec:Pcurl}

Recalling the definition \eqref{eq:def.Xrec} of $\recRT{k}{\cdot}{\cdot}$, the vector potential reconstruction $\Pcurl:\Xcurl{T}\to\vPoly{k}(T)$ is such that, for all $\uvec{v}_T\in\Xcurl{T}$,
\begin{equation}\label{eq:Pcurl}
  \Pcurl\uvec{v}_T \coloneq \recRT{k}{\PcurlR\uvec{v}_T}{\bvec{v}_{\cvec{R},T}^\compl},
\end{equation}
where $\PcurlR\uvec{v}_T\in\Roly{k}(T)$ is defined, using the fact that $\CURL:\cGoly{k+1}(T)\to\Roly{k}(T)$ is an isomorphism (see \eqref{eq:iso:CURL}), by
\begin{multline}\label{eq:PcurlR}
  \int_T\PcurlR\uvec{v}_T\cdot\CURL\bvec{w}_T
  = \int_T\cCT\uvec{v}_T\cdot\bvec{w}_T
  - \sum_{F\in\FT}\omega_{TF}\int_F\trFt\uvec{v}_F\cdot(\bvec{w}_T\times\normal_F)
  \\
  \forall\bvec{w}_T\in\cGoly{k+1}(T).
\end{multline}
\begin{remark}[Discrete integration by parts formula for $\Pcurl$]\label{rem:ibp.PcurlT}
  Formula \eqref{eq:PcurlR} can be extended to test functions in the N\'ed\'elec space $\NE{k+1}(T)$ defined by \eqref{eq:NE.RT}.
  To check it, simply notice that both sides vanish whenever $\bvec{w}_T\in\Goly{k}(T)$ (use $\CURL\GRAD=0$ and the definition \eqref{eq:cCT} of $\cCT$). Since $\Rproj{k}{T}\big(\Pcurl\uvec{v}_T\big) = \PcurlR\uvec{v}_T$ (see \eqref{eq:Pcurl} and \eqref{eq:recovery.proj}), we infer that
  \begin{multline}\label{eq:ibp.Pcurl}
    \int_T\Pcurl\uvec{v}_T\cdot\CURL\bvec{z}_T
    = \int_T\cCT\uvec{v}_T\cdot\bvec{z}_T
    - \sum_{F\in\FT}\omega_{TF}\int_F\trFt\uvec{v}_F\cdot(\bvec{z}_T\times\normal_F)
    \\
    \forall\bvec{z}_T\in\NE{k+1}(T).
  \end{multline}
\end{remark}
Apply \eqref{eq:ibp.Pcurl} to $\uvec{v}_T=\Icurl{T}\bvec{v}$ with $\bvec{v}\in\vPoly{k}(T)$, use the consistency properties $\trFt\big(\Icurl{F}\bvec{v}\big) = \vlproj{k}{F}\bvec{v}_{{\rm t},F}=\bvec{v}_{{\rm t},F}$ and $\cCT\big(\Icurl{T}\bvec{v}\big) = \CURL\bvec{v}$ (see \eqref{eq:trFt.cons} and \eqref{eq:CT:consistency}, respectively), and integrate by parts. Since $\CURL:\NE{k+1}(T)\to\Roly{k}(T)$ is onto (due to the isomorphism property \eqref{eq:iso:CURL}), we obtain the relation $\Rproj{k}{T}\big[\Pcurl\big(\Icurl{T}\bvec{v}\big)\big] = \Rproj{k}{T}\bvec{v}$. The definition \eqref{eq:Pcurl} and the property \eqref{eq:recovery.proj} of the recovery operator also yield $\Rcproj{k}{T}\big[\Pcurl\big(\Icurl{T}\bvec{v}\big)\big] = \Rcproj{k}{T}\bvec{v}$. As a consequence,
\begin{equation}\label{eq:Pcurl:polynomial.consistency}
  \Pcurl\big(\Icurl{T}\bvec{v}\big) = \bvec{v}\qquad\forall\bvec{v}\in\vPoly{k}(T).
\end{equation}
Using similar arguments as in the proof of Proposition \ref{prop:trFt}, we also have
\begin{equation}\label{eq:proj.Pcurl}
  \text{
    $\Rproj{k-1}{T}\big(\Pcurl\uvec{v}_T\big) = \bvec{v}_{\cvec{R},T}$
    and
    $\Rcproj{k}{T}\big(\Pcurl\uvec{v}_T\big) = \bvec{v}_{\cvec{R},T}^\compl$}
  \quad\forall \uvec{v}_T\in\Xcurl{T}.
\end{equation}

\subsection{Vector potential on $\Xdiv{T}$}\label{sec:Pdiv}

Recalling the definition \eqref{eq:def.Xrec} of $\recGT{k}{\cdot}{\cdot}$, the vector potential reconstruction $\Pdiv:\Xdiv{T}\to\vPoly{k}(T)$ is such that, for all $\uvec{w}_T\in\Xdiv{T}$,
\begin{equation}\label{eq:Pdiv}
\Pdiv\uvec{w}_T = \recGT{k}{\PdivG\uvec{w}_T}{\bvec{w}_{\cvec{G},T}^\compl},
\end{equation}
where $\PdivG\uvec{w}_T\in\Goly{k}(T)$ is defined by
\begin{multline}\label{eq:PdivG}
  \int_T\PdivG\uvec{w}_T\cdot \GRAD r_T
  = -\int_T\DT\uvec{w}_T~r_T
  + \sum_{F\in\FT}\omega_{TF}\int_Fw_F~r_T
  \\
  \forall r_T\in\Poly{0,k+1}(T).
\end{multline}

\begin{remark}[Discrete integration by parts formula for $\Pdiv$]\label{rem:ibp.PdivT}
  Observing that $\PdivG=\Gproj{k}{T}\Pdiv$ (use \eqref{eq:recovery.proj}) and that \eqref{eq:PdivG} holds for any $r_T\in\Poly{k+1}(T)$ (as can be proved taking $r_T$ constant in $T$ and observing that both sides of this equation vanish due to the definition \eqref{eq:DT} of $\DT$), we infer
  \begin{multline}\label{eq:Pdiv:ibp}
    \int_T\Pdiv\uvec{w}_T\cdot \GRAD r_T
    = -\int_T\DT\uvec{w}_T~r_T
    + \sum_{F\in\FT}\omega_{TF}\int_Fw_F~r_T
    \\
    \forall r_T\in\Poly{k+1}(T).
  \end{multline}
\end{remark}
Writing this relation for $\uvec{w}_T = \Idiv{T}\bvec{w}$ with $\bvec{w}\in\RT{k+1}(T)$, observing that $\DT\big(\Idiv{T}\bvec{w}\big) = \lproj{k}{T}(\DIV\bvec{w}) = \DIV\bvec{w}$ by \eqref{eq:DT:commutation} and $\lproj{k}{F}(\bvec{w}_{|F}\cdot\normal_F) = \bvec{w}_{|F}\cdot\normal_F$ for all $F\in\FT$ by \eqref{eq:RT.T.trace}, and integrating by parts the right-hand side of the resulting expression, we infer $\Gproj{k}{T}\big[\Pdiv\big(\Idiv{T}\bvec{w}\big)\big] = \Gproj{k}{T}\bvec{w}$; since $\Gcproj{k}{T}\big[\Pdiv\big(\Idiv{T}\bvec{w}\big)\big] = \Gcproj{k}{T}\bvec{w}$ by definition of $\Pdiv$, $\Idiv{T}$ and \eqref{eq:recovery.proj}, we deduce that
\begin{equation}\label{eq:Pdiv:polynomial.consistency}
  \Pdiv\big(\Idiv{T}\bvec{w}\big) = \vlproj{k}{T}\bvec{w}\qquad\forall\bvec{w}\in\RT{k+1}(T).
\end{equation}
Moreover, using similar arguments as in Proposition \ref{prop:trFt} we get
\begin{equation}\label{eq:proj.Pdiv}
    \Gproj{k-1}{T}\big(\Pdiv\uvec{w}_T\big) = \bvec{w}_{\cvec{G},T}\mbox{ and }
    \Gcproj{k}{T}\big(\Pdiv\uvec{w}_T\big) = \bvec{w}_{\cvec{G},T}^\compl
  \quad\forall\uvec{w}_T\in\Xdiv{T}.
\end{equation}

\subsection{Discrete $\Leb$-products}\label{sec:discrete.L2}

We now define discrete $\Leb$-inner products on the DDR spaces. These products are all constructed in a similar way: by assembling local contributions composed of a consistent term based on the potential reconstruction and a stabilisation term that provides a control over the polynomial components on the lower dimensional geometrical objects. 
Specifically, each $\Leb$-product $(\cdot,\cdot)_{\GRAD,h}:\Xgrad{h}\times\Xgrad{h}\to\Real$, $(\cdot,\cdot)_{\CURL,h}:\Xcurl{h}\times\Xcurl{h}\to\Real$, and $(\cdot,\cdot)_{\DIV,h}:\Xdiv{h}\times\Xdiv{h}\to\Real$ is the sum over $T\in\Th$ of its local counterpart defined by:
\begin{multline} \label{eq:Xgrad:l2.prod}
  (\underline{r}_T,\underline{q}_T)_{\GRAD,T}
  \coloneq \int_T\Pgrad\underline{r}_T~\Pgrad\underline{q}_T + \mathrm{s}_{\GRAD,T}(\underline{r}_T, \underline{q}_T)
  \\
  \forall (\underline{r}_T,\underline{q}_T)\in\Xgrad{T}\times\Xgrad{T},
\end{multline}
\begin{multline} \label{eq:Xcurl:l2.prod}
  (\uvec{w}_T,\uvec{v}_T)_{\CURL,T}
  \coloneq \int_T\Pcurl\uvec{w}_T{\cdot}\Pcurl\uvec{v}_T + \mathrm{s}_{\CURL,T}(\uvec{w}_T, \uvec{v}_T)
  \\
  \forall (\uvec{w}_T,\uvec{v}_T)\in\Xcurl{T}\times\Xcurl{T},
\end{multline}
\begin{multline} \label{eq:Xdiv:l2.prod}
  (\uvec{w}_T,\uvec{v}_T)_{\DIV,T}
  \coloneq \int_T\Pdiv\uvec{w}_T{\cdot}\Pdiv\uvec{v}_T + \mathrm{s}_{\DIV,T}(\uvec{w}_T, \uvec{v}_T)
  \\
  \forall (\uvec{w}_T,\uvec{v}_T)\in\Xdiv{T}\times\Xdiv{T},
\end{multline}
with symmetric, positive semidefinite stabilisation bilinear forms $\mathrm{s}_{\bullet,T}$, $\bullet\in\{\GRAD,\CURL,\DIV\}$ defined as follows:
\begin{align}
  \mathrm{s}_{\GRAD,T}(\underline{r}_T,\underline{q}_T)
  \coloneq{}
    &\sum_{F\in\FT}h_F\int_F\big(\Pgrad\underline{r}_T-\trF\underline{r}_F\big) \big(\Pgrad\underline{q}_T-\trF\underline{q}_F\big)
    \nonumber\\
    & + \sum_{E\in\ET}h_E^2\int_E \big(\Pgrad\underline{r}_T-r_E\big) \big(\Pgrad\underline{q}_T-q_E\big),
  \label{eq:sgradT}
\end{align}
 \begin{align} 
   \mathrm{s}_{\CURL,T}{}&(\uvec{w}_T,\uvec{v}_T)\coloneq
   \nonumber\\
   {}&
     \sum_{F\in\FT}h_F\int_F \big( (\Pcurl\uvec{w}_T)_{{\rm t},F}-\trFt\uvec{w}_F\big)\cdot\big( (\Pcurl\uvec{v}_T)_{{\rm t},F}-\trFt\uvec{v}_F\big)
     \nonumber\\
     & + \sum_{E\in\ET}h_E^2\int_E\big(\Pcurl\uvec{w}_T\cdot\tangent_E-w_E\big)\big(\Pcurl\uvec{v}_T\cdot\tangent_E-v_E\big),
    \label{eq:scurlT}
  \end{align}
where we recall that the index ${\rm t},F$ denotes the tangential trace on $F$, and
\begin{equation}\label{eq:divT}
  \mathrm{s}_{\DIV,T}(\uvec{w}_T,\uvec{v}_T)
  \coloneq\sum_{F\in\FT}h_F\int_F\big(\Pdiv\uvec{w}_T\cdot\normal_F-w_F\big)\big(\Pdiv\uvec{v}_T\cdot\normal_F-v_F\big).
\end{equation}
These local stabilisation bilinear forms $\mathrm{s}_{\bullet,T}$ are polynomially consistent, i.e., they vanish whenever one of their arguments is the interpolate of a polynomial of total degree $\le k+1$ for $\bullet=\GRAD$, or $\le k$ for $\bullet\in\{\CURL,\DIV\}$. 
The consistency properties on interpolates of smooth functions of the potential reconstructions and stabilisation forms proved in Section \ref{sec:consistency.results} below make these discrete $\Leb$-products natural candidates for use in the discretisation of PDEs in weak formulation; see the application of DDR to the magnetostatic problem in Section \ref{sec:application}.

For $\bullet\in\{\GRAD,\CURL,\DIV\}$, we denote by $\norm[\bullet,T]{{\cdot}}$ the norm on $\Xbullet{T}$ induced by the corresponding local discrete $\Leb$-product $(\cdot,\cdot)_{\bullet,T}$, and by $\norm[\bullet,h]{{\cdot}}$ the norm on $\Xbullet{h}$ corresponding to the global discrete $\Leb$-product $(\cdot,\cdot)_{\bullet,h}$.

\subsection{Component $\Leb$-norms, bounds, and equivalence properties}\label{sec:components.norms}

The analysis of the stability and consistency properties of the DDR sequence is facilitated by the introduction of $\Leb$-like norms naturally associated with the choices of polynomial components in the DDR spaces.
Specifically we set, for all $\underline{q}_h\in\Xgrad{h}$,
\[
\tnorm[\GRAD,h]{\underline{q}_h}\coloneq\bigg(
\sum_{T\in\Th}\tnorm[\GRAD,T]{\underline{q}_T}^2
\bigg)^{\frac12}\mbox{ with}
\]%
\begin{equation}\label{eq:tnorm.T.F}
  \begin{alignedat}{4}
    &\forall T\in\Th,\; \forall\underline{q}_T\in\Xgrad{T},\\
    &\qquad
    \tnorm[\GRAD,T]{\underline{q}_T}\coloneq\bigg(
    \norm[\Leb(T)]{q_T}^2 + \sum_{F\in\FT} h_F\tnorm[\GRAD,F]{\underline{q}_F}^2
    \bigg)^{\frac12}
    \\
    &\forall F\in\Fh,\; \forall\underline{q}_F\in\Xgrad{F},\\
    &\qquad
    \tnorm[\GRAD,F]{\underline{q}_F}\coloneq\bigg(
    \norm[\Leb(F)]{q_F}^2 + \sum_{E\in\EF} h_E\norm[\Leb(E)]{q_E}^2
    \bigg)^{\frac12}.
  \end{alignedat}
\end{equation}
Similarly, for all $\uvec{v}_h\in\Xcurl{h}$,
\[
\tnorm[\CURL,h]{\uvec{v}_h}\coloneq
\bigg(
\sum_{T\in\Th}\tnorm[\CURL,T]{\uvec{v}_T}^2
\bigg)^{\frac12}\mbox{ with}
\]
\begin{equation}\label{eq:tnorm.curl}
  \begin{alignedat}{4}
  &\forall T\in\Th,\; \forall\uvec{v}_T\in\Xcurl{T},\\
  &\qquad
    \tnorm[\CURL,T]{\uvec{v}_T}\coloneq\bigg(
    \norm[\vLeb(T)]{\bvec{v}_{\cvec{R},T}}^2
    + \norm[\vLeb(T)]{\bvec{v}_{\cvec{R},T}^\compl}^2
    + \sum_{F\in\FT} h_F\tnorm[\CURL,F]{\uvec{v}_F}^2
    \bigg)^{\frac12}
    \\
    &\forall F\in\Fh,\; \forall\uvec{v}_F\in\Xcurl{F},\\
    &\qquad
    \tnorm[\CURL,F]{\uvec{v}_F}\coloneq\bigg(
    \norm[\vLeb(F)]{\bvec{v}_{\cvec{R},F}}^2
    + \norm[\vLeb(F)]{\bvec{v}_{\cvec{R},F}^\compl}^2
    + \sum_{E\in\EF} h_E\norm[\Leb(E)]{v_E}^2
    \bigg)^{\frac12}.
  \end{alignedat}
\end{equation}
Finally, for all $\uvec{w}_h\in\Xdiv{h}$,
\[
\tnorm[\DIV,h]{\uvec{w}_h}\coloneq\bigg(
\sum_{T\in\Th}\tnorm[\DIV,T]{\uvec{w}_T}^2
\bigg)^{\frac12}\mbox{ with}
\]
\[
\begin{alignedat}{2}
&\forall T\in\Th,\;\forall\uvec{w}_T\in\Xdiv{T},\\
&\qquad\tnorm[\DIV,T]{\uvec{w}_T}\coloneq\bigg(
\norm[\vLeb(T)]{\bvec{w}_{\cvec{G},T}}^2
+ \norm[\vLeb(T)]{\bvec{w}_{\cvec{G},T}^\compl}^2
+ \sum_{F\in\FT} h_F \norm[\Leb(F)]{w_F}^2
\bigg)^{\frac12}.
\end{alignedat}
\]

\begin{remark}[Alternative stabilisations]
  Each of the component norms $\tnorm[\bullet,T]{{\cdot}}$, for $\bullet\in\{\GRAD,\CURL,\DIV\}$, is a Euclidean norm on the corresponding local space $\Xbullet{T}$. It is therefore associated to an inner product $[\cdot,\cdot]_{\bullet,T}$, which can be used to design an alternative  stabilisation to $\mathrm{s}_{\bullet,T}$ by setting $\widetilde{\mathrm{s}}_{\bullet,T}(\uvec{v}_T,\uvec{w}_T)\coloneq[\uvec{v}_T-\uvec{I}_{\bullet,T}\bvec{P}_{\bullet,T}\uvec{v}_T,\uvec{w}_T-\uvec{I}_{\bullet,T}\bvec{P}_{\bullet,T}\uvec{w}_T]_{\bullet,T}$, where $\uvec{I}_{\bullet,T}$ and $\bvec{P}_{\bullet,T}$ are respectively the interpolator and potential reconstruction on $\Xbullet{T}$. This alternative stabilisation is the one chosen for $\Xcurl{T}$ in \cite{Di-Pietro.Droniou:20*1}. We also note that $\mathrm{s}_{\DIV,T}=\widetilde{\mathrm{s}}_{\DIV,T}$.
\end{remark}

The next proposition follows from \eqref{eq:estimate.norm.rec} and Lemma \ref{lem:norm.isomorphisms} in Appendix \ref{sec:results}, in a similar way as in the proof of \cite[Proposition 13]{Di-Pietro.Droniou:20*1}.

\begin{proposition}[Boundedness of local potentials]\label{prop:bound.P}
  It holds, for all $T\in\Th$ and all $F\in\FT$,
  \begin{alignat}{3}
    \label{eq:bound.trF.Pgrad}
    &\left\{\begin{aligned}
      \norm[\Leb(F)]{\trF\underline{q}_F} &\lesssim \tnorm[\GRAD,F]{\underline{q}_F} \\
      \norm[\Leb(T)]{\Pgrad\underline{q}_T} &\lesssim \tnorm[\GRAD,T]{\underline{q}_T}
    \end{aligned}\right.
    &\qquad&\forall \underline{q}_T\in\Xgrad{T},
    \\
    \label{eq:bound.trFt.Pcurl}
    &
    \left\{\begin{aligned}
      \norm[\vLeb(F)]{\trFt\uvec{v}_F} &\lesssim \tnorm[\CURL,F]{\uvec{v}_F} \\
      \norm[\vLeb(T)]{\Pcurl\uvec{v}_T} &\lesssim \tnorm[\CURL,T]{\uvec{v}_T}
    \end{aligned}\right.
    &\qquad&\forall \uvec{v}_T\in\Xcurl{T},\\
    \label{eq:bound.Pdiv}
    &\norm[\vLeb(T)]{\Pdiv\uvec{w}_T}\lesssim \tnorm[\DIV,T]{\uvec{w}_T}
    &\qquad&\forall \uvec{w}_T\in\Xdiv{T}.
  \end{alignat}
\end{proposition}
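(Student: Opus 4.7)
The plan is to estimate each potential and trace reconstruction by testing its defining equation against a well-chosen element produced by the vector-calculus isomorphisms \eqref{eq:iso:VROTF.GRAD}--\eqref{eq:iso:CURL}, whose sharp scaling in $h_Y$ is quantified in Lemma \ref{lem:norm.isomorphisms}. Throughout, one uses the standard discrete inverse inequality $\norm[\Leb(F)]{\DIV_F\bvec{w}_F}\lesssim h_F^{-1}\norm[\vLeb(F)]{\bvec{w}_F}$ on polynomial spaces and the discrete trace inequality $\norm[\Leb(E)]{r_F}\lesssim h_F^{-\nicefrac12}\norm[\Leb(F)]{r_F}$ (and its volume-to-face analogue), together with mesh-regularity $h_E\simeq h_F\simeq h_T$ for $E\in\EF$, $F\in\FT$.

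The first block, \eqref{eq:bound.trF.Pgrad}, is handled by bootstrapping. First I would bound $\cGF\underline{q}_F$ by writing \eqref{eq:cGF} with the test function $\bvec{w}_F=\cGF\underline{q}_F$; a Cauchy--Schwarz argument combined with the inverse/trace inequalities above yields $\norm[\vLeb(F)]{\cGF\underline{q}_F}\lesssim h_F^{-1}\tnorm[\GRAD,F]{\underline{q}_F}$. Then, using the isomorphism \eqref{eq:iso:DIV} to pick $\bvec{v}_F\in\cRoly{k+2}(F)$ with $\DIV_F\bvec{v}_F=\trF\underline{q}_F$ and $\norm[\vLeb(F)]{\bvec{v}_F}\lesssim h_F\norm[\Leb(F)]{\trF\underline{q}_F}$ (Lemma \ref{lem:norm.isomorphisms}), \eqref{eq:trF} immediately gives the face estimate. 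The element bound on $\Pgrad$ is obtained identically: one first controls $\cGT\underline{q}_T$ from \eqref{eq:cGT} (using the face estimate on $\trF$ just proved), then tests \eqref{eq:PgradT} against $\bvec{v}_T\in\cRoly{k+2}(T)$ satisfying $\DIV\bvec{v}_T=\Pgrad\underline{q}_T$ with $\norm[\vLeb(T)]{\bvec{v}_T}\lesssim h_T\norm[\Leb(T)]{\Pgrad\underline{q}_T}$.

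For \eqref{eq:bound.trFt.Pcurl}, the recovery norm equivalence \eqref{eq:estimate.norm.rec} together with \eqref{eq:Pcurl} and \eqref{eq:trFt} reduces the problem to bounding only the $\Roly{k}$-components $\trFtR\uvec{v}_F$ and $\PcurlR\uvec{v}_T$, the complementary components being already part of the target norm. To estimate $\CF\uvec{v}_F$, one tests \eqref{eq:CF} with $r_F=\CF\uvec{v}_F$ to get $h_F\norm[\Leb(F)]{\CF\uvec{v}_F}\lesssim\tnorm[\CURL,F]{\uvec{v}_F}$. Then, selecting $r_F\in\Poly{0,k+1}(F)$ via the isomorphism \eqref{eq:iso:VROTF.GRAD} so that $\VROT_F r_F=\trFtR\uvec{v}_F$ (with $\norm[\Leb(F)]{r_F}\lesssim h_F\norm[\vLeb(F)]{\trFtR\uvec{v}_F}$ by Lemma \ref{lem:norm.isomorphisms}) and using \eqref{eq:trFt:Roly.k} yields the face bound. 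The element counterpart proceeds along the same template: first bound $\cCT\uvec{v}_T$ via its definition \eqref{eq:cCT}, feeding in the face bound on $\trFt\uvec{v}_F$ just obtained, then test \eqref{eq:PcurlR} with $\bvec{w}_T\in\cGoly{k+1}(T)$ satisfying $\CURL\bvec{w}_T=\PcurlR\uvec{v}_T$ and $\norm[\vLeb(T)]{\bvec{w}_T}\lesssim h_T\norm[\vLeb(T)]{\PcurlR\uvec{v}_T}$ via \eqref{eq:iso:CURL}.

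For \eqref{eq:bound.Pdiv}, invoke once more \eqref{eq:estimate.norm.rec} and \eqref{eq:Pdiv} to reduce to estimating $\PdivG\uvec{w}_T$. Bound $\DT\uvec{w}_T$ first by testing \eqref{eq:DT} against $q_T=\DT\uvec{w}_T$, which gives $h_T\norm[\Leb(T)]{\DT\uvec{w}_T}\lesssim\tnorm[\DIV,T]{\uvec{w}_T}$. Then write $\PdivG\uvec{w}_T=\GRAD r_T$ with $r_T\in\Poly{0,k+1}(T)$ (which exists by the polynomial exactness underpinning \eqref{eq:iso:DIV}); a scaling/Poincar\'e argument on the zero-mean polynomial space gives $\norm[\Leb(T)]{r_T}\lesssim h_T\norm[\vLeb(T)]{\GRAD r_T}$, and the identity \eqref{eq:PdivG} together with a discrete trace inequality closes the estimate. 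The main obstacle, as in \cite[Proposition 13]{Di-Pietro.Droniou:20*1}, is keeping track of the correct powers of $h_Y$ at each step so that the bootstrap from $\cGF, \CF, \cCT, \DT$ to the potentials/traces yields exactly the scaling encoded in the component norms $\tnorm[\bullet,T]{\cdot}$; Lemma \ref{lem:norm.isomorphisms} is precisely what makes this bookkeeping close.
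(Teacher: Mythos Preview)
Your proposal is correct and follows exactly the approach the paper indicates: the paper itself does not give a detailed proof but simply states that the result follows from the recovery-operator estimate \eqref{eq:estimate.norm.rec} and Lemma \ref{lem:norm.isomorphisms}, ``in a similar way as in the proof of \cite[Proposition 13]{Di-Pietro.Droniou:20*1}''. Your bootstrap (first bound the full local operators $\cGF$, $\cGT$, $\CF$, $\cCT$, $\DT$ by testing against themselves, then bound the traces/potentials by testing their defining relations against the preimage of the target under the relevant polynomial isomorphism) is precisely the template of that reference, and your use of \eqref{eq:estimate.norm.rec} to reduce $\trFt$, $\Pcurl$, $\Pdiv$ to their $\Roly{k}$-/$\Goly{k}$-components is exactly the point of introducing the recovery operator.
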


We next establish the equivalence of the norms corresponding to the discrete $\Leb$-products and the component norms.

\begin{lemma}[Equivalence of norms]\label{lem:equiv.norms}
Let $\bullet\in\{\GRAD,\CURL,\DIV\}$. We have, for any $T\in\Th$,
\begin{equation}\label{eq:equiv.norms}
  \tnorm[\bullet,T]{\underline{z}_T}\simeq \norm[\bullet,T]{\underline{z}_T}\qquad\forall \underline{z}_T\in\Xbullet{T}.
\end{equation}
\end{lemma}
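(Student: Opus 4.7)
The plan is to prove the two inequalities separately for each $\bullet\in\{\GRAD,\CURL,\DIV\}$, relying on three ingredients: the boundedness of the potential reconstructions (Proposition \ref{prop:bound.P}); the characterisations of the polynomial components of $\underline{z}_T$ as $\Leb$-orthogonal projections of the potential $\bvec{P}_{\bullet,T}\underline{z}_T$ or of its scalar/tangential trace, furnished by \eqref{eq:Pgrad:projection}, \eqref{eq:lproj.k-1.trF}, \eqref{eq:proj.Pcurl}, \eqref{eq:Rproj.k-1.trFt} and \eqref{eq:proj.Pdiv}; and the standard discrete trace inequalities $h_F^{1/2}\norm[\Leb(F)]{p}\lesssim\norm[\Leb(T)]{p}$, $h_E^{1/2}\norm[\Leb(E)]{p}\lesssim\norm[\Leb(F)]{p}$ for polynomials, together with the mesh regularity equivalences $h_F\simeq h_E\simeq h_T$ for $F\in\FT$ and $E\in\EF$.

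For the bound $\norm[\bullet,T]{\underline{z}_T}\lesssim\tnorm[\bullet,T]{\underline{z}_T}$, I would expand $\norm[\bullet,T]{\underline{z}_T}^2$ into a potential part, directly controlled by $\tnorm[\bullet,T]{\underline{z}_T}^2$ via Proposition \ref{prop:bound.P}, and the stabilisation part. Each stabilisation term is then split by the triangle inequality into a face or edge $\Leb$-norm of the potential --- bounded by $\norm[\Leb(T)]{\bvec{P}_{\bullet,T}\underline{z}_T}^2$ through discrete trace, with the $h_F$ or $h_E^2$ weight absorbed using mesh regularity --- and the corresponding $\Leb$-norm of the boundary polynomial component of $\underline{z}_T$, which is either directly a summand of $\tnorm[\bullet,T]{\underline{z}_T}^2$ or (for $\trF\underline{q}_F$ and $\trFt\uvec{v}_F$) bounded by one via Proposition \ref{prop:bound.P}.

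For the converse bound $\tnorm[\bullet,T]{\underline{z}_T}\lesssim\norm[\bullet,T]{\underline{z}_T}$, I would recover each polynomial component of $\underline{z}_T$ from the potential and the stabilisation. Components arising as $\Leb$-orthogonal projections of $\bvec{P}_{\bullet,T}\underline{z}_T$ or of its trace (namely $q_T$, $q_F$, the pairs $(\bvec{v}_{\cvec{R},T},\bvec{v}_{\cvec{R},T}^\compl)$ and $(\bvec{v}_{\cvec{R},F},\bvec{v}_{\cvec{R},F}^\compl)$, and $(\bvec{w}_{\cvec{G},T},\bvec{w}_{\cvec{G},T}^\compl)$) are bounded immediately by the $\Leb$-norm of the corresponding potential or face trace. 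The trace of the potential on a face, when it appears, is in turn controlled by $\norm[\Leb(T)]{\bvec{P}_{\bullet,T}\underline{z}_T}^2$ plus the stabilisation, via the decomposition $\mathrm{tr}=\bvec{P}_{\bullet,T}\underline{z}_T-(\bvec{P}_{\bullet,T}\underline{z}_T-\mathrm{tr})$ and a discrete trace inequality on the first piece. For the remaining boundary components that are \emph{not} projections of the potential --- the edge polynomials $q_E$ and $v_E$, as well as the face polynomial $w_F$ in $\Xdiv{T}$ --- the same add-and-subtract trick applies directly on the boundary entity, with the first piece controlled by discrete trace from $T$ down to $F$ or $E$, and the second by the corresponding stabilisation term.

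The main obstacle is the bookkeeping of geometric scalings in the edge contributions. The term $\norm[\Leb(E)]{q_E}^2$ enters $\tnorm[\GRAD,T]{\underline{q}_T}^2$ with weight $h_Fh_E$ through the nested definition \eqref{eq:tnorm.T.F}, while the edge stabilisation in \eqref{eq:sgradT} carries the weight $h_E^2$; reconciling these (and analogously for $\Xcurl{T}$) relies on $h_F\simeq h_E\simeq h_T$. Once this bookkeeping is in place, the proof reduces to a routine assembly of the ingredients listed above.
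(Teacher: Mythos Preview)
Your proposal is correct and follows essentially the same approach as the paper, which only writes out the case $\bullet=\DIV$ (the others being referred to as similar, with a citation for $\bullet=\CURL$): expand $\norm[\bullet,T]{{\cdot}}^2$ and use triangle and discrete trace inequalities together with Proposition~\ref{prop:bound.P} for one direction, and use the projection identities \eqref{eq:proj.Pdiv} (and their analogues \eqref{eq:Pgrad:projection}, \eqref{eq:lproj.k-1.trF}, \eqref{eq:Rproj.k-1.trFt}, \eqref{eq:proj.Pcurl} that you correctly identify) combined with an add-and-subtract of the potential for the other. Your treatment of the $\GRAD$ and $\CURL$ cases, including the bookkeeping of the $h_Fh_E\simeq h_E^2$ scaling, is exactly the ``similar'' argument the paper leaves implicit.
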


\begin{proof}
We only prove the result for $\bullet=\DIV$, the other cases being similar (see also \cite[Proposition 14]{Di-Pietro.Droniou:20*1} for $\bullet=\CURL$ in the case of orthogonal complements, instead of the Koszul complements \eqref{eq:spaces.F}, \eqref{eq:spaces.T}).
Let $T\in\Th$ and $\uvec{w}_T\in\Xdiv{T}$.
By definition \eqref{eq:Xdiv:l2.prod} of the $\Leb$-product on $\Xdiv{T}$, we have
\begin{align*}
\norm[\DIV,T]{\uvec{w}_T}^2={}&\norm[\vLeb(T)]{\Pdiv\uvec{w}_T}^2+\sum_{F\in\FT}h_F\norm[\Leb(F)]{\Pdiv\uvec{w}_T\cdot\normal_F-w_F}^2\\
\lesssim{}& \norm[\vLeb(T)]{\Pdiv\uvec{w}_T}^2+\sum_{F\in\FT}h_F\norm[\Leb(F)]{w_F}^2\lesssim \tnorm[\DIV,T]{\uvec{w}_T}^2,
\end{align*}
where the first inequality follows from a triangle inequality together with the discrete trace inequality $h_F\norm[\vLeb(F)]{\Pdiv\uvec{w}_T}^2\lesssim\norm[\vLeb(T)]{\Pdiv\uvec{w}_T}^2$ (see \cite[Lemma 1.32]{Di-Pietro.Droniou:20}), while the conclusion is a consequence of \eqref{eq:bound.Pdiv} together with the definition of $\tnorm[\DIV,T]{\uvec{w}_T}$. This proves $\gtrsim$ in \eqref{eq:equiv.norms}.

To prove the converse inequality, we start from \eqref{eq:proj.Pdiv} to write
\begin{align*}
  \tnorm[\DIV,T]{\uvec{w}_T}^2
  ={}&\norm[\vLeb(T)]{\Gproj{k-1}{T}(\Pdiv\uvec{w}_T)}^2+ \norm[\vLeb(T)]{\Gcproj{k}{T}(\Pdiv\uvec{w}_T)}^2\\
  &+ \sum_{F\in\FT} h_F \norm[\Leb(F)]{w_F}^2\\
  \lesssim{}& \norm[\vLeb(T)]{\Pdiv\uvec{w}_T}^2 + \sum_{F\in\FT} h_F \norm[\Leb(F)]{w_F-\Pdiv\uvec{w}_T\cdot\normal_F}^2
  \\
  &+\sum_{F\in\FT} h_F \norm[\Leb(F)]{\Pdiv\uvec{w}_T\cdot\normal_F}^2\\
\lesssim{}& \norm[\DIV,T]{\uvec{w}_T}^2,
\end{align*}
where the first inequality follows from the $\Leb$-boundedness of the orthogonal projectors $\Gproj{k-1}{T}$ and $\Gcproj{k}{T}$ together with a triangle inequality,
and the conclusion is obtained invoking the same discrete trace inequality as before together with the definition of $\norm[\DIV,T]{\uvec{w}_T}$. This proves $\lesssim$ in \eqref{eq:equiv.norms}.\qed
\end{proof}

\begin{lemma}[Boundedness of local interpolators]\label{lem:bound.I}
  It holds, for all $T\in\Th$,
  \begin{alignat}{2}
    \label{eq:bound.Igrad}
    \tnorm[\GRAD,T]{\Igrad{T} q}&\lesssim \norm[\Leb(T)]{q}+h_T\seminorm[\Sob{1}(T)]{q}+h_T^2\seminorm[\Sob{2}(T)]{q}&\quad&\forall q\in \Sob{2}(T),\\
    \label{eq:bound.Icurl}
    \tnorm[\CURL,T]{\Icurl{T} \bvec{v}}&\lesssim \norm[\vLeb(T)]{\bvec{v}}+h_T\seminorm[\vSob{1}(T)]{\bvec{v}}+h_T^2\seminorm[\vSob{2}(T)]{\bvec{v}}&\quad&\forall \bvec{v}\in \vSob{2}(T),\\
    \label{eq:bound.Idiv}
    \tnorm[\DIV,T]{\Idiv{T}\bvec{w}}&\lesssim \norm[\vLeb(T)]{\bvec{w}}+h_T\seminorm[\vSob{1}(T)]{\bvec{w}}&\quad&\forall \bvec{w}\in \vSob{1}(T).
  \end{alignat}
\end{lemma}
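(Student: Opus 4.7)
The plan is to expand each of the three component norms on the left-hand sides using the definitions in Section~\ref{sec:components.norms}, then bound the resulting polynomial components of the interpolate one by one. Three tools will suffice: the $\Leb$-boundedness of all involved orthogonal projectors ($\lproj{\ell}{\cdot}$, $\Gproj{\ell}{\cdot}$, $\Gcproj{\ell}{\cdot}$, $\Rproj{\ell}{\cdot}$, $\Rcproj{\ell}{\cdot}$), the continuous trace inequality $\norm[\Leb(F)]{v}^2 \lesssim h_T^{-1}\norm[\Leb(T)]{v}^2 + h_T\seminorm[\Sob{1}(T)]{v}^2$ for $v\in\Sob{1}(T)$, and, to handle the edge and vertex terms in \eqref{eq:bound.Igrad} and \eqref{eq:bound.Icurl}, the Sobolev embedding $\Sob{2}(T)\hookrightarrow \rC{0}(\overline{T})$ (valid in dimension three), whose scaling to a reference element of unit diameter yields
\[
\norm[\rC{0}(\overline{T})]{q}^2\lesssim h_T^{-3}\norm[\Leb(T)]{q}^2+h_T^{-1}\seminorm[\Sob{1}(T)]{q}^2+h_T\seminorm[\Sob{2}(T)]{q}^2.
\]

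For \eqref{eq:bound.Igrad}, the element term $\norm[\Leb(T)]{\lproj{k-1}{T}q}$ and the face term $h_F^{1/2}\norm[\Leb(F)]{\lproj{k-1}{F}q}$ fall directly to the first two tools, producing the $\norm[\Leb(T)]{q}$ and $h_T\seminorm[\Sob{1}(T)]{q}$ contributions. The only nontrivial piece is the edge term $\norm[\Leb(E)]{q_E}$, where $q_E\in\Poly{k+1}(E)$ is characterised by $\lproj{k-1}{E}q_E=\lproj{k-1}{E}q$ and $q_E(\bvec{x}_V)=q(\bvec{x}_V)$ for $V\in\VE$ through \eqref{eq:Poly.c.ell:isomorphism}. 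I would pull $q_E$ back to a reference unit-length edge, invoke the equivalence of all norms on the finite-dimensional space $\Poly{k+1}$ of the reference edge (the data-to-polynomial map being a linear isomorphism), and scale back to obtain
\[
\norm[\Leb(E)]{q_E}^2\lesssim \norm[\Leb(E)]{q}^2+h_E\sum_{V\in\VE}|q(\bvec{x}_V)|^2.
\]
Bounding $\norm[\Leb(E)]{q}^2\le h_E\norm[\rC{0}(\overline{T})]{q}^2$ and $|q(\bvec{x}_V)|^2 \le \norm[\rC{0}(\overline{T})]{q}^2$, applying the scaled Sobolev embedding, and multiplying by the prefactor $h_Fh_E\lesssim h_T^2$ from \eqref{eq:tnorm.T.F} then gives the desired estimate.

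The bounds \eqref{eq:bound.Icurl} and \eqref{eq:bound.Idiv} would follow the same template. For the curl, the volume components $\bvec{v}_{\cvec{R},T}$, $\bvec{v}_{\cvec{R},T}^\compl$ and face components $\bvec{v}_{\cvec{R},F}$, $\bvec{v}_{\cvec{R},F}^\compl$ succumb to the $\Leb$-boundedness of $\Rproj{\cdot}{\cdot}$ and $\Rcproj{\cdot}{\cdot}$, combined with the continuous trace inequality applied to $\bvec{v}_{{\rm t},F}$ for the face terms; the edge component $v_E=\lproj{k}{E}(\bvec{v}\cdot\tangent_E)$ is controlled by $\norm[\Leb(E)]{v_E}^2\le h_E\norm[\vC{0}(\overline{T})]{\bvec{v}}^2$ and then by the scaled embedding, providing the $h_T^2\seminorm[\vSob{2}(T)]{\bvec{v}}$ term. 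For the divergence, there are no edge or vertex contributions, so the $\Leb$-boundedness of $\Gproj{k-1}{T}$, $\Gcproj{k}{T}$, $\lproj{k}{F}$ together with the continuous trace inequality on $\bvec{w}\cdot\normal_F$ suffice, explaining why only $\vSob{1}$-regularity is required in \eqref{eq:bound.Idiv}.

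The main obstacle is the edge/vertex contribution in the first two estimates: since edges have codimension two in $\Real^3$, no $\Sob{1}$-trace to edges exists, and one is forced to invoke the $\Sob{2}\hookrightarrow \rC{0}$ embedding. The only delicate book-keeping is aligning the powers of $h_T$ arising from (i) the reference-edge rescaling, (ii) the scaled Sobolev constant, and (iii) the prefactors $h_F$, $h_E$ in $\tnorm[\GRAD,T]{\cdot}$ and $\tnorm[\CURL,T]{\cdot}$, so that the final estimate exhibits exactly the $1{:}h_T{:}h_T^2$ weighting of the three regularity terms.
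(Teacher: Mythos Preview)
Your proposal is correct and follows essentially the same approach as the paper: the key ingredient in both is the scaled Sobolev embedding $\Sob{2}(T)\hookrightarrow\rC{0}(\overline{T})$ to handle the codimension-two edge/vertex contributions in \eqref{eq:bound.Igrad} and \eqref{eq:bound.Icurl}, while \eqref{eq:bound.Idiv} reduces to the continuous trace inequality. The only organisational difference is that the paper bounds \emph{all} components of $\Igrad{T}q$ (volume, face, and edge) at once by $|T|^{1/2}\max_T|q|$ and then invokes the embedding, whereas you treat the volume and face terms separately via projector boundedness and the trace inequality before turning to the embedding for the edge term; both routes lead to the same estimate.
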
  

\begin{remark}[Boundedness in other norms]
  The boundedness of $\Igrad{T}$ and $\Icurl{T}$ could easily be stated using norms in larger spaces (typically, $C^0(\overline{T})$ for $\Igrad{T}$, and the same spaces on which usual N\'ed\'elec interpolators are defined for $\Icurl{T}$ -- see \cite[Section 2.5.3]{Boffi.Brezzi.ea:13}). However, the role of Lemma \ref{lem:bound.I} is to enable primal consistency estimates (Theorem \ref{thm:approx.PTIT}); since these estimates require higher regularity on the solutions, the bounds \eqref{eq:bound.Igrad} and \eqref{eq:bound.Icurl} stated in non-minimal norms are sufficient to our purpose.
\end{remark}

\begin{proof}[Lemma \ref{lem:bound.I}]
  The definition \eqref{eq:Igradh} of $\Igrad{T}$ shows that $\tnorm[\GRAD,T]{\Igrad{T} q}\lesssim |T|^{\nicefrac12}\max_T|q|$. By \cite[Eq.\ (5.110)]{Di-Pietro.Droniou:20}, it holds 
  \[
  \max_T|q|\lesssim |T|^{-\frac12}\sum_{r=0}^2 h_T^r\seminorm[\Sob{r}(T)]{q},
  \]
  which concludes the proof of \eqref{eq:bound.Igrad}. The estimate \eqref{eq:bound.Icurl} is obtained the same way. As for \eqref{eq:bound.Idiv}, by the continuous trace inequality of \cite[Lemma 1.31]{Di-Pietro.Droniou:20}, we have 
  \[
  \norm[\Leb(F)]{\lproj{k}{F}(\bvec{w}\cdot\normal_F)}\le 
  \norm[\vLeb(F)]{\bvec{w}}\lesssim h_F^{-\frac12}\norm[\vLeb(T)]{\bvec{w}}+h_F^{\frac12}\seminorm[\vSob{1}(T)]{\bvec{w}}.
  \]
  Using this bound in the definition \eqref{eq:Idivh} of $\Idiv{T}$ yields \eqref{eq:bound.Idiv}.\qed
\end{proof}


\subsection{Links between discrete vector potentials and vector calculus operators}

In the next proposition, we show that the element gradient and curl can be recovered applying the suitable potential reconstruction to the corresponding discrete vector calculus operator, in a similar way
as in \eqref{eq:trFt:GF} for the tangential face reconstruction and face gradients.

\begin{proposition}[Link between discrete vector potentials and vector calculus operators]
  For all $T\in\Th$, it holds
  \begin{alignat}{2} \label{eq:Pcurl.uGT=cGT}
      \Pcurl\big(\uGT\underline{q}_T\big) &= \cGT\underline{q}_T &\qquad& \forall\underline{q}_T\in\Xgrad{T},
      \\ \label{eq:Pdiv.uCT=cCT}
      \Pdiv\big(\uCT\uvec{v}_T\big) &= \cCT\uvec{v}_T &\qquad& \forall\uvec{v}_T\in\Xcurl{T}.
    \end{alignat}
\end{proposition}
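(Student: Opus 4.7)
The plan is to prove both identities by the same two-step scheme: show that each side has the same $\Leb$-orthogonal projections on both summands of the Koszul decomposition \eqref{eq:decomposition:Poly.ell.T}, and then invoke the recovery identity \eqref{eq:recovery.rec} applied to the element polynomial ($\cGT\underline{q}_T$ or $\cCT\uvec{v}_T$) on the right-hand side. In each case, the projection onto the Koszul complement is automatic from the definitions of $\uGh$ and $\uCh$, so the substantive work is to identify the ``non-complement'' piece: $\PcurlR(\uGT\underline{q}_T)$ with $\Rproj{k}{T}(\cGT\underline{q}_T)$ for \eqref{eq:Pcurl.uGT=cGT}, and $\PdivG(\uCT\uvec{v}_T)$ with $\Gproj{k}{T}(\cCT\uvec{v}_T)$ for \eqref{eq:Pdiv.uCT=cCT}.

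For \eqref{eq:Pcurl.uGT=cGT}, the complement piece is free: \eqref{eq:uGh} gives $\Rcproj{k}{T}(\cGT\underline{q}_T)$ as the $\cRoly{k}(T)$-component of $\uGT\underline{q}_T$, and \eqref{eq:Pcurl} combined with \eqref{eq:recovery.proj} yields $\Rcproj{k}{T}\big(\Pcurl(\uGT\underline{q}_T)\big) = \Rcproj{k}{T}(\cGT\underline{q}_T)$. For the $\Roly{k}(T)$-piece, I would substitute $\uvec{v}_T = \uGT\underline{q}_T$ into the defining relation \eqref{eq:PcurlR}: the volume curl term vanishes by Remark \ref{rem:ker.cCT}, and the face tangential traces simplify to $\cGF\underline{q}_F$ by \eqref{eq:trFt:GF}. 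The link \eqref{eq:link.GT.GF} between element and face gradients, applied to the test function $\bvec{z}_T = \bvec{w}_T\in\cGoly{k+1}(T)\subset\NE{k+1}(T)$, produces exactly the same expression. The isomorphism $\CURL:\cGoly{k+1}(T)\to\Roly{k}(T)$ from \eqref{eq:iso:CURL} then forces $\PcurlR(\uGT\underline{q}_T) = \Rproj{k}{T}(\cGT\underline{q}_T)$, and \eqref{eq:recovery.rec} concludes.

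The proof of \eqref{eq:Pdiv.uCT=cCT} follows the same blueprint. Here \eqref{eq:uCh} gives $\Gcproj{k}{T}(\cCT\uvec{v}_T)$ as the $\cGoly{k}(T)$-component of $\uCT\uvec{v}_T$, yielding the complement equality at once from \eqref{eq:Pdiv} and \eqref{eq:recovery.proj}. For the remaining $\Goly{k}(T)$-piece, I would feed $\uvec{w}_T = \uCT\uvec{v}_T$ into the extended integration-by-parts formula \eqref{eq:Pdiv:ibp}, kill the divergence term via the inclusion $\Image\uCT\subset\Ker\DT$ (see \eqref{eq:Ker.DT}), and rewrite the resulting face sum as $\int_T\cCT\uvec{v}_T\cdot\GRAD r_T$ by Proposition \ref{prop:cCT.CF}. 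Letting $r_T$ range over $\Poly{0,k+1}(T)$, so that $\GRAD r_T$ spans $\Goly{k}(T)$, then forces $\PdivG(\uCT\uvec{v}_T) = \Gproj{k}{T}(\cCT\uvec{v}_T)$, after which \eqref{eq:recovery.rec} closes the argument.

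I do not anticipate any genuine obstacle: the argument is a careful orchestration of results already established, with the only mild subtlety being to ensure that the test spaces in \eqref{eq:PcurlR} and \eqref{eq:PdivG} are compatible with the broader test spaces in \eqref{eq:link.GT.GF} and \eqref{eq:cCT.CF}. The inclusions $\cGoly{k+1}(T)\subset\NE{k+1}(T)$ and $\Poly{0,k+1}(T)\subset\Poly{k+1}(T)$ (through Remark \ref{rem:ibp.PdivT}) make this automatic.
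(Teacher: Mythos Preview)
Your proposal is correct and follows essentially the same approach as the paper: establish equality of the $\cRoly{k}(T)$ (resp.\ $\cGoly{k}(T)$) projections directly from the definitions, then match the $\Roly{k}(T)$ (resp.\ $\Goly{k}(T)$) projections by combining the vanishing of $\cCT(\uGT\underline{q}_T)$ (resp.\ $\DT(\uCT\uvec{v}_T)$) with \eqref{eq:trFt:GF} and \eqref{eq:link.GT.GF} (resp.\ Proposition \ref{prop:cCT.CF}), before concluding via \eqref{eq:recovery.rec}. The only cosmetic difference is that the paper tests \eqref{eq:ibp.Pcurl} against $\bvec{z}_T\in\NE{k+1}(T)$ and then restricts to $\cGoly{k+1}(T)$, whereas you work directly with \eqref{eq:PcurlR}; the arguments are otherwise identical.
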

\begin{proof}
  \underline{1. \emph{Proof of \eqref{eq:Pcurl.uGT=cGT}}.}
  By the second projection property in \eqref{eq:proj.Pcurl}, we have
  $\Rcproj{k}{T}\big[\Pcurl\big(\uGT\underline{q}_T\big)\big] = \Rcproj{k}{T}\big(\cGT\underline{q}_T\big)$.
  To infer the conclusion, it then suffices to prove that
  \begin{equation}\label{eq:rproj.pcurl.uGT}
    \Rproj{k}{T}\big[\Pcurl\big(\uGT\underline{q}_T\big)\big]
    = \Rproj{k}{T}\big(\cGT\underline{q}_T\big)
  \end{equation}
  and invoke \eqref{eq:recovery.rec}. To prove \eqref{eq:rproj.pcurl.uGT}, we take $\bvec{z}_T\in\NE{k+1}(T)$ and apply \eqref{eq:ibp.Pcurl} with $\uvec{v}_T=\uGT\underline{q}_T$. Using the inclusion $\Image\uGT\subset\Ker\cCT$ (see Remark \ref{rem:ker.cCT}) and the relation $\trFt\big(\uGF\underline{q}_F\big)=\cGF\underline{q}_F$ valid for all $F\in\FT$ (see Proposition \ref{prop:trFt}), we obtain
  \[
  \begin{aligned}
  \int_T\Pcurl\big(\uGT\underline{q}_T\big)\cdot\CURL\bvec{z}_T
  ={}& -\sum_{F\in\FT}\omega_{TF}\int_F\cGF\underline{q}_F\cdot(\bvec{z}_T\times\normal_F)\\
  ={}&\int_T\cGT\underline{q}_T\cdot\CURL\bvec{z}_T,
  \end{aligned}
  \]
  where the conclusion follows from the link between element and face gradients established in Proposition \ref{prop:link.GT.GF}.  
  By the isomorphism \eqref{eq:iso:CURL} with $\ell=k+1$ and since $\cGoly{k+1}(T)\subset\NE{k+1}(T)$, this establishes \eqref{eq:rproj.pcurl.uGT} and concludes the proof of \eqref{eq:Pcurl.uGT=cGT}.
  \medskip\\
  \underline{2. \emph{Proof of \eqref{eq:Pdiv.uCT=cCT}}.}
  The second projection property in \eqref{eq:proj.Pdiv} ensures that 
  \[
  \Gcproj{k}{T}\big[\Pdiv\big(\uCT\uvec{v}_T\big)\big] = \Gcproj{k}{T}\big(\cCT\uvec{v}_T\big).
  \]
  As before, it therefore remains to analyse the projections on $\Goly{k}(T)$.
  Apply \eqref{eq:Pdiv:ibp} to $\uvec{w}_T=\uCT\uvec{v}_T$ and a generic $r_T\in\Poly{k+1}(T)$, and use the inclusion $\Image\uCT\subset\Ker\DT$ (see Proposition \ref{prop:ImCT.KerDT}) to get
  \[
  \int_T\Pdiv\big(\uCT\uvec{v}_T\big)\cdot\GRAD r_T
  = \sum_{F\in\FT}\omega_{TF}\int_F \CF\uvec{v}_F~r_T
  = \int_T \cCT\uvec{v}_T\cdot\GRAD r_T,
  \]
  where the conclusion is obtained applying the link between element and face curls of Proposition \ref{prop:cCT.CF}.
  This yields $\Gproj{k}{T}\big[\Pdiv\big(\uCT\uvec{v}_T\big)\big] = \Gproj{k}{T}\big(\cCT\uvec{v}_T\big)$, proving \eqref{eq:Pdiv.uCT=cCT}.\qed
\end{proof}

\begin{corollary}[Bounds on discrete gradients and curl]\label{cor:bound.full.op}
  For all $F\in\Fh$, it holds
  \begin{equation}
    \label{eq:bound.cGF}
    \norm[\vLeb(F)]{\cGF\underline{q}_F}^2+\sum_{E\in\EF}h_E\norm[\Leb(E)]{G^k_Eq_E}^2\lesssim\tnorm[\CURL,F]{\uGF\underline{q}_F}
    \quad\forall\underline{q}_F\in\Xgrad{F}.
  \end{equation}
  For all $T\in\Th$, it holds
  \begin{multline}
    \label{eq:bound.cGT}
    \norm[\vLeb(T)]{\cGT\underline{q}_T}^2+\sum_{F\in\FT}h_F\norm[\vLeb(F)]{\cGF\underline{q}_F}^2+\sum_{E\in\ET}\hspace{-0.5ex}h_E^2\norm[\Leb(E)]{G^k_Eq_E}^2    \lesssim\tnorm[\CURL,T]{\uGT\underline{q}_T}\\
    \forall\underline{q}_T\in\Xgrad{T},
  \end{multline}
  \begin{equation}
    \label{eq:bound:cCT}
    \norm[\vLeb(T)]{\cCT\uvec{v}_T}^2+\sum_{F\in\FT}h_F\norm[\vLeb(F)]{\CF\uvec{v}_F}^2
    \lesssim\tnorm[\DIV,T]{\uCT\uvec{v}_T}\qquad\forall\uvec{v}_T\in\Xcurl{T}.
  \end{equation}
\end{corollary}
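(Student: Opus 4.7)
The plan is to derive all three bounds from the identities linking full vector calculus operators to potentials, combined with the boundedness estimates of Proposition \ref{prop:bound.P}. (Throughout, I read the right-hand sides as squared, which is consistent with the dimensional balance of the inequalities.)

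\textbf{Bound \eqref{eq:bound.cGF}.} I would invoke \eqref{eq:trFt:GF} to rewrite $\cGF\underline{q}_F = \trFt(\uGF\underline{q}_F)$, then apply the face estimate in \eqref{eq:bound.trFt.Pcurl} to get $\norm[\vLeb(F)]{\cGF\underline{q}_F} \lesssim \tnorm[\CURL,F]{\uGF\underline{q}_F}$. The edge contribution $\sum_{E\in\EF}h_E\norm[\Leb(E)]{\GE q_E}^2$ is already a component of $\tnorm[\CURL,F]{\uGF\underline{q}_F}^2$ by the very definition \eqref{eq:tnorm.curl} of the component norm, since the edge components of $\uGF\underline{q}_F$ are exactly the $(\GE q_E)_{E\in\EF}$ by \eqref{eq:uGh}.

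\textbf{Bound \eqref{eq:bound.cGT}.} For the volumetric term, I would use \eqref{eq:Pcurl.uGT=cGT} to write $\cGT\underline{q}_T = \Pcurl(\uGT\underline{q}_T)$ and apply the element bound in \eqref{eq:bound.trFt.Pcurl} to deduce $\norm[\vLeb(T)]{\cGT\underline{q}_T}\lesssim \tnorm[\CURL,T]{\uGT\underline{q}_T}$. The face sum follows by applying \eqref{eq:bound.cGF} face by face and recognising that $\sum_{F\in\FT}h_F\tnorm[\CURL,F]{(\uGT\underline{q}_T)_F}^2\le \tnorm[\CURL,T]{\uGT\underline{q}_T}^2$ by the definition \eqref{eq:tnorm.curl}. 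The only mildly technical point is the edge sum $\sum_{E\in\ET}h_E^2\norm[\Leb(E)]{\GE q_E}^2$: for each $E\in\ET$ I would select a face $F\in\FT$ with $E\in\EF$ and invoke mesh regularity to write $h_E^2\lesssim h_E h_F$; summing over edges and reorganising the sum by faces then yields a bound by $\sum_{F\in\FT}h_F\sum_{E\in\EF}h_E\norm[\Leb(E)]{\GE q_E}^2 \le \tnorm[\CURL,T]{\uGT\underline{q}_T}^2$. This mesh-regularity bookkeeping step is the only place one has to be careful; the rest is just plugging the right identities into the right boundedness estimate.

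\textbf{Bound \eqref{eq:bound:cCT}.} The argument mirrors the previous one, one step shorter: by \eqref{eq:Pdiv.uCT=cCT} we have $\cCT\uvec{v}_T = \Pdiv(\uCT\uvec{v}_T)$, and \eqref{eq:bound.Pdiv} immediately gives $\norm[\vLeb(T)]{\cCT\uvec{v}_T}\lesssim \tnorm[\DIV,T]{\uCT\uvec{v}_T}$. The face components of $\uCT\uvec{v}_T$ are exactly $(\CF\uvec{v}_F)_{F\in\FT}$ by \eqref{eq:uCh}, so the face sum $\sum_{F\in\FT}h_F\norm[\Leb(F)]{\CF\uvec{v}_F}^2$ is embedded in $\tnorm[\DIV,T]{\uCT\uvec{v}_T}^2$ by definition. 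No further work is needed.
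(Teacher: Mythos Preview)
Your proposal is correct and follows essentially the same route as the paper: identify the edge (resp.\ face) contributions directly as components of the $\tnorm[\CURL,\cdot]{{\cdot}}$ (resp.\ $\tnorm[\DIV,\cdot]{{\cdot}}$) norm of the discrete operator, and recover the full face/element operators via the identities \eqref{eq:trFt:GF}, \eqref{eq:Pcurl.uGT=cGT}, \eqref{eq:Pdiv.uCT=cCT} combined with the boundedness estimates of Proposition~\ref{prop:bound.P}. Your observation that the right-hand sides should be squared is correct, and your mesh-regularity bookkeeping for the edge sum in \eqref{eq:bound.cGT} makes explicit what the paper leaves implicit.
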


\begin{proof}
  The definitions of $\tnorm[\CURL,F]{{\cdot}}$, $\tnorm[\CURL,T]{{\cdot}}$, $\uGF$ and $\uGT$ show that the edge gradient contributions in the left-hand sides of \eqref{eq:bound.cGF} and \eqref{eq:bound.cGT} are bounded by the corresponding right-hand sides. 
  To bound the face and element gradient contributions in the left-hand sides of \eqref{eq:bound.cGF} and \eqref{eq:bound.cGT}, simply apply \eqref{eq:bound.trFt.Pcurl} to $\uvec{v}_T=\uGT\underline{q}_T$ and use \eqref{eq:trFt:GF} along with \eqref{eq:Pcurl.uGT=cGT}.
  The estimate \eqref{eq:bound:cCT} is established in a similar way, using \eqref{eq:Pdiv.uCT=cCT}.\qed
\end{proof}


\section{Poincar\'e inequalities}\label{sec:poincare}

In this section we state and prove Poincar\'e-type inequalities for the operators in the DDR sequence.
Notice that we consider here the complex without boundary conditions, but one could alternatively consider the complex with (homogeneous) boundary conditions, for which similar inequalities are expected to hold.
The details are left for a future work.

\subsection{Discrete Poincar\'e inequalities}

\begin{theorem}[Poincar\'e inequality for the gradient]\label{thm:poincare.grad}
  Let $\underline{q}_h\in\Xgrad{h}$ be such that
  \begin{equation}\label{eq:orthogonality:unit}
  \sum_{T\in\Th}\int_T\Pgrad \underline{q}_T=0.
  \end{equation}
  Then, there exists a real number $C>0$ independent of $h$ and $\underline{q}_h$, and depending only on $\Omega$, $k$, and the mesh regularity parameter, such that
  \begin{equation}\label{eq:poincare.grad}
    \tnorm[\GRAD,h]{\underline{q}_h}\le C\tnorm[\CURL,h]{\uGh\underline{q}_h}.
  \end{equation}
\end{theorem}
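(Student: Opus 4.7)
My plan is to mimic the continuous duality proof of Poincar\'e's inequality for the gradient, using a Bogovskii-type lifting, the commutation property \eqref{eq:DT:commutation}, and the discrete integration by parts \eqref{eq:Pdiv:ibp}. By Lemma~\ref{lem:equiv.norms}, it suffices to estimate $\norm[\GRAD,h]{\underline{q}_h}^2 = \sum_{T\in\Th}\left(\norm[\Leb(T)]{\Pgrad\underline{q}_T}^2 + \mathrm{s}_{\GRAD,T}(\underline{q}_T,\underline{q}_T)\right)$ in terms of $\tnorm[\CURL,h]{\uGh\underline{q}_h}^2$, and the core task is to bound $\norm[\Leb(\Omega)]{\Pgrad\underline{q}_h}$ (the piecewise polynomial with $(\Pgrad\underline{q}_h)_{|T}=\Pgrad\underline{q}_T$, which has zero global mean by \eqref{eq:orthogonality:unit}). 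The surjectivity of $\DIV:\boldsymbol{\mathrm{H}}^1_0(\Omega)\to\Leb_0(\Omega)$ furnishes $\bvec{v}\in\boldsymbol{\mathrm{H}}^1_0(\Omega)$ with $\DIV\bvec{v}=\Pgrad\underline{q}_h$ and $\norm[\vSob{1}(\Omega)]{\bvec{v}}\lesssim\norm[\Leb(\Omega)]{\Pgrad\underline{q}_h}$. Setting $\uvec{v}_h\coloneq\Idiv{h}\bvec{v}$, \eqref{eq:DT:commutation} yields $\DT\uvec{v}_T=\lproj{k}{T}\Pgrad\underline{q}_T$, \eqref{eq:bound.Idiv} gives $\tnorm[\DIV,h]{\uvec{v}_h}\lesssim\norm[\Leb(\Omega)]{\Pgrad\underline{q}_h}$, and every boundary DOF $v_F$ vanishes since $\bvec{v}_{|\partial\Omega}=\bvec{0}$.

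Applying \eqref{eq:Pdiv:ibp} with $r_T=\Pgrad\underline{q}_T\in\Poly{k+1}(T)$, summing over $T\in\Th$, and telescoping interior face terms by inserting $\mp\trF\underline{q}_F$ (single-valued across interior faces, harmlessly multiplied by $v_F=0$ on boundary faces), I obtain
\begin{equation*}
\norm[\Leb(\Omega)]{\lproj{k}{h}\Pgrad\underline{q}_h}^2 = -\sum_{T\in\Th}\int_T\Pdiv\uvec{v}_T\cdot\GRAD\Pgrad\underline{q}_T + \sum_{T\in\Th}\sum_{F\in\FT}\omega_{TF}\int_F v_F(\Pgrad\underline{q}_T-\trF\underline{q}_F).
\end{equation*}
To control $\GRAD\Pgrad\underline{q}_T$ I invoke \eqref{eq:PgradT} with test functions in $\vPoly{k}(T)$ (allowed by Remark~\ref{rem:validity.Pgrad}); integrating by parts the continuous divergence yields, for every $\bvec{v}_T\in\vPoly{k}(T)$,
\begin{equation*}
\int_T(\cGT\underline{q}_T-\GRAD\Pgrad\underline{q}_T)\cdot\bvec{v}_T = \sum_{F\in\FT}\omega_{TF}\int_F(\trF\underline{q}_F-\Pgrad\underline{q}_T)(\bvec{v}_T\cdot\normal_F),
\end{equation*}
and testing with $\bvec{v}_T=\cGT\underline{q}_T-\GRAD\Pgrad\underline{q}_T$ together with a discrete trace inequality gives $\norm[\vLeb(T)]{\GRAD\Pgrad\underline{q}_T}\lesssim\norm[\vLeb(T)]{\cGT\underline{q}_T}+h_T^{-1/2}\bigl(\sum_{F\in\FT}\norm[\Leb(F)]{\Pgrad\underline{q}_T-\trF\underline{q}_F}^2\bigr)^{1/2}$. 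Combining this with \eqref{eq:bound.Pdiv}, Corollary~\ref{cor:bound.full.op} (providing $\sum_T\norm[\vLeb(T)]{\cGT\underline{q}_T}^2\lesssim\tnorm[\CURL,h]{\uGh\underline{q}_h}^2$), and the Poincar\'e--Wirtinger bound $\norm[\Leb(T)]{(\mathrm{Id}-\lproj{k}{T})\Pgrad\underline{q}_T}\le\norm[\Leb(T)]{(\mathrm{Id}-\lproj{0}{T})\Pgrad\underline{q}_T}\lesssim h_T\norm[\vLeb(T)]{\GRAD\Pgrad\underline{q}_T}$ to recover the missing high-order part of $\Pgrad\underline{q}_h$, I absorb $\norm[\Leb(\Omega)]{\Pgrad\underline{q}_h}$ on the left and reach $\norm[\Leb(\Omega)]{\Pgrad\underline{q}_h}\lesssim\tnorm[\CURL,h]{\uGh\underline{q}_h}+\mathcal{S}^{1/2}$, where $\mathcal{S}$ collects face/edge stabilisation-type terms.

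The principal obstacle is then to dominate $\mathcal{S}$ together with the original stabilisation $\sum_T\mathrm{s}_{\GRAD,T}(\underline{q}_T,\underline{q}_T)$ by $\tnorm[\CURL,h]{\uGh\underline{q}_h}^2$, i.e.\ to show that $\sqrt{h_F}\norm[\Leb(F)]{\Pgrad\underline{q}_T-\trF\underline{q}_F}$ and $h_E\norm[\Leb(E)]{\Pgrad\underline{q}_T-q_E}$ are controlled by $\tnorm[\CURL,T]{\uGT\underline{q}_T}$ uniformly in $h$. These quantities measure the accuracy of the element potential on the mesh skeleton and cannot be handled by reference-element techniques in this polytopal setting. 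My expectation is that they will follow by re-expressing $\Pgrad\underline{q}_T-\trF\underline{q}_F$ and $\Pgrad\underline{q}_T-q_E$ as linear functionals of $\cGT\underline{q}_T$, $\cGF\underline{q}_F$ and $\GE q_E$ using the defining relations \eqref{eq:PgradT} and \eqref{eq:trF}, then by combining scaled Poincar\'e--Wirtinger and trace inequalities with the hierarchical structure \eqref{eq:hierarchical.complements} of the Koszul complements to propagate control recursively from edges through faces to elements.
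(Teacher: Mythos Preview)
Your approach is correct and lands on the same decisive local estimate as the paper, but the ``global'' step is organised differently. The paper does not use a Bogovskii lifting: instead, it applies an already-available discrete Poincar\'e--Wirtinger inequality for Hybrid High-Order spaces \cite[Theorem~6.5]{Di-Pietro.Droniou:20} to the hybrid vector $\big((\Pgrad\underline{q}_T)_{T\in\Th},(\trF\underline{q}_F)_{F\in\Fh}\big)$, which immediately gives
\[
\sum_{T\in\Th}\norm[\Leb(T)]{\Pgrad\underline{q}_T}^2
\lesssim
\sum_{T\in\Th}\Big(\norm[\vLeb(T)]{\GRAD\Pgrad\underline{q}_T}^2
+\sum_{F\in\FT}h_F^{-1}\norm[\Leb(F)]{\Pgrad\underline{q}_T-\trF\underline{q}_F}^2\Big).
\]
Your Bogovskii duality argument, together with the $\lproj{k}{T}$/Poincar\'e--Wirtinger recovery of the high-order part, effectively reproves this HHO inequality from scratch. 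Either way, the remaining work is identical: one must show that the right-hand side above is $\lesssim\tnorm[\CURL,h]{\uGh\underline{q}_h}^2$, and this is exactly the content of the paper's preliminary Lemma~\ref{lem:jump.traces} (equations \eqref{eq:grad.trF.qE}--\eqref{eq:grad.Pgrad.trF}), whose proof proceeds precisely along the lines you anticipate (subtracting edge/face averages, Poincar\'e--Wirtinger along $\partial F$, testing \eqref{eq:trF} and \eqref{eq:PgradT} with suitable $\cRoly{k+2}$ functions, and propagating from edges to faces to elements). What each route buys: the paper's version is shorter because it outsources the global Poincar\'e step to a cited result; yours is self-contained and makes the mechanism (duality against $\DIV$) explicit.

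One small slip to flag: when you summarise the ``principal obstacle'' you write that you need to control $\sqrt{h_F}\norm[\Leb(F)]{\Pgrad\underline{q}_T-\trF\underline{q}_F}$, but your own Bogovskii face term and your bound on $\GRAD\Pgrad\underline{q}_T$ actually require the stronger scaling $h_F^{-1/2}\norm[\Leb(F)]{\Pgrad\underline{q}_T-\trF\underline{q}_F}\lesssim\tnorm[\CURL,T]{\uGT\underline{q}_T}$ (the $\sqrt{h_F}$ version would only suffice for $\mathrm{s}_{\GRAD,T}$). This is not a real obstacle---Lemma~\ref{lem:jump.traces} delivers the $h_F^{-1}$-weighted estimate---but you should state the correct target.
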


\begin{proof}
  See Section \ref{sec:poincare:proof.poincare.grad}.
\end{proof}

\begin{remark}[Condition \eqref{eq:orthogonality:unit}]
  For $k\ge 1$, owing to \eqref{eq:Pgrad:projection} the condition \eqref{eq:orthogonality:unit} is equivalent to
  \[
  \sum_{T\in\Th}\int_T q_T=0.
  \]
  For $k=0$, the absence of element components means that translating \eqref{eq:orthogonality:unit} in terms of the components of $\underline{q}_h$ is less straightforward.
  Assuming that, for all $Y\in\Th\cup\Fh$, $Y$ is star-shaped with respect to $\bvec{x}_Y\coloneq\frac{1}{|Y|}\int_Y\bvec{x}$, and that this point is selected in the definition of the complements in Section \ref{sec:polynomial.spaces}, condition \eqref{eq:orthogonality:unit} becomes
  \[
  \sum_{T\in\Th}\sum_{F\in\FT}\sum_{E\in\EF}|P_{TFE}|\lproj{0}{E}q_E
  = \frac12\sum_{T\in\Th}\sum_{F\in\FT}\sum_{E\in\EF}\sum_{V\in\VE}|P_{TFE}| q_{\EF}(\bvec{x}_V)
  = 0,
  \]
  where, for any mesh element $T\in\Th$, face $F\in\FT$, and edge $E\in\EF$ of vertices $V_1$ and $V_2$, $P_{TFE}$ is the tetrahedron of vertices $\bvec{x}_T$, $\bvec{x}_F$, $\bvec{x}_{V_1}$, and $\bvec{x}_{V_2}$.
  This corresponds to the construction on the dual barycentric mesh of \cite[Section 4.1]{Bonelle.Ern:14}.
  \smallskip
  
  We also notice, in passing, that condition \eqref{eq:orthogonality:unit} is not needed when considering the subspace of $\Xgrad{h}$ with homogeneous boundary conditions.
\end{remark}

For the sake of completeness, we state in what follows Poincar\'e inequalities for the curl and the divergence that are easy consequences of the results of \cite{Di-Pietro.Droniou:20*1}.

\begin{theorem}[Poincar\'e inequality for the curl]\label{thm:poincare.curl}
  Denote by $(b_0,b_1,b_2,b_3)$ the Betti numbers of $\Omega$ (with $b_0=1$ and $b_3=0$) and assume $b_2=0$.
  Let $(\Ker\uCh)^\perp$ be the orthogonal of $\Ker\uCh$ in $\Xcurl{h}$ for an inner product whose norm is, uniformly in $h$, equivalent to $\tnorm[\CURL,h]{{\cdot}}$.
  Then, $\uCh:(\Ker\uCh)^\perp\to\Ker\Dh$ is an isomorphism.
  Further assuming that $b_1=0$, there exists $C>0$ independent of $h$, and depending only on $\Omega$, $k$ and the mesh regularity parameter, such that
  \begin{equation}\label{eq:poincare.curl}
    \tnorm[\CURL,h]{\uvec{v}_h}\le C\tnorm[\DIV,h]{\uCh\uvec{v}_h}\qquad\forall\uvec{v}_h\in (\Ker\uCh)^\perp.
  \end{equation}
\end{theorem}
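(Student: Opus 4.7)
The plan is to adapt the strategy of \cite[Theorem 20]{Di-Pietro.Droniou:20*1} to the present Koszul-based construction, which is possible because the proof only relies on the structural ingredients already available here: the complex and exactness properties (Theorem \ref{thm:global.exactness} together with \eqref{eq:Im.uCh.equal.Ker.Dh}), the commutation relations \eqref{eq:uCT:commutation}--\eqref{eq:DT:commutation}, and the boundedness of the discrete operators.

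The isomorphism statement is essentially formal. Since $b_2=0$, \eqref{eq:Im.uCh.equal.Ker.Dh} gives $\Image\uCh=\Ker\Dh$, so $\uCh:\Xcurl{h}\to\Ker\Dh$ is surjective; its restriction to $(\Ker\uCh)^\perp$ is therefore a bijection, and both this map and its inverse are continuous by finite dimensionality. The nontrivial content of the theorem is the uniform (in $h$) bound on the inverse encoded by \eqref{eq:poincare.curl}.

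For the Poincar\'e inequality, fix $\uvec{v}_h\in(\Ker\uCh)^\perp$ and set $\uvec{w}_h\coloneq\uCh\uvec{v}_h\in\Ker\Dh$. I would proceed in three steps. \emph{Step (i):} build a continuous lift $\bvec{w}\in\Hdiv{\Omega}$ with $\DIV\bvec{w}=0$, $\Idiv{h}\bvec{w}=\uvec{w}_h$, and $\norm[\vLeb(\Omega)]{\bvec{w}}\lesssim\tnorm[\DIV,h]{\uvec{w}_h}$. This mirrors the lifting step in the divergence Poincar\'e inequality, exploiting the surjectivity of the continuous divergence together with the commutation \eqref{eq:DT:commutation}. \emph{Step (ii):} since $b_1=0$ and $\DIV\bvec{w}=0$, the continuous curl Poincar\'e inequality on $\Omega$ yields $\bvec{v}\in\Hcurl{\Omega}$ with $\CURL\bvec{v}=\bvec{w}$ and $\norm[\vLeb(\Omega)]{\bvec{v}}\lesssim\norm[\vLeb(\Omega)]{\bvec{w}}$. \emph{Step (iii):} form a discrete analogue $\widetilde{\uvec{v}}_h\in\Xcurl{h}$ of $\bvec{v}$ such that $\uCh\widetilde{\uvec{v}}_h=\uvec{w}_h$ (via \eqref{eq:uCT:commutation}, using $\Idiv{h}(\CURL\bvec{v})=\Idiv{h}\bvec{w}=\uvec{w}_h$) and such that $\tnorm[\CURL,h]{\widetilde{\uvec{v}}_h}\lesssim\norm[\vLeb(\Omega)]{\bvec{v}}+\norm[\vLeb(\Omega)]{\CURL\bvec{v}}$. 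Denoting by $P$ the orthogonal projector on $(\Ker\uCh)^\perp$, the isomorphism from Part~1 forces $\uvec{v}_h=P\widetilde{\uvec{v}}_h$, hence $\tnorm[\CURL,h]{\uvec{v}_h}\le\tnorm[\CURL,h]{\widetilde{\uvec{v}}_h}$, and chaining the bounds of (i)--(iii) gives \eqref{eq:poincare.curl}.

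The main obstacle is step (iii): the interpolator $\Icurl{h}$ as defined in \eqref{eq:Icurlh} requires $\vC{0}(\overline{\Omega})$ regularity, which is not guaranteed by $\bvec{v}\in\Hcurl{\Omega}$ alone. As in \cite{Di-Pietro.Droniou:20*1}, this is circumvented by replacing $\Icurl{h}$ with a quasi-interpolator built from local regularisations on the mesh, enjoying the same commutation with $\uCh$ and controlled in the natural Sobolev norms. Once such a quasi-interpolator is available, step (i) also follows from an analogous construction on $\Xdiv{h}$ -- actually simpler, since $\bvec{w}$ does not need to be recovered as a curl -- and the remaining arguments are direct applications of the exactness and commutation results established earlier in the paper.
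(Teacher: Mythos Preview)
Your proposal is essentially the paper's approach: the paper's proof is nothing more than an appeal to \cite[Theorem 20]{Di-Pietro.Droniou:20*1} combined with \cite[Proposition 16]{Di-Pietro.Droniou:20*1} and the norm equivalence \eqref{eq:equiv.norms}, and you are reconstructing that argument. The isomorphism part and the lifting/quasi-interpolation strategy you sketch are correct, and you rightly flag the regularity mismatch for $\Icurl{h}$ as the genuine technical hurdle, resolved exactly as you say via a smoothed quasi-interpolator with the commutation property.

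One imprecision worth correcting: the assumption $b_1=0$ does \emph{not} enter in your Step~(ii). The continuous curl Poincar\'e inequality on the orthogonal of $\Ker\CURL$ holds regardless of $b_1$; what is needed there is only $b_2=0$, so that $\DIV\bvec{w}=0$ implies $\bvec{w}\in\Image\CURL$. The role of $b_1=0$ in the paper's proof is encapsulated in \cite[Proposition 16]{Di-Pietro.Droniou:20*1}, whose content here is precisely \eqref{eq:Im.uGh.equal.Ker.uCh}: it identifies $\Ker\uCh$ with $\Image\uGh$. The lifting argument you outline most naturally yields the Poincar\'e inequality on $(\Image\uGh)^\perp$ (this is what Theorem~20 of the reference states), and the identification $\Image\uGh=\Ker\uCh$ is then what transfers it to $(\Ker\uCh)^\perp$. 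So $b_1=0$ enters at the very end, not in the continuous potential construction.
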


\begin{proof}
  The isomorphism property is a consequence of \eqref{eq:Im.uCh.equal.Ker.Dh}.
  In order to prove the Poincar\'e inequality \eqref{eq:poincare.curl}, combine \cite[Theorem 20]{Di-Pietro.Droniou:20*1}  with \cite[Proposition 16]{Di-Pietro.Droniou:20*1} (which requires the additional assumption $b_1=0$) and the norm equivalence \eqref{eq:equiv.norms}.\qed
\end{proof}

\begin{theorem}[Poincar\'e inequality for the divergence]\label{thm:poincare.div}
  Let $(\Ker\Dh)^\perp$ be the orthogonal of $\Ker\Dh$ in $\Xdiv{h}$ for an inner product whose norm is, uniformly in $h$, equivalent to $\tnorm[\DIV,h]{{\cdot}}$.
  Then, $\Dh:(\Ker\Dh)^\perp\to\Poly{k}(\Th)$ is an isomorphism and there exists $C>0$ independent of $h$, and depending only on $\Omega$, $k$ and the mesh regularity parameter, such that
  \begin{equation}\label{eq:poincare.div}
    \tnorm[\DIV,h]{\uvec{w}_h}\le C\norm[\Leb(\Omega)]{\Dh\uvec{w}_h}\qquad\forall\uvec{w}_h\in(\Ker\Dh)^\perp.
  \end{equation}
\end{theorem}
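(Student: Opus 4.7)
The plan is to mimic the classical strategy used for the Raviart--Thomas case: lift the target polynomial to a smooth vector field, interpolate, and use the boundedness of the interpolator together with the commutation property.

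First, I would dispose of the isomorphism claim. Surjectivity of $\Dh:\Xdiv{h}\to\Poly{k}(\Th)$ is already recorded in \eqref{eq:Im.Dh.equal.Pk.Th}, and $\Dh$ restricted to $(\Ker\Dh)^\perp$ is trivially injective; combined with the direct sum $\Xdiv{h}=\Ker\Dh\oplus(\Ker\Dh)^\perp$ this yields that $\Dh:(\Ker\Dh)^\perp\to\Poly{k}(\Th)$ is bijective.

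Next, for the Poincar\'e inequality \eqref{eq:poincare.div}, fix $\uvec{w}_h\in(\Ker\Dh)^\perp$ and set $q_h\coloneq\Dh\uvec{w}_h\in\Poly{k}(\Th)\subset\Leb(\Omega)$. Since the continuous divergence $\DIV:\vSob{1}(\Omega)\to\Leb(\Omega)$ is onto with continuous right inverse (cf.\ \cite[Lemma 8.3]{Di-Pietro.Droniou:20}, already invoked in the proof of \eqref{eq:Im.Dh.equal.Pk.Th}), there exists $\bvec{v}\in\vSob{1}(\Omega)$ such that $\DIV\bvec{v}=q_h$ and
\[
\norm[\vSob{1}(\Omega)]{\bvec{v}}\lesssim \norm[\Leb(\Omega)]{q_h}.
\]
Set $\uvec{v}_h\coloneq \Idiv{h}\bvec{v}\in\Xdiv{h}$. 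The local commutation property \eqref{eq:DT:commutation} gives, on each $T\in\Th$, $\DT\uvec{v}_T=\lproj{k}{T}(\DIV\bvec{v})=\lproj{k}{T}(q_{h|T})=q_{h|T}$, hence $\Dh\uvec{v}_h=q_h=\Dh\uvec{w}_h$. Decomposing $\uvec{v}_h=\uvec{v}_h^K+\uvec{v}_h^\perp$ along the orthogonal splitting $\Xdiv{h}=\Ker\Dh\oplus(\Ker\Dh)^\perp$, the injectivity established above forces $\uvec{v}_h^\perp=\uvec{w}_h$.

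It then remains to bound $\tnorm[\DIV,h]{\uvec{w}_h}$ using this identity. By the assumed uniform equivalence between $\tnorm[\DIV,h]{\cdot}$ and the Hilbert norm defining the decomposition, together with the fact that orthogonal projections are norm-contracting in the chosen inner product,
\[
\tnorm[\DIV,h]{\uvec{w}_h}=\tnorm[\DIV,h]{\uvec{v}_h^\perp}\lesssim \tnorm[\DIV,h]{\uvec{v}_h}.
\]
Finally, invoking the local interpolation bound \eqref{eq:bound.Idiv}, summing over $T\in\Th$ and using $h_T\le h\lesssim \mathrm{diam}(\Omega)$ (so the constant depends on $\Omega$ as allowed),
\[
\tnorm[\DIV,h]{\uvec{v}_h}^2=\sum_{T\in\Th}\tnorm[\DIV,T]{\Idiv{T}\bvec{v}}^2\lesssim \norm[\vLeb(\Omega)]{\bvec{v}}^2 + \seminorm[\vSob{1}(\Omega)]{\bvec{v}}^2 \lesssim \norm[\vSob{1}(\Omega)]{\bvec{v}}^2.
\]
Chaining the three previous bounds yields $\tnorm[\DIV,h]{\uvec{w}_h}\lesssim \norm[\Leb(\Omega)]{q_h}=\norm[\Leb(\Omega)]{\Dh\uvec{w}_h}$, which is \eqref{eq:poincare.div}. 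There is no real obstacle here: all the heavy lifting (surjectivity of the continuous divergence, commutation of $\Dh$ with interpolation, and boundedness of $\Idiv{h}$) is already done in earlier results; the only care required is in using the norm equivalence hypothesis to pass from the abstract orthogonal projection to the triple-bar norm.\qed
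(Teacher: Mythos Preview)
Your argument is correct: it is the standard Fortin-type lifting (continuous right inverse of $\DIV$, interpolate, use the commutation \eqref{eq:DT:commutation} and the interpolator bound \eqref{eq:bound.Idiv}, then project onto $(\Ker\Dh)^\perp$ via the assumed norm equivalence). The paper itself does not spell this out but simply invokes \cite[Theorem~18]{Di-Pietro.Droniou:20*1} together with the norm equivalence \eqref{eq:equiv.norms}; your proof is a self-contained reconstruction of essentially the same strategy using only the tools already developed in the present paper.
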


\begin{proof}
  The isomorphism property is a consequence of \eqref{eq:Im.Dh.equal.Pk.Th}.
  The Poincar\'e inequality \eqref{eq:poincare.div} follows from \cite[Theorem 18]{Di-Pietro.Droniou:20*1} accounting for the norm equivalence \eqref{eq:equiv.norms}.\qed
\end{proof}

\subsection{Proof of the discrete Poincar\'e inequality for the gradient}\label{sec:poincare:proof.poincare.grad}

We first prove a preliminary result, which will also be useful to establish adjoint consistency properties for the discrete gradient operator in Section \ref{sec:adjoint.consistency.grad}.

\begin{lemma}[Estimates on local $\Sob{1}$-seminorms of potentials]\label{lem:jump.traces}
  For all $F\in\Fh$ and all $\underline{q}_F\in\Xgrad{F}$, it holds
  \begin{equation}
    \label{eq:grad.trF.qE}
    \norm[\vLeb(F)]{\GRAD\trF\underline{q}_F}^2\,{+}\sum_{E\in\EF}h_E^{-1}\norm[\Leb(E)]{\trF\underline{q}_F-q_E}^2\lesssim \tnorm[\CURL,F]{\uGF\underline{q}_F}^2.
  \end{equation}
  For all $T\in\Th$ and all $\underline{q}_T\in\Xgrad{T}$, it holds
  \begin{equation}
    \label{eq:grad.Pgrad.trF}
    \norm[\vLeb(T)]{\GRAD\Pgrad\underline{q}_T}^2\,{+}\sum_{F\in\FT}h_F^{-1}\norm[\Leb(F)]{\Pgrad\underline{q}_T-\trF\underline{q}_F}^2\lesssim \tnorm[\CURL,T]{\uGT\underline{q}_T}^2.
  \end{equation}
\end{lemma}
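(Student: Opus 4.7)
Both estimates rest on a common integration-by-parts identity. I begin with \eqref{eq:grad.trF.qE}: setting $p_F\coloneq\trF\underline{q}_F\in\Poly{k+1}(F)$, I would integrate by parts $\int_F p_F\DIV_F\bvec{v}_F$ and subtract the defining relation \eqref{eq:trF} to obtain
\begin{equation*}
  \int_F(\GRAD_F p_F-\cGF\underline{q}_F)\cdot\bvec{v}_F = \sum_{E\in\EF}\omega_{FE}\int_E(p_F-q_E)(\bvec{v}_F\cdot\normal_{FE}).
\end{equation*}
By Remark \ref{rem:validity.trF} together with the hierarchical inclusion $\cRoly{k}(F)\subset\cRoly{k+2}(F)$ from \eqref{eq:hierarchical.complements}, this identity is valid for all $\bvec{v}_F\in\Roly{k}(F)\oplus\cRoly{k+2}(F)\supset\vPoly{k}(F)$. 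Testing with $\bvec{v}_F = \GRAD_F p_F-\cGF\underline{q}_F\in\vPoly{k}(F)$, applying Cauchy--Schwarz edge-by-edge, and invoking the discrete trace inequality $h_E\|\bvec{v}_F\|_{\vLeb(E)}^2\lesssim\|\bvec{v}_F\|_{\vLeb(F)}^2$ yields
\begin{equation*}
  \|\GRAD_F p_F-\cGF\underline{q}_F\|_{\vLeb(F)}^2 \lesssim \sum_{E\in\EF}h_E^{-1}\|p_F-q_E\|_{\Leb(E)}^2.
\end{equation*}
A triangle inequality combined with \eqref{eq:bound.cGF} then controls $\|\GRAD_F p_F\|_{\vLeb(F)}$ by $\tnorm[\CURL,F]{\uGF\underline{q}_F}$ plus the edge-jump sum appearing on the right-hand side.

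The delicate and, I expect, main step is to bound that edge-jump sum by $\tnorm[\CURL,F]{\uGF\underline{q}_F}^2$ on its own. My plan is to insert into the key identity a second family of test functions $\bvec{v}_F\in\cRoly{k+2}(F)$, parametrised through the isomorphism $\DIV_F:\cRoly{k+2}(F)\to\Poly{k+1}(F)$ from \eqref{eq:iso:DIV} with the $h_F$-scaling furnished by Lemma \ref{lem:norm.isomorphisms}, designed so that their normal traces localise on a chosen edge and thereby probe each $p_F-q_E$ separately. Equivalently, since every quantity involved is an $\Leb$-type norm of a polynomial in a finite-dimensional space of dimension depending only on $k$, one can rescale $F$ to unit diameter and argue by compactness: the linear map sending $(\cGF\underline{q}_F,(q_E')_{E\in\EF})$ to $(\GRAD_F p_F,(p_F-q_E)_{E\in\EF})$ has kernel precisely the constant-data interpolates $\Igrad{F}C$ (for which both source and target vanish, thanks to \eqref{eq:cGF:consistency} and \eqref{eq:trF:consistency}), and so induces a bounded isomorphism on the reference configuration; scaling back produces the desired inequality with a constant depending only on the mesh regularity parameter.

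Once \eqref{eq:grad.trF.qE} is in place, \eqref{eq:grad.Pgrad.trF} follows by an entirely parallel route. With $p_T\coloneq\Pgrad\underline{q}_T\in\Poly{k+1}(T)$, an integration by parts of $\int_T p_T\DIV\bvec{v}_T$ together with \eqref{eq:PgradT} -- whose range of validity extends to $\vPoly{k}(T)$ by Remark \ref{rem:validity.Pgrad} and the three-dimensional analogue of \eqref{eq:hierarchical.complements} -- yields
\begin{equation*}
  \int_T(\GRAD p_T-\cGT\underline{q}_T)\cdot\bvec{v}_T = \sum_{F\in\FT}\omega_{TF}\int_F(p_T-\trF\underline{q}_F)(\bvec{v}_T\cdot\normal_F)\quad\forall\bvec{v}_T\in\vPoly{k}(T).
\end{equation*}
Testing with $\bvec{v}_T=\GRAD p_T-\cGT\underline{q}_T$ and using \eqref{eq:bound.cGT} reduces the matter to bounding $\sum_{F\in\FT}h_F^{-1}\|p_T-\trF\underline{q}_F\|_{\Leb(F)}^2$ by $\tnorm[\CURL,T]{\uGT\underline{q}_T}^2$; this face-jump sum is then handled by the same localisation/scaling strategy as above, now based on the three-dimensional isomorphism $\DIV:\cRoly{k+2}(T)\to\Poly{k+1}(T)$ of \eqref{eq:iso:DIV}, with the edge contributions that unavoidably appear in the intermediate computations absorbed using the already-established face inequality \eqref{eq:grad.trF.qE}.
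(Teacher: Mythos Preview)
Your integration-by-parts identity and the reduction of $\|\GRAD_F\trF\underline{q}_F\|$ to the edge-jump sum are correct, and they match how the paper handles the gradient term (which in fact comes \emph{last} in the paper's argument). The real divergence between your proposal and the paper is in how the jump sum $\sum_{E\in\EF}h_E^{-1}\|\trF\underline{q}_F-q_E\|^2$ is bounded on its own.

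The paper does this constructively: it subtracts the average $A_{q,\partial F}$ of $q_{\EF}$ over $\partial F$, bounds $\sum_E h_E^{-1}\|q_E-A_{q,\partial F}\|^2$ by a Poincar\'e--Wirtinger inequality along the connected edge skeleton $\partial F$ (this is the step that exploits the continuity of $q_{\EF}$ at vertices, giving $\lesssim h_F\sum_E\|q_E'\|^2\lesssim\tnorm[\CURL,F]{\uGF\underline{q}_F}^2$), and then controls $h_F^{-2}\|\trF\underline{q}_F-A_{q,\partial F}\|_{\Leb(F)}^2$ by taking $\bvec{v}_F\in\cRoly{k+2}(F)$ with $\DIV_F\bvec{v}_F=\trF(\underline{q}_F-\Igrad{F}A_{q,\partial F})$ in \eqref{eq:trF} and invoking Lemma~\ref{lem:norm.isomorphisms}. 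In three dimensions the same average-subtraction idea applies, but there is an additional difficulty your sketch does not address: the face potentials $\trF\underline{q}_F$ are \emph{not} continuous across edges, so one cannot run a Poincar\'e--Wirtinger argument directly on $\partial T$; the paper instead bounds $|A_{q,F}-A_{q,F'}|$ for adjacent faces via a path through shared edges, and this is precisely where the already-established face inequality \eqref{eq:grad.trF.qE} is fed back in.

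Both of your alternative routes for the jump sum have real gaps. The ``localised normal trace'' idea cannot work as stated: once $\bvec{v}_F\in\cRoly{k+2}(F)$ is fixed through its divergence via the isomorphism \eqref{eq:iso:DIV}, there is no freedom left to prescribe its edge-normal traces, and polynomials do not localise on a single edge. The compactness argument is closer in spirit to a valid proof, but your claim that the relevant finite-dimensional space has dimension depending only on $k$ is incorrect (it also depends on $\card(\EF)$, which is bounded by mesh regularity but not fixed), and you do not explain how to obtain a constant uniform over all polygons in the regularity class. This uniformity can in principle be established by an argument in the style of Lemma~\ref{lem:piS.piSc}, but it is appreciably harder here than there, because the operators $\uGF$ and $\trF$ themselves depend on the polygon's combinatorics and geometry, not merely the $\Leb$-measure. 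The paper's explicit route via the skeleton Poincar\'e--Wirtinger inequality sidesteps all of this.
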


\begin{proof}~\\
  \underline{1. \emph{Proof of \eqref{eq:grad.trF.qE}}.}
  Let $\underline{q}_F\in\Xgrad{F}$ and define $A_{q,\partial F}\in\Real$ as the average of $q_{\EF}$ over $\partial F$. Introducing $A_{q,\partial F}=\trF\big(\Igrad{F}A_{q,\partial F}\big)$ (see \eqref{eq:trF:consistency}), using $h_E\simeq h_F$ and $\card(\EF)\lesssim 1$, and invoking a discrete trace inequality on $\trF\big(\underline{q}_F-A_{q,\partial F}\big)$, we have
  \begin{equation}\label{eq:diff.trF.qE}
    \begin{aligned}
      \sum_{E\in\EF}h_E^{-1}\norm[\Leb(E)]{\trF\underline{q}_F-q_E}^2
      &\lesssim\sum_{E\in\EF}h_E^{-1}\norm[\Leb(E)]{q_E-A_{q,\partial F}}^2
      \\
      &\quad
      + h_F^{-2}\norm[\Leb(F)]{\trF\big(\underline{q}_F-A_{q,\partial F}\big)}^2.
    \end{aligned}
  \end{equation}
  Since $q_{\EF}$ is continuous, recalling that $q_E=(q_{\Eh})_{|E}$ for all $E\in\EF$ and using a Poincar\'e--Wirtinger inequality along $\partial F$ followed by the definition \eqref{eq:tnorm.curl} of $\tnorm[\CURL,F]{{\cdot}}$ yields 
  \begin{equation}\label{eq:PW.partialF}
    \sum_{E\in\EF}h_E^{-1}\norm[\Leb(E)]{q_E-A_{q,\partial F}}^2
    \lesssim h_F\sum_{E\in\EF}\norm[\Leb(E)]{\GE q_E}^2\lesssim \tnorm[\CURL,F]{\uGF\underline{q}_F}^2.
  \end{equation}
  We now turn to the second term in \eqref{eq:diff.trF.qE}.
  Using the isomorphism property \eqref{eq:iso:DIV}, we select $\bvec{v}_F\in\cRoly{k+2}(F)$ such that $\DIV_F\bvec{v}_F = {\trF\big(\underline{q}_F-\Igrad{F}A_{q,\partial F}\big)}$.
  By Lemma \ref{lem:norm.isomorphisms} in Appendix \ref{sec:results}, we have 
  \[
  \norm[\vLeb(F)]{\bvec{v}_F}\lesssim h_F\norm[\Leb(F)]{\trF\big(\underline{q}_F-\Igrad{F}A_{q,\partial F}\big)}.
  \]
  The discrete trace inequality of \cite[Lemma 1.32]{Di-Pietro.Droniou:20} and the consistency property \eqref{eq:cGF:consistency} of $\cGF$ then yield
  \begin{gather*}
    \norm[\vLeb(F)]{\bvec{v}_F}+\left(\sum_{E\in\EF}h_E\norm[\Leb(E)]{\bvec{v}_F}^2\right)^{\frac12}
    \lesssim h_F\norm[\Leb(F)]{\trF\big(\underline{q}_F-\Igrad{F}A_{q,\partial F}\big)},
    \\
    \cGF\big(\underline{q}_F-\Igrad{F}A_{q,\partial F}\big) = \cGF\underline{q}_F.
  \end{gather*}
  Hence, applying the definition \eqref{eq:trF} of $\trF$ to $\underline{q}_F-\Igrad{F}A_{q,\partial F}\in \Xgrad{F}$, taking $\bvec{v}_F$ above as a test function, and using Cauchy--Schwarz inequalities, we obtain
  \[
  \begin{aligned}
    &\norm[\Leb(F)]{\trF\big(\underline{q}_F-\Igrad{F}A_{q,\partial F}\big)}^2\\
    &\qquad\lesssim  h_F\norm[\vLeb(F)]{\cGF\underline{q}_F}\norm[\Leb(F)]{\trF\big(\underline{q}_F-\Igrad{F}A_{q,\partial F}\big)}\\
    &\qquad\quad+ \left(\sum_{E\in\EF}h_E^{-1}\norm[\Leb(E)]{q_E-A_{q,\partial F}}^2\right)^{\frac12}
    h_F\norm[\Leb(F)]{\trF\big(\underline{q}_F-\Igrad{F}A_{q,\partial F}\big)}.
  \end{aligned}
  \]
  Simplifying and recalling \eqref{eq:bound.cGF} and \eqref{eq:PW.partialF}, we infer $\norm[\Leb(F)]{\trF\big(\underline{q}_F-A_{q,\partial F}\big)}\lesssim  h_F\tnorm[\CURL,F]{\uGF\underline{q}_F}$ which, plugged together with \eqref{eq:PW.partialF} into \eqref{eq:diff.trF.qE}, gives the following estimate on the second term in the left-hand side of \eqref{eq:grad.trF.qE}:
  \begin{equation}\label{eq:estimate.trF-qE}
    \sum_{E\in\EF}h_E^{-1}\norm[\Leb(E)]{\trF\underline{q}_F-q_E}^2
    \lesssim \tnorm[\CURL,F]{\uGF\underline{q}_F}^2.
  \end{equation}

  Integrating by parts the definition \eqref{eq:trF} of $\trF$ applied to a generic $\bvec{v}_F\in\vPoly{k}(F)$ (see Remark \ref{rem:validity.trF}), we have
  \[
  \begin{aligned}
  \int_F\GRAD_F\trF\underline{q}_F\cdot\bvec{v}_F
  ={}& \int_F\cGF\underline{q}_F\cdot\bvec{v}_F\\
  &+ \sum_{E\in\EF}\omega_{FE}\int_E (\trF\underline{q}_F-q_E)(\bvec{v}_F\cdot\normal_{FE}).
  \end{aligned}
  \]
  Making $\bvec{v}_F=\GRAD_F\trF\underline{q}_F$, using Cauchy--Schwarz inequalities, \eqref{eq:bound.cGF}, a discrete trace inequality, and \eqref{eq:estimate.trF-qE} then yields the bound on the first term in the left-hand side of \eqref{eq:grad.trF.qE}.
  \medskip\\
  \underline{2. \emph{Proof of \eqref{eq:grad.Pgrad.trF}}.}
  The ideas are similar to those used to prove \eqref{eq:grad.trF.qE}, but first we need to establish a Poincar\'e--Wirtinger inequality for face potentials (which is not straightforward given their discontinuity). Let
  \[
  A_{q,\partial T}\coloneq\frac{1}{|\partial T|}\sum_{F\in\FT}|F|A_{q,F}\quad\text{with}\quad
  A_{q,F}\coloneq\frac{1}{|F|}\int_F\trF\underline{q}_F
  \]
  denote the average over $\partial T$ of the piecewise polynomial function defined by $(\trF\underline{q}_F)_{F\in\FT}$. We write, using triangle inequalities,
  \begin{equation}\label{eq:Pgrad.trF.diff}
    \begin{aligned}
      &\sum_{F\in\FT}h_F^{-1}\norm[\Leb(F)]{\Pgrad\underline{q}_T-\trF\underline{q}_F}^2
      \\
      &\quad \lesssim \sum_{F\in\FT}h_F^{-1}\norm[\Leb(F)]{\trF\underline{q}_F-A_{q,F}}^2
      \\
      &\qquad +\sum_{F\in\FT}h_F^{-1}\norm[\Leb(F)]{A_{q,F}-A_{q,\partial T}}^2
      \\
      &\qquad +\sum_{F\in\FT}h_F^{-1}\norm[\Leb(F)]{\Pgrad\underline{q}_T-A_{q,\partial T}}^2
      \eqcolon\term_1+\term_2+\term_3.
    \end{aligned}
  \end{equation}
  The first term is estimated using a Poincar\'e--Wirtinger inequality on $\trF\underline{q}_F$ and invoking \eqref{eq:grad.trF.qE} together with the definition \eqref{eq:tnorm.curl} of $\tnorm[\CURL,T]{{\cdot}}$ to get
  \begin{equation}\label{eq:norm.1.T:T1}
    \begin{aligned}
      \term_1&\lesssim \sum_{F\in\FT}h_F^{-1}\left(h_F\norm[\vLeb(F)]{\GRAD_F\trF\underline{q}_F}\right)^2
      \\
      &\lesssim \sum_{F\in\FT}h_F\tnorm[\CURL,F]{\uGF\underline{q}_F}^2
      \\
      &\lesssim\tnorm[\CURL,T]{\uGT\underline{q}_T}^2.
    \end{aligned}
  \end{equation}
  
  Let us turn to the second term in \eqref{eq:Pgrad.trF.diff}. Since $A_{q,\partial T}$ is a weighted average of all $(A_{q,F})_{F\in\FT}$, the bound
  \begin{equation}\label{eq:norm.1.T:T2}
    \term_2\lesssim \tnorm[\CURL,T]{\uGT\underline{q}_T}^2
  \end{equation}
  follows if we prove that, for all $F,F'\in\FT$,
  \begin{equation}\label{eq:AqF1-AqF2}
    h_F^{-1}\norm[\Leb(F)]{A_{q,F} - A_{q,F'}}^2
    = h_F^{-1}|F|\,|A_{q,F} - A_{q,F'}|^2
    \lesssim\tnorm[\CURL,T]{\uGT\underline{q}_T}^2.
  \end{equation}
  Creating a sequence $(F=F_0,F_1,\ldots,F_m=F')$ of faces in $\FT$ such that, for all $i=0,\ldots,m-1$, the faces $F_i,F_{i+1}$ share an edge $E_i$, inserting
  \begin{multline*}
    -\lproj{0}{E_0}q_{E_0}+\sum_{i=0}^{m-2}\left[\left(\lproj{0}{E_i}q_{E_i} - A_{q,F_{i+1}}\right)-\left(\lproj{0}{E_{i+1}}q_{E_{i+1}}-A_{q,F_{i+1}}\right)\right]
    \\
    +\lproj{0}{E_{m-1}}q_{E_{m-1}}
    = 0
  \end{multline*}
  into $|A_{q,F}-A_{q,F'}|$, using triangle inequalities and the fact that $h_{F_i}\simeq h_{F_{i+1}}$ and $|F_{i}|\simeq |F_{i+1}|$ for all $i=0,\ldots,m-1$ by mesh regularity, and recalling the definition \eqref{eq:tnorm.curl} of $\tnorm[\CURL,F]{{\cdot}}$, \eqref{eq:AqF1-AqF2} is a consequence of
  \begin{equation}\label{eq:AF.AE}
    \forall F\in\FT,\quad
    h_F^{-1}|F|\,|A_{q,F}-\lproj{0}{E}q_E|^2\lesssim h_F\tnorm[\CURL,F]{\uGF\underline{q}_F}^2
    \quad\forall E\in\EF.
  \end{equation}
  To prove this relation, we write
  \begin{align*}
    \norm[\Leb(E)]{A_{q,F}-\lproj{0}{E}q_E}^2
    \le{}&\norm[\Leb(E)]{A_{q,F}-q_E}^2
    \\
    \lesssim{}&h_F^{-1}\norm[\Leb(F)]{A_{q,F} - \trF\underline{q}_F}^2+\norm[\Leb(E)]{\trF\underline{q}_F - q_E}^2
    \\
    \lesssim{}& h_F\tnorm[\CURL,F]{\uGF\underline{q}_F}^2,
  \end{align*}
  where the first inequality comes from the $\Leb$-boundedness of $\lproj{0}{E}$,
  the second inequality is obtained introducing $\trF\underline{q}_F$ and using a triangle inequality together with a discrete trace inequality,
  while \eqref{eq:grad.trF.qE} together with the same arguments that lead to \eqref{eq:norm.1.T:T1} yield the conclusion.
  The relation \eqref{eq:AF.AE} follows noticing that $|F|\simeq h_F|E|$, so that $|F|\,|A_{q,F}-\lproj{0}{E}q_E|^2\simeq h_F\norm[\Leb(E)]{A_{q,F}-\lproj{0}{E}q_E}^2$.
  This concludes the proof of \eqref{eq:AqF1-AqF2}, hence of \eqref{eq:norm.1.T:T2}.
  
  Finally, for $\term_3$, we apply the definition \eqref{eq:PgradT} of $\Pgrad\big(\underline{q}_T-\Igrad{T} A_{q,\partial T}\big)$ with $\bvec{v}_T\in \cRoly{k+2}(T)$ such that 
  \[
  \DIV\bvec{v}_T=\Pgrad\big(\underline{q}_T-\Igrad{T} A_{q,\partial T}\big)
  \]
  and $\norm[\vLeb(T)]{\bvec{v}_T}\lesssim h_T\norm[\Leb(T)]{\Pgrad(\underline{q}_T-\Igrad{T} A_{q,\partial T})}$, see Lemma \ref{lem:norm.isomorphisms}. Using the consistency properties \eqref{eq:Pgrad:polynomial.consistency} of $\Pgrad$, \eqref{eq:cGT:consistency} of $\cGT$ and \eqref{eq:trF:consistency} of $\trF$, and a discrete trace inequality, this gives
  \begin{align}
    \norm[\Leb(T)]{\Pgrad\underline{q}_T- A_{q,\partial T}}
    \lesssim{}& h_T\norm[\vLeb(T)]{\cGT\underline{q}_T}\nonumber\\
    & +h_T\sum_{F\in\FT}h_F^{-\frac12}\norm[\Leb(F)]{\trF\underline{q}_F-A_{q,\partial T}}\nonumber\\
    \lesssim{}&h_T\tnorm[\CURL,T]{\uGT\underline{q}_T} + h_T\left(\term_1^{\frac12}+\term_2^{\frac12}\right),
    \label{eq:Pgrad.trF.diff.2}
  \end{align}
  where the second inequality follows from \eqref{eq:bound.cGT} and a triangle inequality to write 
  \[
  \begin{aligned}
    \sum_{F\in\FT}h_F^{-\frac12}\norm[\Leb(F)]{\trF\underline{q}_F-A_{q,\partial T}}
    &\le \sum_{F\in\FT}h_F^{-\frac12}\norm[\Leb(F)]{\trF\underline{q}_F-A_{q,F}}
    \\
    &\quad +\sum_{F\in\FT}h_F^{-\frac12}\norm[\Leb(F)]{A_{q,F}-A_{q,\partial T}}.
  \end{aligned}
  \]
  Using discrete trace inequalities and the estimates \eqref{eq:norm.1.T:T1} and \eqref{eq:norm.1.T:T2} on $\term_1$ and $\term_2$, \eqref{eq:Pgrad.trF.diff.2} leads to
  \[
  \term_3\lesssim h_T^{-2}\norm[\Leb(T)]{\Pgrad\underline{q}_T- A_{q,\partial T}}^2\lesssim \tnorm[\CURL,T]{\uGT\underline{q}_T}^2.
  \]
  Plugging this bound together with the estimates on $\term_1$ and $\term_2$ into \eqref{eq:Pgrad.trF.diff} concludes the proof of the bound on the second term in the right-hand side of \eqref{eq:grad.Pgrad.trF}.
  To bound the first term in the left-hand side of \eqref{eq:grad.Pgrad.trF}, we proceed as for $\GRAD_F\trF\underline{q}_F$ in Step 1 of this proof, using an integration by parts in the definition \eqref{eq:PgradT} of $\Pgrad\underline{q}_T$ and selecting the test function $\bvec{v}_T=\GRAD\Pgrad\underline{q}_T$ (see Remark \ref{rem:validity.Pgrad}).\qed
\end{proof}

 We are now ready to prove the discrete Poincar\'e inequality for the gradient.

 \begin{proof}[Theorem \ref{thm:poincare.grad}]
   By the orthogonality condition \eqref{eq:orthogonality:unit}, we can apply the discrete Poincar\'e--Wirtinger inequality in Hybrid High-Order spaces \cite[Theorem 6.5]{Di-Pietro.Droniou:20} (with $p=q=2$) to the vector of element- and face-polynomials $((\Pgrad\underline{q}_T)_{T\in\Th},(\trF\underline{q}_T)_{F\in\Fh})$ to get
   \begin{equation}\label{eq:estimate.poincare.Pgrad}
     \begin{aligned}
       &\sum_{T\in\Th}\norm[\Leb(T)]{\Pgrad\underline{q}_T}^2
       \\
       &\quad\lesssim
       \sum_{T\in\Th}\left(
       \norm[\vLeb(T)]{\GRAD \Pgrad\underline{q}_T}^2
       {+} \hspace{-1ex}\sum_{F\in\FT} h_F^{-1}\norm[\Leb(F)]{\Pgrad\underline{q}_T{-}\,\trF\underline{q}_T}^2
       \right)
       \\
       &\quad\lesssim\tnorm[\CURL,h]{\uGh\underline{q}_h}^2,  
     \end{aligned}
   \end{equation}
  where the conclusion is a consequence of \eqref{eq:grad.Pgrad.trF} followed by the definition \eqref{eq:tnorm.curl} of the $\tnorm[\CURL,h]{{\cdot}}$-norm.
  
  Let $T\in\Th$. By definition \eqref{eq:sgradT} of $\mathrm{s}_{\GRAD,T}$ we have
  \begin{equation}\label{eq:estimate.poincare.sT}
  \begin{aligned}
    \mathrm{s}_{\GRAD,T}(\underline{q}_T,\underline{q}_T)
    &=
    \sum_{F\in\FT}h_F\norm[\Leb(F)]{\Pgrad\underline{q}_T-\trF\underline{q}_T}^2
    \\
    &\quad
    +\sum_{E\in\ET}h_E^2\norm[\Leb(E)]{\Pgrad\underline{q}_T-q_E}^2
    \\
    &\lesssim \sum_{F\in\FT}h_F\norm[\Leb(F)]{\Pgrad\underline{q}_T-\trF\underline{q}_T}^2
    \\
    &\quad
    +\sum_{F\in\FT}\sum_{E\in\EF}h_E^2\norm[\Leb(E)]{\trF\underline{q}_T-q_E}^2
    \\
    &\lesssim h_T^2\tnorm[\CURL,T]{\uGT\underline{q}_T}^2,    
  \end{aligned}
  \end{equation}
  where the first inequality follows writing $\sum_{E\in\ET}\bullet=\frac12 \sum_{F\in\FT}\sum_{E\in\EF}\bullet$, introducing $\pm\trF\underline{q}_F$ in the norms and using triangle and discrete trace inequalities, while the conclusion is obtained invoking \eqref{eq:grad.trF.qE}, \eqref{eq:grad.Pgrad.trF}, $h_E\simeq h_F\simeq h_T$ and the definition of $\tnorm[\CURL,T]{\uGT\underline{q}_T}$.
  
  Using $h_T\lesssim 1$, summing \eqref{eq:estimate.poincare.sT} over $T\in\Th$, and adding the resulting estimate to \eqref{eq:estimate.poincare.Pgrad} we infer that $\norm[\GRAD,h]{\underline{q}_h}\lesssim \tnorm[\CURL,h]{\uGh\underline{q}_h}$. The Poincar\'e inequality \eqref{eq:poincare.grad} then follows from the norm equivalence \eqref{eq:equiv.norms}.\qed
\end{proof}


\section{Consistency results}\label{sec:consistency.results}


\subsection{Primal consistency}

In this section we state consistency results for the discrete potentials, vector calculus operators, stabilisation bilinear forms, and discrete $\Leb$-products.
Because of the nature of the interpolator on $\Xcurl{T}$ (which requires higher regularity of functions), we introduce the following notation: For $T\in\Th$ and $\bvec{v}\in \vSob{\max(k+1,2)}(T)$,
\begin{equation}\label{eq:def.Hkp12}
\seminorm[\vSob{(k+1,2)}(T)]{\bvec{v}}\coloneq\left\{
\begin{array}{l@{\quad}l}\seminorm[\vSob{1}(T)]{\bvec{v}}+h_T\seminorm[\vSob{2}(T)]{\bvec{v}}&\mbox{ if $k=0$},\\
\seminorm[\vSob{k+1}(T)]{\bvec{v}}&\mbox{ if $k\ge 1$}.
\end{array}\right.
\end{equation}
The corresponding global broken seminorm $\seminorm[\vSob{(k+1,2)}(\Th)]{{\cdot}}$ is such that, for all $\bvec{v}\in\vSob{(k+1,2)}(\Th)$,
\[
\seminorm[\vSob{(k+1,2)}(\Th)]{\bvec{v}}\coloneq\left(\sum_{T\in\Th}\seminorm[\vSob{(k+1,2)}(T)]{\bvec{v}}^2\right)^{\nicefrac12}.
\]
The proofs of the following theorems are postponed to Section \ref{sec:proof.primal.consistency}.

\begin{theorem}[Consistency of the potential reconstructions]\label{thm:approx.PTIT}
  It holds, for all $T\in\Th$,
  \begin{alignat}{2} \label{eq:approx.PgradIgrad}
    \norm[\Leb(T)]{\Pgrad\big(\Igrad{T} q\big) - q}&\lesssim h_T^{k+2}\seminorm[\Sob{k+2}(T)]{q}&\quad&\forall q\in \Sob{k+2}(T),
    \\
    \label{eq:approx.PcurlIcurl}
    \norm[\vLeb(T)]{\Pcurl\big(\Icurl{T}\bvec{v}\big) - \bvec{v}}&\lesssim h_T^{k+1}\seminorm[\vSob{(k+1,2)}(T)]{\bvec{v}}&\quad&\forall \bvec{v}\in \vSob{\max(k+1,2)}(T),
    \\
    \label{eq:approx.PdivIdiv}
    \norm[\vLeb(T)]{\Pdiv\big(\Idiv{T}\bvec{w}\big) - \bvec{w}}&\lesssim h_T^{k+1}\seminorm[\vSob{k+1}(T)]{\bvec{w}}&\quad&\forall \bvec{w}\in \vSob{k+1}(T).
  \end{alignat}
\end{theorem}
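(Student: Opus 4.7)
All three estimates will follow from the same template, which combines the polynomial consistency properties of the composed operators $\bvec{P}_{\bullet,T}\circ\uvec{I}_{\bullet,T}$ proved in Section \ref{sec:potentials.L2products} with their boundedness and with standard polynomial approximation results on polytopal elements (e.g., \cite[Theorem 1.45]{Di-Pietro.Droniou:20}). The general idea is that, writing $\bullet$ for either $\GRAD$, $\CURL$ or $\DIV$, the consistency property ensures that $\bvec{P}_{\bullet,T}\circ \uvec{I}_{\bullet,T}$ fixes (up to a projection) a full polynomial subspace of degree $\le k$ or $k+1$. Inserting an optimal polynomial approximation $\pi$ of the function under consideration and using linearity, the error reduces to evaluating $\bvec{P}_{\bullet,T}\circ \uvec{I}_{\bullet,T}$ on the approximation remainder, which we then estimate using the boundedness properties.

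For \eqref{eq:approx.PgradIgrad}, the plan is to pick $\widetilde{q}\in\Poly{k+1}(T)$ such that $\sum_{r=0}^{k+2} h_T^r \seminorm[\Sob{r}(T)]{q - \widetilde{q}}\lesssim h_T^{k+2}\seminorm[\Sob{k+2}(T)]{q}$. The polynomial consistency \eqref{eq:Pgrad:polynomial.consistency} yields $\Pgrad\big(\Igrad{T}\widetilde{q}\big)=\widetilde{q}$, hence
\[
\Pgrad\big(\Igrad{T}q\big) - q = \Pgrad\big(\Igrad{T}(q-\widetilde{q})\big) - (q-\widetilde{q}).
\]
Combining the boundedness of the potential \eqref{eq:bound.trF.Pgrad} with that of the interpolator \eqref{eq:bound.Igrad}, the first term is controlled by $\norm[\Leb(T)]{q-\widetilde{q}} + h_T\seminorm[\Sob{1}(T)]{q-\widetilde{q}} + h_T^2\seminorm[\Sob{2}(T)]{q-\widetilde{q}}$, while the second admits the same bound, and the approximation estimate for $\widetilde{q}$ then concludes.

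For \eqref{eq:approx.PdivIdiv}, the argument is analogous: choose $\widetilde{\bvec{w}}\in\vPoly{k}(T)\subset\RT{k+1}(T)$ with $\norm[\vLeb(T)]{\bvec{w}-\widetilde{\bvec{w}}}+h_T\seminorm[\vSob{1}(T)]{\bvec{w}-\widetilde{\bvec{w}}}\lesssim h_T^{k+1}\seminorm[\vSob{k+1}(T)]{\bvec{w}}$; use \eqref{eq:Pdiv:polynomial.consistency} (noting that $\vlproj{k}{T}\widetilde{\bvec{w}}=\widetilde{\bvec{w}}$) to write $\Pdiv\big(\Idiv{T}\bvec{w}\big) - \bvec{w} = \Pdiv\big(\Idiv{T}(\bvec{w}-\widetilde{\bvec{w}})\big) - (\bvec{w}-\widetilde{\bvec{w}})$; and combine \eqref{eq:bound.Pdiv} with \eqref{eq:bound.Idiv}.

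For \eqref{eq:approx.PcurlIcurl}, the structure is the same but the role of the hybrid seminorm \eqref{eq:def.Hkp12} becomes essential. Using \eqref{eq:Pcurl:polynomial.consistency}, we pick $\widetilde{\bvec{v}}\in\vPoly{k}(T)$ and reduce to estimating $\Pcurl\big(\Icurl{T}(\bvec{v}-\widetilde{\bvec{v}})\big)-(\bvec{v}-\widetilde{\bvec{v}})$. The bounds \eqref{eq:bound.trFt.Pcurl} and \eqref{eq:bound.Icurl} together produce a right-hand side involving $\norm[\vLeb(T)]{\bvec{v}-\widetilde{\bvec{v}}} + h_T\seminorm[\vSob{1}(T)]{\bvec{v}-\widetilde{\bvec{v}}} + h_T^2\seminorm[\vSob{2}(T)]{\bvec{v}-\widetilde{\bvec{v}}}$. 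For $k\ge 1$, selecting $\widetilde{\bvec{v}}$ as an optimal approximation in $\vPoly{k}(T)$ gives each of these three terms bounded by $h_T^{k+1}\seminorm[\vSob{k+1}(T)]{\bvec{v}}$. For $k=0$, $\widetilde{\bvec{v}}$ is only constant so $\seminorm[\vSob{1}]{\bvec{v}-\widetilde{\bvec{v}}}=\seminorm[\vSob{1}]{\bvec{v}}$ and $\seminorm[\vSob{2}]{\bvec{v}-\widetilde{\bvec{v}}}=\seminorm[\vSob{2}]{\bvec{v}}$, whose weighted sum is precisely $h_T\seminorm[\vSob{(1,2)}(T)]{\bvec{v}}$; this is the only place where the asymmetry in the definition \eqref{eq:def.Hkp12} is needed, and it traces back to the quadratic dependence of the $\tnorm[\CURL,T]{{\cdot}}$-boundedness of $\Icurl{T}$ in \eqref{eq:bound.Icurl}.

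The plan involves no real obstacle; the routine point of attention is matching the approximation exponents in each scale, particularly the degenerate $k=0$ case for the curl where the $\vSob{2}$-regularity in \eqref{eq:bound.Icurl} forces the $\seminorm[\vSob{(k+1,2)}(T)]{{\cdot}}$-seminorm in the estimate.\qed
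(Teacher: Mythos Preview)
Your proposal is correct and takes essentially the same approach as the paper: the paper phrases the argument for \eqref{eq:approx.PgradIgrad}, \eqref{eq:approx.PdivIdiv}, and \eqref{eq:approx.PcurlIcurl} with $k\ge 1$ as ``the composed operator is a bounded projector onto the relevant polynomial space, so apply \cite[Lemma~1.43]{Di-Pietro.Droniou:20}'', which is exactly the insert-optimal-polynomial-and-bound argument you spell out explicitly. For the $k=0$ curl case, the paper likewise introduces $\vlproj{0}{T}\bvec{v}$ and handles the residual $\vSob{1}$- and $\vSob{2}$-seminorms directly, matching your treatment.
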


\begin{theorem}[Primal consistency of the discrete vector calculus operators]\label{thm:primal.consistency}
  It holds, for all $T\in\Th$, 
  \begin{alignat}{2}
    \norm[\vLeb(T)]{\cGT\big(\Igrad{T} q\big) {-} \GRAD q}
    \lesssim{}& h_T^{k+1}\seminorm[\Sob{k+2}(T)]{q}
    \qquad \forall q\in\Sob{k+2}(T),\label{eq:approx:cGT.Igrad}
    \\ 
    \norm[\vLeb(T)]{\cCT\big(\Icurl{T}\bvec{v}\big) {-} \CURL\bvec{v}}
    \lesssim{}& h_T^{k+1}\seminorm[\vSob{k+1}(T)]{\CURL\bvec{v}}\nonumber\\
    &\qquad 
      \forall\bvec{v}\in\vSob{2}(T) \mbox{ s.t. } \CURL\bvec{v}\in\vSob{k+1}(T),\label{eq:approx:cCT.Icurl}
    \\
    \norm[\Leb(T)]{\DT\big(\Idiv{T}\bvec{w}\big) {-} \DIV\bvec{w}}
    \lesssim{}& h_T^{k+1}\seminorm[\Sob{k+1}(T)]{\DIV\bvec{w}}\nonumber\\
    &\qquad \forall \bvec{w}\in\vSob{1}(T) \mbox{ s.t. } \DIV\bvec{w}\in\Sob{k+1}(T). \label{eq:approx:DT.Idiv}
  \end{alignat}
\end{theorem}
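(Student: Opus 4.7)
The plan is to combine the commutation properties from Lemma~\ref{lem:commutation}, the identities $\Pcurl\circ\uGT = \cGT$ and $\Pdiv\circ\uCT = \cCT$ of \eqref{eq:Pcurl.uGT=cGT} and \eqref{eq:Pdiv.uCT=cCT}, and the approximation results of Theorem~\ref{thm:approx.PTIT}, resorting to a direct polynomial-consistency-plus-boundedness argument only in the one case where regularity is insufficient. The simplest of the three is \eqref{eq:approx:DT.Idiv}: by the commutation property \eqref{eq:DT:commutation}, $\DT(\Idiv{T}\bvec{w}) = \lproj{k}{T}(\DIV\bvec{w})$, and the claim reduces to the standard $\Leb$-projector error estimate, bounded by $h_T^{k+1}\seminorm[\Sob{k+1}(T)]{\DIV\bvec{w}}$ via classical Bramble--Hilbert (e.g.\ \cite[Theorem~1.45]{Di-Pietro.Droniou:20}).

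For \eqref{eq:approx:cCT.Icurl}, I would combine \eqref{eq:Pdiv.uCT=cCT} and the commutation \eqref{eq:uCT:commutation} to write
\begin{equation*}
  \cCT(\Icurl{T}\bvec{v})
  = \Pdiv\big(\uCT(\Icurl{T}\bvec{v})\big)
  = \Pdiv\big(\Idiv{T}(\CURL\bvec{v})\big),
\end{equation*}
so that the desired error coincides with $\Pdiv(\Idiv{T}(\CURL\bvec{v})) - \CURL\bvec{v}$, which \eqref{eq:approx.PdivIdiv} applied to $\bvec{w}=\CURL\bvec{v}$ bounds by $h_T^{k+1}\seminorm[\vSob{k+1}(T)]{\CURL\bvec{v}}$. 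The assumption $\bvec{v}\in\vSob{2}(T)$ makes \eqref{eq:uCT:commutation} valid, while $\CURL\bvec{v}\in\vSob{k+1}(T)$ matches the hypothesis of \eqref{eq:approx.PdivIdiv}.

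The delicate case is \eqref{eq:approx:cGT.Igrad}. For $k\ge 1$, $q\in\Sob{k+2}(T)\hookrightarrow\rC{1}(\overline T)$ in three dimensions, and the analogous strategy works: \eqref{eq:Pcurl.uGT=cGT} together with the commutation \eqref{eq:uGT:commutation} yields $\cGT(\Igrad{T} q) = \Pcurl(\Icurl{T}(\GRAD q))$, and \eqref{eq:approx.PcurlIcurl} applied to $\GRAD q\in\vSob{k+1}(T)$ delivers the claim since $\seminorm[\vSob{(k+1,2)}(T)]{\GRAD q} = \seminorm[\Sob{k+2}(T)]{q}$ for $k\ge 1$. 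The main obstacle is $k=0$: then $q\in\Sob{2}(T)$ only embeds into $\rC{0}$, not $\rC{1}$, in three dimensions, $\Icurl{T}(\GRAD q)$ is not defined, and this slick approach breaks down. I would resolve this by a direct argument: polynomial consistency \eqref{eq:cGT:consistency} gives, for any $\bvec{p}\in\Poly{k+1}(T)$,
\begin{equation*}
  \cGT(\Igrad{T} q) - \GRAD q
  = \cGT\big(\Igrad{T}(q-\bvec{p})\big) - \GRAD(q-\bvec{p}).
\end{equation*}
Testing the definition \eqref{eq:cGT} of $\cGT$ against $\bvec{w}_T = \cGT(\Igrad{T}(q-\bvec{p}))$, then applying Cauchy--Schwarz with discrete trace \cite[Lemma~1.32]{Di-Pietro.Droniou:20} and inverse inequalities, and bounding the boundary contributions through \eqref{eq:bound.trF.Pgrad} and the interpolator estimate \eqref{eq:bound.Igrad}, leads to
\begin{equation*}
  \norm[\vLeb(T)]{\cGT\big(\Igrad{T}(q-\bvec{p})\big)}
  \lesssim h_T^{-1}\norm[\Leb(T)]{q-\bvec{p}} + \seminorm[\Sob{1}(T)]{q-\bvec{p}} + h_T\seminorm[\Sob{2}(T)]{q-\bvec{p}}.
\end{equation*}
Choosing $\bvec{p}$ as a Bramble--Hilbert approximation of $q$ in $\Poly{k+1}(T)$, so that $\seminorm[\Sob{j}(T)]{q-\bvec{p}}\lesssim h_T^{k+2-j}\seminorm[\Sob{k+2}(T)]{q}$ for $0\le j\le 2$, and combining with $\norm[\vLeb(T)]{\GRAD(q-\bvec{p})}\lesssim h_T^{k+1}\seminorm[\Sob{k+2}(T)]{q}$, yields the bound. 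This direct computation actually works uniformly for every $k\ge 0$ and can serve as a single unified proof of \eqref{eq:approx:cGT.Igrad}.
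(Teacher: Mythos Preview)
Your proposal is correct and matches the paper's proof: the treatments of \eqref{eq:approx:cCT.Icurl} and \eqref{eq:approx:DT.Idiv} are identical, and for \eqref{eq:approx:cGT.Igrad} the paper uses precisely your direct polynomial-consistency-plus-boundedness argument (your bound is the paper's \eqref{eq:bound.GT.Igrad}) uniformly in $k$. Your ``slick'' route via \eqref{eq:Pcurl.uGT=cGT} and \eqref{eq:uGT:commutation} for $k\ge 1$ is also correct and is exactly what the paper records in the remark following the proof; one cosmetic point is that your approximant should be a scalar $p\in\Poly{k+1}(T)$, not $\bvec{p}$.
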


\begin{theorem}[Consistency of stabilisation forms]\label{thm:consistency.sT}
  For all $T\in\Th$, the stabilisation forms defined by \eqref{eq:sgradT}--\eqref{eq:divT} satisfy the following consistency properties:
  \begin{alignat}{2} \label{eq:s.grad.T:consistency}
    \mathrm{s}_{\GRAD,T}(\Igrad{T} q, \Igrad{T} q)^{\frac12}
    &\lesssim h_T^{k+2}\seminorm[\Sob{k+2}(T)]{q}
    &\qquad& \forall q\in \Sob{k+2}(T),
    \\ \label{eq:s.curl.T:consistency}
    \mathrm{s}_{\CURL,T}(\Icurl{T}\bvec{v},\Icurl{T}\bvec{v})^{\frac12}
    &\lesssim h_T^{k+1}\seminorm[\vSob{(k+1,2)}(T)]{\bvec{v}}
    &\qquad& \forall\bvec{v}\in \vSob{\max(k+1,2)}(T),
    \\ \label{eq:s.div.T:consistency}
    \mathrm{s}_{\DIV,T}(\Idiv{T}\bvec{w},\Idiv{T}\bvec{w})^{\frac12}
    &\lesssim h_T^{k+1}\seminorm[\vSob{k+1}(T)]{\bvec{w}}
    &\qquad& \forall\bvec{w}\in \vSob{k+1}(T).
  \end{alignat}
\end{theorem}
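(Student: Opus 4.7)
The plan is to establish all three estimates by the same mechanical template: first verify that each stabilisation form vanishes on interpolates of suitable local polynomials, then bound the stabilisation semi-norm by the natural component norm $\tnorm[\bullet,T]{{\cdot}}$, on which both interpolator bounds (Lemma \ref{lem:bound.I}) and $\Leb$-projection errors are already available.

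\emph{Step 1 (polynomial consistency).} I would first check that $\mathrm{s}_{\GRAD,T}(\Igrad{T}p,\cdot) = 0$ for every $p\in\Poly{k+1}(T)$: \eqref{eq:Pgrad:polynomial.consistency}, \eqref{eq:trF:consistency}, together with uniqueness in the isomorphism \eqref{eq:Poly.c.ell:isomorphism} (which forces $(\Igrad{T}p)_E = p_{|E}$ on every $E\in\ET$) make each difference entering \eqref{eq:sgradT} vanish. The same verification for $\mathrm{s}_{\CURL,T}$ and $\mathrm{s}_{\DIV,T}$ gives polynomial consistency on $\Icurl{T}\bvec{p}$ and $\Idiv{T}\bvec{p}$ for every $\bvec{p}\in\vPoly{k}(T)$; the one subtlety, for $\mathrm{s}_{\CURL,T}$, is that the face traces $\bvec{p}_{t,F}$ must lie in $\NE{k+1}(F)$ in order to apply \eqref{eq:trFt.cons}, which follows from the hierarchical inclusion $\cGoly{k}(F)\subset\cGoly{k+1}(F)$ (cf.\ Remark \ref{rem:hierarchical.complements}) yielding $\vPoly{k}(F)\subset\NE{k+1}(F)$.

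\emph{Step 2 (common core estimate).} Set $\pi q\coloneq\lproj{k+1}{T}q$, $\pi\bvec{v}\coloneq\vlproj{k}{T}\bvec{v}$, $\pi\bvec{w}\coloneq\vlproj{k}{T}\bvec{w}$. Step 1 converts each $\mathrm{s}_{\bullet,T}(\uvec{I}_{\bullet,T}\varphi,\uvec{I}_{\bullet,T}\varphi)$ into the same quantity evaluated on $\uvec{I}_{\bullet,T}(\varphi-\pi\varphi)$. The very definitions \eqref{eq:Xgrad:l2.prod}--\eqref{eq:Xdiv:l2.prod} give $\mathrm{s}_{\bullet,T}(\uvec{z}_T,\uvec{z}_T)\le\norm[\bullet,T]{\uvec{z}_T}^2$, the norm equivalence \eqref{eq:equiv.norms} provides $\norm[\bullet,T]{{\cdot}}\simeq\tnorm[\bullet,T]{{\cdot}}$, and Lemma \ref{lem:bound.I} controls $\tnorm[\bullet,T]{\uvec{I}_{\bullet,T}(\varphi-\pi\varphi)}$ by a combination of $\Leb$-, $\Sob{1}$- and $\Sob{2}$-seminorms of $\varphi-\pi\varphi$ on $T$.

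\emph{Step 3 (projection error and conclusion).} Applying standard approximation estimates for $\Leb$-projectors on a polyhedron (e.g.\ \cite[Lemma 1.43]{Di-Pietro.Droniou:20}) turns these seminorms of $\varphi-\pi\varphi$ into $h_T^{k+2}\seminorm[\Sob{k+2}(T)]{q}$ for \eqref{eq:s.grad.T:consistency}, into $h_T^{k+1}\seminorm[\vSob{k+1}(T)]{\bvec{w}}$ for \eqref{eq:s.div.T:consistency}, and into $h_T^{k+1}\seminorm[\vSob{(k+1,2)}(T)]{\bvec{v}}$ for \eqref{eq:s.curl.T:consistency}: the tailored definition \eqref{eq:def.Hkp12} is precisely what is needed to collapse the three contributions $\norm[\vLeb(T)]{{\cdot}}+h_T\seminorm[\vSob{1}(T)]{{\cdot}}+h_T^2\seminorm[\vSob{2}(T)]{{\cdot}}$ issued from \eqref{eq:bound.Icurl} into a single bound by $h_T^{k+1}\seminorm[\vSob{(k+1,2)}(T)]{\bvec{v}}$, uniformly in the cases $k=0$ and $k\ge 1$. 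No real obstacle is expected: the whole argument is a structural consequence of the way discrete $\Leb$-products, component norms and interpolators have been set up. The only point requiring a little care is the polynomial consistency of $\mathrm{s}_{\CURL,T}$, which hinges on the hierarchical property of the Koszul complements.
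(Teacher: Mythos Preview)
Your proposal is correct and follows essentially the same approach as the paper: subtract the interpolate of the local $\Leb$-projection (which kills the stabilisation by polynomial consistency), bound the stabilisation by $\tnorm[\bullet,T]{{\cdot}}$ via the norm equivalence \eqref{eq:equiv.norms}, apply the interpolator bounds of Lemma \ref{lem:bound.I}, and conclude with the approximation properties of $\lproj{k+1}{T}$/$\vlproj{k}{T}$. The only point you make more explicit than the paper is the inclusion $\vPoly{k}(F)\subset\NE{k+1}(F)$ needed to invoke \eqref{eq:trFt.cons}, which is indeed the one place where the hierarchical property of the Koszul complements enters.
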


The following corollary is a straightforward consequence of Theorems \ref{thm:approx.PTIT} and \ref{thm:consistency.sT}, and of the definitions \eqref{eq:Xgrad:l2.prod}--\eqref{eq:Xdiv:l2.prod} of the discrete $\Leb$-products. Its proof is therefore omitted.

\begin{corollary}[Consistency of discrete $\Leb$-products]
It holds, for all $T\in\Th$,
  \begin{alignat}{2} 
    \left|\int_T q\,\Pgrad \underline{r}_T - (\Igrad{T}q,\underline{r}_T)_{\GRAD,T}\right|&\lesssim h_T^{k+2}\seminorm[\Sob{k+2}(T)]{q}\norm[\GRAD,T]{\underline{r}_T}\nonumber\\
    &\hspace{-3ex}\forall q\in \Sob{k+2}(T)\,,\;\forall\underline{r}_T\in\Xgrad{T},
    \label{eq:consistency.L2.Xgrad}
    \\
    \left|\int_T \bvec{v}\cdot\Pcurl \uvec{\zeta}_T - (\Icurl{T}\bvec{v},\uvec{\zeta}_T)_{\CURL,T}\right|&\lesssim h_T^{k+1}\seminorm[\vSob{(k+1,2)}(T)]{\bvec{v}}\norm[\CURL,T]{\uvec{\zeta}_T}\nonumber\\
    &\hspace{-3ex}\forall \bvec{v}\in \vSob{\max(k+1,2)}(T),\,\forall \uvec{\zeta}_T\in\Xcurl{T},
    \label{eq:consistency.L2.Xcurl}
    \\
    \left|\int_T \bvec{w}\cdot\Pdiv \uvec{\xi}_T - (\Idiv{T}\bvec{w},\uvec{\xi}_T)_{\DIV,T}\right|&\lesssim h_T^{k+1}\seminorm[\vSob{k+1}(T)]{\bvec{w}}\norm[\DIV,T]{\uvec{\xi}_T}\nonumber\\
    &\hspace{-3ex}\forall \bvec{w}\in \vSob{k+1}(T),\,\forall\uvec{\xi}_T\in\Xdiv{T}.
    \label{eq:consistency.L2.Xdiv}
    \end{alignat}
\end{corollary}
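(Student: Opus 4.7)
The plan is to decompose each difference by inserting the appropriate consistency error from Theorem \ref{thm:approx.PTIT}. For instance, for \eqref{eq:consistency.L2.Xgrad}, I would insert $\pm\int_T\Pgrad(\Igrad{T}q)\,\Pgrad\underline{r}_T$ and use the definition \eqref{eq:Xgrad:l2.prod} of the local $\Leb$-product to write
\[
\int_T q\,\Pgrad\underline{r}_T - (\Igrad{T}q,\underline{r}_T)_{\GRAD,T}
= \int_T\bigl(q - \Pgrad(\Igrad{T}q)\bigr)\Pgrad\underline{r}_T - \mathrm{s}_{\GRAD,T}(\Igrad{T}q,\underline{r}_T).
\]
Each piece will then be bounded separately by a Cauchy--Schwarz inequality.

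For the volumetric term, Cauchy--Schwarz combined with the potential consistency estimate \eqref{eq:approx.PgradIgrad} gives a factor $h_T^{k+2}\seminorm[\Sob{k+2}(T)]{q}$ multiplied by $\norm[\Leb(T)]{\Pgrad\underline{r}_T}$. For the stabilisation term, the symmetry and positive semidefiniteness of $\mathrm{s}_{\GRAD,T}$ (built into its definition \eqref{eq:sgradT} as a sum of squared $\Leb$-norms) enable a Cauchy--Schwarz inequality on the form itself; applying then \eqref{eq:s.grad.T:consistency} yields a factor $h_T^{k+2}\seminorm[\Sob{k+2}(T)]{q}$ multiplied by $\mathrm{s}_{\GRAD,T}(\underline{r}_T,\underline{r}_T)^{\nicefrac12}$. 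To conclude, I observe that, by \eqref{eq:Xgrad:l2.prod} and the positivity of $\mathrm{s}_{\GRAD,T}$, both $\norm[\Leb(T)]{\Pgrad\underline{r}_T}$ and $\mathrm{s}_{\GRAD,T}(\underline{r}_T,\underline{r}_T)^{\nicefrac12}$ are controlled by $\norm[\GRAD,T]{\underline{r}_T}$, which yields \eqref{eq:consistency.L2.Xgrad}.

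The remaining two estimates \eqref{eq:consistency.L2.Xcurl} and \eqref{eq:consistency.L2.Xdiv} are handled by the exact same decomposition, invoking \eqref{eq:Xcurl:l2.prod}--\eqref{eq:Xdiv:l2.prod} in place of \eqref{eq:Xgrad:l2.prod}, and pairing the potential consistency estimate \eqref{eq:approx.PcurlIcurl} (resp. \eqref{eq:approx.PdivIdiv}) with the stabilisation consistency \eqref{eq:s.curl.T:consistency} (resp. \eqref{eq:s.div.T:consistency}). The only point requiring a little attention is the use of the seminorm $\seminorm[\vSob{(k+1,2)}(T)]{{\cdot}}$ defined in \eqref{eq:def.Hkp12} for the curl case, which is consistent between the two estimates being combined. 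No substantial obstacle is expected: since the consistency orders in $h_T$ and the seminorms in the continuous arguments match between the potential and stabilisation estimates in each case, the two contributions add up to the stated right-hand side, which is why the corollary follows at once from Theorems \ref{thm:approx.PTIT} and \ref{thm:consistency.sT}.
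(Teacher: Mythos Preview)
Your proposal is correct and follows precisely the approach the paper indicates: the corollary is stated as a straightforward consequence of Theorems \ref{thm:approx.PTIT} and \ref{thm:consistency.sT} together with the definitions \eqref{eq:Xgrad:l2.prod}--\eqref{eq:Xdiv:l2.prod}, with the proof omitted. Your decomposition via $\pm\int_T\Pgrad(\Igrad{T}q)\,\Pgrad\underline{r}_T$, the Cauchy--Schwarz bounds on the volumetric and stabilisation pieces, and the observation that both factors are dominated by $\norm[\GRAD,T]{\underline{r}_T}$ directly from \eqref{eq:Xgrad:l2.prod}, is exactly the intended argument.
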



\subsection{Adjoint consistency}

Whenever a (formal) integration by parts is used to write the weak formulation of a PDE problem underpinning its discretisation, a form of adjoint consistency is required in the convergence analysis.
We state here the adjoint consistency of the operators in the DDR sequence \eqref{eq:global.sequence.3D}.
Since this sequence does not incorporate boundary conditions, the corresponding adjoint consistency will be based on essential (homogeneous) boundary conditions.
The regularity requirements will be expressed in terms of the broken Sobolev spaces and norms such that, for any $\ell\ge 1$,
\[
\begin{gathered}
  \Sob{\ell}(\Th)\coloneq\left\{
  g\in \Leb(\Omega)\st\text{$g_{|T}\in \Sob{\ell}(T)$ for all $T\in\Th$}
  \right\}
  \\
  \text{
    and
  $\seminorm[\Sob{\ell}(\Th)]{g}\coloneq\left(
  \sum_{T\in\Th}\seminorm[\Sob{\ell}(T)]{g_{|T}}^2
  \right)^{\frac12}$.%
  }
\end{gathered}
\]
The corresponding seminorms for vector-valued functions are denoted using boldface letters, as usual.
We denote in what follows by $\Sob{1}_0(\Omega)$, $\HDdiv{\Omega}$, and $\HDcurl{\Omega}$ the subspaces of $\Sob{1}(\Omega)$, $\Hdiv{\Omega}$, and $\Hcurl{\Omega}$ spanned by functions whose trace, normal trace, and tangential trace vanish on the boundary $\partial\Omega$ of $\Omega$, respectively.

\begin{theorem}[Adjoint consistency for the gradient]\label{thm:adjoint.consistency.grad}
  Define the gradient adjoint consistency error $\dEgrad:\big(\vC{0}(\overline{\Omega})\cap\HDdiv{\Omega}\big)\times\Xgrad{h}\to\Real$ by:
  For all $(\bvec{v},\underline{q}_h)\in\big(\vC{0}(\overline{\Omega})\cap\HDdiv{\Omega}\big)\times\Xgrad{h}$,
  \[
  \dEgrad(\bvec{v},\underline{q}_h)
  \coloneq \sum_{T\in\Th}\left[
    (\Icurl{T}\bvec{v}_{|T},\uGT\underline{q}_T)_{\CURL,T} + \int_T\DIV\bvec{v}~\Pgrad\underline{q}_T
    \right].
  \]
  Then, it holds, for all $\bvec{v}\in \vC{0}(\overline{\Omega})\cap\HDdiv{\Omega}$ such that $\bvec{v}\in \vSob{\max(k+1,2)}(\Th)$ and all $\underline{q}_h\in\Xgrad{h}$,
  \begin{equation} \label{eq:dEgrad:consistency}
    |\dEgrad(\bvec{v},\underline{q}_h)|
    \lesssim h^{k+1}\seminorm[\vSob{(k+1,2)}(\Th)]{\bvec{v}}
    \norm[\CURL,h]{\uGh\underline{q}_h}.
  \end{equation}  
\end{theorem}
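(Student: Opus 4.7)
The plan is to use the $\Leb$-product consistency \eqref{eq:consistency.L2.Xcurl} to reduce the discrete quantity to an (almost) continuous one, then exhibit the remaining error as a face-based residual and bound it by combining Lemma \ref{lem:jump.traces} with a standard approximation property of $\vlproj{k}{T}\bvec{v}$ on faces. First, applying \eqref{eq:consistency.L2.Xcurl} with test function $\uvec{\zeta}_T=\uGT\underline{q}_T$ and replacing $\Pcurl(\uGT\underline{q}_T)$ by $\cGT\underline{q}_T$ via \eqref{eq:Pcurl.uGT=cGT}, a discrete Cauchy--Schwarz inequality yields
\[
\dEgrad(\bvec{v},\underline{q}_h) = \sum_{T\in\Th}\left[\int_T\bvec{v}\cdot\cGT\underline{q}_T + \int_T\DIV\bvec{v}\,\Pgrad\underline{q}_T\right] + R_1,
\]
with $|R_1|\lesssim h^{k+1}\seminorm[\vSob{(k+1,2)}(\Th)]{\bvec{v}}\,\norm[\CURL,h]{\uGh\underline{q}_h}$ after summing over $T\in\Th$.

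To treat the remaining volume sum, I would apply the discrete integration by parts \eqref{eq:PgradT}, valid by Remark \ref{rem:validity.Pgrad} on $\vPoly{k}(T)+\cRoly{k+2}(T)$, with test function $\vlproj{k}{T}\bvec{v}\in\vPoly{k}(T)$. Since $\cGT\underline{q}_T\in\vPoly{k}(T)$, the $\Leb$-orthogonality of $\vlproj{k}{T}$ gives $\int_T\cGT\underline{q}_T\cdot\vlproj{k}{T}\bvec{v}=\int_T\cGT\underline{q}_T\cdot\bvec{v}$; similarly $\int_T\GRAD\Pgrad\underline{q}_T\cdot(\bvec{v}-\vlproj{k}{T}\bvec{v})=0$ since $\GRAD\Pgrad\underline{q}_T\in\vPoly{k}(T)$. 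Combining \eqref{eq:PgradT} with continuous integration by parts of $\int_T\DIV\bvec{v}\,\Pgrad\underline{q}_T$ and these two orthogonalities yields
\[
\int_T\bvec{v}\cdot\cGT\underline{q}_T + \int_T\DIV\bvec{v}\,\Pgrad\underline{q}_T = \sum_{F\in\FT}\omega_{TF}\int_F\bigl[\Pgrad\underline{q}_T(\bvec{v}-\vlproj{k}{T}\bvec{v}) + \trF\underline{q}_F\,\vlproj{k}{T}\bvec{v}\bigr]\cdot\normal_F.
\]
Writing $\Pgrad\underline{q}_T=(\Pgrad\underline{q}_T-\trF\underline{q}_F)+\trF\underline{q}_F$ and summing over $T\in\Th$, the contributions involving $\trF\underline{q}_F\,\bvec{v}\cdot\normal_F$ cancel pairwise on interior faces (both factors are single-valued while the two element orientations $\omega_{TF}$ are opposite) and vanish on boundary faces since $\bvec{v}\in\HDdiv{\Omega}$. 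What remains is the jump residual
\[
\mathcal{J}\coloneq\sum_{T\in\Th}\sum_{F\in\FT}\omega_{TF}\int_F(\Pgrad\underline{q}_T-\trF\underline{q}_F)(\bvec{v}-\vlproj{k}{T}\bvec{v})\cdot\normal_F.
\]

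A face-by-face Cauchy--Schwarz, combined with Lemma \ref{lem:jump.traces} which gives $\norm[\Leb(F)]{\Pgrad\underline{q}_T-\trF\underline{q}_F}\lesssim h_F^{\nicefrac12}\tnorm[\CURL,T]{\uGT\underline{q}_T}$, and with the standard bound $\norm[\Leb(F)]{\bvec{v}-\vlproj{k}{T}\bvec{v}}\lesssim h_T^{k+\nicefrac12}\seminorm[\vSob{(k+1,2)}(T)]{\bvec{v}}$ obtained from the $\vLeb(T)$-approximation of $\vlproj{k}{T}$ and the continuous trace inequality \cite[Lemma 1.31]{Di-Pietro.Droniou:20} (where the $\vSob{2}$-correction in \eqref{eq:def.Hkp12} caters for the case $k=0$ in which $\GRAD(\vlproj{0}{T}\bvec{v})=\bvec{0}$), shows that $|\mathcal{J}|\lesssim h^{k+1}\seminorm[\vSob{(k+1,2)}(\Th)]{\bvec{v}}\,\tnorm[\CURL,h]{\uGh\underline{q}_h}$ after a discrete Cauchy--Schwarz on the sum over $T\in\Th$. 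Collecting with the bound on $R_1$ and invoking the norm equivalence \eqref{eq:equiv.norms} delivers \eqref{eq:dEgrad:consistency}.

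The main difficulty is not the final estimation but the identification of $\vlproj{k}{T}\bvec{v}$ as the admissible test function in the extended discrete integration by parts \eqref{eq:PgradT} (which hinges on Remark \ref{rem:validity.Pgrad}), and the algebraic rearrangement that arranges for the single-valued quantity $\trF\underline{q}_F$ to absorb the ``continuous'' boundary integral and for only the \emph{jump} $\Pgrad\underline{q}_T-\trF\underline{q}_F$ to remain; this is precisely the factor whose sharp $h_F^{1/2}$-control is provided by Lemma \ref{lem:jump.traces} and matches the face approximation rate of $\bvec{v}-\vlproj{k}{T}\bvec{v}$ to give the optimal order $k+1$.
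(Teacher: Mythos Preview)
Your proof is correct and follows essentially the same route as the paper: both hinge on the extended discrete integration by parts \eqref{eq:PgradT} (Remark~\ref{rem:validity.Pgrad}) with the test function $\vlproj{k}{T}\bvec{v}$, the single-valuedness of $\trF\underline{q}_F$ to eliminate the interior boundary contributions, and Lemma~\ref{lem:jump.traces} to control the remaining jump $\Pgrad\underline{q}_T-\trF\underline{q}_F$.

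There is one organisational difference worth noting. The paper expands $(\Icurl{T}\bvec{v},\uGT\underline{q}_T)_{\CURL,T}$ explicitly and, after integration by parts, retains a volume residual $-\int_T(\bvec{v}-\vlproj{k}{T}\bvec{v})\cdot\GRAD\Pgrad\underline{q}_T$, which it bounds using the $\norm[\vLeb(T)]{\GRAD\Pgrad\underline{q}_T}$ part of Lemma~\ref{lem:jump.traces}. You instead invoke the $\Leb$-product consistency \eqref{eq:consistency.L2.Xcurl} as a black box for $R_1$, and then exploit the orthogonality $\int_T\GRAD\Pgrad\underline{q}_T\cdot(\bvec{v}-\vlproj{k}{T}\bvec{v})=0$ (since $\GRAD\Pgrad\underline{q}_T\in\vPoly{k}(T)$) to make that volume term vanish outright, leaving only the face jump $\mathcal{J}$. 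Your variant is slightly more economical in that it needs only the boundary part of Lemma~\ref{lem:jump.traces}; the paper's version is a little more self-contained in that it does not appeal to the corollary on $\Leb$-product consistency. One minor remark: for $k=0$, the face bound $h_F^{\nicefrac12}\norm[\Leb(F)]{\bvec{v}-\vlproj{0}{T}\bvec{v}}\lesssim h_T\seminorm[\vSob{1}(T)]{\bvec{v}}$ already follows from the continuous trace inequality without the $\vSob{2}$-correction; the extra $h_T\seminorm[\vSob{2}(T)]{\bvec{v}}$ in \eqref{eq:def.Hkp12} is actually needed only in your $R_1$ term (through the regularity required by $\Icurl{T}$), not in $\mathcal{J}$.
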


\begin{proof}
  See Section \ref{sec:adjoint.consistency.grad}.
\end{proof}

\begin{theorem}[Adjoint consistency for the curl]\label{thm:adjoint.consistency.curl}
  Define the curl adjoint consistency error $\dEcurl:\big(\vC{0}(\overline{\Omega})\cap\HDcurl{\Omega}\big)\times \Xcurl{h}\to\Real$ by:
  For all $(\bvec{w},\uvec{v}_h)\in\big(\vC{0}(\overline{\Omega})\cap\HDcurl{\Omega}\big)\times\Xcurl{h}$,
  \begin{equation}\label{eq:dEcurl}
    \dEcurl(\bvec{w},\uvec{v}_h)\coloneq
    \sum_{T\in\Th}\left[
      (\Idiv{T}\bvec{w}_{|T},\uCT\uvec{v}_T)_{\DIV,T}
      - \int_T\CURL\bvec{w}\cdot\Pcurl\uvec{v}_T
      \right].
  \end{equation}
  Then, for all $\bvec{w}\in \vC{0}(\overline{\Omega})\cap\HDcurl{\Omega}$ such that $\bvec{w}\in \vSob{k+2}(\Th)$ and all $\uvec{v}_h\in\Xcurl{h}$,
  \begin{equation}\label{eq:dEcurl:consistency}
    |\dEcurl(\bvec{w},\uvec{v}_h)|
    \lesssim h^{k+1}\left(
    \seminorm[\vSob{k+1}(\Th)]{\bvec{w}}
    +\seminorm[\vSob{k+2}(\Th)]{\bvec{w}}
    \right)\left(
    \norm[\CURL,h]{\uvec{v}_h}
    +\norm[\DIV,h]{\uCh\uvec{v}_h}
    \right).
  \end{equation}
\end{theorem}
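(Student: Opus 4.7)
The strategy is to mimic, at the discrete level, the continuous integration-by-parts identity $\int_\Omega \CURL\bvec{w}\cdot\bvec{v}=\int_\Omega\bvec{w}\cdot\CURL\bvec{v}$, which carries no boundary term since $\bvec{w}\in\HDcurl{\Omega}$. The two key tools I will use are the identity $\Pdiv(\uCT\uvec{v}_T)=\cCT\uvec{v}_T$ from \eqref{eq:Pdiv.uCT=cCT}, which turns the divergence-side discrete $\Leb$-product into the integral of a vector polynomial on $T$, and the discrete integration-by-parts formula \eqref{eq:ibp.Pcurl} for $\Pcurl$ tested against elements of $\NE{k+1}(T)$.

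First, I expand $(\Idiv{T}\bvec{w}_{|T},\uCT\uvec{v}_T)_{\DIV,T}$ through \eqref{eq:Xdiv:l2.prod} and \eqref{eq:Pdiv.uCT=cCT}. The stabilisation contribution is estimated by Cauchy--Schwarz combined with Theorem \ref{thm:consistency.sT} applied to $\Idiv{T}\bvec{w}$, the trivial bound $\mathrm{s}_{\DIV,T}(\uCT\uvec{v}_T,\uCT\uvec{v}_T)^{1/2}\le\norm[\DIV,T]{\uCT\uvec{v}_T}$, and the norm equivalence \eqref{eq:equiv.norms}. The consistency property \eqref{eq:approx.PdivIdiv} of Theorem \ref{thm:approx.PTIT} then allows me to replace $\Pdiv(\Idiv{T}\bvec{w}_{|T})$ by $\bvec{w}_{|T}$ in the leading volume integral, at the price of an $O(h_T^{k+1})$ term controlled using \eqref{eq:Pdiv.uCT=cCT}, the bound \eqref{eq:bound.Pdiv}, and \eqref{eq:equiv.norms}. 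After these steps, the problem is reduced to estimating
\[
\sum_{T\in\Th}\left(\int_T\bvec{w}\cdot\cCT\uvec{v}_T-\int_T\CURL\bvec{w}\cdot\Pcurl\uvec{v}_T\right).
\]

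To tackle this sum, on each $T\in\Th$ I construct a polynomial $\bvec{z}_T\in\NE{k+1}(T)$ approximating $\bvec{w}_{|T}$ with the following properties: (i) the tangential trace $\bvec{z}_T\times\normal_F$ on every face $F\in\FT$ depends only on face data of $\bvec{w}$, so that it is single-valued across each interior face and vanishes on faces $F\subset\partial\Omega$ (using $\bvec{w}\in\HDcurl{\Omega}$ together with $\bvec{w}\in\vC{0}(\overline{\Omega})$); (ii) the approximation estimates $\norm[\vLeb(T)]{\bvec{w}-\bvec{z}_T}\lesssim h_T^{k+1}\seminorm[\vSob{k+1}(T)]{\bvec{w}}$ and $\norm[\vLeb(T)]{\CURL(\bvec{w}-\bvec{z}_T)}\lesssim h_T^{k+1}\seminorm[\vSob{k+2}(T)]{\bvec{w}}$ hold. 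This is precisely the role of the curl lifting $\RcurlT$ together with its curl correction $\DcurlT$ developed in Appendix \ref{appen:RcurlT}. Applying \eqref{eq:ibp.Pcurl} to $\bvec{z}_T$ and rearranging,
\begin{multline*}
\int_T\bvec{w}\cdot\cCT\uvec{v}_T-\int_T\CURL\bvec{w}\cdot\Pcurl\uvec{v}_T
=\int_T(\bvec{w}-\bvec{z}_T)\cdot\cCT\uvec{v}_T \\
+\int_T\CURL(\bvec{z}_T-\bvec{w})\cdot\Pcurl\uvec{v}_T
+\sum_{F\in\FT}\omega_{TF}\int_F\trFt\uvec{v}_F\cdot(\bvec{z}_T\times\normal_F).
\end{multline*}
Summing over $T\in\Th$, the face terms cancel across interior faces by single-valuedness of $\bvec{z}_T\times\normal_F$ and vanish on boundary faces, while the two volume residuals are estimated by Cauchy--Schwarz using properties (ii), the bound \eqref{eq:bound.trFt.Pcurl} for $\Pcurl$, and the identity \eqref{eq:Pdiv.uCT=cCT} together with \eqref{eq:bound.Pdiv} for $\cCT$, producing the factor $\norm[\CURL,h]{\uvec{v}_h}+\norm[\DIV,h]{\uCh\uvec{v}_h}$ after a discrete Cauchy--Schwarz inequality.

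The main obstacle is point (i)--(ii) above: constructing $\bvec{z}_T$ simultaneously satisfying the prescribed face-local tangential trace, the $\vLeb$ approximation of $\bvec{w}$, and the $\vLeb$ approximation of $\CURL\bvec{w}$ on an arbitrary polyhedron. Such a reconstruction amounts to solving a div--curl problem on $T$ whose well-posedness on general polytopes is delicate; this is the content of Appendix \ref{appen:RcurlT}, where the curl correction $\DcurlT$ and the regularity results of \cite{Dauge:88,Assous.Ciarlet.ea:18} (cited in the introduction) play the central role.
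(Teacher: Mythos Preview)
Your decomposition up to the boundary term is correct and matches the paper: expanding the $\Leb$-product via \eqref{eq:Pdiv.uCT=cCT}, handling the stabilisation via \eqref{eq:s.div.T:consistency}, replacing $\Pdiv(\Idiv{T}\bvec{w})$ by $\bvec{w}$ via \eqref{eq:approx.PdivIdiv}, and invoking \eqref{eq:ibp.Pcurl} against some $\bvec{z}_T\in\NE{k+1}(T)$ are exactly the first moves in the paper's proof.

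The gap is in your treatment of the boundary term, specifically in what you require of $\bvec{z}_T$ and what you believe Appendix~\ref{appen:RcurlT} provides. You ask for $\bvec{z}_T\in\NE{k+1}(T)$ whose tangential trace on each face is \emph{single-valued across interfaces} (depending only on face data of $\bvec{w}$); this would amount to a globally conforming N\'ed\'elec interpolant on a polyhedral mesh, which is not available and is \emph{not} what the appendix constructs. The liftings $\RcurlF$ and $\RcurlT$ act on the \emph{discrete test vector} $\uvec{v}_T$, not on $\bvec{w}$, and $\RcurlT\uvec{v}_T$ is an $\Hcurl{T}$ function, not a polynomial in $\NE{k+1}(T)$. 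So your claim that ``this is precisely the role of the curl lifting $\RcurlT$'' is a misreading.

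The paper instead takes $\bvec{z}_T$ from Lemma~\ref{lem:approx.NEk} (a local N\'ed\'elec interpolant on an inscribed simplex), which has \emph{no} inter-element continuity, and handles the boundary term
\[
\term_4=\sum_{T\in\Th}\sum_{F\in\FT}\omega_{TF}\int_F(\bvec{z}_T\times\normal_F)\cdot\trFt\uvec{v}_F
\]
as follows: since $(\bvec{z}_T)_{|F}\times\normal_F\in\RT{k+1}(F)$ by \eqref{eq:NE.T.trace}, the orthogonality \eqref{eq:RcurlF:orth} lets one replace $\trFt\uvec{v}_F$ by $\RcurlF\uvec{v}_F$. It is $\RcurlF\uvec{v}_F$ that is single-valued across interfaces (by construction from face data only), so one can insert $\bvec{w}$ and rewrite $\term_4=\sum_{T}\sum_{F}\omega_{TF}\int_F(\bvec{z}_T-\bvec{w})\times\normal_F\cdot\RcurlF\uvec{v}_F$. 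Then, using $\RcurlF\uvec{v}_F=(\RcurlT\uvec{v}_T)_{{\rm t},F}$ from \eqref{eq:RcurlT:bc}, an integration by parts sends this back to volume integrals involving $\RcurlT\uvec{v}_T$ and $\CURL\RcurlT\uvec{v}_T$, which are controlled by \eqref{eq:RcurlT:bound}. In short, the lifting supplies continuity on the \emph{test-function side}, not an approximation of $\bvec{w}$; your plan places the burden on the wrong factor.
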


\begin{proof}
  See Section \ref{sec:consistency.adjoint.curl}.
\end{proof}

\begin{theorem}[Adjoint consistency for the divergence]\label{thm:adjoint.consistency.div}
  Define the divergence adjoint consistency error $\dEdiv:\big(\rC{0}(\overline{\Omega})\cap H_0^1(\Omega)\big)\times\Xdiv{h}\to\Real$ by:
  For all $(q,\uvec{v}_h)\in\big(\rC{0}(\overline{\Omega})\cap H_0^1(\Omega)\big)\times\Xdiv{h}$,
  \begin{equation} \label{eq:dEdiv}
    \dEdiv(q,\uvec{v}_h)\coloneq
    \int_\Omega \lproj{k}{h} q~\Dh\uvec{v}_h
    + \sum_{T\in\Th}\int_\Omega\GRAD q\cdot\Pdiv\uvec{v}_T.
  \end{equation}
  Then, for all $q\in \rC{0}(\overline{\Omega})\cap H_0^1(\Omega)$ such that $q\in \Sob{k+2}(\Th)$ and all $\uvec{v}_h\in\Xdiv{h}$,
  \begin{equation} \label{eq:dEdiv:consistency}
    |\dEdiv(q,\uvec{v}_h)|
    \lesssim h^{k+1}\seminorm[\Sob{k+2}(\Th)]{q}\norm[\DIV,h]{\uvec{v}_h}.
  \end{equation}
\end{theorem}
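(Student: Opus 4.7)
The plan is to use the discrete integration by parts formula \eqref{eq:Pdiv:ibp} as the pivot, testing against an elementwise polynomial approximation of $q$, and then control the remainders by standard $L^2$-projection approximation estimates.

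First, I would rewrite the two terms of $\dEdiv$ so they can be combined via \eqref{eq:Pdiv:ibp}. Since $\DT\uvec{v}_T\in\Poly{k}(T)\subset\Poly{k+1}(T)$, we have $\int_T \lproj{k}{T}q~\DT\uvec{v}_T = \int_T q~\DT\uvec{v}_T = \int_T \lproj{k+1}{T}q~\DT\uvec{v}_T$, so
\[
  \dEdiv(q,\uvec{v}_h) = \sum_{T\in\Th}\left[\int_T\lproj{k+1}{T}q~\DT\uvec{v}_T + \int_T\GRAD q\cdot\Pdiv\uvec{v}_T\right].
\]
Apply \eqref{eq:Pdiv:ibp} with $r_T = \lproj{k+1}{T}q\in\Poly{k+1}(T)$ and substitute to get
\[
  \dEdiv(q,\uvec{v}_h) = \sum_{T\in\Th}\int_T\Pdiv\uvec{v}_T\cdot\GRAD\big(q - \lproj{k+1}{T}q\big) + \sum_{T\in\Th}\sum_{F\in\FT}\omega_{TF}\int_F v_F\,\lproj{k+1}{T}q.
\]

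Next, I would handle the face term using the boundary conditions on $q$. Since $q\in\rC{0}(\overline{\Omega})\cap\Sob{1}_0(\Omega)$ and the face components $(v_F)_{F\in\Fh}$ are single-valued across interior faces, the sum $\sum_{T\in\Th}\sum_{F\in\FT}\omega_{TF}\int_F v_F\, q$ vanishes (interior contributions cancel pairwise by opposite orientations, boundary contributions vanish because $q_{|\partial\Omega}=0$). Subtracting this zero, the face term becomes
\[
  \sum_{T\in\Th}\sum_{F\in\FT}\omega_{TF}\int_F v_F\big(\lproj{k+1}{T}q - q\big).
\]

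It then remains to bound the two resulting contributions by standard polynomial approximation. For the element contribution, the $\Sob{1}$-approximation estimate of the $L^2$-projector (\cite[Theorem 1.45]{Di-Pietro.Droniou:20}) gives $\norm[\vLeb(T)]{\GRAD(q - \lproj{k+1}{T}q)}\lesssim h_T^{k+1}\seminorm[\Sob{k+2}(T)]{q}$, so Cauchy--Schwarz together with \eqref{eq:bound.Pdiv} and the equivalence $\tnorm[\DIV,T]{\cdot}\simeq\norm[\DIV,T]{\cdot}$ of Lemma \ref{lem:equiv.norms} yields the desired $h^{k+1}\seminorm[\Sob{k+2}(\Th)]{q}\norm[\DIV,h]{\uvec{v}_h}$ bound. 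For the face contribution, the trace approximation estimate $\norm[\Leb(F)]{q - \lproj{k+1}{T}q}\lesssim h_T^{k+3/2}\seminorm[\Sob{k+2}(T)]{q}\simeq h_F^{\nicefrac12}h_T^{k+1}\seminorm[\Sob{k+2}(T)]{q}$ (see \cite[Theorem 1.45]{Di-Pietro.Droniou:20}) combined with a discrete Cauchy--Schwarz over $(T,F)$ delivers the bound $h^{k+1}\seminorm[\Sob{k+2}(\Th)]{q}\big(\sum_{T,F}h_F\norm[\Leb(F)]{v_F}^2\big)^{\nicefrac12}$, and the last factor is controlled by $\tnorm[\DIV,h]{\uvec{v}_h}\simeq\norm[\DIV,h]{\uvec{v}_h}$.

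The only subtle point is the cancellation argument for the face term, which genuinely requires both the continuity of $q$ (to have matching traces on interior faces) and the homogeneous boundary condition $q_{|\partial\Omega}=0$; this is precisely where the assumed regularity $q\in\rC{0}(\overline{\Omega})\cap\Sob{1}_0(\Omega)$ is exploited. Everything else amounts to projection approximation and the already-established boundedness properties of $\Pdiv$.
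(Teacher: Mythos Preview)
Your proof is correct and follows essentially the same route as the paper: apply the discrete integration by parts formula \eqref{eq:Pdiv:ibp} with $r_T=\lproj{k+1}{T}q$, use single-valuedness of $q$ at interfaces together with $q_{|\partial\Omega}=0$ to rewrite the face sum, and conclude by the approximation properties of $\lproj{k+1}{T}$. The only difference is cosmetic: the paper writes the identity for a generic $q_T\in\Poly{k+1}(T)$ and keeps the extra term $\int_T(q-q_T)\DT\uvec{v}_T$, bounding it via $h_T^{-1}\norm[\Leb(T)]{q-q_T}\cdot h_T\norm[\Leb(T)]{\DT\uvec{v}_T}$, whereas you observe upfront that with the specific choice $q_T=\lproj{k+1}{T}q$ this term vanishes (since $\DT\uvec{v}_T\in\Poly{k}(T)\subset\Poly{k+1}(T)$), yielding a slightly cleaner two-term decomposition.
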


\begin{proof}
  See Section \ref{sec:adjoint.consistency.div}.
\end{proof}


\subsection{Proof of the primal consistency}\label{sec:proof.primal.consistency}

\begin{proof}[Theorem \ref{thm:approx.PTIT}]
  Let us start with \eqref{eq:approx.PgradIgrad}. Since $\Sob{2}(T)\subset \rC{0}(\overline{T})$, the mapping $\Pgrad\circ\Igrad{T}:\Sob{2}(T)\to \Poly{k+1}(T)$ is well-defined and, owing to \eqref{eq:Pgrad:polynomial.consistency}, it is a projector. Moreover, combining \eqref{eq:bound.Igrad} and \eqref{eq:bound.trF.Pgrad}, it satisfies the $\Leb(T)$-boundedness
  \[
  \norm[\Leb(T)]{\Pgrad\big(\Igrad{T} q\big)}\lesssim \norm[\Leb(T)]{q}+h_T\seminorm[\Sob{1}(T)]{q}+h_T^2\seminorm[\Sob{2}(T)]{q}
  \quad\forall q\in \Sob{2}(T).
  \]
  The approximation property \eqref{eq:approx.PgradIgrad} is thus a direct consequence of \cite[Lemma 1.43]{Di-Pietro.Droniou:20}.
  The proofs of \eqref{eq:approx.PcurlIcurl} (for $k\ge 1$) and \eqref{eq:approx.PdivIdiv} are similar, using the fact that the considered operators are projectors onto $\vPoly{k}(T)$ (see \eqref{eq:Pcurl:polynomial.consistency} and \eqref{eq:Pdiv:polynomial.consistency}) and invoking Proposition \ref{prop:bound.P} and Lemma \ref{lem:bound.I} to establish their $\Leb$-boundedness.
  In the case $k=0$, since $\Pcurlz\circ\Icurlz{T}$ requires the $\vSob{2}$-regularity of its argument, with $2>k+1$, \eqref{eq:approx.PcurlIcurl} cannot be deduced directly from \cite[Lemma 1.43]{Di-Pietro.Droniou:20}. However, using the bounds \eqref{eq:bound.trFt.Pcurl} and \eqref{eq:bound.Icurl} a direct proof can be done by introducing $\vlproj{0}{T}\bvec{v}=\Pcurlz\big(\Icurlz{T} \vlproj{0}{T}\bvec{v}\big)$:
  \[
  \begin{aligned}
    &\norm[\vLeb(T)]{\Pcurlz\big(\Icurlz{T}\bvec{v}\big) - \bvec{v}}
    \\
    &\quad
    \le\norm[\vLeb(T)]{\Pcurlz\big[\Icurlz{T}(\bvec{v}-\vlproj{0}{T}\bvec{v})\big]}
    +\norm[\vLeb(T)]{\vlproj{0}{T}\bvec{v}-\bvec{v}}\\
    &\quad
    \lesssim\norm[\vLeb(T)]{\bvec{v}-\vlproj{0}{T}\bvec{v}}+h_T\seminorm[\vSob{1}(T)]{\bvec{v}-\vlproj{0}{T}\bvec{v}}
    +h_T^2\seminorm[\vSob{2}(T)]{\bvec{v}-\vlproj{0}{T}\bvec{v}},    
  \end{aligned}
  \]
  and \eqref{eq:approx.PcurlIcurl} follows using the approximation properties of $\vlproj{0}{T}$, the fact that the $\vSob{1}(T)$- and $\vSob{2}(T)$-seminorms of $\vlproj{0}{T}\bvec{v}$ vanish, and the definition \eqref{eq:def.Hkp12} of $\seminorm[\vSob{(k+1,2)}(T)]{{\cdot}}$.\qed
\end{proof}


\begin{proof}[Theorem \ref{thm:primal.consistency}]
  Let us prove \eqref{eq:approx:cGT.Igrad}.
  For any $\underline{q}_T\in\Xgrad{T}$, taking $\bvec{w}_T = \cGT\underline{q}_T$ in \eqref{eq:cGT} and using Cauchy--Schwarz inequalities along with discrete inverse and trace inequalities, it is inferred, after simplification,
  \[
  \norm[\vLeb(T)]{\cGT\underline{q}_T}
  \lesssim h_T^{-1}\norm[\Leb(T)]{q_T} + \sum_{F\in\FT} h_F^{-\nicefrac12}\norm[\Leb(F)]{\trF\underline{q}_F}
  \lesssim h_T^{-1}\tnorm[\GRAD,T]{\underline{q}},
  \]
  where the conclusion follows from the estimate on $\trF\underline{q}_F$ in \eqref{eq:bound.trF.Pgrad} and from the definition of $\tnorm[\GRAD,T]{{\cdot}}$.
  As a result, for any $r\in\Sob{2}(T)$, making $\underline{q}_T = \Igrad{T} r$ and invoking \eqref{eq:bound.Igrad}, we infer
  \begin{equation}\label{eq:bound.GT.Igrad}
    \norm[\vLeb(T)]{\cGT\big(\Igrad{T} r\big)}
    \lesssim
    h_T^{-1}\norm[\Leb(T)]{r} + \seminorm[\Sob{1}(T)]{r} + h_T\seminorm[\Sob{2}(T)]{r}.
  \end{equation}
  Letting now $q\in\Sob{k+2}(T)$, we use the polynomial consistency \eqref{eq:cGT:consistency} of $\cGT$ followed by a triangle inequality to write
  \begin{multline*}
  \norm[\vLeb(T)]{\cGT\big(\Igrad{T} q\big) - \GRAD q}
  \\
  \le\norm[\vLeb(T)]{\cGT\big[\Igrad{T} \big(q - \lproj{k+1}{T} q\big)\big]}
  + \norm[\vLeb(T)]{\GRAD\big(\lproj{k+1}{T} q - q\big)}
  \end{multline*}
  and conclude using \eqref{eq:bound.GT.Igrad} with $r = q - \lproj{k+1}{T} q$ for the first term in the right-hand side followed by the approximation properties of $\lproj{k+1}{T}$ (see \cite[Theorem 1.45]{Di-Pietro.Droniou:20}).

  To prove \eqref{eq:approx:cCT.Icurl}, we notice that
  \[
  \cCT\big(\Icurl{T}\bvec{v}\big) = \Pdiv\big[\uCT\big(\Icurl{T}\bvec{v}\big)\big] = \Pdiv\big[\Idiv{T}\big(\CURL\bvec{v}\big)\big]
  \]
  owing to \eqref{eq:Pdiv.uCT=cCT} along with the commutation property \eqref{eq:uCT:commutation}, and conclude using the approximation properties \eqref{eq:approx.PdivIdiv} with $\bvec{w} = \CURL\bvec{v}$.
  
  Finally, \eqref{eq:approx:DT.Idiv} is a straightforward consequence of the commutation property $\DT\big(\Idiv{T}\bvec{w}\big)=\lproj{k}{T}(\DIV\bvec{w})$ stated in \eqref{eq:DT:commutation} together with \cite[Theorem 1.45]{Di-Pietro.Droniou:20}.\qed
\end{proof}

  \begin{remark}[Alternative proof of \eqref{eq:approx:cGT.Igrad}]
    When $q\in\rC{1}(\overline{T})$ is such that $\GRAD q\in\vSob{\max(k+1,2)}(T)$, the proof of \eqref{eq:approx:cGT.Igrad} can be done following similar arguments as for \eqref{eq:approx:cCT.Icurl}, i.e., we write
    \[
    \cGT\big(\Igrad{T} q\big) = \Pcurl\big[\uGT\big(\Igrad{T} q\big)\big] = \Pcurl\big[\Icurl{T}\big(\GRAD q\big)\big]
    \]
    using \eqref{eq:Pcurl.uGT=cGT} followed by \eqref{eq:uGT:commutation}, and conclude using the approximation properties \eqref{eq:approx.PcurlIcurl} with $\bvec{v} = \GRAD q$.
    This argument, however, requires additional regularity on $q$ with respect to the one used above.
  \end{remark}


\begin{proof}[Theorem \ref{thm:consistency.sT}]
We only prove \eqref{eq:s.curl.T:consistency}, the other consistency properties being established in a similar way. Let $\bvec{v}\in \vSob{\max(k+1,2)}(T)$. By the polynomial consistency \eqref{eq:trFt.cons} of $\trFt$ and \eqref{eq:Pcurl:polynomial.consistency} of $\Pcurl$, it is easily checked that, for all $\bvec{z}_T\in\vPoly{k}(T)$ and all $\uvec{w}_T\in\Xcurl{T}$, it holds $\mathrm{s}_{\CURL,T}(\Icurl{T}\bvec{z}_T,\uvec{w}_T)=0$. Applying this with $\bvec{z}_T=\vlproj{k}{T}\bvec{v}$ we infer
\[
\begin{aligned}
  \mathrm{s}_{\CURL,T}(\Icurl{T}\bvec{v},\Icurl{T}\bvec{v})
  &=\mathrm{s}_{\CURL,T}(\Icurl{T}(\bvec{v}-\vlproj{k}{T}\bvec{v}),\Icurl{T}(\bvec{v}-\vlproj{k}{T}\bvec{v}))
  \\
  &\lesssim \tnorm[\CURL,T]{\Icurl{T}(\bvec{v}-\vlproj{k}{T}\bvec{v})}^2,
\end{aligned}
\]
the conclusion following from the definition of $\norm[\CURL,T]{{\cdot}}$ and the norm equivalence \eqref{eq:equiv.norms}. Invoking then \eqref{eq:bound.Icurl} we infer
\[
\begin{aligned}
  \mathrm{s}_{\CURL,T}(\Icurl{T}\bvec{v},\Icurl{T}\bvec{v})^{\frac12}
  &\lesssim \norm[\vLeb(T)]{\bvec{v}-\vlproj{k}{T}\bvec{v}}
  \\
  &\quad
  + h_T\seminorm[\vSob{1}(T)]{\bvec{v}-\vlproj{k}{T}\bvec{v}}+h_T^2\seminorm[\vSob{2}(T)]{\bvec{v}-\vlproj{k}{T}\bvec{v}},
\end{aligned}
\]
and the estimate \eqref{eq:s.curl.T:consistency} follows from the approximation properties of $\vlproj{k}{T}$, see \cite[Theorem 1.45]{Di-Pietro.Droniou:20}, and the definition \eqref{eq:def.Hkp12} of $\seminorm[\vSob{(k+1,2)}(T)]{{\cdot}}$,
using in the case $k=0$ the same arguments as in the proof of Theorem \ref{thm:approx.PTIT}.\qed
\end{proof}


\subsection{Proof of the adjoint consistency for the gradient}\label{sec:adjoint.consistency.grad}

\begin{proof}[Theorem \ref{thm:adjoint.consistency.grad}.]
  It holds, by definition \eqref{eq:Xcurl:l2.prod} of the local discrete $\Leb$-product in $\Xcurl{h}$ and \eqref{eq:Pcurl.uGT=cGT},
  \begin{equation}\label{eq:dEgrad.explicit}
    \begin{aligned}
    \dEgrad(\bvec{v},\underline{q}_h)    
    = \sum_{T\in\Th}\bigg[
      &\int_T \Pcurl\big(\Icurl{T}\bvec{v}\big)\cdot\cGT\underline{q}_T
      \\
      &+ \mathrm{s}_{\CURL,T}(\Icurl{T} \bvec{v}_{|T},\uGT\underline{q}_T)
      + \int_T\DIV\bvec{v}~\Pgrad\underline{q}_T
      \bigg].
    \end{aligned}
  \end{equation}
  Using Remark \ref{rem:validity.Pgrad}, we have, for all $\bvec{w}_T\in\vPoly{k}(T)$,
  \[
  \begin{aligned}
    \int_T\Pgrad\underline{q}_T~\DIV\bvec{w}_T
    + \int_T\cGT\underline{q}_T\cdot\bvec{w}_T
    - \sum_{F\in\FT}\omega_{TF}\int_F\trF\underline{q}_F(\bvec{w}_T\cdot\normal_F)
    = 0.
  \end{aligned}
  \]
  Subtracting this quantity from \eqref{eq:dEgrad.explicit}, we obtain
  \begin{equation*}
    \begin{aligned}
      &\dEgrad(\bvec{v},\underline{q}_h)
      \\
      &\quad=
      \sum_{T\in\Th}\bigg[
        \int_T \left(\Pcurl\big(\Icurl{T}\bvec{v}\big)-\bvec{w}_T\right)\cdot\cGT\underline{q}_T
        +\mathrm{s}_{\CURL,T}(\Icurl{T} \bvec{v}_{|T},\uGT\underline{q}_T)
        \bigg]
      \\
      &\qquad
      + \sum_{T\in\Th}\bigg[
        \int_T\DIV(\bvec{v}-\bvec{w}_T)\Pgrad\underline{q}_T      
        +\sum_{F\in\FT}\omega_{TF}\int_F (\bvec{w}_T-\bvec{v})\cdot\normal_F \trF\underline{q}_F
        \bigg],
    \end{aligned}
  \end{equation*}
  where $\bvec{v}$ is introduced into the boundary term by single-valuedness of the discrete trace, and using $\bvec{v}_{|F}\cdot\normal_F=0$ whenever $F\subset\partial\Omega$.
  Integrating by parts the third term in the right-hand side of the above expression, we obtain
  \begin{equation}\label{eq:Ediv.recast}
    \begin{aligned}
    &\dEgrad(\bvec{v},\underline{q}_h)
    \\
    &{}= \sum_{T\in\Th}\bigg[
      \int_T \left(\Pcurl\big(\Icurl{T}\bvec{v}\big)-\bvec{w}_T\right)\cdot\cGT\underline{q}_T
      + \mathrm{s}_{\CURL,T}(\Icurl{T} \bvec{v}_{|T},\uGT\underline{q}_T)
      \bigg]
    \\
    &\quad
    + \sum_{T\in\Th}
      \begin{aligned}[t]
        \bigg[
          &- \int_T(\bvec{v}-\bvec{w}_T)\cdot\GRAD\Pgrad\underline{q}_T
          \\
          &+ \sum_{F\in\FT}\omega_{TF}\int_F (\bvec{w}_T-\bvec{v})\cdot\normal_F (\trF\underline{q}_F-\Pgrad\underline{q}_T)
        \bigg].
      \end{aligned}
    \end{aligned}
  \end{equation}
  We set $\bvec{w}_T=\vlproj{k}{T}\bvec{v}$ and use \eqref{eq:approx.PcurlIcurl} and the approximation properties of $\vlproj{k}{T}$ stated in \cite[Theorem 1.45]{Di-Pietro.Droniou:20} to see that 
  \begin{multline*}
  \norm[\vLeb(T)]{\Pcurl\big(\Icurl{T}\bvec{v}\big)-\vlproj{k}{T}\bvec{v}}
  + \norm[\vLeb(T)]{\bvec{v}-\vlproj{k}{T}\bvec{v}}
  \\
  +\sum_{F\in\FT}h_F^{\frac12}\norm[\vLeb(F)]{\bvec{v} - \vlproj{k}{T}\bvec{v}}
  \lesssim h_T^{k+1}\seminorm[\vSob{(k+1,2)}(T)]{\bvec{v}}.
  \end{multline*}
  Using Cauchy--Schwarz inequalities on the integrals and on the stabilisation bilinear form in \eqref{eq:Ediv.recast}, the bound \eqref{eq:bound.cGT} together with the norm equivalence \eqref{eq:equiv.norms}, and the consistency property \eqref{eq:s.curl.T:consistency} of the stabilisation term, we arrive at
  \[
    \begin{aligned}
      \left|\dEgrad(\bvec{v},\underline{q}_h)\right|
      \le{}& \sum_{T\in\Th}h_T^{k+1}\seminorm[\vSob{(k+1,2)}(T)]{\bvec{v}}\norm[\CURL,T]{\uGT\underline{q}_T}
      \\
      &  + \sum_{T\in\Th}h_T^{k+1}\seminorm[\vSob{(k+1,2)}(T)]{\bvec{v}}\norm[\vLeb(T)]{\GRAD\Pgrad\underline{q}_T}
      \\
      &  + \sum_{T\in\Th}\sum_{F\in\FT}
       h_T^{k+1}\seminorm[\vSob{(k+1,2)}(T)]{\bvec{v}}~h_F^{-\frac12}
      \norm[\Leb(F)]{\trF\underline{q}_F-\Pgrad\underline{q}_T}.
    \end{aligned}
  \]
  The conclusion follows from the estimate \eqref{eq:grad.Pgrad.trF}, and Cauchy--Schwarz inequalities on the sums.\qed
\end{proof}

\subsection{Proof of the adjoint consistency for the curl}\label{sec:consistency.adjoint.curl}

The proof of the adjoint consistency for the curl hinges on liftings defined as solutions of local problems.
For any $F\in\Fh$, the \emph{face lifting} $\RcurlF:\Xcurl{F}\to\Hrot{F}\cap\Hdiv{F}$ is such that, for all $\uvec{v}_F\in\Xcurl{F}$, $\RcurlF\uvec{v}_F = \bvec{\phi}_{\uvec{v}_F} + \GRAD_F\psi_{\uvec{v}_F}$ with $\bvec{\phi}_{\uvec{v}_F}\in\Hrot{F}\cap\Hdiv{F}$ such that
\begin{subequations}\label{eq:RcurlF:phi}
  \begin{alignat}{2}\label{eq:RcurlF:phi:rot}
    \ROT_F\bvec{\phi}_{\uvec{v}_F} &= \CF\uvec{v}_F &\qquad& \text{in $F$},
    \\
    \label{eq:RcurlF:phi:div}
    \DIV_F\bvec{\phi}_{\uvec{v}_F} &= 0 &\qquad& \text{in $F$},
    \\
    \label{eq:RcurlF:phi:bc}
    \bvec{\phi}_{\uvec{v}_F}\cdot\tangent_E &= v_E &\qquad& \text{on all $E\in\EF$},
  \end{alignat}
\end{subequations}
while $\psi_{\uvec{v}_F}\in C_{\rm c}^\infty(F)$ is such that
\begin{equation}\label{eq:RcurlF:psi}
  -\int_F\psi_{\uvec{v}_F}~\DIV_F\bvec{z}_F
  = \int_F(\trFt\uvec{v}_F - \bvec{\phi}_{\uvec{v}_F})\cdot\bvec{z}_F
  \qquad\forall\bvec{z}_F\in\cRoly{k+1}(F).
\end{equation}

Let now $T\in\Th$. The \emph{curl correction} $\CurlCorr:\Xcurl{T}\to\Hcurl{T}\cap\Hdiv{T}$ is such that, for all $\uvec{v}_T\in\Xcurl{T}$,
\begin{subequations}\label{eq:deltaT}
  \begin{alignat}{2}
    \DIV\CurlCorr\uvec{v}_T &= -\DIV\cCT\uvec{v}_T &\qquad& \text{in $T$},\label{eq:deltaT:div}
    \\
    \label{eq:deltaT:curl}
    \CURL\CurlCorr\uvec{v}_T &= \bvec{0}  &\qquad& \text{in $T$},
    \\
    \CurlCorr\uvec{v}_T\cdot\normal_F &= \CF\uvec{v}_F - \cCT\uvec{v}_T\cdot\normal_F &\qquad& \text{on all $F\in\FT$}.\label{eq:deltaT:bc}
  \end{alignat}
\end{subequations}
The curl correction lifts the difference between the face curl $\CF\uvec{v}_F$ and the normal component of the element curl $\cCT\uvec{v}_T$ as a function defined over $T$. Its role is to ensure the well-posedness of the problem defining the \emph{element lifting} $\RcurlT:\Xcurl{T}\to\Hcurl{T}\cap\Hdiv{T}$ such that, for all $\uvec{v}_T\in\Xcurl{T}$,
\begin{subequations}\label{eq:RcurlT}
  \begin{alignat}{2}\label{eq:RcurlT:curl}
    \CURL\RcurlT\uvec{v}_T &= \cCT\uvec{v}_T + \CurlCorr\uvec{v}_T &\qquad& \text{in $T$},
    \\
    \label{eq:RcurlT:div}
    \DIV\RcurlT\uvec{v}_T &= 0 &\qquad& \text{in $T$},
    \\
    \label{eq:RcurlT:bc}
    (\RcurlT\uvec{v}_T)_{{\rm t},F} &= \RcurlF\uvec{v}_F &\qquad& \text{on all $F\in\FT$}.
  \end{alignat}
\end{subequations}
In Appendix \ref{appen:RcurlT} we prove that these lifting operators are well-defined, and that they satisfy the following two key properties:
\begin{compactitem}
\item \emph{Orthogonality of the face lifting:} For all $F\in\Fh$,
  \begin{equation}\label{eq:RcurlF:orth}
    \int_F (\trFt\uvec{v}_F-\RcurlF\uvec{v}_F)\cdot\bvec{z}_F=0\qquad\forall(\uvec{v}_F,\bvec{z}_F)\in\Xcurl{F}\times\RT{k+1}(F);
\end{equation}
\item\emph{Boundedness of the element lifting:} For all $T\in\Th$,
\begin{multline}\label{eq:RcurlT:bound}
  \norm[\vLeb(T)]{\RcurlT\uvec{v}_T}+\norm[\vLeb(T)]{\CURL\RcurlT\uvec{v}_T}\lesssim \norm[\CURL,T]{\uvec{v}_T}+\norm[\DIV,T]{\uCT\uvec{v}_T}
  \\
  \forall \uvec{v}_T\in\Xcurl{T}.
\end{multline}
\end{compactitem}

\begin{lemma}[Approximation properties of {$\NE{k+1}(T)$} on polyhedral elements]\label{lem:approx.NEk}
  For all $T\in\Th$ and all $\bvec{w}\in \vSob{k+2}(T)$, there exists $\bvec{z}_T\in\NE{k+1}(T)$ such that
  \begin{align}\label{eq:approx.NEk.w}
    \norm[\vLeb(T)]{\bvec{w}-\bvec{z}_T}\lesssim{}& h_T^{k+1}\big(
    \seminorm[\vSob{k+1}(T)]{\bvec{w}}+\seminorm[\vSob{k+2}(T)]{\bvec{w}}
    \big),
    \\ \label{eq:approx.NEk.curl}
    \norm[\vLeb(T)]{\CURL\bvec{w}-\CURL\bvec{z}_T}\lesssim{}& h_T^{k+1}\seminorm[\vSob{k+2}(T)]{\bvec{w}}.
  \end{align}
\end{lemma}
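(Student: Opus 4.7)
My strategy is to embed $T$ in a shape-regular tetrahedron $K$ of comparable size, construct a carefully chosen Sobolev extension $\tilde{\bvec{w}}$ of $\bvec{w}$ to $K$, and take $\bvec{z}_T$ to be the restriction to $T$ of the classical Nédélec interpolant of $\tilde{\bvec{w}}$ on $K$. A Bramble--Hilbert polynomial-subtraction trick will produce the seminorm (rather than full-norm) estimates required by the statement.

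More concretely, the mesh regularity assumption ensures that $T$ is a Lipschitz polyhedron whose Lipschitz character is controlled by $\rho$, and that $T$ is contained in some shape-regular tetrahedron $K$ with $h_K\simeq h_T$. Let $\bvec{p}\in\vPoly{k+1}(T)$ be a standard averaged Taylor polynomial of $\bvec{w}$ satisfying
\[
\|\bvec{w}-\bvec{p}\|_{\vSob{j}(T)}\lesssim h_T^{k+2-j}\seminorm[\vSob{k+2}(T)]{\bvec{w}}
\quad\mbox{for } 0\le j\le k+2,
\]
and let $E:\vSob{k+2}(T)\to\vSob{k+2}(K)$ be Stein's universal extension, bounded uniformly in the mesh regularity. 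Setting $\tilde{\bvec{w}}\coloneq \bvec{p}+E(\bvec{w}-\bvec{p})$, with $\bvec{p}$ extended naturally as a polynomial to $K$, I define
\[
\bvec{z}_T\coloneq \bigl(I_{\NE{k+1}}^K\tilde{\bvec{w}}\bigr)_{|T}\in\NE{k+1}(T),
\]
where $I_{\NE{k+1}}^K$ is the classical Nédélec interpolator of degree $k+1$ on $K$, well-defined on $\vSob{2}(K)\hookrightarrow\vC{0}(K)$; the restriction of a polynomial in $\NE{k+1}(K)$ to $T$ belongs to $\NE{k+1}(T)$.

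For \eqref{eq:approx.NEk.w}, I would invoke the standard Nédélec approximation estimate on the shape-regular simplex $K$,
\[
\|\tilde{\bvec{w}}-I_{\NE{k+1}}^K\tilde{\bvec{w}}\|_{\vLeb(K)}\lesssim h_K^{k+1}\bigl(\seminorm[\vSob{k+1}(K)]{\tilde{\bvec{w}}}+\seminorm[\vSob{k+1}(K)]{\CURL\tilde{\bvec{w}}}\bigr),
\]
and bound the seminorms of $\tilde{\bvec{w}}=\bvec{p}+E(\bvec{w}-\bvec{p})$ using (i) $\seminorm[\vSob{k+2}(K)]{\bvec{p}}=0$ since $\bvec{p}\in\vPoly{k+1}$; (ii) $\seminorm[\vSob{k+1}(K)]{\bvec{p}}\simeq\seminorm[\vSob{k+1}(T)]{\bvec{p}}\lesssim\seminorm[\vSob{k+1}(T)]{\bvec{w}}+h_T\seminorm[\vSob{k+2}(T)]{\bvec{w}}$ (the top-order derivatives of $\bvec{p}$ are constant and $|K|\simeq|T|$); and (iii) the Stein bound $\seminorm[\vSob{j}(K)]{E(\bvec{w}-\bvec{p})}\lesssim\|\bvec{w}-\bvec{p}\|_{\vSob{j}(T)}$ combined with the Bramble--Hilbert estimate on $\bvec{w}-\bvec{p}$. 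For \eqref{eq:approx.NEk.curl}, I would use the commuting property $\CURL\circ I_{\NE{k+1}}^K = I_{\RT{k+1}}^K\circ\CURL$ to rewrite the error as a Raviart--Thomas approximation error for $\CURL\tilde{\bvec{w}}$; the corresponding RT estimate yields a bound proportional to $h_K^{k+1}\seminorm[\vSob{k+1}(K)]{\CURL\tilde{\bvec{w}}}\le h_K^{k+1}\seminorm[\vSob{k+2}(K)]{\tilde{\bvec{w}}}\lesssim h_T^{k+1}\seminorm[\vSob{k+2}(T)]{\bvec{w}}$ by (i) and (iii).

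The main obstacle is the interplay between Stein's extension (which preserves only full Sobolev norms) and the seminorm estimates required by the lemma; the polynomial-subtraction construction of $\tilde{\bvec{w}}$ resolves this by isolating the top-order part of $\bvec{w}$ into an honest polynomial $\bvec{p}$ (with vanishing $(k+2)$-seminorm) while leaving only the Bramble--Hilbert-controlled residual $\bvec{w}-\bvec{p}$ to be extended by $E$.
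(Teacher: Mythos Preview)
Your proposal is correct but follows a different route from the paper. The paper works with a simplex $S$ \emph{inside} $T$ (whose existence and size are guaranteed by mesh regularity), takes $\bvec{z}_T$ to be the N\'ed\'elec interpolant of $\bvec{w}$ on $S$ extended as a polynomial to $T$, and then transfers the classical simplex estimate to $T$ via the norm equivalence $\norm[\vLeb(S)]{q}\simeq\norm[\vLeb(T)]{q}$ for $q\in\vPoly{k+1}(T)$, inserting $\vlproj{k+1}{T}\bvec{w}$ and using triangle inequalities. Because $S\subset T$, no Sobolev extension is needed. Your approach instead enlarges $T$ to a simplex $K\supset T$, which forces you to extend $\bvec{w}$ outside its natural domain; the polynomial-subtraction trick you describe is then exactly what is needed to convert the full-norm bounds of Stein's operator into the seminorm control the lemma demands. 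Both arguments are sound; the paper's is shorter and avoids the extension operator and its scaling subtleties, while yours is perhaps closer in spirit to how such transfer arguments are usually presented for extensions to larger reference domains. One small point to check in your write-up is the uniformity of the Stein constant: this requires a scaling argument (reduce to unit-size $T$, where the Lipschitz character is controlled by $\rho$), which you allude to but should make explicit.
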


\begin{proof}
  By the mesh regularity assumption, there is a simplex $S\subset T$ whose inradius is $\gtrsim h_T$. Following the arguments
  in the proof of \cite[Lemma 1.25]{Di-Pietro.Droniou:20}, we infer the norm equivalence
  \begin{equation}\label{eq:norm.equiv}
    \norm[\Leb(S)]{q}\simeq \norm[\Leb(T)]{q}\qquad\forall q\in\Poly{k+1}(T).
  \end{equation}
  Let us take $\bvec{z}_T$ as the N\'ed\'elec interpolant in $\NE{k+1}(S)$ of $\bvec{w}$; $\bvec{z}_T$ can be uniquely extended as an element of $\NE{k+1}(T)$.
  By the arguments in the proof of \cite[Theorem 3.14 and Corollary 3.17]{Hiptmair:02}, and since $S\subset T$, it holds
  \begin{equation}\label{eq:approx.NEk}
    \begin{aligned}
      \norm[\vLeb(S)]{\bvec{w}-\bvec{z}_T}\lesssim{}& h_T^{k+1}\big(\seminorm[\vSob{k+1}(T)]{\bvec{w}}+\seminorm[\vSob{k+2}(T)]{\bvec{w}}\big),\\
      \norm[\vLeb(S)]{\CURL\bvec{w}-\CURL\bvec{z}_T}\lesssim{}& h_T^{k+1}\seminorm[\vSob{k+2}(T)]{\bvec{w}}.
    \end{aligned}
  \end{equation}
  We then write, introducing $\vlproj{k+1}{T}\bvec{w}$ and using triangle inequalities,
  \begin{align*}
    \norm[\vLeb(T)]{\bvec{w}-\bvec{z}_T}
    &\lesssim \norm[\vLeb(T)]{\bvec{w}-\vlproj{k+1}{T}\bvec{w}} + \norm[\vLeb(T)]{\vlproj{k+1}{T}\bvec{w}-\bvec{z}_T}
    \\
    &\lesssim h_T^{k+1}\seminorm[\vSob{k+1}(T)]{\bvec{w}} + \norm[\vLeb(S)]{\vlproj{k+1}{T}\bvec{w}-\bvec{z}_T}
    \\
    &\lesssim h_T^{k+1}\big(
    \seminorm[\vSob{k+1}(T)]{\bvec{w}}
    + \seminorm[\vSob{k+2}(T)]{\bvec{w}}
    \big),
  \end{align*}
  where we have used the approximation property of $\vlproj{k+1}{T}$ together with the norm equivalence \eqref{eq:norm.equiv} in the second line,
  and concluded by introducing $\bvec{w}$ and invoking \eqref{eq:approx.NEk} to write
  \[
  \begin{aligned}
    \norm[\vLeb(S)]{\vlproj{k+1}{T}\bvec{w}-\bvec{z}_T}
    &\le \norm[\vLeb(S)]{\vlproj{k+1}{T}\bvec{w}-\bvec{w}}
    + \norm[\vLeb(S)]{\bvec{w}-\bvec{z}_T}
    \\
    &\lesssim h_T^{k+1}\seminorm[\vSob{k+1}(T)]{\bvec{w}}
    + h_T^{k+1}\big(
    \seminorm[\vSob{k+1}(T)]{\bvec{w}}
    + \seminorm[\vSob{k+2}(T)]{\bvec{w}}
    \big).
  \end{aligned}
  \] 
  This concludes the proof of \eqref{eq:approx.NEk.w}.
  The proof of \eqref{eq:approx.NEk.curl} is done in a similar way, introducing $\CURL(\vlproj{k+1}{T}\bvec{w})$
  and using the approximation property $\norm[\vLeb(T)]{\CURL\bvec{w}-\CURL(\vlproj{k+1}{T}\bvec{w})}\lesssim h_T^{k+1}\seminorm[\vSob{k+2}(T)]{\bvec{w}}$.\qed
\end{proof}

\begin{proof}[Theorem \ref{thm:adjoint.consistency.curl}]
  For all $T\in\Th$, select $\bvec{z}_T\in\NE{k+1}(T)$ given by Lemma \ref{lem:approx.NEk}.
  Using \eqref{eq:Xdiv:l2.prod} to expand $(\cdot,\cdot)_{\DIV,h}$ together with \eqref{eq:Pdiv.uCT=cCT}, and recalling \eqref{eq:ibp.Pcurl}, we see that it holds, for all $\uvec{v}_h\in\Xcurl{h}$,
  \begin{equation}\label{eq:dEcurl:basic}
    \begin{aligned}
      \dEcurl(\bvec{w},\uvec{v}_h)
      &=
      \sum_{T\in\Th}\int_T\big(\Pdiv(\Idiv{T}\bvec{w}_{|T})-\bvec{z}_T\big)\cdot\cCT\uvec{v}_T
      \\
      &\quad
      + \sum_{T\in\Th}\mathrm{s}_{\DIV,T}(\Idiv{T}\bvec{w}_{|T},\uCT\uvec{v}_T)
      \\
      &\quad      
      + \sum_{T\in\Th}\int_T\CURL(\bvec{z}_T-\bvec{w})\cdot\Pcurl\uvec{v}_T
      \\
      &\quad
      + \sum_{T\in\Th}\sum_{F\in\FT}\omega_{TF}\int_F(\bvec{z}_T\times\normal_F)\cdot\trFt\uvec{v}_F.
      \\
      &\eqcolon \term_1 + \term_2 + \term_3 + \term_4.
    \end{aligned}
  \end{equation}

  Using Cauchy--Schwarz and triangle inequalities, it is readily inferred for the first term
  \begin{equation} \label{eq:dEcurl:consistency:T1}
    \begin{aligned}
      |\term_1|
      &\lesssim\left[
        \sum_{T\in\Th}\left(
        \norm[\vLeb(T)]{\Pdiv(\Idiv{T}\bvec{w}) - \bvec{w}}^2
        + \norm[\vLeb(T)]{\bvec{w} - \bvec{z}_T}^2
        \right)
        \right]^{\frac12}
      \\
      &\quad\times\left(
      \sum_{T\in\Th}\norm[\vLeb(T)]{\cCT\uvec{v}_T}^2
      \right)^{\frac12}\\
      &\lesssim
      h^{k+1}\left(\seminorm[\vSob{k+1}(\Th)]{\bvec{w}}+\seminorm[\vSob{k+2}(\Th)]{\bvec{w}}\right)\norm[\DIV,h]{\uCh\uvec{v}_h},      
    \end{aligned}
  \end{equation}
  where the conclusion follows using the approximation properties \eqref{eq:approx.PdivIdiv} and \eqref{eq:approx.NEk.w} to bound the first factor, and \eqref{eq:bound:cCT} along with the norm equivalence \eqref{eq:equiv.norms} to bound the second.
  
  For $\term_2$, combining the consistency property \eqref{eq:s.div.T:consistency} of $\mathrm{s}_{\DIV,T}$ with discrete Cauchy--Schwarz inequalities and the definition of the $\norm[\DIV,h]{{\cdot}}$-norm readily gives
  \begin{equation}\label{eq:dEcurl:consistency:T2}
    |\term_2|\lesssim h^{k+1}\seminorm[\vSob{k+1}(\Th)]{\bvec{w}}\norm[\DIV,h]{\uCh\uvec{v}_h}.
  \end{equation}

  For $\term_3$, Cauchy--Schwarz inequalities, the approximation property \eqref{eq:approx.NEk.curl}, and the definition of the norm $\norm[\CURL,h]{{\cdot}}$ yield
  \begin{equation}\label{eq:dEcurl:consistency:T3}
    \begin{aligned}
      |\term_3|
      &\le\left(
      \sum_{T\in\Th}\!\norm[\vLeb(T)]{\CURL(\bvec{z}_T - \bvec{w})}^2
      \right)^{\frac12}\left(
      \sum_{T\in\Th}\!\norm[\vLeb(T)]{\Pcurl\uvec{v}_T}^2
      \right)^{\frac12}
      \\
      &\lesssim h^{k+1}\seminorm[\vSob{k+2}(\Th)]{\bvec{w}}\norm[\CURL,h]{\uvec{v}_h}.
    \end{aligned}
  \end{equation}

  Let us now consider the last term in the right-hand side of \eqref{eq:dEcurl:basic}.
  Since $(\bvec{z}_T)_{|F}\times\normal_F\in\RT{k+1}(F)$ as a consequence of \eqref{eq:NE.T.trace} with $\ell=k+1$, by \eqref{eq:RcurlF:orth} we can replace $\trFt\uvec{v}_T$ by $\RcurlF\uvec{v}_F$ in the boundary integral.
  Using the fact that both $\RcurlF\uvec{v}_F$ and the (rotated) tangential trace of $\bvec{w}$ are continuous across interfaces, along with the fact that $\omega_{T_1F}+\omega_{T_2F}=0$ for all $F\in \Fh$ between two elements $T_1,T_2$, and $\bvec{w}_{|F}\times\normal_F=\bvec{0}$ for all $F\subset\partial\Omega$, we then have
  \[
  \begin{aligned}
    \term_4    
    &= \sum_{T\in\Th}\sum_{F\in\FT}\omega_{TF}\int_F(\bvec{z}_T-\bvec{w})\times\normal_F\cdot\RcurlF\uvec{v}_F
    \\
    &= \sum_{T\in\Th}\left(
    \int_T(\bvec{z}_T - \bvec{w})\cdot\CURL\RcurlT\uvec{v}_T
    - \int_T\CURL(\bvec{z}_T - \bvec{w})\cdot\RcurlT\uvec{v}_T
    \right),
  \end{aligned}
  \]
  where the conclusion follows recalling that $\RcurlF\uvec{v}_F = (\RcurlT\uvec{v}_T)_{{\rm t},F}$ for all $T\in\Th$ and all $F\in\FT$ (see \eqref{eq:RcurlT:bc}), and integrating by parts.
  Using Cauchy--Schwarz inequalities, it is inferred
  \[
  \begin{aligned}
    |\term_4|
    &\le\left[
      \sum_{T\in\Th}\left(
      \norm[\vLeb(T)]{\bvec{z}_T - \bvec{w}}^2
      + \norm[\vLeb(T)]{\CURL(\bvec{z}_T - \bvec{w})}^2
      \right)
      \right]^{\frac12}
    \\
    &\quad\times\left[
      \sum_{T\in\Th}\left(
      \norm[\vLeb(T)]{\CURL\RcurlT\uvec{v}_T}^2
      +     \norm[\vLeb(T)]{\RcurlT\uvec{v}_T}^2
      \right)
      \right]^{\frac12}.
  \end{aligned}
  \]
  The approximation properties \eqref{eq:approx.NEk.w}--\eqref{eq:approx.NEk.curl} of $\bvec{z}_T$ along with the boundedness \eqref{eq:RcurlT:bound} of $\RcurlT\uvec{v}_T$ yield
  \begin{equation}\label{eq:dEcurl:consistency:T4}
    |\term_4|\lesssim h^{k+1}\left(\seminorm[\vSob{k+1}(\Th)]{\bvec{w}}+\seminorm[\vSob{k+2}(\Th)]{\bvec{w}}\right)\left(
    \norm[\CURL,h]{\uvec{v}_h} + \norm[\DIV,h]{\uCh\uvec{v}_h}
    \right).
  \end{equation}
  Plugging \eqref{eq:dEcurl:consistency:T1}--\eqref{eq:dEcurl:consistency:T4} into \eqref{eq:dEcurl:basic}, \eqref{eq:dEcurl:consistency} follows.\qed
\end{proof}

\subsection{Proof of the adjoint consistency for the divergence}\label{sec:adjoint.consistency.div}

\begin{proof}[Theorem \ref{thm:adjoint.consistency.div}]
  Combining the definition \eqref{eq:dEdiv} of the adjoint consistency error for the divergence with \eqref{eq:Pdiv:ibp} summed over $T\in\Th$, we infer that it holds, for all $(q,\uvec{v}_h)$ as in the theorem and all $q_h\in\Poly{k+1}(\Th)$ with $q_T\coloneq (q_h)_{|T}$ for all $T\in\Th$,
  \begin{multline*}
    \dEdiv(q,\uvec{v}_h)
    =
    \sum_{T\in\Th}\bigg[
      \int_T(\cancel{\lproj{k}{T}} q - q_T)\DT\uvec{v}_T
      \\
      + \int_T\GRAD(q - q_T)\cdot\Pdiv\uvec{v}_T
      + \sum_{F\in\FT}\omega_{TF}\int_F (q_T - q) v_F
      \bigg],
  \end{multline*}  
  where
  the cancellation of $\lproj{k}{T}$ is justified by its definition along with $\DT\uvec{v}_T\in\Poly{k}(T)$,
  while the insertion of $q$ into the boundary integral is possible thanks to its single-valuedness at interfaces along with the fact that it vanishes on $\partial\Omega$.
  Taking absolute values and using Cauchy--Schwarz inequalities in the right-hand side along with $h_F\simeq h_T$ for all $T\in\Th$ and all $F\in\FT$, we infer
  \begin{equation}\label{eq:dEdiv:bound}
    \begin{aligned}
      &\left|\dEdiv(q,\uvec{v}_h)\right|
      \\
      &\quad\lesssim
      \left[
        \sum_{T\in\Th}\left(
        h_T^{-2}\norm[\Leb(T)]{q - q_T}^2
        {+}\, \norm[\vLeb(T)]{\GRAD(q - q_T)}^2
        {+}\, h_T^{-1}\norm[\partial T]{q_T - q}^2
        \right)
        \right]^{\frac12}
      \\
      &\qquad\times\left[
        \sum_{T\in\Th}\left(
        h_T^2\norm[\Leb(T)]{\DT\uvec{v}_T}^2
        {+}\, \norm[\vLeb(T)]{\Pdiv\uvec{v}_T}^2
        {+}\hspace{-1ex} \sum_{F\in\FT}h_F\norm[\Leb(F)]{v_F}^2
        \right)
        \right]^{\frac12}.
    \end{aligned}
  \end{equation}
  Taking $q_h$ such that $q_T=\lproj{k+1}{T}q_{|T}$ for all $T\in\Th$ and using the approximation properties of the $\Leb$-orthogonal projector \cite[Theorem 1.45]{Di-Pietro.Droniou:20}, it is inferred that the first factor in the right-hand side of \eqref{eq:dEdiv:bound} is $\lesssim h^{k+1}\seminorm[\Sob{k+2}(\Th)]{q}$.
  Moving to the second factor, we use, for all $T\in\Th$, \cite[Lemma 8]{Di-Pietro.Ern:17} followed by the local seminorm equivalence \eqref{eq:equiv.norms} to write $h_T\norm[\Leb(T)]{\DT\uvec{v}_T}\lesssim\tnorm[\DIV,T]{\uvec{v}_T}\lesssim\norm[\DIV,T]{\uvec{v}_T}$. The same norm equivalence and the definition of the $\norm[\DIV,T]{{\cdot}}$-norm also yields $\norm[\vLeb(T)]{\Pdiv\uvec{v}_T}^2 + \sum_{F\in\FT}h_F\norm[\Leb(F)]{v_F}^2\lesssim \norm[\DIV,T]{\uvec{v}_T}$. The second factor in the right-hand side of \eqref{eq:dEdiv:bound} is therefore $\lesssim\norm[\DIV,h]{\uvec{v}_h}$, and the proof is complete.\qed
\end{proof}

\section{Convergence analysis for a DDR discretisation of magnetostatics}\label{sec:application}

We analyse in this section the DDR approximation of the following magnetostatics model, in which the unknowns are the magnetic field $\bvec{H}\in \Hcurl{\Omega}$ and the vector potential $\bvec{A}\in\Hdiv{\Omega}$:
\begin{equation}\label{eq:strong}
  \begin{aligned}
    &\mu\bvec{H} - \CURL\bvec{A} = \bvec{0}\,,\quad\CURL\bvec{H} = \bvec{J}\,,\quad\DIV\bvec{A} = 0 &\qquad& \text{in $\Omega$},
    \\
    &\bvec{A}\times\normal = \bvec{0} &\qquad& \text{on $\partial\Omega$}.
  \end{aligned}
\end{equation}
The free current $\bvec{J}$ belongs to $\CURL\Hcurl{\Omega}$ and we assume, for the sake of simplicity, that the magnetic permeability $\mu$ is piecewise-constant on the considered meshes, with $\mu:\Omega\to[\mu_-,\mu_+]$ for some constant numbers $0<\mu_-\le \mu_+$.

\subsection{Scheme}

As shown in \cite{Di-Pietro.Droniou:20*1}, a scheme based on the discrete de Rham tools can be written by replacing, in the weak formulation of \eqref{eq:strong}, the continuous $\Leb$-products by discrete ones built on the local products.
Denote by $\mu_T$ the constant value of $\mu$ over $T\in\Th$ and define
the bilinear forms $a_h:\Xcurl{h}\times\Xcurl{h}\to\Real$, $b_h:\Xcurl{h}\times\Xdiv{h}\to\Real$, and $c_h:\Xdiv{h}\times\Xdiv{h}\to\Real$ as follows:
For all $\uvec{\upsilon}_h,\uvec{\zeta}_h\in\Xcurl{h}$ and all $\uvec{w}_h,\uvec{v}_h\in\Xdiv{h}$,
\begin{gather*}
  \mathrm{a}_h(\uvec{\upsilon}_h,\uvec{\zeta}_h)\coloneq \sum_{T\in\Th}\mu_T(\uvec{\upsilon}_T,\uvec{\zeta}_T)_{\CURL,T},\quad
  \mathrm{b}_h(\uvec{\zeta}_h,\uvec{v}_h)\coloneq (\uCh\uvec{\zeta}_h,\uvec{v}_h)_{\DIV,h},
  \\
  \mathrm{c}_h(\uvec{w}_h,\uvec{v}_h)\coloneq\int_\Omega\Dh\uvec{w}_h~\Dh\uvec{v}_h.
\end{gather*}
The discrete problem then reads:
Find $\uvec{H}_h\in\Xcurl{h}$ and $\uvec{A}_h\in\Xdiv{h}$ such that
\begin{equation}\label{eq:discrete}
  \begin{alignedat}{2}
    \mathrm{a}_h(\uvec{H}_h,\uvec{\zeta}_h) - \mathrm{b}_h(\uvec{\zeta}_h,\uvec{A}_h) &= 0
    &\qquad&\forall\uvec{\zeta}_h\in\Xcurl{h},
    \\
    \mathrm{b}_h(\uvec{H}_h,\uvec{v}_h) + \mathrm{c}_h(\uvec{A}_h,\uvec{v}_h) &= \sum_{T\in\Th}\int_T\bvec{J}\cdot\Pdiv\uvec{v}_T
    &\qquad&\forall\uvec{v}_h\in\Xdiv{h}.
  \end{alignedat}
\end{equation}
The equations of this problem can be recast in the standard variational form $\mathcal A_h((\uvec{H}_h,\uvec{A}_h),(\uvec{\zeta}_h,\uvec{v}_h))=\mathcal L_h(\uvec{\zeta}_h,\uvec{v}_h)$, where $\mathcal A_h: (\Xcurl{h}\times\Xdiv{h})^2\to \Real$ and $\mathcal L_h:\Xcurl{h}\times\Xdiv{h}\to\Real$ are the bilinear and linear forms, respectively, such that
\[
\begin{aligned}
 \mathcal A_h((\uvec{\upsilon}_h,\uvec{w}_h),(\uvec{\zeta}_h,\uvec{v}_h))\coloneq{}&
\mathrm{a}_h(\uvec{\upsilon}_h,\uvec{\zeta}_h) - \mathrm{b}_h(\uvec{\zeta}_h,\uvec{w}_h)
+\mathrm{b}_h(\uvec{\upsilon}_h,\uvec{v}_h) + \mathrm{c}_h(\uvec{w}_h,\uvec{v}_h),\\
\mathcal L_h(\uvec{\zeta}_h,\uvec{v}_h)\coloneq{}&\sum_{T\in\Th}\int_T\bvec{J}\cdot\Pdiv\uvec{v}_T.
\end{aligned}
\]

\subsection{Error estimate}

To measure the error, we introduce the following $\Hcurl{\Omega}$- and $\Hdiv{\Omega}$-like (graph) norms on $\Xcurl{h}$ and $\Xdiv{h}$, respectively:
\[
\begin{alignedat}{2}
  \norm[\mu,\CURL,1,h]{\uvec{\zeta}_h}
  &\coloneq\left( \mathrm{a}_h(\uvec{\zeta}_h,\uvec{\zeta}_h) + \norm[\DIV,h]{\uCh\uvec{\zeta}_h}^2\right)^{\frac12}
  &\qquad&\forall\uvec{\zeta}_h\in\Xcurl{h},
  \\
  \norm[\DIV,1,h]{\uvec{v}_h}
  &\coloneq\left( \norm[\DIV,h]{\uvec{v}_h}^2 + \norm[\Leb(\Omega)]{\Dh\uvec{v}_h}^2\right)^{\frac12}
  &\qquad&\forall\uvec{v}_h\in\Xdiv{h}.
\end{alignedat}
\]
\begin{theorem}[Error estimate for the magnetostatics problem]\label{th:error.estimates} 
Assume that both the first and second Betti numbers of $\Omega$ are zero (i.e., $\Omega$ is not crossed by any tunnel and does not enclose any void).
  Then, there exists a unique solution $(\uvec{H}_h,\uvec{A}_h)\in\Xcurl{h}\times\Xdiv{h}$ to \eqref{eq:discrete}. Moreover, letting $(\bvec{H},\bvec{A})\in\Hcurl{\Omega}\times \Hdiv{\Omega}$ be the weak solution to \eqref{eq:strong} and assuming that $\bvec{H}\in \vC{0}(\overline{\Omega})\cap \vSob{k+2}(\Th)$ and $\bvec{A}\in \vC{0}(\overline{\Omega})\times \vSob{k+2}(\Th)$, we have
  \begin{multline}\label{eq:error.estimate}
    \norm[\mu,\CURL,1,h]{\uvec{H}_h-\Icurl{h}\bvec{H}}+\norm[\DIV,1,h]{\uvec{A}_h-\Idiv{h}\bvec{A}}\\
    \lesssim
    h^{k+1}\left(\seminorm[\vSob{k+1}(\Th)]{\CURL \bvec{H}}+\seminorm[\vSob{(k+1,2)}(\Th)]{\bvec{H}}+
    \seminorm[\vSob{k+1}(\Th)]{\bvec{A}}+\seminorm[\vSob{k+2}(\Th)]{\bvec{A}}\right),
  \end{multline}
  where the hidden constant in $\lesssim$ only depends on $\Omega$, $k$, the mesh regularity parameter, and $\mu_-$, $\mu_+$.
\end{theorem}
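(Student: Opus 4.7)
The system \eqref{eq:discrete} is a discrete saddle-point problem of mixed Babu\v{s}ka--Brezzi type with bilinear form $\mathcal A_h$ on the product space $X_h\coloneq\Xcurl{h}\times\Xdiv{h}$ equipped with the norm $\norm[X]{(\uvec{\zeta},\uvec{v})}^2\coloneq\norm[\mu,\CURL,1,h]{\uvec{\zeta}}^2+\norm[\DIV,1,h]{\uvec{v}}^2$. My plan is to derive \eqref{eq:error.estimate} via the usual two ingredients: (i) a discrete inf-sup condition for $\mathcal A_h$ on $X_h$ (which simultaneously yields well-posedness of \eqref{eq:discrete}), and (ii) a bound of order $h^{k+1}$ on the consistency error
\[
\mathcal E_h(\uvec{\zeta}_h,\uvec{v}_h)\coloneq \mathcal L_h(\uvec{\zeta}_h,\uvec{v}_h)-\mathcal A_h\bigl((\Icurl{h}\bvec{H},\Idiv{h}\bvec{A}),(\uvec{\zeta}_h,\uvec{v}_h)\bigr).
\]
A third Strang lemma then combines these into the stated estimate.

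\textbf{Step 1: inf-sup.} Given $(\uvec{\upsilon}_h,\uvec{w}_h)\in X_h$, I will test with a perturbation of itself of the form $(\uvec{\zeta}_h,\uvec{v}_h)=(\uvec{\upsilon}_h-\epsilon_1\uvec{\tau}_h,\,\uvec{w}_h+\epsilon_2\uCh\uvec{\upsilon}_h)$. Testing with $(\uvec{\upsilon}_h,\uvec{w}_h)$ alone gives $\mathcal A_h((\uvec{\upsilon}_h,\uvec{w}_h),(\uvec{\upsilon}_h,\uvec{w}_h))=\mathrm{a}_h(\uvec{\upsilon}_h,\uvec{\upsilon}_h)+\norm[\Leb(\Omega)]{\Dh\uvec{w}_h}^2$, thereby controlling $\sum_T\mu_T\norm[\CURL,T]{\uvec{\upsilon}_T}^2$ and $\norm[\Leb(\Omega)]{\Dh\uvec{w}_h}^2$. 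The perturbation $\epsilon_2\uCh\uvec{\upsilon}_h$ produces the term $\epsilon_2\norm[\DIV,h]{\uCh\uvec{\upsilon}_h}^2$ thanks to the complex property $\Dh\circ\uCh=0$ (which kills the $\mathrm{c}_h$ cross term). To recover $\norm[\DIV,h]{\uvec{w}_h}^2$ I will use the orthogonal splitting $\uvec{w}_h=\uvec{w}_h^\perp+\uvec{w}_h^0$ with $\uvec{w}_h^0\in\Ker\Dh$ and $\uvec{w}_h^\perp\in(\Ker\Dh)^\perp$: the Poincar\'e inequality for the divergence (Theorem \ref{thm:poincare.div}) bounds $\norm[\DIV,h]{\uvec{w}_h^\perp}$ by $\norm[\Leb(\Omega)]{\Dh\uvec{w}_h}$, while exactness $\Ker\Dh=\Image\uCh$ (available since $b_2=0$) combined with the Poincar\'e inequality for the curl (Theorem \ref{thm:poincare.curl}, requiring $b_1=0$ in addition) produces $\uvec{\tau}_h\in(\Ker\uCh)^\perp$ with $\uCh\uvec{\tau}_h=\uvec{w}_h^0$ and $\norm[\CURL,h]{\uvec{\tau}_h}\lesssim\norm[\DIV,h]{\uvec{w}_h^0}$. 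Then $-\mathrm{b}_h(\uvec{\tau}_h,\uvec{w}_h)=-\norm[\DIV,h]{\uvec{w}_h^0}^2$ by orthogonality, and for small enough $\epsilon_1$ Young's inequality absorbs the residual $\epsilon_1\mathrm{a}_h(\uvec{\upsilon}_h,\uvec{\tau}_h)$ into the existing coercivity. Choosing $\epsilon_1,\epsilon_2$ small, I obtain $\mathcal A_h((\uvec{\upsilon}_h,\uvec{w}_h),(\uvec{\zeta}_h,\uvec{v}_h))\gtrsim\norm[X]{(\uvec{\upsilon}_h,\uvec{w}_h)}^2$, together with $\norm[X]{(\uvec{\zeta}_h,\uvec{v}_h)}\lesssim\norm[X]{(\uvec{\upsilon}_h,\uvec{w}_h)}$, which is the inf-sup.

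\textbf{Step 2: consistency.} Using the commutation properties \eqref{eq:uCT:commutation} and \eqref{eq:DT:commutation} with $\CURL\bvec{H}=\bvec{J}$ and $\DIV\bvec{A}=0$, I reduce $\mathcal E_h(\uvec{\zeta}_h,\uvec{v}_h)$ to
\[
\mathcal E_h(\uvec{\zeta}_h,\uvec{v}_h)=\bigl[-\mathrm{a}_h(\Icurl{h}\bvec{H},\uvec{\zeta}_h)+\mathrm{b}_h(\uvec{\zeta}_h,\Idiv{h}\bvec{A})\bigr]+\Bigl[\sum_{T\in\Th}\!\!\int_T\bvec{J}\cdot\Pdiv\uvec{v}_T-(\Idiv{h}\bvec{J},\uvec{v}_h)_{\DIV,h}\Bigr].
\]
The second bracket is bounded by $h^{k+1}\seminorm[\vSob{k+1}(\Th)]{\CURL\bvec{H}}\norm[\DIV,h]{\uvec{v}_h}$ via \eqref{eq:consistency.L2.Xdiv} applied to $\bvec{w}=\bvec{J}$. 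For the first bracket, I use the L${}^2$-product consistency \eqref{eq:consistency.L2.Xcurl} to replace $\mathrm{a}_h(\Icurl{h}\bvec{H},\uvec{\zeta}_h)$ by $\sum_T\int_T\mu\bvec{H}\cdot\Pcurl\uvec{\zeta}_T$ up to an $h^{k+1}$ error, then invoke the pointwise identity $\mu\bvec{H}=\CURL\bvec{A}$ (valid in each $T$ where $\mu$ is constant) and the adjoint consistency for the curl (Theorem \ref{thm:adjoint.consistency.curl}, applicable since $\bvec{A}\in\vC{0}(\overline{\Omega})\cap\HDcurl{\Omega}$ by the boundary condition $\bvec{A}\times\normal=\bvec{0}$) to rewrite $\sum_T\int_T\CURL\bvec{A}\cdot\Pcurl\uvec{\zeta}_T=(\Idiv{h}\bvec{A},\uCh\uvec{\zeta}_h)_{\DIV,h}-\dEcurl(\bvec{A},\uvec{\zeta}_h)$. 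After telescoping, the first bracket equals $\dEcurl(\bvec{A},\uvec{\zeta}_h)$ plus the $\mathrm{L}^2$-product consistency residual, both of which are $h^{k+1}$-small in terms of $\norm[\CURL,h]{\uvec{\zeta}_h}+\norm[\DIV,h]{\uCh\uvec{\zeta}_h}\lesssim\norm[\mu,\CURL,1,h]{\uvec{\zeta}_h}$.

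\textbf{Main obstacle.} The technical heart of the argument is the inf-sup construction in Step~1, and specifically the Hodge-type splitting $\uvec{w}_h=\uvec{w}_h^\perp+\uCh\uvec{\tau}_h$ with quantitative control of the potential $\uvec{\tau}_h$. This is where both Betti-number assumptions enter in an essential way: $b_2=0$ guarantees the global exactness $\Ker\Dh=\Image\uCh$, and $b_1=0$ is needed to apply the Poincar\'e inequality for the curl to $\uvec{\tau}_h\in(\Ker\uCh)^\perp$. Once the inf-sup is in hand and the bound $|\mathcal E_h(\uvec{\zeta}_h,\uvec{v}_h)|\lesssim h^{k+1}(\seminorm[\vSob{k+1}(\Th)]{\CURL\bvec{H}}+\seminorm[\vSob{(k+1,2)}(\Th)]{\bvec{H}}+\seminorm[\vSob{k+1}(\Th)]{\bvec{A}}+\seminorm[\vSob{k+2}(\Th)]{\bvec{A}})\norm[X]{(\uvec{\zeta}_h,\uvec{v}_h)}$ is assembled, the third Strang lemma yields \eqref{eq:error.estimate} directly.
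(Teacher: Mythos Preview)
Your proof is correct and follows essentially the same approach as the paper: a uniform inf-sup for $\mathcal A_h$ obtained from the exactness $\Ker\Dh=\Image\uCh$ and the discrete Poincar\'e inequalities for the curl and divergence, combined with a Third Strang Lemma consistency-error estimate that uses the commutation properties \eqref{eq:uCT:commutation}--\eqref{eq:DT:commutation}, the $\Leb$-product consistency \eqref{eq:consistency.L2.Xcurl}--\eqref{eq:consistency.L2.Xdiv}, and the adjoint consistency for the curl (Theorem~\ref{thm:adjoint.consistency.curl}) applied with $\mu\bvec{H}=\CURL\bvec{A}$. The paper's own proof defers the inf-sup construction to \cite[Theorem~10]{Di-Pietro.Droniou:20*1} and organises the consistency error into three pieces $\mathcal E_{h,1},\mathcal E_{h,2},\mathcal E_{h,3}$ that match your two brackets (with $\mathcal E_{h,2}=0$ absorbed into your use of $\DIV\bvec{A}=0$), so the arguments coincide.
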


\begin{proof}
  As shown in the proof of \cite[Theorem 10]{Di-Pietro.Droniou:20*1}, the exactness of the rightmost part of the sequence \eqref{eq:global.sequence.3D}, which holds owing to \eqref{eq:Im.uCh.equal.Ker.Dh} and \eqref{eq:Im.Dh.equal.Pk.Th}, and the Poincar\'e inequalities for $\uCh$ and $\Dh$ (see Theorems \ref{thm:poincare.curl} and \ref{thm:poincare.div}) enable a reproduction of the arguments of the continuous inf-sup condition (see, e.g., \cite[Section 2]{Di-Pietro.Droniou.ea:20} or \cite[Theorem 4.9]{Arnold:18}) to see that $\mathcal A_h$ satisfies a uniform inf-sup condition with respect to the norm on $\Xcurl{h}\times\Xdiv{h}$ induced by $\norm[\mu,\CURL,1,h]{{\cdot}}$ and $\norm[\DIV,1,h]{{\cdot}}$.

  Using the Third Strang Lemma \cite{Di-Pietro.Droniou:18}, we therefore obtain \eqref{eq:error.estimate} provided we can prove that the consistency error
  \[
  \mathcal E_h((\bvec{H},\bvec{A});(\uvec{\zeta}_h,\uvec{v}_h))\coloneq
  \mathcal L_h(\uvec{\zeta}_h,\uvec{v}_h)
  - \mathcal A_h((\Icurl{h}\bvec{H},\Idiv{h}\bvec{A}),(\uvec{\zeta}_h,\uvec{v}_h))
  \]
  satisfies, for all $(\uvec{\zeta}_h,\uvec{v}_h)\in\Xcurl{h}\times\Xdiv{h}$,
  \begin{equation}\label{eq:global.consistency.error}
    \begin{aligned}
      &\mathcal E_h((\bvec{H},\bvec{A});(\uvec{\zeta}_h,\uvec{v}_h))
      \\
      &\quad\lesssim h^{k+1}\left(\seminorm[\vSob{k+1}(\Th)]{\CURL \bvec{H}}+\seminorm[\vSob{(k+1,2)}(\Th)]{\bvec{H}}+
      \seminorm[\vSob{k+1}(\Th)]{\bvec{A}}+\seminorm[\vSob{k+2}(\Th)]{\bvec{A}}\right)
      \\
      &\qquad
      \times
      \left(\norm[\mu,\CURL,1,h]{\uvec{\zeta}_h}+\norm[\DIV,1,h]{\uvec{v}_h}\right).
    \end{aligned}
  \end{equation}
  Expanding according to the respective definitions $\mathcal A_h$, $\mathcal L_h$, $\mathrm{a}_h$, $\mathrm{b}_h$, and $\mathrm{c}_h$, we have
  \begin{equation}\label{eq:Eh.sum}
    \begin{aligned}
      \mathcal E_h((\bvec{H},\bvec{A});(\uvec{\zeta}_h,\uvec{v}_h))
      &= \mathcal E_{h,1}((\bvec{H},\bvec{A});(\uvec{\zeta}_h,\uvec{v}_h))
      \\
      &\quad
      + \mathcal E_{h,2}((\bvec{H},\bvec{A});(\uvec{\zeta}_h,\uvec{v}_h))
      + \mathcal E_{h,3}((\bvec{H},\bvec{A});(\uvec{\zeta}_h,\uvec{v}_h)),
    \end{aligned}
  \end{equation}
  with
  \begin{align*}
    \mathcal E_{h,1}((\bvec{H},\bvec{A});(\uvec{\zeta}_h,\uvec{v}_h))\,{\coloneq}&
    \sum_{T\in\Th}\left(\int_T\bvec{J}\cdot\Pdiv\uvec{v}_T-(\uCT\big(\Icurl{T}\bvec{H}\big),\uvec{v}_T)_{\DIV,T}\right),\\
    \mathcal E_{h,2}((\bvec{H},\bvec{A});(\uvec{\zeta}_h,\uvec{v}_h))\,{\coloneq}&
    -\hspace{-1ex}\sum_{T\in\Th}\int_T \DT\big(\Idiv{T}\bvec{A}\big)\DT\uvec{v}_T\\
    \mathcal E_{h,3}((\bvec{H},\bvec{A});(\uvec{\zeta}_h,\uvec{v}_h))\,{\coloneq}&
    -\hspace{-1ex}\sum_{T\in\Th}\left(\mu_T(\Icurl{T}\bvec{H},\uvec{\zeta}_T)_{\CURL,T} - (\uCT\uvec{\zeta}_T,\Idiv{T}\bvec{A})_{\DIV,T}\right).
  \end{align*}

  Let us first estimate $\mathcal E_{h,1}$. Recalling that $\bvec{J}=\CURL\bvec{H}$, using the commutation formula \eqref{eq:uCT:commutation}, invoking the consistency \eqref{eq:consistency.L2.Xdiv} of the discrete $\Leb$-product on $\Xdiv{T}$ and applying a Cauchy--Schwarz inequality, we have
  \begin{align}
    \mathcal E_{h,1}((\bvec{H},\bvec{A});(\uvec{\zeta}_h,\uvec{v}_h))
    &=\sum_{T\in\Th}\left(\int_T\CURL\bvec{H}\cdot\Pdiv\uvec{v}_T-(\Idiv{T}(\CURL\bvec{H}),\uvec{v}_T)_{\DIV,T}\right)
    \nonumber\\
    &\lesssim \sum_{T\in\Th}h_T^{k+1}\seminorm[\vSob{k+1}(T)]{\CURL\bvec{H}}\norm[\DIV,T]{\uvec{v}_T}
    \nonumber\\
    &\le h^{k+1}\seminorm[\vSob{k+1}(\Th)]{\CURL \bvec{H}}\norm[\DIV,h]{\uvec{v}_h}.
   \label{eq:est.Eh1}
  \end{align}
  To handle $\mathcal E_{h,2}$, we use the commutation formula \eqref{eq:DT:commutation} to get $\DT\big(\Idiv{T}\bvec{A}\big) = \lproj{k}{T}(\DIV\bvec{A})=0$, and thus
  \begin{equation}\label{eq:est.Eh2}
    \mathcal E_{h,2}((\bvec{H},\bvec{A});(\uvec{\zeta}_h,\uvec{v}_h))=0.
  \end{equation}
  Finally, we turn to $\mathcal E_{h,3}$. Since $\bvec{A}\in\HDcurl{\Omega}$, the adjoint consistency Theorem \ref{thm:adjoint.consistency.curl} enables us to replace, in $\mathcal E_{h,3}$, the term $(\uCT\uvec{\zeta}_T,\Idiv{T}\bvec{A})_{\DIV,T}$ with ${\int_T \CURL\bvec{A}\cdot \Pcurl\uvec{\zeta}_T} = {\int_T \mu_T\bvec{H}\cdot \Pcurl\uvec{\zeta}_T}$ up to a term that is controlled, i.e.,
  \[
  \begin{aligned}
    &\mathcal E_{h,3}((\bvec{H},\bvec{A});(\uvec{\zeta}_h,\uvec{v}_h))
    \\
    &\quad\le
    -\sum_{T\in\Th}\left(\mu_T(\Icurl{T}\bvec{H},\uvec{\zeta}_T)_{\CURL,T} - \mu_T\int_T \bvec{H}\cdot\Pcurl\uvec{\zeta}_T\right)\\
    &\qquad
    +h^{k+1}\left(\seminorm[\vSob{k+1}(\Th)]{\bvec{A}}+\seminorm[\vSob{k+2}(\Th)]{\bvec{A}}\right)\norm[\mu,\CURL,1,h]{\uvec{\zeta}_h}
    \\
    &\le
    -\sum_{T\in\Th}\left(\mu_T\int_T\big[\Pcurl\big(\Icurl{T}\bvec{H}\big)-\bvec{H}\big]\cdot\Pcurl\uvec{\zeta}_T+\mathrm{s}_{\CURL,T}(\Icurl{T}\bvec{H},\uvec{\zeta}_T)\right)
    \\
    &\qquad
    +h^{k+1}\left(\seminorm[\vSob{k+1}(\Th)]{\bvec{A}}+\seminorm[\vSob{k+2}(\Th)]{\bvec{A}}\right)\norm[\mu,\CURL,1,h]{\uvec{\zeta}_h},
  \end{aligned}
  \]
  where we have used $\norm[\CURL,h]{\uvec{\zeta}_h}+\norm[\DIV,h]{\uCh\uvec{\zeta}_h}\lesssim \norm[\mu,\CURL,1,h]{\uvec{\zeta}_h}$ and the second inequality comes from expanding $(\cdot,\cdot)_{\CURL,T}$ according to its definition. Cauchy--Schwarz inequalities and the consistency properties \eqref{eq:approx.PcurlIcurl} and \eqref{eq:s.curl.T:consistency} then lead to
  \[
  \begin{aligned}
    \mathcal E_{h,3}((\bvec{H},\bvec{A});(\uvec{\zeta}_h,\uvec{v}_h))
    &\lesssim h^{k+1}\seminorm[\vSob{(k+1,2)}(\Th)]{\bvec{H}}\norm[\CURL,h]{\uvec{\zeta}_h}
    \\
    &\quad
    +h^{k+1}\left(\seminorm[\vSob{k+1}(\Th)]{\bvec{A}}+\seminorm[\vSob{k+2}(\Th)]{\bvec{A}}\right)\norm[\mu,\CURL,1,h]{\uvec{\zeta}_h}.
  \end{aligned}
  \]
  Plugging this estimate together with \eqref{eq:est.Eh1} and \eqref{eq:est.Eh2} into  \eqref{eq:Eh.sum}, we infer that \eqref{eq:global.consistency.error} holds, which concludes the proof.\qed
\end{proof}

\subsection{Numerical tests}\label{sec:ddr:numerical.tests}

We present here the results of some numerical tests obtained with the DDR scheme \eqref{eq:discrete} for the magnetostatics model \eqref{eq:strong}, focusing on comparing outputs obtained using either the complements \eqref{eq:spaces.T}, hereafter denoted by (K), or the orthogonal complements of \cite{Di-Pietro.Droniou.ea:20,Di-Pietro.Droniou:20*1}, denoted by ($\perp$). Both versions of the DDR complex, and related schemes, have been implemented in the \texttt{HArDCore3D} C++ framework (see \url{https://github.com/jdroniou/HArDCore}), using linear algebra facilities from the \texttt{Eigen3} library (see \url{http://eigen.tuxfamily.org}) and the \texttt{Intel MKL PARDISO} library (see \url{https://software.intel.com/en-us/mkl}) for the resolution of the global sparse linear system.
This solver proved to be the most efficient among those at our disposal.
All tests were run on a 16-inch 2019 MacBook Pro equipped with an 8-core Intel Core i9 processor (I9-9980HK) and 32Gb of RAM, and running macOS Big Sur version 11.5.1.
 We consider a constant permeability $\mu=1$, and the same exact smooth solution and mesh families as in \cite[Section 4.4]{Di-Pietro.Droniou:20*1} for comparability.


Figure \ref{fig:conv} presents the errors, for various values of $k$, computed in the relative discrete $\Hcurl{\Omega}\times\Hdiv{\Omega}$ norm:
\[
\frac{\big(\norm[\mu,\CURL,1,h]{\uvec{H}_h-\Icurl{h}\bvec{H}}^2+\norm[\DIV,1,h]{\uvec{A}_h-\Idiv{h}\bvec{A}}^2\big)^{\nicefrac12}}
{\big(\norm[\mu,\CURL,1,h]{\Icurl{h}\bvec{H}}^2+\norm[\DIV,1,h]{\Idiv{h}\bvec{A}}^2\big)^{\nicefrac12}}.
\]
In the case of the Koszul complements, Theorem \ref{th:error.estimates} states that this error should decrease as $\mathcal O(h^{k+1})$ with the mesh size. No such estimate is known for the DDR scheme using orthogonal complements and, due to the lack of key properties of these complements (hierarchical inclusions, structure of traces), it is not clear whether the analysis carried out in the rest of this paper could be adapted to such complements. Nonetheless, the graphs in Figure \ref{fig:conv} show that both schemes converge with an order $k+1$. The errors between (K) and ($\perp$) are essentially indistinguishable, except for $k\ge 1$ on tetrahedral meshes, where ($\perp$) leads to slightly larger errors than (K) -- about twice as large on the finest mesh with $k=3$.

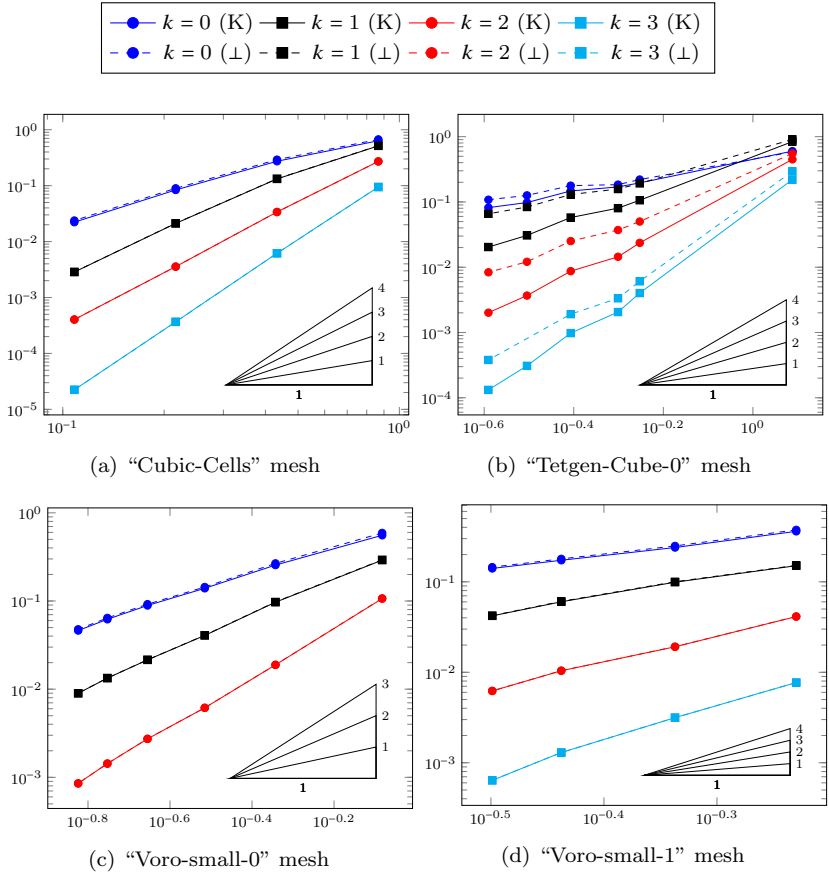
\begin{figure}\centering
  {
    \hypersetup{hidelinks}
    \ref{conv.cubic-cells}
  }
  \vspace{0.50cm}\\
  \begin{minipage}{0.45\textwidth}
    \begin{tikzpicture}[scale=0.70]
      \begin{loglogaxis}[legend columns=4] 
        \addplot [mark=*, blue] table[x=MeshSize,y=HcurlHdivError] {dat_revision/compl/Cubic-Cells_0/data_rates.dat};
        \logLogSlopeTriangle{0.90}{0.4}{0.1}{1}{black};
        \addplot [mark=square*, black] table[x=MeshSize,y=HcurlHdivError] {dat_revision/compl/Cubic-Cells_1/data_rates.dat};
        \logLogSlopeTriangle{0.90}{0.4}{0.1}{2}{black};
        \addplot [mark=*, red] table[x=MeshSize,y=HcurlHdivError] {dat_revision/compl/Cubic-Cells_2/data_rates.dat};
        \logLogSlopeTriangle{0.90}{0.4}{0.1}{3}{black};
        \addplot [mark=square*, cyan] table[x=MeshSize,y=HcurlHdivError] {dat_revision/compl/Cubic-Cells_3/data_rates.dat};
        \logLogSlopeTriangle{0.90}{0.4}{0.1}{4}{black};        
        \addplot [mark=*, mark options=solid, blue, dashed] table[x=MeshSize,y=HcurlHdivError] {dat_revision/orth/Cubic-Cells_0/data_rates.dat};
        \addplot [mark=square*, mark options=solid, black, dashed] table[x=MeshSize,y=HcurlHdivError] {dat_revision/orth/Cubic-Cells_1/data_rates.dat};
        \addplot [mark=*, mark options=solid, red, dashed] table[x=MeshSize,y=HcurlHdivError] {dat_revision/orth/Cubic-Cells_2/data_rates.dat};
        \addplot [mark=square*, mark options=solid, cyan, dashed] table[x=MeshSize,y=HcurlHdivError] {dat_revision/orth/Cubic-Cells_3/data_rates.dat};
      \end{loglogaxis}            
    \end{tikzpicture}
    \subcaption{``Cubic-Cells'' mesh}
  \end{minipage}
  \begin{minipage}{0.45\textwidth}
    \begin{tikzpicture}[scale=0.70]
      \begin{loglogaxis}[legend columns=4] 
        \addplot [mark=*, blue] table[x=MeshSize,y=HcurlHdivError] {dat_revision/compl/Tetgen-Cube-0_0/data_rates.dat};
        \logLogSlopeTriangle{0.90}{0.4}{0.1}{1}{black};
        \addplot [mark=square*, black] table[x=MeshSize,y=HcurlHdivError] {dat_revision/compl/Tetgen-Cube-0_1/data_rates.dat};
        \logLogSlopeTriangle{0.90}{0.4}{0.1}{2}{black};
        \addplot [mark=*, red] table[x=MeshSize,y=HcurlHdivError] {dat_revision/compl/Tetgen-Cube-0_2/data_rates.dat};
        \logLogSlopeTriangle{0.90}{0.4}{0.1}{3}{black};
        \addplot [mark=square*, cyan] table[x=MeshSize,y=HcurlHdivError] {dat_revision/compl/Tetgen-Cube-0_3/data_rates.dat};
        \logLogSlopeTriangle{0.90}{0.4}{0.1}{4}{black};        
        \addplot [mark=*, mark options=solid, blue, dashed] table[x=MeshSize,y=HcurlHdivError] {dat_revision/orth/Tetgen-Cube-0_0/data_rates.dat};
        \addplot [mark=square*, mark options=solid, black, dashed] table[x=MeshSize,y=HcurlHdivError] {dat_revision/orth/Tetgen-Cube-0_1/data_rates.dat};
        \addplot [mark=*, mark options=solid, red, dashed] table[x=MeshSize,y=HcurlHdivError] {dat_revision/orth/Tetgen-Cube-0_2/data_rates.dat};
        \addplot [mark=square*, mark options=solid, cyan, dashed] table[x=MeshSize,y=HcurlHdivError] {dat_revision/orth/Tetgen-Cube-0_3/data_rates.dat};
      \end{loglogaxis}                     
    \end{tikzpicture}
    \subcaption{``Tetgen-Cube-0'' mesh}
  \end{minipage}
  \vspace{0.25cm}\\
  \begin{minipage}{0.45\textwidth}
    \begin{tikzpicture}[scale=0.70]      
      \begin{loglogaxis}[legend columns=4] 
        \addplot [mark=*, blue] table[x=MeshSize,y=HcurlHdivError] {dat_revision/compl/Voro-small-0_0/data_rates.dat};
        \logLogSlopeTriangle{0.90}{0.4}{0.1}{1}{black};
        \addplot [mark=square*, black] table[x=MeshSize,y=HcurlHdivError] {dat_revision/compl/Voro-small-0_1/data_rates.dat};
        \logLogSlopeTriangle{0.90}{0.4}{0.1}{2}{black};
        \addplot [mark=*, red] table[x=MeshSize,y=HcurlHdivError] {dat_revision/compl/Voro-small-0_2/data_rates.dat};
        \logLogSlopeTriangle{0.90}{0.4}{0.1}{3}{black};
        \addplot [mark=*, mark options=solid, blue, dashed] table[x=MeshSize,y=HcurlHdivError] {dat_revision/orth/Voro-small-0_0/data_rates.dat};
        \addplot [mark=square*, mark options=solid, black, dashed] table[x=MeshSize,y=HcurlHdivError] {dat_revision/orth/Voro-small-0_1/data_rates.dat};
        \addplot [mark=*, mark options=solid, red, dashed] table[x=MeshSize,y=HcurlHdivError] {dat_revision/orth/Voro-small-0_2/data_rates.dat};
      \end{loglogaxis}
    \end{tikzpicture}
    \subcaption{``Voro-small-0'' mesh}
  \end{minipage}
  \begin{minipage}{0.45\textwidth}
    \begin{tikzpicture}[scale=0.70]
      \begin{loglogaxis}[legend columns=4, legend to name=conv.cubic-cells]        
        \addplot [mark=*, blue] table[x=MeshSize,y=HcurlHdivError] {dat_revision/compl/Voro-small-1_0/data_rates.dat};
        \logLogSlopeTriangle{0.90}{0.4}{0.1}{1}{black};
        \addplot [mark=square*, black] table[x=MeshSize,y=HcurlHdivError] {dat_revision/compl/Voro-small-1_1/data_rates.dat};
        \logLogSlopeTriangle{0.90}{0.4}{0.1}{2}{black};
        \addplot [mark=*, red] table[x=MeshSize,y=HcurlHdivError] {dat_revision/compl/Voro-small-1_2/data_rates.dat};
        \logLogSlopeTriangle{0.90}{0.4}{0.1}{3}{black};
        \addplot [mark=square*, cyan] table[x=MeshSize,y=HcurlHdivError] {dat_revision/compl/Voro-small-1_3/data_rates.dat};
        \logLogSlopeTriangle{0.90}{0.4}{0.1}{4}{black};        
        \addplot [mark=*, mark options=solid, blue, dashed] table[x=MeshSize,y=HcurlHdivError] {dat_revision/orth/Voro-small-1_0/data_rates.dat};
        \addplot [mark=square*, mark options=solid, black, dashed] table[x=MeshSize,y=HcurlHdivError] {dat_revision/orth/Voro-small-1_1/data_rates.dat};
        \addplot [mark=*, mark options=solid, red, dashed] table[x=MeshSize,y=HcurlHdivError] {dat_revision/orth/Voro-small-1_2/data_rates.dat};
        \addplot [mark=square*, mark options=solid, cyan, dashed] table[x=MeshSize,y=HcurlHdivError] {dat_revision/orth/Voro-small-1_3/data_rates.dat};
        \legend{$k=0$ (K), $k=1$ (K), $k=2$ (K), $k=3$ (K), $k=0$ ($\perp$), $k=1$ ($\perp$), $k=2$ ($\perp$), $k=3$ ($\perp$)};
      \end{loglogaxis}        
    \end{tikzpicture}
    \subcaption{``Voro-small-1'' mesh}
  \end{minipage}
  \caption{Relative error estimates in discrete $\Hcurl{\Omega}\times\Hdiv{\Omega}$ norm vs.\ $h$, for the Koszul complements of \eqref{eq:spaces.T} [(K), continuous lines], and the orthogonal complements of \cite{Di-Pietro.Droniou.ea:20}[($\perp$), dashed lines].
  \label{fig:conv}}
\end{figure}


The assembly of the ($\perp$)-DDR scheme requires, for any $Y\in\Th\cup\Fh$, to compute bases for the $\Leb$-orthogonal complements in $\vPoly{\ell}(Y)$ of $\Goly{\ell}(Y)$ and $\Roly{\ell}(Y)$, which is done by computing the kernels of local matrices through a full pivot LU algorithm \cite[Section 5.1]{Di-Pietro.Droniou:20*1}. On the contrary, in the (K) version, explicit bases for $\cGoly{\ell}(Y)$ and $\cRoly{\ell}(Y)$ can be devised; even though these bases are then orthonormalised to ensure a better numerical stability of the scheme (especially on non-isotropic elements, see the discussion in \cite[Section B.1.1]{Di-Pietro.Droniou:20} on this topic), the computational cost of creating the polynomial bases in ($\perp$) can be expected to be larger than in (K). Figure \ref{fig:times} compares the processor times for the two DDR schemes required for
\begin{inparaenum}[(a)]
\item the creation of the bases for local polynomial spaces and
\item the model construction (computation of the discrete operators, potentials, and $\Leb$-products, and global system assembly).
\end{inparaenum}
We do not compare the linear system resolution times as they are very close for both schemes.
In all the cases, the finest mesh of each sequence is considered; see Table \ref{tab:meshes}.
A profiling of the code shows that numerical integration is by far the most expensive operation.
We therefore include in Figure \ref{fig:times} also a comparison between two integration strategies on general meshes:
on one hand, the Homogeneous Numerical Integration of \cite{Chin.Lasserre.ea:15};
on the other hand, the use of standard quadratures on a simplicial subdivision of (nonsimplicial) elements.
\begin{table}\centering
  \begin{minipage}{0.95\textwidth}\centering
    \begin{tabular}{cccc}
      \toprule
      Mesh & $\card(\Th)$ & $\card(\Fh)$ & $\card(\Eh)$ \\
      \midrule
      Cubic\_Cells & 4~096 & 13~056 & 13~872 \\
      Tetgen\_Cube-0 & 2~925 & 6~228 & 3~965 \\
      Voro-small-0 & 2~197 & 15~969 & 27~546 \\
      Voro-small-1 & 356 & 2~376 & 4~042 \\
      \bottomrule
    \end{tabular}
    \subcaption{Number of relevant mesh entities}
  \end{minipage}
  \medskip\\
  \begin{minipage}{0.95\textwidth}\centering
    \begin{tabular}{ccccc}
      \toprule
      Mesh & $\dim(\underline{\bvec{X}}_{\CURL,h}^0)$ & $\dim(\underline{\bvec{X}}_{\CURL,h}^1)$ & $\dim(\underline{\bvec{X}}_{\CURL,h}^2)$ & $\dim(\underline{\bvec{X}}_{\CURL,h}^3)$ \\
      \midrule
      Cubic\_Cells & 13~872 & 83~296 & 207~504 & 398~784 \\
      Tetgen\_Cube-0 & 3~956 & 38~314 & 105~594 & 214~580 \\
      Voro-small-0 & 27~546 & 111~787 & 243~345 & --- \\
      Voro-small-1 & 4~042 & 16~636 & 36~474 & 64~624 \\
      \bottomrule
    \end{tabular}
    \subcaption{Dimension of the space $\Xcurl{h}$ for $k\in\{0,\ldots,3\}$}
  \end{minipage}
  \medskip\\
  \begin{minipage}{0.95\textwidth}\centering
    \begin{tabular}{ccccc}
      \toprule
      Mesh & $\dim(\underline{\bvec{X}}_{\DIV,h}^0)$ & $\dim(\underline{\bvec{X}}_{\DIV,h}^1)$ & $\dim(\underline{\bvec{X}}_{\DIV,h}^2)$ & $\dim(\underline{\bvec{X}}_{\DIV,h}^3)$ \\
      \midrule
      Cubic\_Cells & 13~056 & 63~744 & 160~256 & 314~880 \\
      Tetgen\_Cube-0 & 6~228 & 36~234 & 95~868 & 193~905 \\
      Voro-small-0 & 15~969 & 61~089 & 139~754 & --- \\
      Voro-small-1 & 2~376 & 9~264 & 21~376 & 39~780 \\
      \bottomrule
    \end{tabular}
    \subcaption{Dimension of the space $\Xdiv{h}$ for $k\in\{0,\ldots,3\}$}
  \end{minipage}
  \caption{Dimension of meshes and spaces considered for the evaluation of computational times in the numerical tests of Section \ref{sec:ddr:numerical.tests}.\label{tab:meshes}}
\end{table}
In the left column of Figure \ref{fig:times} we report the total CPU time, which constitutes the most reliable measure to assess performance.
Since our code makes use of multi-threading, we also report, in the right column, wall-clock times, which are more representative of real-life performance on the selected architecture.
Wall-clock times are subject to outside influences, such as the impact of other processes, and should therefore be regarded with caution.

As expected, when considering standard quadratures on element subdivisions, (K) polynomial bases are faster to create than ($\perp$) polynomial bases, but not by a large factor (this factor however becomes very large when $\Pgrad$ is required, which is not the case for the scheme \eqref{eq:discrete}, as the computation of $(\Roly{k+2}(T))^\perp$ (see \eqref{eq:PgradT}) necessitates to integrate polynomials of degree $2k+4$ over the elements). There is a more pronounced difference when comparing the time for model construction, which is mostly dedicated to the creation of the discrete vector calculus operators and potentials in $\Xcurl{h}$ and $\Xdiv{h}$ (once these are created, assembling the global linear system itself takes only a small fraction of the total model construction time).
Basis construction and model assembly times, on the other hand, basically even out between (K) and ($\perp$) when considering Homogeneous Numerical Integration, thereby showing the importance of efficient integral computation.
Drawing more definitive conclusions is always difficult, as running times highly depend on specific implementation choices, and our implementation is designed for flexibility rather than for efficiency on one given model.
The results presented in this section seem to show, however, that the DDR complex using Koszul complements is not only theoretically better (as it allows for complete consistency analysis and error estimates), but also requires less computational resources, at least when efficient integration is not available in the codes at hand.
The comparison of CPU times and wall clock times also confirms that the assembly step strongly benefits from parallel implementations.

To close this section, we briefly assess the evolution of construction and solution times with mesh refinement.
On a linear problem such as the one considered here, it is expected that the solution time be larger than the construction time starting from a certain number of elements.
To check whether this is the case, we consider the Voronoi mesh family ``Voro-small-0'' and the polynomial degree $k=2$ for the sequence.
This test is representative of a worst-case scenario for the construction time, since mesh elements are genuinely polyhedral and it is not possible to optimise the construction using standard (reference element) techniques.
The plots in Figure \ref{fig:times.vs.elements} show that the asymptotic behaviour of the construction times (``Bases'' and ``Model'') scale linearly with the number of elements (with the former being essentially negligible with respect to the latter), whereas the solution time (``Solve'') has a quadratic scaling.
The solution time exceeds the construction time starting from the fourth mesh in the sequence, which has 729 elements (a small number for a three-dimensional computation).
For meshes of real-life geometries, one can thus expect that the solution time will be the dominating cost (even if more efficient linear solvers become available at some point).


\begin{figure}\centering
  \begin{tabular}{ccc}
    & CPU time (s) & Wall time (s) \\
    \rotatebox[origin=c]{90}{$k=0$}
    &
    \begin{minipage}{0.45\textwidth}
      \begin{tikzpicture}[scale=0.70]
        \begin{axis}[
            ybar stacked,
            bar width=10pt,
            bar shift=-15pt,
            xticklabels={Cubic-Cells,Tetgen-Cube-0,Voro-small-0,Voro-small-1},
            xtick=data,
            xticklabel style={rotate=15,anchor=north east},
            enlarge x limits=0.20,
            ymin=0,
            ymax=30,
            height=5.6cm,
            width=8cm,
            legend style={
              legend columns=-1,
              draw=none,
              font=\tiny
            },
            legend to name=leg:times:nhi.K
          ]
          \addplot[fill=blue!40] table[x expr=\coordindex,y=TprocDDRCore] {dat_revision/times-by-degree/degree0_compl_times.dat};
          \addplot[fill=red!40] table[x expr=\coordindex,y=TprocModel] {dat_revision/times-by-degree/degree0_compl_times.dat};
          
          \legend{\parbox{24ex}{Bases (K, HNI)}, \parbox{24ex}{Model (K, HNI)}}
        \end{axis}
        \begin{axis}[
            ybar stacked,      
            hide axis,
            bar width=10pt,
            bar shift=-5pt,
            enlarge x limits=0.20,          
            ymin=0,
            ymax=30,
            height=5.6cm,
            width=8cm,
            legend style={
              legend columns=-1,
              draw=none,
              font=\tiny
            },
            legend to name=leg:times:nhi.orth
          ]
          \addplot[fill=blue!40, postaction={pattern={north east lines}, pattern color=blue}] table[x expr=\coordindex,y=TprocDDRCore] {dat_revision/times-by-degree/degree0_orth_times.dat};
          \addplot[fill=red!40, postaction={pattern={north east lines}, pattern color=red}] table[x expr=\coordindex,y=TprocModel] {dat_revision/times-by-degree/degree0_orth_times.dat};
          \legend{\parbox{24ex}{Bases ($\perp$, HNI)}, \parbox{24ex}{Model ($\perp$, HNI)}}
        \end{axis}
        \begin{axis}[
            ybar stacked,      
            hide axis,
            bar width=10pt,
            bar shift=5pt,
            enlarge x limits=0.20,
            ymin=0,
            ymax=30,
            height=5.6cm,
            width=8cm,          
            legend style={
              legend columns=-1,
              draw=none,
              font=\tiny
            },
            legend to name=leg:times:quad.K
          ]
          \addplot[fill=green!40] table[x expr=\coordindex,y=TprocDDRCore] {dat/times-by-degree/degree0_compl_times.dat};
          \addplot[fill=violet!40] table[x expr=\coordindex,y=TprocModel] {dat/times-by-degree/degree0_compl_times.dat};
          \legend{\parbox{24ex}{Bases (K, quad.)}, \parbox{24ex}{Model (K, quad.)}}
        \end{axis}
        \begin{axis}[
            ybar stacked,      
            hide axis,
            bar width=10pt,
            bar shift=15pt,
            enlarge x limits=0.20,
            ymin=0,
            ymax=30,
            height=5.6cm,
            width=8cm,          
            legend style={
              legend columns=-1,
              draw=none,
              font=\tiny
            },
            legend to name=leg:times:quad.orth
          ]
          \addplot[fill=green!40, postaction={pattern={north east lines}, pattern color=green!25!black}] table[x expr=\coordindex,y=TprocDDRCore] {dat/times-by-degree/degree0_orth_times.dat};
          \addplot[fill=violet!40, postaction={pattern={north east lines}, pattern color=violet}] table[x expr=\coordindex,y=TprocModel] {dat/times-by-degree/degree0_orth_times.dat};
          \legend{\parbox{24ex}{Bases ($\perp$, quad.)}, \parbox{24ex}{Model ($\perp$, quad.)}}
        \end{axis}      
      \end{tikzpicture}
    \end{minipage}
    &
    \begin{minipage}{0.45\textwidth}
      \begin{tikzpicture}[scale=0.70]
        \begin{axis}[
            ybar stacked,
            bar width=10pt,
            bar shift=-15pt,
            xticklabels={Cubic-Cells,Tetgen-Cube-0,Voro-small-0,Voro-small-1},
            xtick=data,
            xticklabel style={rotate=15,anchor=north east},
            enlarge x limits=0.20,
            ymin=0,
            ymax=5,
            height=5.6cm,
            width=8cm
          ]
          \addplot[fill=blue!40] table[x expr=\coordindex,y=TwallDDRCore] {dat_revision/times-by-degree/degree0_compl_times.dat};
          \addplot[fill=red!40] table[x expr=\coordindex,y=TwallModel] {dat_revision/times-by-degree/degree0_compl_times.dat};          
        \end{axis}
        \begin{axis}[
            ybar stacked,      
            hide axis,
            bar width=10pt,
            bar shift=-5pt,
            enlarge x limits=0.20,          
            ymin=0,
            ymax=5,
            height=5.6cm,
            width=8cm
          ]
          \addplot[fill=blue!40, postaction={pattern={north east lines}, pattern color=blue}] table[x expr=\coordindex,y=TwallDDRCore] {dat_revision/times-by-degree/degree0_orth_times.dat};
          \addplot[fill=red!40, postaction={pattern={north east lines}, pattern color=red}] table[x expr=\coordindex,y=TwallModel] {dat_revision/times-by-degree/degree0_orth_times.dat};
        \end{axis}
        \begin{axis}[
            ybar stacked,      
            hide axis,
            bar width=10pt,
            bar shift=5pt,
            enlarge x limits=0.20,
            ymin=0,
            ymax=5,
            height=5.6cm,
            width=8cm
          ]
          \addplot[fill=green!40] table[x expr=\coordindex,y=TwallDDRCore] {dat/times-by-degree/degree0_compl_times.dat};
          \addplot[fill=violet!40] table[x expr=\coordindex,y=TwallModel] {dat/times-by-degree/degree0_compl_times.dat};
        \end{axis}
        \begin{axis}[
            ybar stacked,      
            hide axis,
            bar width=10pt,
            bar shift=15pt,
            enlarge x limits=0.20,
            ymin=0,
            ymax=5,
            height=5.6cm,
            width=8cm
          ]
          \addplot[fill=green!40, postaction={pattern={north east lines}, pattern color=green!25!black}] table[x expr=\coordindex,y=TwallDDRCore] {dat/times-by-degree/degree0_orth_times.dat};
          \addplot[fill=violet!40, postaction={pattern={north east lines}, pattern color=violet}] table[x expr=\coordindex,y=TwallModel] {dat/times-by-degree/degree0_orth_times.dat};
        \end{axis}      
      \end{tikzpicture}
    \end{minipage}
    \\
    \rotatebox[origin=c]{90}{$k=1$}
    &
    \begin{minipage}{0.45\textwidth}
      \begin{tikzpicture}[scale=0.70]
        \begin{axis}[
            ybar stacked,
            bar width=10pt,
            bar shift=-15pt,
            xticklabels={Cubic-Cells,Tetgen-Cube-0,Voro-small-0,Voro-small-1},
            xtick=data,
            xticklabel style={rotate=15,anchor=north east},
            enlarge x limits=0.20,
            ymin=0,
            ymax=170,
            height=5.6cm,
            width=8cm
          ]
          \addplot[fill=blue!40] table[x expr=\coordindex,y=TprocDDRCore] {dat_revision/times-by-degree/degree1_compl_times.dat};
          \addplot[fill=red!40] table[x expr=\coordindex,y=TprocModel] {dat_revision/times-by-degree/degree1_compl_times.dat};
        \end{axis}
        \begin{axis}[
            ybar stacked,      
            hide axis,
            bar width=10pt,
            bar shift=-5pt,
            enlarge x limits=0.20,          
            ymin=0,
            ymax=170,
            height=5.6cm,
            width=8cm
          ]
          \addplot[fill=blue!40, postaction={pattern={north east lines}, pattern color=blue}] table[x expr=\coordindex,y=TprocDDRCore] {dat_revision/times-by-degree/degree1_orth_times.dat};
          \addplot[fill=red!40, postaction={pattern={north east lines}, pattern color=red}] table[x expr=\coordindex,y=TprocModel] {dat_revision/times-by-degree/degree1_orth_times.dat};
        \end{axis}
        \begin{axis}[
            ybar stacked,      
            hide axis,
            bar width=10pt,
            bar shift=5pt,
            enlarge x limits=0.20,
            ymin=0,
            ymax=170,
            height=5.6cm,
            width=8cm
          ]
          \addplot[fill=green!40] table[x expr=\coordindex,y=TprocDDRCore] {dat/times-by-degree/degree1_compl_times.dat};
          \addplot[fill=violet!40] table[x expr=\coordindex,y=TprocModel] {dat/times-by-degree/degree1_compl_times.dat};
        \end{axis}
        \begin{axis}[
            ybar stacked,      
            hide axis,
            bar width=10pt,
            bar shift=15pt,
            enlarge x limits=0.20,
            ymin=0,
            ymax=170,
            height=5.6cm,
            width=8cm
          ]
          \addplot[fill=green!40, postaction={pattern={north east lines}, pattern color=green!25!black}] table[x expr=\coordindex,y=TprocDDRCore] {dat/times-by-degree/degree1_orth_times.dat};
          \addplot[fill=violet!40, postaction={pattern={north east lines}, pattern color=violet}] table[x expr=\coordindex,y=TprocModel] {dat/times-by-degree/degree1_orth_times.dat};
        \end{axis}      
      \end{tikzpicture}
    \end{minipage}
    &
    \begin{minipage}{0.45\textwidth}
      \begin{tikzpicture}[scale=0.70]
        \begin{axis}[
            ybar stacked,
            bar width=10pt,
            bar shift=-15pt,
            xticklabels={Cubic-Cells,Tetgen-Cube-0,Voro-small-0,Voro-small-1},
            xtick=data,
            xticklabel style={rotate=15,anchor=north east},
            enlarge x limits=0.20,
            ymin=0,
            ymax=38,
            height=5.6cm,
            width=8cm
          ]
          \addplot[fill=blue!40] table[x expr=\coordindex,y=TwallDDRCore] {dat_revision/times-by-degree/degree1_compl_times.dat};
          \addplot[fill=red!40] table[x expr=\coordindex,y=TwallModel] {dat_revision/times-by-degree/degree1_compl_times.dat};
        \end{axis}
        \begin{axis}[
            ybar stacked,      
            hide axis,
            bar width=10pt,
            bar shift=-5pt,
            enlarge x limits=0.20,          
            ymin=0,
            ymax=38,
            height=5.6cm,
            width=8cm
          ]
          \addplot[fill=blue!40, postaction={pattern={north east lines}, pattern color=blue}] table[x expr=\coordindex,y=TwallDDRCore] {dat_revision/times-by-degree/degree1_orth_times.dat};
          \addplot[fill=red!40, postaction={pattern={north east lines}, pattern color=red}] table[x expr=\coordindex,y=TwallModel] {dat_revision/times-by-degree/degree1_orth_times.dat};
        \end{axis}
        \begin{axis}[
            ybar stacked,      
            hide axis,
            bar width=10pt,
            bar shift=5pt,
            enlarge x limits=0.20,
            ymin=0,
            ymax=38,
            height=5.6cm,
            width=8cm
                      ]
          \addplot[fill=green!40] table[x expr=\coordindex,y=TwallDDRCore] {dat/times-by-degree/degree1_compl_times.dat};
          \addplot[fill=violet!40] table[x expr=\coordindex,y=TwallModel] {dat/times-by-degree/degree1_compl_times.dat};
        \end{axis}
        \begin{axis}[
            ybar stacked,      
            hide axis,
            bar width=10pt,
            bar shift=15pt,
            enlarge x limits=0.20,
            ymin=0,
            ymax=38,
            height=5.6cm,
            width=8cm
          ]
          \addplot[fill=green!40, postaction={pattern={north east lines}, pattern color=green!25!black}] table[x expr=\coordindex,y=TwallDDRCore] {dat/times-by-degree/degree1_orth_times.dat};
          \addplot[fill=violet!40, postaction={pattern={north east lines}, pattern color=violet}] table[x expr=\coordindex,y=TwallModel] {dat/times-by-degree/degree1_orth_times.dat};
        \end{axis}      
      \end{tikzpicture}
    \end{minipage}
    \\
    \rotatebox[origin=c]{90}{$k=2$}
    &
    \begin{minipage}{0.45\textwidth}
      \begin{tikzpicture}[scale=0.70]
        \begin{axis}[
            ybar stacked,
            bar width=10pt,
            bar shift=-15pt,
            xticklabels={Cubic-Cells,Tetgen-Cube-0,Voro-small-0,Voro-small-1},
            xtick=data,
            xticklabel style={rotate=15,anchor=north east},
            enlarge x limits=0.20,
            ymin=0,
            ymax=1500,
            height=5.6cm,
            width=8cm
          ]
          \addplot[fill=blue!40] table[x expr=\coordindex,y=TprocDDRCore] {dat_revision/times-by-degree/degree2_compl_times.dat};
          \addplot[fill=red!40] table[x expr=\coordindex,y=TprocModel] {dat_revision/times-by-degree/degree2_compl_times.dat};
        \end{axis}
        \begin{axis}[
            ybar stacked,      
            hide axis,
            bar width=10pt,
            bar shift=-5pt,
            enlarge x limits=0.20,          
            ymin=0,
            ymax=1500,
            height=5.6cm,
            width=8cm
          ]
          \addplot[fill=blue!40, postaction={pattern={north east lines}, pattern color=blue}] table[x expr=\coordindex,y=TprocDDRCore] {dat_revision/times-by-degree/degree2_orth_times.dat};
          \addplot[fill=red!40, postaction={pattern={north east lines}, pattern color=red}] table[x expr=\coordindex,y=TprocModel] {dat_revision/times-by-degree/degree2_orth_times.dat};
        \end{axis}
        \begin{axis}[
            ybar stacked,      
            hide axis,
            bar width=10pt,
            bar shift=5pt,
            enlarge x limits=0.20,
            ymin=0,
            ymax=1500,
            height=5.6cm,
            width=8cm
          ]
          \addplot[fill=green!40] table[x expr=\coordindex,y=TprocDDRCore] {dat/times-by-degree/degree2_compl_times.dat};
          \addplot[fill=violet!40] table[x expr=\coordindex,y=TprocModel] {dat/times-by-degree/degree2_compl_times.dat};
        \end{axis}
        \begin{axis}[
            ybar stacked,      
            hide axis,
            bar width=10pt,
            bar shift=15pt,
            enlarge x limits=0.20,
            ymin=0,
            ymax=1500,
            height=5.6cm,
            width=8cm
          ]
          \addplot[fill=green!40, postaction={pattern={north east lines}, pattern color=green!25!black}] table[x expr=\coordindex,y=TprocDDRCore] {dat/times-by-degree/degree2_orth_times.dat};
          \addplot[fill=violet!40, postaction={pattern={north east lines}, pattern color=violet}] table[x expr=\coordindex,y=TprocModel] {dat/times-by-degree/degree2_orth_times.dat};
        \end{axis}      
      \end{tikzpicture}
    \end{minipage}
    &
    \begin{minipage}{0.45\textwidth}
      \begin{tikzpicture}[scale=0.70]
        \begin{axis}[
            ybar stacked,
            bar width=10pt,
            bar shift=-15pt,
            xticklabels={Cubic-Cells,Tetgen-Cube-0,Voro-small-0,Voro-small-1},
            xtick=data,
            xticklabel style={rotate=15,anchor=north east},
            enlarge x limits=0.20,
            ymin=0,
            ymax=200,
            height=5.6cm,
            width=8cm
          ]
          \addplot[fill=blue!40] table[x expr=\coordindex,y=TwallDDRCore] {dat_revision/times-by-degree/degree2_compl_times.dat};
          \addplot[fill=red!40] table[x expr=\coordindex,y=TwallModel] {dat_revision/times-by-degree/degree2_compl_times.dat};
        \end{axis}
        \begin{axis}[
            ybar stacked,      
            hide axis,
            bar width=10pt,
            bar shift=-5pt,
            enlarge x limits=0.20,          
            ymin=0,
            ymax=200,
            height=5.6cm,
            width=8cm
          ]
          \addplot[fill=blue!40, postaction={pattern={north east lines}, pattern color=blue}] table[x expr=\coordindex,y=TwallDDRCore] {dat_revision/times-by-degree/degree2_orth_times.dat};
          \addplot[fill=red!40, postaction={pattern={north east lines}, pattern color=red}] table[x expr=\coordindex,y=TwallModel] {dat_revision/times-by-degree/degree2_orth_times.dat};
        \end{axis}
        \begin{axis}[
            ybar stacked,      
            hide axis,
            bar width=10pt,
            bar shift=5pt,
            enlarge x limits=0.20,
            ymin=0,
            ymax=200,
            height=5.6cm,
            width=8cm
          ]
          \addplot[fill=green!40] table[x expr=\coordindex,y=TwallDDRCore] {dat/times-by-degree/degree2_compl_times.dat};
          \addplot[fill=violet!40] table[x expr=\coordindex,y=TwallModel] {dat/times-by-degree/degree2_compl_times.dat};
        \end{axis}
        \begin{axis}[
            ybar stacked,      
            hide axis,
            bar width=10pt,
            bar shift=15pt,
            enlarge x limits=0.20,
            ymin=0,
            ymax=200,
            height=5.6cm,
            width=8cm
          ]
          \addplot[fill=green!40, postaction={pattern={north east lines}, pattern color=green!25!black}] table[x expr=\coordindex,y=TwallDDRCore] {dat/times-by-degree/degree2_orth_times.dat};
          \addplot[fill=violet!40, postaction={pattern={north east lines}, pattern color=violet}] table[x expr=\coordindex,y=TwallModel] {dat/times-by-degree/degree2_orth_times.dat};
        \end{axis}      
      \end{tikzpicture}
    \end{minipage}
    \\
    \rotatebox[origin=c]{90}{$k=3$}
    &
    \begin{minipage}{0.45\textwidth}
      \begin{tikzpicture}[scale=0.70]
        \begin{axis}[
            ybar stacked,
            bar width=10pt,
            bar shift=-15pt,
            xticklabels={Cubic-Cells,Tetgen-Cube-0,Voro-small-1},
            xtick=data,
            xticklabel style={rotate=15,anchor=north east},
            enlarge x limits=0.20,
            ymin=0,
            ymax=8000,
            height=5.6cm,
            width=8cm
          ]
          \addplot[fill=blue!40] table[x expr=\coordindex,y=TprocDDRCore] {dat_revision/times-by-degree/degree3_compl_times.dat};
          \addplot[fill=red!40] table[x expr=\coordindex,y=TprocModel] {dat_revision/times-by-degree/degree3_compl_times.dat};
        \end{axis}
        \begin{axis}[
            ybar stacked,      
            hide axis,
            bar width=10pt,
            bar shift=-5pt,
            enlarge x limits=0.20,          
            ymin=0,
            ymax=8000,
            height=5.6cm,
            width=8cm
          ]
          \addplot[fill=blue!40, postaction={pattern={north east lines}, pattern color=blue}] table[x expr=\coordindex,y=TprocDDRCore] {dat_revision/times-by-degree/degree3_orth_times.dat};
          \addplot[fill=red!40, postaction={pattern={north east lines}, pattern color=red}] table[x expr=\coordindex,y=TprocModel] {dat_revision/times-by-degree/degree3_orth_times.dat};
        \end{axis}
        \begin{axis}[
            ybar stacked,      
            hide axis,
            bar width=10pt,
            bar shift=5pt,
            enlarge x limits=0.20,
            ymin=0,
            ymax=8000,
            height=5.6cm,
            width=8cm
          ]
          \addplot[fill=green!40] table[x expr=\coordindex,y=TprocDDRCore] {dat/times-by-degree/degree3_compl_times.dat};
          \addplot[fill=violet!40] table[x expr=\coordindex,y=TprocModel] {dat/times-by-degree/degree3_compl_times.dat};
        \end{axis}
        \begin{axis}[
            ybar stacked,      
            hide axis,
            bar width=10pt,
            bar shift=15pt,
            enlarge x limits=0.20,
            ymin=0,
            ymax=8000,
            height=5.6cm,
            width=8cm
          ]
          \addplot[fill=green!40, postaction={pattern={north east lines}, pattern color=green!25!black}] table[x expr=\coordindex,y=TprocDDRCore] {dat/times-by-degree/degree3_orth_times.dat};
          \addplot[fill=violet!40, postaction={pattern={north east lines}, pattern color=violet}] table[x expr=\coordindex,y=TprocModel] {dat/times-by-degree/degree3_orth_times.dat};
        \end{axis}      
      \end{tikzpicture}
    \end{minipage}
    &
    \begin{minipage}{0.45\textwidth}
      \begin{tikzpicture}[scale=0.70]
        \begin{axis}[
            ybar stacked,
            bar width=10pt,
            bar shift=-15pt,
            xticklabels={Cubic-Cells,Tetgen-Cube-0,Voro-small-1},
            xtick=data,
            xticklabel style={rotate=15,anchor=north east},
            enlarge x limits=0.20,
            ymin=0,
            ymax=600,
            height=5.6cm,
            width=8cm
          ]
          \addplot[fill=blue!40] table[x expr=\coordindex,y=TwallDDRCore] {dat_revision/times-by-degree/degree3_compl_times.dat};
          \addplot[fill=red!40] table[x expr=\coordindex,y=TwallModel] {dat_revision/times-by-degree/degree3_compl_times.dat};
        \end{axis}
        \begin{axis}[
            ybar stacked,      
            hide axis,
            bar width=10pt,
            bar shift=-5pt,
            enlarge x limits=0.20,          
            ymin=0,
            ymax=600,
            height=5.6cm,
            width=8cm
          ]
          \addplot[fill=blue!40, postaction={pattern={north east lines}, pattern color=blue}] table[x expr=\coordindex,y=TwallDDRCore] {dat_revision/times-by-degree/degree3_orth_times.dat};
          \addplot[fill=red!40, postaction={pattern={north east lines}, pattern color=red}] table[x expr=\coordindex,y=TwallModel] {dat_revision/times-by-degree/degree3_orth_times.dat};
        \end{axis}
        \begin{axis}[
            ybar stacked,      
            hide axis,
            bar width=10pt,
            bar shift=5pt,
            enlarge x limits=0.20,
            ymin=0,
            ymax=600,
            height=5.6cm,
            width=8cm
          ]
          \addplot[fill=green!40] table[x expr=\coordindex,y=TwallDDRCore] {dat/times-by-degree/degree3_compl_times.dat};
          \addplot[fill=violet!40] table[x expr=\coordindex,y=TwallModel] {dat/times-by-degree/degree3_compl_times.dat};
        \end{axis}
        \begin{axis}[
            ybar stacked,      
            hide axis,
            bar width=10pt,
            bar shift=15pt,
            enlarge x limits=0.20,
            ymin=0,
            ymax=600,
            height=5.6cm,
            width=8cm
          ]
          \addplot[fill=green!40, postaction={pattern={north east lines}, pattern color=green!25!black}] table[x expr=\coordindex,y=TwallDDRCore] {dat/times-by-degree/degree3_orth_times.dat};
          \addplot[fill=violet!40, postaction={pattern={north east lines}, pattern color=violet}] table[x expr=\coordindex,y=TwallModel] {dat/times-by-degree/degree3_orth_times.dat};
        \end{axis}      
      \end{tikzpicture}
    \end{minipage}
  \end{tabular}  
      \fbox{\hypersetup{hidelinks}
        \begin{tabular}{{r}{l}}
          \ref{leg:times:nhi.K} & \ref{leg:times:nhi.orth}
          \\
          \ref{leg:times:quad.K} & \ref{leg:times:quad.orth}
        \end{tabular}}
  \caption{CPU (left column) and wall times (right column), both measured in seconds, for the computation of the DDR bases (``Bases'') and of the model construction (``Model'') for Koszul (K) and orthogonal ($\perp$) complements using Homogeneous Numerical Integration (HNI) or quadratures on an element subdivision (quad.) on the finest mesh of each sequence; see Table \ref{tab:meshes}.}\label{fig:times}
\end{figure}
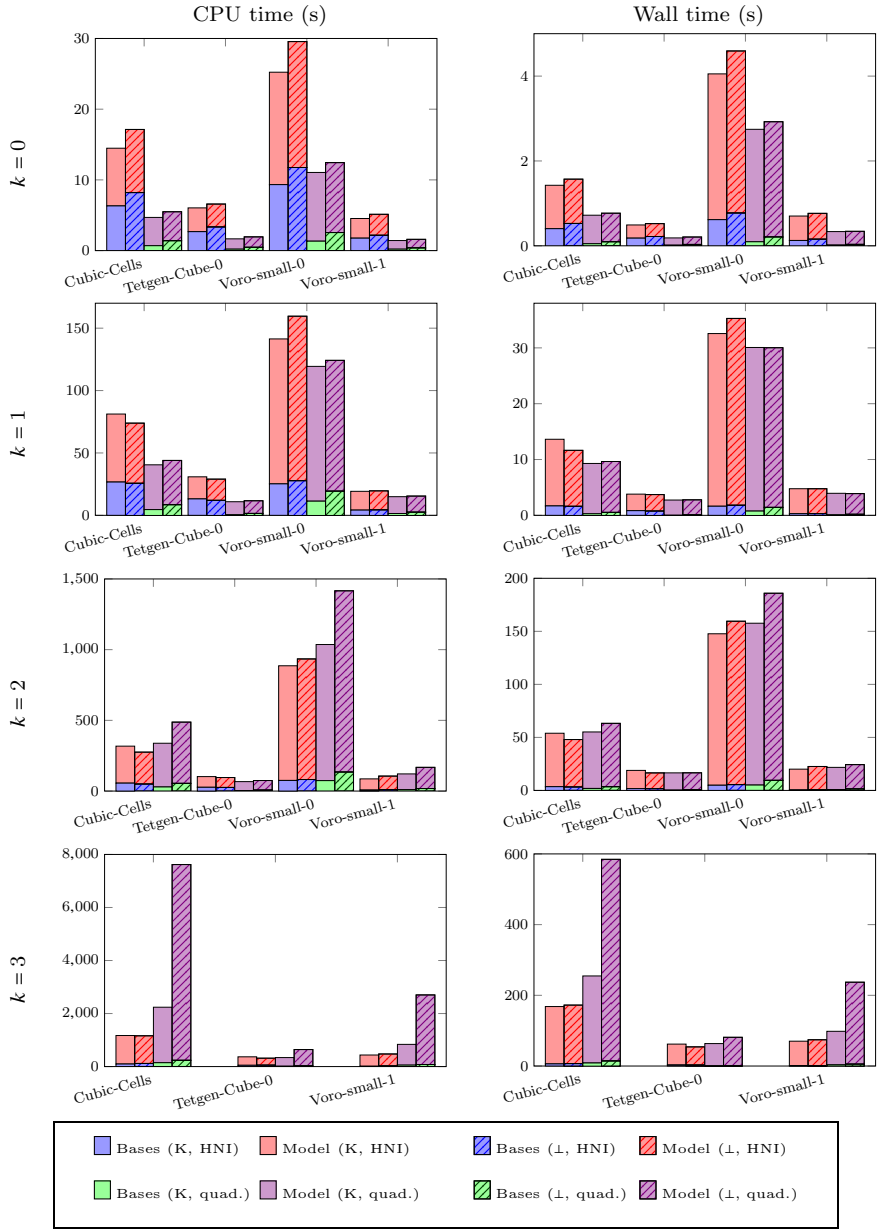

\begin{figure}\centering
  \begin{minipage}{0.45\textwidth}
    \begin{tikzpicture}[scale=0.70]
      \begin{loglogaxis}[
          legend pos = north west,
          ymax=1100,
          xlabel = {$\card(\Th)$},
          ylabel = {Time (s)}
        ]
        \addplot table[x=NbCells,y=TwallDDRCore] {dat_revision/compl/Voro-small-0_2/times.dat};
        \addplot table[x=NbCells,y=TwallModel] {dat_revision/compl/Voro-small-0_2/times.dat};
        \addplot table[x=NbCells,y=TwallSolve] {dat_revision/compl/Voro-small-0_2/times.dat};
        \logLogSlopeTriangle{0.925}{0.3}{0.1}{2}{black};
        \logLogSlopeTriangle{0.925}{0.3}{0.1}{1}{black};
        \legend{Bases, Model, Solve}
      \end{loglogaxis}
    \end{tikzpicture}
    \subcaption{Homogeneous Numerical Integration}
  \end{minipage}
  \begin{minipage}{0.45\textwidth}
    \begin{tikzpicture}[scale=0.70]
      \begin{loglogaxis}[
          legend pos = north west,
          ymax=1100,
          xlabel = {$\card(\Th)$},
          ylabel = {Time (s)}
        ]
        \addplot table[x=NbCells,y=TwallDDRCore] {dat/compl/Voro-small-0_2/times.dat};
        \addplot table[x=NbCells,y=TwallModel] {dat/compl/Voro-small-0_2/times.dat};
        \addplot table[x=NbCells,y=TwallSolve] {dat/compl/Voro-small-0_2/times.dat};
        \logLogSlopeTriangle{0.925}{0.3}{0.1}{2}{black};
        \logLogSlopeTriangle{0.925}{0.3}{0.1}{1}{black};
        \legend{Bases, Model, Solve}
      \end{loglogaxis}
    \end{tikzpicture}
    \subcaption{Quadratures on element subdivisions}
  \end{minipage}
  \caption{Comparison of the wall times, all measured in seconds, for the computation of the DDR bases (``Bases''), the model construction (``Model'') and the resolution of the linear system using the PARDISO direct solver (``Solve'') for the Voro-small-0 mesh sequence for $k=2$.}\label{fig:times.vs.elements}  
\end{figure}
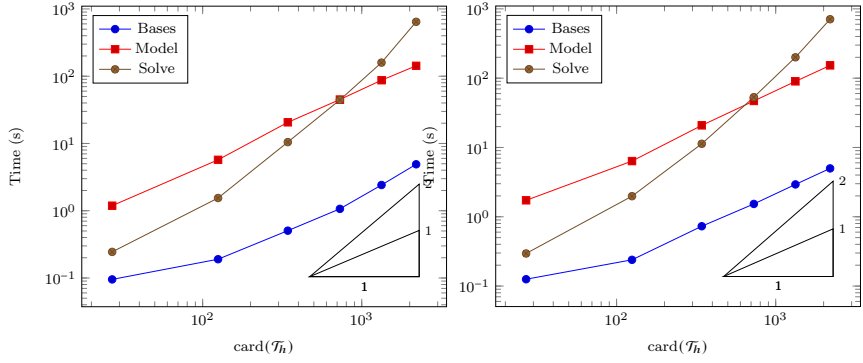


\appendix
\normalsize

\section{Results on local spaces}\label{sec:results}

This section collects miscellaneous results on the Koszul complements defined in \eqref{eq:spaces.F} and \eqref{eq:spaces.T}, as well as on the trimmed spaces \eqref{eq:NE.RT} obtained from the latter.
The first result on traces of Raviart--Thomas and N\'ed\'elec functions is known on simplices, see e.g. \cite[Proposition 2.3.3]{Boffi.Brezzi.ea:13}; we however provide a proof on general polyhedra for sake of completeness.

\begin{proposition}[Traces of N\'ed\'elec and Raviart--Thomas functions]\label{prop:traces.NE.RT}
  It holds, for all $F\in\Fh$,
  \begin{alignat}{4}
    \label{eq:NE.F.trace}
    \forall E\in\EF&\qquad&(\bvec{v}_F)_{|E}\cdot\tangent_E&\in\Poly{\ell-1}(E)
    &\qquad
    &\forall\bvec{v}_F\in\NE{\ell}(F),
    \\
    \label{eq:RT.F.trace}
    \forall E\in\EF&\qquad&(\bvec{w}_F)_{|E}\cdot\normal_{FE}&\in\Poly{\ell-1}(E)
    &\qquad
    &\forall\bvec{w}_F\in\RT{\ell}(F)
  \end{alignat}
  and, for all $T\in\Th$,
  \begin{alignat}{4} \label{eq:NE.T.edge.trace}
    &\forall E\in\ET&\qquad
    (\bvec{v}_T)_{|E}\cdot\tangent_E&\in\Poly{\ell-1}(E)&\qquad
    &\forall\bvec{v}_T\in\NE{\ell}(T),
    \\ \label{eq:RT.T.trace}
    &\forall F\in\FT&\qquad
    (\bvec{w}_T)_{|F}\cdot\normal_F&\in\Poly{\ell-1}(F)&\qquad
    &\forall\bvec{w}_T\in\RT{\ell}(T),
    \\ \label{eq:NE.T.trace}
    &\forall F\in\FT&\qquad
    (\bvec{v}_T)_{|F}\times\normal_F&\in\RT{\ell}(F)&\qquad
    &\forall\bvec{v}_T\in\NE{\ell}(T).
  \end{alignat}
\end{proposition}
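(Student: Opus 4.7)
The strategy is uniform for all five identities: decompose an element of $\NE{\ell}$ (resp.\ $\RT{\ell}$) according to its Koszul splitting \eqref{eq:NE.RT}, and check the trace separately on each summand. On the gradient/rotated-gradient piece the trace reduces to a one-dimensional derivative of a polynomial, and on the Koszul piece the key fact is that the coordinate vector $\bvec{x}-\bvec{x}_Y$ has a particularly simple behaviour when restricted to a lower-dimensional subvariety (constant normal component on a planar face, constant cross-product with the edge direction, etc.).

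For the four ``trivial'' identities \eqref{eq:NE.F.trace}--\eqref{eq:RT.T.trace} the argument goes as follows. For \eqref{eq:NE.F.trace}, write $\bvec{v}_F = \GRAD_F r + (\bvec{x}-\bvec{x}_F)^\perp s$ with $r\in\Poly{\ell}(F)$ and $s\in\Poly{\ell-1}(F)$; then $\GRAD_F r\cdot\tangent_E = (r_{|E})'\in\Poly{\ell-1}(E)$, while $(\bvec{x}-\bvec{x}_F)^\perp\cdot\tangent_E = -(\bvec{x}-\bvec{x}_F)\cdot\normal_{FE}$ is constant along $E$ (its variation in $\bvec{x}$ is along $\tangent_E$, which is orthogonal to $\normal_{FE}$). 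Identity \eqref{eq:RT.F.trace} is similar using $\VROT_F r\cdot\normal_{FE} = -(r_{|E})'$ (a direct consequence of $(\tangent_E,\normal_{FE})$ being right-handed). For \eqref{eq:NE.T.edge.trace}, write $\bvec{v}_T = \GRAD r + (\bvec{x}-\bvec{x}_T)\times\bvec{s}$ and use the scalar triple product identity to get $[(\bvec{x}-\bvec{x}_T)\times\bvec{s}]\cdot\tangent_E = \bvec{s}\cdot[\tangent_E\times(\bvec{x}-\bvec{x}_T)]$, where the bracketed vector is constant on $E$ (parametrising $E$ along $\tangent_E$ makes $\tangent_E\times\tangent_E = \bvec{0}$ kill the non-constant part). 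For \eqref{eq:RT.T.trace}, $\CURL\bvec{p}\in\vPoly{\ell-1}(T)$ already lies in the right polynomial degree, and on $F$ the factor $(\bvec{x}-\bvec{x}_T)\cdot\normal_F$ is the constant signed distance from $\bvec{x}_T$ to the plane of $F$.

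The core of the work is \eqref{eq:NE.T.trace}. Writing again $\bvec{v}_T = \GRAD r + (\bvec{x}-\bvec{x}_T)\times\bvec{s}$, for the gradient piece the identity $\bvec{v}\times\normal_F = \varrho_{-\pi/2}\bvec{v}$ for any $\bvec{v}$ tangent to $F$ (a consequence of the right-handed convention on $(\tangent_E,\normal_{FE},\normal_F)$) together with the vanishing of the normal component of $\GRAD r$ in the cross product gives $(\GRAD r)_{|F}\times\normal_F = \VROT_F(r_{|F})\in\Roly{\ell-1}(F)\subset\RT{\ell}(F)$. For the Koszul piece, apply the BAC--CAB identity:
\[
[(\bvec{x}-\bvec{x}_T)\times\bvec{s}]\times\normal_F = \bvec{s}\,[(\bvec{x}-\bvec{x}_T)\cdot\normal_F] - (\bvec{x}-\bvec{x}_T)\,[\bvec{s}\cdot\normal_F].
\]
On $F$, set $d\coloneq(\bvec{x}_F-\bvec{x}_T)\cdot\normal_F$, which equals $(\bvec{x}-\bvec{x}_T)\cdot\normal_F$ for every $\bvec{x}\in F$, and decompose $\bvec{s} = \bvec{s}_t + (\bvec{s}\cdot\normal_F)\normal_F$ and $\bvec{x}_F-\bvec{x}_T = (\bvec{x}_F-\bvec{x}_T)_t + d\normal_F$ into tangential and normal parts. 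Substituting and observing that the normal contribution cancels (as it must, since the left-hand side is tangential to $F$), one is left with the tangential field
\[
d\,\bvec{s}_t - (\bvec{x}-\bvec{x}_F)\,(\bvec{s}\cdot\normal_F) - (\bvec{x}_F-\bvec{x}_T)_t\,(\bvec{s}\cdot\normal_F).
\]
The middle term is exactly of the form $(\bvec{x}-\bvec{x}_F)\Poly{\ell-1}(F)$, hence belongs to $\cRoly{\ell}(F)$; the first and last terms are tangential polynomial fields on $F$ of total degree at most $\ell-1$, hence lie in $\vPoly{\ell-1}(F)$. By the hierarchical inclusions \eqref{eq:hierarchical.complements}, $\Roly{\ell-2}(F)\subset\Roly{\ell-1}(F)$ and $\cRoly{\ell-1}(F)\subset\cRoly{\ell}(F)$, which together give $\vPoly{\ell-1}(F) = \Roly{\ell-2}(F)\oplus\cRoly{\ell-1}(F)\subset\RT{\ell}(F)$. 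All three terms therefore belong to $\RT{\ell}(F)$, concluding the argument.

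The principal obstacle is thus bookkeeping in \eqref{eq:NE.T.trace}: one must perform the BAC--CAB expansion, split $\bvec{x}-\bvec{x}_T$ around the landmark $\bvec{x}_F$ so that the Koszul generator $(\bvec{x}-\bvec{x}_F)$ of $\cRoly{\ell}(F)$ appears naturally, and then invoke the hierarchical property \eqref{eq:hierarchical.complements} to absorb the lower-degree remainders into $\RT{\ell}(F)$. All other parts of the proposition reduce to short one-line computations once the Koszul decomposition has been invoked.
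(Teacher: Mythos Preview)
Your proposal is correct and follows essentially the same approach as the paper: Koszul-split each space, observe that the relevant component of $\bvec{x}-\bvec{x}_Y$ is constant on the subvariety for \eqref{eq:NE.F.trace}--\eqref{eq:RT.T.trace}, and for \eqref{eq:NE.T.trace} apply the BAC--CAB identity, insert $\pm\bvec{x}_F$ to extract the $\cRoly{\ell}(F)$ piece, and absorb the remainder into $\vPoly{\ell-1}(F)\subset\RT{\ell}(F)$. The only cosmetic difference is that for the Koszul piece in \eqref{eq:NE.T.trace} the paper, instead of decomposing $\bvec{s}$ and $\bvec{x}_F-\bvec{x}_T$ into tangential and normal parts and cancelling, re-applies the BAC--CAB identity (with $\bvec{A}=\bvec{x}_F-\bvec{x}_T$) to recombine your first and third terms into the manifestly tangential expression $\big((\bvec{x}_F-\bvec{x}_T)\times\bvec{s}\big)_{|F}\times\normal_F\in\vPoly{\ell-1}(F)$, and treats the $\Goly{\ell-1}(T)$ part simply as a subspace of $\vPoly{\ell-1}(T)$ rather than invoking $\VROT_F$.
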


\begin{proof}
  \underline{1. \emph{Proof of \eqref{eq:NE.F.trace} and \eqref{eq:NE.T.edge.trace}}.}
  The tangent edge traces of functions in $\Goly{\ell-1}(F)$ (resp.\ $\Goly{\ell-1}(T)$) are in $\Poly{\ell-1}(E)$ for all $E\in\EF$ (resp.\ $E\in\ET$).
  Let, for any $E\in\EF$, $\bvec{x}_E$ denote the middle point of $E$.
  To prove \eqref{eq:NE.F.trace}, it then suffices to recall the definition \eqref{eq:Goly.cGoly.F} and observe that the quantity $(\bvec{x}-\bvec{x}_F)^\perp\cdot\tangent_E=\cancel{(\bvec{x}-\bvec{x}_E)^\perp\cdot\tangent_E}+ (\bvec{x}_E-\bvec{x}_F)^\perp\cdot\tangent_E$ is constant over $E$, the cancellation coming from the fact that $(\bvec{x}-\bvec{x}_E)$ and $\tangent_E$ are parallel for all $\bvec{x}\in E$.
  To prove \eqref{eq:NE.T.edge.trace}, recall the definition \eqref{eq:Goly.cGoly} of $\cGoly{\ell}(T)$ and observe that, for all $\bvec{v}\in\vPoly{\ell-1}(T)$ and all $\bvec{x}\in E$, $\left[(\bvec{x}-\bvec{x}_T)\times\bvec{v}\right]\cdot\tangent_E = \cancel{\left[(\bvec{x}-\bvec{x}_E)\times\bvec{v}\right]\cdot\tangent_E} + \left[(\bvec{x}_E-\bvec{x}_T)\times\bvec{v}\right]\cdot\tangent_E\in\Poly{\ell-1}(E)$, where the cancellation follows observing, as before, that the vectors $(\bvec{x}-\bvec{x}_E)$ and $\tangent_E$ are parallel.
  \medskip\\
  \underline{2. \emph{Proof of \eqref{eq:RT.F.trace} and \eqref{eq:RT.T.trace}}.} The normal traces of functions in $\Roly{\ell-1}(F)$ (resp.\ $\Roly{\ell-1}(T)$) are in $\Poly{\ell-1}(E)$ (resp.\ $\Poly{\ell-1}(F)$) for all $E\in\EF$ (resp.\ $F\in\FT$).
  To conclude, recall the definition \eqref{eq:Roly.cRoly.F} (resp.\ \eqref{eq:Roly.cRoly}) of $\cRoly{\ell}(F)$ (resp.\ $\cRoly{\ell}(T)$) and observe, using similar arguments as above, that the quantity $(\bvec{x}-\bvec{x}_F)\cdot\normal_{FE}$ (resp.\ $(\bvec{x}-\bvec{x}_T)\cdot\normal_F$) is constant for any $\bvec{x}\in E$ (resp.\ $\bvec{x}\in F$).
  This implies, in particular, for all $T\in\Th$,
  \[
    \forall F\in\FT,\qquad
    (\bvec{z}_T)_{|F}\cdot\normal_F\in\Poly{\ell-1}(F)\qquad
    \forall\bvec{z}_T\in\cRoly{\ell}(T).
  \]  
  \underline{3. \emph{Proof of \eqref{eq:NE.T.trace}}.}
  For all $\bvec{v}_T\in \Goly{\ell-1}(T)\subset \vPoly{\ell-1}(T)$ we have $(\bvec{v}_T)_{|F}\times\normal_F\in\vPoly{\ell-1}(F)\subset \RT{\ell}(F)$ since $\normal_F$ is constant.
  It therefore suffices to prove \eqref{eq:NE.T.trace} for $\bvec{v}_T\in\cGoly{\ell}(T)$.
  Recalling \eqref{eq:Goly.cGoly}, there is $\bvec{z}_T\in\vPoly{\ell-1}(T)$ such that $\bvec{v}_T = (\bvec{x}-\bvec{x}_T)\times\bvec{z}_T$.
  Thus, we can write
  \[
  \begin{aligned}
    (\bvec{v}_T)_{|F}\times\normal_F
    &= ( (\bvec{x}-\bvec{x}_T)\times\bvec{z}_T)_{|F}\times\normal_F
    \\
    &= ( (\bvec{x}-\bvec{x}_T)_{|F}\cdot\normal_F )(\bvec{z}_T)_{|F}
    + ( (\bvec{z}_T)_{|F}\cdot\normal_F )(\bvec{x}_T-\bvec{x})_{|F}    
    \\
    &=
    ( (\bvec{x}_F-\bvec{x}_T)_{|F}\cdot\normal_F )(\bvec{z}_T)_{|F}
    + ( (\bvec{z}_T)_{|F}\cdot\normal_F )(\bvec{x}_F-\bvec{x})_{|F}\\
    &\quad- ( (\bvec{z}_T)_{|F}\cdot\normal_F )(\bvec{x}_F-\bvec{x}_T)
    \\
    &=
    \underbrace{ ( (\bvec{x}_F-\bvec{x}_T)\times\bvec{z}_T)_{|F}\times\normal_F }_{\in\vPoly{\ell-1}(F)}
    + \underbrace{ ( (\bvec{z}_T)_{|F}\cdot\normal_F )(\bvec{x}_F-\bvec{x})_{|F} }_{\in\cRoly{\ell}(F)},
  \end{aligned}
  \]
  where we have used the vector algebra identity
  \begin{equation}\label{eq:vector.identity}
    (\bvec{A}\times\bvec{B})\times\bvec{C}
    = (\bvec{A}\cdot\bvec{C})\bvec{B} - (\bvec{B}\cdot\bvec{C})\bvec{A}
    \qquad\forall\bvec{A},\bvec{B},\bvec{C}\in\Real^3
  \end{equation}
  with $\bvec{A}=\bvec{x}-\bvec{x}_T$, $\bvec{B}=\bvec{z}_T$, and $\bvec{C}=\bvec{n}_F$ to pass to the second line;
  to pass to the third line, we have noticed that $(\bvec{x}-\bvec{x}_T)\cdot\normal_F$ is constant on $F$ for the first term, and we have added $\pm\bvec{x}_F$ inside the last parentheses and developed;
  the last line follows from an application of \eqref{eq:vector.identity} with $\bvec{A}=\bvec{x}_F-\bvec{x}_T$, $\bvec{B}=\bvec{z}_T$, and $\bvec{C}=\bvec{n}_F$.
  Since $\Poly{\ell-1}(F)=\Roly{\ell-1}(F)\oplus\cRoly{\ell-1}(F)\subset \Roly{\ell-1}(F)\oplus\cRoly{\ell}(F)=\RT{\ell}(F)$, this concludes the proof.\qed
\end{proof}

\begin{lemma}[Norms of the inverses of local differential isomorphisms]\label{lem:norm.isomorphisms}
  The norms of the inverses of the isomorphisms defined in \eqref{eq:iso:VROTF.GRAD}--\eqref{eq:iso:CURL} satisfy, for all $F\in\Fh$ or $T\in\Th$,
  \[
  \norm{(\VROT_F)^{-1}}\lesssim h_F\,,\ \norm{(\DIV_F)^{-1}}\lesssim h_F\,,\ 
  \norm{(\DIV)^{-1}}\lesssim h_T, \mbox{ and } \norm{(\CURL)^{-1}}\lesssim h_T
  \]
  where, above, $\norm{{\cdot}}$ denotes the norm of the corresponding isomorphism when its domain and co-domains are endowed with
  their $\Leb$-norms, and $a\lesssim b$ means that $a\le Cb$ with $C$ depending only on the polynomial degree $\ell$ and on the mesh regularity parameter.
\end{lemma}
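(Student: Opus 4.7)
The plan is to establish all four bounds by the same two-step strategy: a scaling argument that normalises the element (or face) to unit diameter, followed by a compactness-and-contradiction argument over the resulting family of admissible unit-diameter polytopes. I will describe the plan in detail for $\DIV\colon\cRoly{\ell}(T)\to\Poly{\ell-1}(T)$ and indicate how the other three cases reduce to it.

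First, I would normalise by the diameter using the affine map $\phi_T(\widehat{\bvec{x}})\coloneq h_T\widehat{\bvec{x}}+\bvec{x}_T$, which carries a unit-diameter polyhedron $\widehat{T}$ satisfying $B(\rho)\subset\widehat{T}\subset B(1)$ onto $T$. Pullback by $\phi_T$ preserves the polynomial spaces $\cRoly{\ell}$ and $\Poly{\ell-1}$, while $\Leb$-norms scale as $h_T^{3/2}$ and the divergence (taken in the $\widehat{\bvec{x}}$ variable, denoted $\widehat{\DIV}$) picks up a factor $h_T^{-1}$. Consequently, the desired bound $\norm[\vLeb(T)]{\bvec{v}}\lesssim h_T\norm[\Leb(T)]{\DIV\bvec{v}}$ is equivalent to the uniform inequality
\[
\norm[\vLeb(\widehat{T})]{\widehat{\bvec{v}}}\lesssim \norm[\Leb(\widehat{T})]{\widehat{\DIV}\widehat{\bvec{v}}}\qquad\forall \widehat{\bvec{v}}\in\cRoly{\ell}(\widehat{T}),
\]
with a constant depending only on $\ell$ and $\rho$, over the family of unit-diameter polyhedra satisfying $B(\rho)\subset\widehat{T}\subset B(1)$.

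Next, I would prove this uniform estimate by contradiction, closely following the compactness argument already carried out in the proof of Lemma \ref{lem:piS.piSc}. If the bound fails, one extracts sequences $(\widehat{T}_n,\widehat{\bvec{v}}_n)$ with $B(\rho)\subset\widehat{T}_n\subset B(1)$, $\widehat{\bvec{v}}_n\in\cRoly{\ell}(\widehat{T}_n)$, $\norm[\vLeb(\widehat{T}_n)]{\widehat{\bvec{v}}_n}=1$ and $\norm[\Leb(\widehat{T}_n)]{\widehat{\DIV}\widehat{\bvec{v}}_n}\to 0$. Boundedness of $(\widehat{\bvec{v}}_n)$ in $\vLeb(B(\rho))$ together with equivalence of norms on the fixed finite-dimensional space $\vPoly{\ell}(\Real^3)$ yields a subsequence with $\widehat{\bvec{v}}_n\to\widehat{\bvec{v}}$ in $\vPoly{\ell}(\Real^3)$; along the same subsequence one extracts $\bvec{x}_{\widehat{T}_n}\to\bvec{x}_\star\in\overline{B(1)}$ and a weak-$\star$ limit $\mathbf{1}_{\widehat{T}_n}\to\theta$ in $L^\infty(B(1))$ with $\mathbf{1}_{B(\rho)}\le\theta\le\mathbf{1}_{B(1)}$. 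Passing to the limit in $\int_{B(1)}\mathbf{1}_{\widehat{T}_n}|\widehat{\bvec{v}}_n|^2=1$ gives $\int_{B(1)}\theta|\widehat{\bvec{v}}|^2=1$, hence $\widehat{\bvec{v}}\neq\bvec{0}$; using $B(\rho)\subset\widehat{T}_n$, one also gets $\widehat{\DIV}\widehat{\bvec{v}}=0$ on $B(\rho)$, hence on the whole of $\Real^3$ by polynomiality.

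Finally, I would close the contradiction at the polynomial-algebraic level: writing $\widehat{\bvec{v}}=(\bvec{x}-\bvec{x}_\star)p$ with $p\in\Poly{\ell-1}(\Real^3)$, one has $\widehat{\DIV}\widehat{\bvec{v}}=3p+(\bvec{x}-\bvec{x}_\star)\cdot\nabla p$; decomposing $p$ on the homogeneous polynomials centred at $\bvec{x}_\star$ shows this operator acts as multiplication by $3+\deg$ on each component, and is therefore an isomorphism of $\Poly{\ell-1}(\Real^3)$ for \emph{every} base point $\bvec{x}_\star$. Hence $p=0$ and $\widehat{\bvec{v}}=\bvec{0}$, the sought contradiction. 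The three remaining bounds will follow the same three-step pattern, with $h_F$ replacing $h_T$ and surface $\Leb$-norms scaling as $h_F$, and with the final algebraic step invoking the polynomial isomorphism underlying, respectively, \eqref{eq:iso:VROTF.GRAD}, the two-dimensional analogue of \eqref{eq:iso:DIV}, and \eqref{eq:iso:CURL} (each diagonalisable in the same Euler-operator fashion); for $\VROT_F$, the zero-mean constraint in $\Poly{0,\ell}(F)$ is preserved in the limit by continuity of the mean functional on the fixed finite-dimensional polynomial space. The main obstacle throughout is precisely the compactness step with varying base points $\bvec{x}_{\widehat{T}_n}$, since the limit $\widehat{\bvec{v}}$ does not a priori lie in $\cRoly{\ell}$ of any fixed reference polyhedron; this is circumvented by performing the final algebraic injectivity argument directly in $\vPoly{\ell}(\Real^3)$, where the required isomorphism is valid for an arbitrary centre.
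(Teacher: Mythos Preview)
Your proposal is correct, but it takes a considerably longer route than the paper's proof. The paper avoids compactness altogether by observing that, once the transport $\widehat{\bvec{x}}=h_T^{-1}(\bvec{x}-\bvec{x}_T)$ is applied, the Koszul complement is \emph{always} centred at the origin: $\cRoly{\ell}(\widehat{T})$ (resp.\ $\cGoly{\ell}(\widehat{T})$) is the restriction to $\widehat{T}$ of the \emph{fixed} polynomial space $\widehat{\bvec{x}}\,\Poly{\ell-1}(\Real^3)$ (resp.\ $\widehat{\bvec{x}}\times\vPoly{\ell-1}(\Real^3)$), and likewise the target spaces $\Poly{\ell-1}(\widehat{T})$, $\Roly{\ell-1}(\widehat{T})$ are restrictions of fixed subspaces of polynomials on $\Real^3$. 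The differential is therefore an isomorphism between two \emph{fixed} finite-dimensional spaces, and one simply endows the domain with the $\vLeb(B(1))$-norm and the codomain with the $\vLeb(B(\rho))$-norm; the continuity constant of the inverse then depends only on $\ell$ and $\rho$, and the inclusions $B(\rho)\subset\widehat{T}\subset B(1)$ transfer the bound to $\widehat{T}$. No sequences, no weak-$\star$ limits, no contradiction.

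In particular, your ``main obstacle'' of varying base points $\bvec{x}_{\widehat{T}_n}$ is a phantom difficulty: the affine map $\phi_T$ you wrote down already sends $\bvec{x}_T$ to the origin, so after scaling every $\widehat{\bvec{v}}_n$ lies in the \emph{same} space $\widehat{\bvec{x}}\,\Poly{\ell-1}(\Real^3)$, and there is no $\bvec{x}_\star$ to extract. Your argument remains valid as written (with $\bvec{x}_\star=\bvec{0}$ throughout), but recognising this collapses the compactness step and the Euler-operator computation into the one-line ``an isomorphism between fixed finite-dimensional spaces has bounded inverse''. What your approach buys is a template that would still work if the complement spaces genuinely varied with the element (e.g.\ for a non-affine family of reference configurations); what the paper's approach buys is a proof that fits in a short paragraph.
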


\begin{proof}
  We only prove the estimate on $\norm{(\CURL)^{-1}}$, since the other ones follow from similar arguments. The idea is to use the transport $T\ni\bvec{x}\mapsto \widehat{\bvec{x}}=h_T^{-1}(\bvec{x}-\bvec{x}_T)\in\widehat{T}$ as in the proof of Lemma \ref{lem:piS.piSc}. We recall that $B(\rho)\subset \widehat{T}\subset B(1)$, where $\rho$ is the mesh regularity parameter and $B(r)\coloneq\{\bvec{y}\in\Real^3\,:\,|\bvec{y}|<r\}$.

  Let $\bvec{v}\in\Roly{\ell-1}(T)$ and set $\widehat{\bvec{v}}(\widehat{\bvec{x}})\coloneq\bvec{v}(\bvec{x})$. Given the definition of the change of variable $\bvec{x}\mapsto\widehat{\bvec{x}}$, $\widehat{\bvec{v}}$ belongs to $\Roly{\ell-1}(\widehat{T})$, and can be considered as a polynomial in $\Roly{\ell-1}(\Real^3)$. As $\CURL:\widehat{\bvec{x}}\times\vPoly{\ell-1}(\Real^3)\to \Roly{\ell-1}(\Real^3)$ is an isomorphism, it has a continuous inverse for any pair of norms we choose on the domain and co-domain; we endow $\widehat{\bvec{x}}\times\vPoly{\ell-1}(\Real^3)$ with the $\vLeb(B(\rho))$-norm and $\Roly{\ell-1}(\Real^3)$ with the $\vLeb(B(1))$-norm. The continuity of the inverse of this $\CURL$ operator gives $\widehat{\bvec{w}}\in\widehat{\bvec{x}}\times\vPoly{\ell-1}(\Real^3)$ such that $\CURL\widehat{\bvec{w}}=\widehat{\bvec{v}}$ on $\Real^3$ and $\norm[\vLeb(B(1))]{\widehat{\bvec{w}}}\lesssim \norm[\vLeb(B(\rho))]{\widehat{\bvec{v}}}$, where the hidden constant depends only on the spaces and their norms, that is, on $\ell$ and $\rho$. Since $B(\rho)\subset\widehat{T}\subset B(1)$, this shows that $\CURL\widehat{\bvec{w}}=\widehat{\bvec{v}}$ on $\widehat{T}$ and $\norm[\vLeb(\widehat{T})]{\widehat{\bvec{w}}}\lesssim\norm[\vLeb(\widehat{T})]{\widehat{\bvec{v}}}$.

  For $\bvec{x}\in T$, define $\bvec{w}(\bvec{x})\coloneq h_T\widehat{\bvec{w}}(\widehat{\bvec{x}})$. Then, $\bvec{w}\in (\bvec{x}-\bvec{x}_T)\times\Poly{\ell-1}(T)$, $\CURL\bvec{w}=\bvec{v}$ (the scaling by $h_T$ cancels out the factor $h_T^{-1}$ which appears when differentiating $\bvec{x}\mapsto \widehat{\bvec{w}}(h_T^{-1}(\bvec{x}-\bvec{x}_T))$), and, denoting by $\bvec{J}_T$ the Jacobian of the transport $\widehat{T}\to T$, we have
  \[
  \norm[\vLeb(T)]{\bvec{w}}=h_T |\bvec{J}_T|^{\frac12}\norm[\vLeb(\widehat{T})]{\widehat{\bvec{w}}}\lesssim h_T |\bvec{J}_T|^{\frac12}\norm[\vLeb(\widehat{T})]{\widehat{\bvec{v}}}=h_T\norm[\vLeb(T)]{\bvec{v}},
  \]
  which concludes the proof.\qed
\end{proof}

\section{Curl lifting}\label{appen:RcurlT}

We prove here that the face $\RcurlF$ and element $\RcurlT$ liftings, detailed in Section \ref{sec:consistency.adjoint.curl}, are well defined and satisfy the key properties \eqref{eq:RcurlF:orth} and \eqref{eq:RcurlT:bound}.

\subsection{Face lifting $\RcurlF$}

\subsubsection{Existence of $\bvec{\phi}_{\uvec{v}_F}$}

Owing to \eqref{eq:RcurlF:phi:div}, we look for $\bvec{\phi}_{\uvec{v}_F}=\VROT_F q_F$ for some $q_F\in \Sob{1}(F)$. Using the property $\ROT_F(\VROT_F)=-\DIV_F(\GRAD_F)=-\Delta_F$ (which stems from \eqref{eq:def:VROTF:ROTF}) and that $\VROT_F q_F$ (resp. $\tangent_E$) is $\GRAD_F q_F$ (resp. $\normal_{FE}$) rotated by $-\pi/2$ in the plane spanned by $F$, we see that \eqref{eq:RcurlF:phi} reduces to the following Neumann problem on $q_F$:
\begin{equation}\label{eq:RcurlF.q}
  \begin{alignedat}{3}
    -\Delta_F q_F&=\CF\uvec{v}_F&\quad&\mbox{ in $F$},\\
    \GRAD_F q_F\cdot(\omega_{FE}\normal_{FE})&=\omega_{FE}v_E&\quad&\forall E\in\EF.
  \end{alignedat}
\end{equation}
Recalling that $\omega_{FE}\normal_{FE}$ is the outer normal, in the plane spanned by $F$, to $F$ on $E$, we see that 
the compatibility condition of this Neumann problem simply amounts to the definition \eqref{eq:CF} of $\CF$ with $r_F=1$.
There exists therefore a unique $q_F\in \Sob{1}(F)$ solution of this problem with $\int_F q_F=0$. Using $q_F$ as a test function in 
the weak formulation and applying Cauchy--Schwarz inequalities leads to 
\[
\begin{aligned}
  &\norm[\vLeb(F)]{\GRAD_F q_F}^2\le \norm[\Leb(F)]{\CF\uvec{v}_F}\norm[\Leb(F)]{q_F}+\sum_{E\in\EF}\norm[\Leb(E)]{v_E}\norm[\Leb(E)]{q_F}\\
  &\ \lesssim h_F\norm[\Leb(F)]{\CF\uvec{v}_F}\norm[\vLeb(F)]{\GRAD_F q_F} + \left(
  \sum_{E\in\EF}h_E\norm[\Leb(E)]{v_E}^2
  \right)^{\frac12}
  \norm[\vLeb(F)]{\GRAD_F q_F},
\end{aligned}
\]
where the second line follows from the Poincar\'e--Wirtinger inequality 
\[
\norm[\Leb(F)]{q_F}\lesssim h_F\norm[\vLeb(F)]{\GRAD_F q_F}
\]
together with the continuous trace inequality (see \cite[Remark 1.46 and Lemma 1.31]{Di-Pietro.Droniou:20})
\[
\norm[\Leb(E)]{q_F}\lesssim h_E^{-\nicefrac12}\norm[\Leb(F)]{q_F}+h_E^{\nicefrac12}\norm[\vLeb(F)]{\GRAD_F q_F}.
\]
As a consequence,
\begin{equation}\label{eq:est.phiv}
  \norm[\vLeb(F)]{\bvec{\phi}_{\uvec{v}_F}}\lesssim \norm[\Leb(F)]{\CF\uvec{v}_F}+\tnorm[\CURL,F]{\uvec{v}_F}.
\end{equation}

\subsubsection{Existence of $\psi_{\uvec{v}_F}$}

Fix $\varpi_F\in \rC{\infty}_c(F)$ such that $\varpi_F=1$ on a ball $B_F\subset F$ of radius $\simeq h_F$ (the existence of such a ball follows from the mesh regularity assumption) and $0\le \varpi_F\le 1$. We look for $\psi_{\uvec{v}_F}$ under the form $\varpi_F r_F$ with $r_F\in\Poly{k}(F)$. Since $\DIV_F:\cRoly{k+1}(F)\to \Poly{k}(F)$ is an isomorphism, denoting as in Lemma \ref{lem:norm.isomorphisms} its inverse by $(\DIV_F)^{-1}$, the relation \eqref{eq:RcurlF:psi} is equivalent to
\[
\int_F \varpi_F r_F w_F = \int_F (\trFt\uvec{v}_F-\bvec{\phi}_{\uvec{v}_F})\cdot(\DIV_F)^{-1}w_F\qquad\forall w_F\in\Poly{k}(F).
\]
Since $\varpi_F\ge 0$ is strictly positive on a ball, the mapping $(r_F,w_F)\mapsto\int_F \varpi_F r_F w_F$ is an inner product on $\Poly{k}(F)$ and there exists therefore a unique $r_F\in\Poly{k}(F)$ that satisfies this property. This establishes the existence of $\psi_{\uvec{v}_F}$. 

Moreover, since $\varpi_F= 1$ on $B_F$ and $\norm[\Leb(B_F)]{{\cdot}}$ and $\norm[\Leb(F)]{{\cdot}}$ are uniformly equivalent on $\Poly{k}(F)$ (see the proof of \cite[Lemma 1.25]{Di-Pietro.Droniou:20}), using $w_F=r_F$ above leads to
\begin{align*}
  \norm[\Leb(F)]{r_F}^2\lesssim \int_F\varpi_F r_F^2
  \le{}& \norm[\vLeb(F)]{\trFt\uvec{v}_F-\bvec{\phi}_{\uvec{v}_F}}\norm[\vLeb(F)]{(\DIV_F)^{-1}r_F}\\
  \lesssim{}& \big(\tnorm[\CURL,F]{\uvec{v}_F}+\norm[\Leb(F)]{\CF\uvec{v}_F}\big)h_F\norm[\Leb(F)]{r_F},
\end{align*}
where the conclusion follows from a triangle inequality along with the boundedness \eqref{eq:bound.trFt.Pcurl} of $\trFt$ and the estimate \eqref{eq:est.phiv} for the first factor, and Lemma \ref{lem:norm.isomorphisms} for the second factor.
Simplifying, we obtain
\begin{equation}\label{eq:RcurlF.est.rF}
  \norm[\Leb(F)]{r_F}\lesssim h_F\big(\tnorm[\CURL,F]{\uvec{v}_F}+\norm[\Leb(F)]{\CF\uvec{v}_F}\big).
\end{equation}

\subsubsection{Orthogonality property of $\RcurlF$}

We prove here \eqref{eq:RcurlF:orth}. Notice first that, since $\psi_{\uvec{v}_F}$ vanishes on $\partial F$ and $\ROT_F\GRAD_F=0$, by \eqref{eq:RcurlF:phi} it holds
\begin{equation}\label{eq:Rcurl:rot.tE}
  \ROT_F(\RcurlF\uvec{v}_F) = \CF\uvec{v}_F\quad\mbox{ and }\quad
  (\RcurlF\uvec{v}_F)\cdot\tangent_E=v_E\quad\forall E\in\EF.
\end{equation}
Let $\bvec{z}_F\in\Roly{k}(F)$ and write $\bvec{z}_F=\VROT_F r_F$ with $r_F\in\Poly{0,k+1}(F)$. By \eqref{eq:trFt:Roly.k} and Remark \ref{rem:validity:trFt:Roly.k}, we have
\begin{align*}
  \int_F\trFt\uvec{v}_F\cdot\bvec{z}_F={}&\int_F\CF\uvec{v}_F r_F + \sum_{E\in\FE}\omega_{FE}\int_E v_E r_F\\
  ={}&\int_F\ROT_F(\RcurlF\uvec{v}_F) r_F
  + \sum_{E\in\FE}\omega_{FE}\int_E(\RcurlF\uvec{v}_F)\cdot\tangent_E~ r_F\\
  ={}&\int_F \RcurlF\uvec{v}_F\cdot\bvec{z}_F,
\end{align*}
where the second equality follows from \eqref{eq:Rcurl:rot.tE}, and the conclusion has been obtained using
an integration by parts. This proves that \eqref{eq:RcurlF:orth} holds for $\bvec{z}_F\in\Roly{k}(F)$.

Let us now take $\bvec{z}_F\in\cRoly{k+1}(F)$. Integrating the left-hand side of \eqref{eq:RcurlF:psi} by parts yields
\[
\int_F \GRAD_F\psi_{\uvec{v}_F}\cdot\bvec{z}_F=\int_F(\trFt\uvec{v}_F-\bvec{\phi}_{\uvec{v}_F})\cdot\bvec{z}_F.
\]
Since $\RcurlF\uvec{v}_F=\bvec{\phi}_{\uvec{v}_F}+\GRAD_F\psi_{\uvec{v}_F}$, this establishes that \eqref{eq:RcurlF:orth} also holds for $\bvec{z}_F\in\cRoly{k+1}(F)$, and completes the proof of this orthogonality relation since $\RT{k+1}(F)=\Roly{k}(F)\oplus\cRoly{k+1}(F)$.

\subsection{Element lifting $\RcurlT$} 

\subsubsection{Existence of $\CurlCorr\uvec{v}_T$}

Owing to \eqref{eq:deltaT:curl}, we look for $\CurlCorr\uvec{v}_T$ under the form of a potential gradient $\GRAD q_T$ with $q_T\in \Sob{1}(T)$. Equations \eqref{eq:deltaT:div} and \eqref{eq:deltaT:bc} then show that $q_T$ must solve the Neumann problem
\begin{equation}\label{eq:deltaT.neumann}
  \begin{alignedat}{3}
    \Delta q_T ={}& -\DIV\cCT\uvec{v}_T&\quad&\mbox{ in $T$},\\
    \GRAD q_T\cdot(\omega_{TF}\normal_{F}) ={}&  \omega_{TF}(\CF\uvec{v}_F-\cCT\uvec{v}_T\cdot\normal_F)&\quad&\forall F\in\FT,
  \end{alignedat}
\end{equation}
where we recall that $\omega_{TF}\normal_F$ is the outer normal to $T$ on $F$. The compatibility condition of this problem is
\[
\sum_{F\in\FT}\omega_{TF}\int_F (\CF\uvec{v}_F-\cCT\uvec{v}_T\cdot\normal_F)=-\int_T\DIV\cCT\uvec{v}_T=-\sum_{T\in\FT}\omega_{TF}\int_F
\cCT\uvec{v}_T\cdot\normal_F,
\]
which holds true owing to \eqref{eq:cCT.CF} with $r_T=1$. 
There exists therefore a unique $q_T\in \Sob{1}(T)$ with $\int_T q_T=0$ solution to \eqref{eq:deltaT.neumann}. Using $q_T$ as a test function in the weak formulation of \eqref{eq:deltaT.neumann} yields
\[
\norm[\vLeb(T)]{\GRAD q_T}^2\le \norm[\vLeb(T)]{\cCT\uvec{v}_T}\norm[\vLeb(T)]{\GRAD q_T}+\sum_{F\in\FT}\norm[\Leb(F)]{\CF\uvec{v}_F}\norm[\Leb(F)]{q_T}.
\]
Using the Poincar\'e--Wirtinger and continuous trace inequalities as we did to obtain \eqref{eq:est.phiv}, and recalling that $\CurlCorr\uvec{v}_T=\GRAD q_T$, we infer
\begin{equation}\label{eq:deltaT.bound}
  \norm[\vLeb(T)]{\CurlCorr\uvec{v}_T}\lesssim \norm[\vLeb(T)]{\cCT\uvec{v}_T}+\left(\sum_{F\in\FT}h_F\norm[\Leb(F)]{\CF\uvec{v}_F}^2\right)^{\frac12}\lesssim\tnorm[\DIV,T]{\uCT\uvec{v}_T},
\end{equation}
where the conclusion follows from \eqref{eq:bound:cCT}.

\subsubsection{Existence of $\RcurlT\uvec{v}_T$}

The equation \eqref{eq:RcurlT:div} suggests to look for $\RcurlT\uvec{v}_T=\CURL \bvec{z}_T$. Since adding a gradient to $\bvec{z}_T$ does not change its curl, we can look for $\bvec{z}_T$ in the space
\begin{equation}\label{def:Im.grad.perp}
  \bvec{z}_T\in (\GRAD \Sob{1}(T))^\perp\coloneq\left\{\bvec{w}\in \Hcurl{T}\,:\,\int_T \bvec{w}\cdot\GRAD r=0\quad\forall r\in \Sob{1}(T)\right\}.
\end{equation}
The equations \eqref{eq:RcurlT:curl} and \eqref{eq:RcurlT:bc} then lead to a curl-curl problem on $\bvec{z}_T$, whose variational form is: Find $\bvec{z}_T\in (\GRAD \Sob{1}(T))^\perp$ such that
\begin{multline}\label{eq:curlcurl.zT}
  \int_T \CURL\bvec{z}_T\cdot\CURL\bvec{w}=\int_T (\cCT\uvec{v}_T+\CurlCorr\uvec{v}_T)\cdot\bvec{w} \\
  -\langle \omega_{T\partial T}\RcurlF[\partial T]\uvec{v}_{\partial T}, \bvec{w}\times\normal_{\partial T}\rangle_{\Sob{\nicefrac12}_{\varparallel}(\partial T),\Sob{-\nicefrac12}_{\varparallel}(\partial T)}
  \\
  \forall \bvec{w}\in(\GRAD \Sob{1}(T))^\perp,
\end{multline}
where $\omega_{T,\partial T}\RcurlF[\partial T]\uvec{v}_{\partial T}$ and $\bvec{w}\times\normal_{\partial T}$ are the functions defined on $\partial T$ by setting, respectively,
$(\omega_{T,\partial T}\RcurlF[\partial T]\uvec{v}_{\partial T})_{|F} \coloneq (\omega_{TF}\RcurlF\uvec{v}_F)_{{\rm t},F}$
and $(\bvec{w}\times\normal_{\partial T})_{|F} \coloneq \bvec{w}_{|F}\times\normal_F$ for all $F\in\FT$, $\Sob{\nicefrac12}_{\varparallel}(\partial T)$ is the set of functions on $\partial T$ whose restriction to each face $F\in\FT$ belongs to $\Sob{\nicefrac12}(F)$, and whose tangential traces on the edges are weakly continuous (see \cite[Definition 3.1.2]{Assous.Ciarlet.ea:18} for details), and $\Sob{-\nicefrac12}_{\varparallel}(\partial T)$ is its dual space. Since the solution to \eqref{eq:RcurlF.q} belongs to $\Sob{\nicefrac32}(F)$ (see \cite[Corollary 23.5]{Dauge:88}), the edge tangential trace property in \eqref{eq:Rcurl:rot.tE} ensures that $\omega_{T,\partial T}\RcurlF[\partial T]\uvec{v}_{\partial T}$ indeed belongs to $\Sob{\nicefrac12}_{\varparallel}(\partial T)$.

Owing to the Poincar\'e inequality \eqref{eq:Poincare.curlT} and to the fact that $(\GRAD \Sob{1}(T))^\perp$ is a closed subspace of $\Hcurl{T}$, there exists a unique solution to \eqref{eq:curlcurl.zT}. We now prove that $\bvec{z}_T$ satisfies \eqref{eq:curlcurl.zT} for all $\bvec{w}\in\Hcurl{T}=\GRAD \Sob{1}(T) \oplus (\GRAD\Sob{1}(T))^\perp$, which amounts to showing that the right-hand side vanishes whenever $\bvec{w}=\GRAD r$ for some $r\in \Sob{1}(T)$. By density of smooth functions in $\Sob{1}(T)$, we only need to prove this result for $r\in \rC{\infty}(\overline{T})$. Plugging $\bvec{w}=\GRAD r$ in the right-hand side of \eqref{eq:curlcurl.zT}, the duality product can be written as standard integrals (since $\RcurlF\uvec{v}_F\in\vLeb(F)$ for all $F\in\FT$) and, integrating by parts, we obtain
\[
\begin{aligned}
  &\int_T (\cCT\uvec{v}_T+\CurlCorr\uvec{v}_T)\cdot\GRAD r - \sum_{F\in\FT}\omega_{TF}\int_F \RcurlF\uvec{v}_F\cdot(\GRAD r\times\normal_F)\\
  &\quad=
  -\int_T \cancel{\DIV (\cCT\uvec{v}_T+\CurlCorr\uvec{v}_T)}~r +\sum_{F\in\FT}\omega_{TF}\int_F (\cCT\uvec{v}_T+\CurlCorr\uvec{v}_T)\cdot\normal_F~r\\
  &\qquad
  - \sum_{F\in\FT}\omega_{TF}\int_F \RcurlF\uvec{v}_F\cdot \VROT_F (r_{|F})\\
  &\quad=
  \sum_{F\in\FT}\omega_{TF}\int_F \CF\uvec{v}_F~r- \sum_{F\in\FT}\omega_{TF}\int_F \ROT_F(\RcurlF\uvec{v}_F)~r_{|F}\\
  &\qquad-\sum_{F\in\FT}\sum_{E\in\FE}\omega_{TF}\omega_{FE}\int_E (\RcurlF\uvec{v}_F\cdot\tangent_E)~r_{|F},
\end{aligned}
\]
where we have used \eqref{eq:deltaT:div} to cancel the term in the first equality, and \eqref{eq:deltaT:bc} together with integrations by parts on each face in the second equality. Recalling \eqref{eq:Rcurl:rot.tE} and that $\omega_{TF_1}\omega_{F_1E}+\omega_{TF_2}\omega_{F_2E}=0$ if $F_1,F_2$ are the two faces of $T$ that share the edge $E$, the right-hand side above vanishes, which shows that \eqref{eq:curlcurl.zT} indeed holds for $\bvec{w}=\GRAD r$, and thus for all $\bvec{w}\in\Hcurl{T}$.

Since $\RcurlT\uvec{v}_T=\CURL\bvec{z}_T$, applying this relation to a generic $\bvec{w}\in \vC{\infty}_c(T)$ and integrating by parts yields \eqref{eq:RcurlT:curl}; using then a generic $\bvec{w}\in \vC{\infty}(\overline{T})$ and again integrating by parts, we infer \eqref{eq:RcurlT:bc}. 

\subsubsection{Bound on $\RcurlT$}

We prove here the estimate \eqref{eq:RcurlT:bound}. The estimate on $\CURL\RcurlT\uvec{v}_T$ follows from \eqref{eq:RcurlT:curl}, \eqref{eq:bound:cCT} and \eqref{eq:deltaT.bound}. It remains to bound the $\Leb$-norm of $\RcurlT\uvec{v}_T$.
To do so, we use $\bvec{g}_{\uvec{v}_T}$ provided by Lemma \ref{lem:lift.g} below and an integration by parts \cite[Eq.\ (2.27)]{Assous.Ciarlet.ea:18} to re-cast \eqref{eq:curlcurl.zT} as
\[
\int_T \CURL\bvec{z}_T\cdot\CURL\bvec{w}=\int_T (\cCT\uvec{v}_T+\CurlCorr\uvec{v}_T)\cdot\bvec{w} 
+\int_T\CURL\bvec{w}\cdot \bvec{g}_{\uvec{v}_T}-\int_T \bvec{w}\cdot\CURL\bvec{g}_{\uvec{v}_T}.
\]
Making $\bvec{w}=\bvec{z}_T$, we deduce
\begin{multline*}
\norm[\vLeb(T)]{\CURL \bvec{z}_T}^2\\
\lesssim \tnorm[\DIV,T]{\uCT\uvec{v}_T}h_T\norm[\vLeb(T)]{\CURL\bvec{z}_T}+
\norm[\vLeb(T)]{\CURL \bvec{z}_T}\left(\tnorm[\CURL,T]{\uvec{v}_T}+\tnorm[\DIV,T]{\uCT\uvec{v}_T}\right),
\end{multline*}
where we have invoked \eqref{eq:bound:cCT}, \eqref{eq:deltaT.bound}, the Poincar\'e inequality \eqref{eq:Poincare.curlT}, and \eqref{eq:RcurlT.bound.g} below.
Simplifying, using the norm equivalences \eqref{eq:equiv.norms}, and recalling that $\RcurlT\uvec{v}_T=\CURL\bvec{z}_T$ concludes the proof of the $\Leb$-estimate on $\RcurlT\uvec{v}_T$ stated in \eqref{eq:RcurlT:bound}.

\begin{lemma}[Lifting in $\vSob{1}(T)$]\label{lem:lift.g}
  There exists $\bvec{g}_{\uvec{v}_T}\in \vSob{1}(T)$ such that the tangential trace of $\bvec{g}_{\uvec{v}_T}$ on $\partial T$ is $\RcurlF[\partial T]\uvec{v}_{\partial T}$, and
  \begin{equation}\label{eq:RcurlT.bound.g}
    \norm[\vLeb(T)]{\bvec{g}_{\uvec{v}_T}}+h_T\norm[\vLeb(T)]{\CURL\bvec{g}_{\uvec{v}_T}}\lesssim \tnorm[\CURL,T]{\uvec{v}_T}+\tnorm[\DIV,T]{\uCT\uvec{v}_T}.
  \end{equation}
\end{lemma}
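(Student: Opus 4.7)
The plan is to construct $\bvec{g}_{\uvec{v}_T}$ by lifting the face-wise tangential field $\RcurlF[\partial T]\uvec{v}_{\partial T}$ through a bounded operator $\Sob{1/2}_{\varparallel}(\partial T)\to\vSob{1}(T)$, and then track the $h_T$-scaling. First, I would verify that the field obtained by setting $(\bvec{g}_{\partial T})_{|F}\coloneq\RcurlF\uvec{v}_F$ on each $F\in\FT$ genuinely belongs to $\Sob{1/2}_{\varparallel}(\partial T)$ as defined in \cite[Definition 3.1.2]{Assous.Ciarlet.ea:18}. Its face-wise $\Sob{1/2}$-regularity follows from the decomposition $\RcurlF\uvec{v}_F=\VROT_F q_F+\GRAD_F\psi_{\uvec{v}_F}$: the first summand lies in $\vSob{1/2}(F)$ by the regularity results of \cite[Corollary 23.5]{Dauge:88} applied to the polygonal Neumann problem \eqref{eq:RcurlF.q}, and the second is smooth with compact support inside $F$. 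The weak tangential continuity across any edge $E\in\EF\cap\mathcal{E}_{F'}$ shared by two faces follows directly from \eqref{eq:Rcurl:rot.tE}, which shows that $(\RcurlF\uvec{v}_F)\cdot\tangent_E=(\RcurlF[F']\uvec{v}_{F'})\cdot\tangent_E=v_E$ is single-valued on $E$.

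Next, I would invoke the continuous lifting operator from $\Sob{1/2}_{\varparallel}(\partial T)$ to $\vSob{1}(T)$ (see, e.g., \cite[Theorem 3.1.6]{Assous.Ciarlet.ea:18}) applied on a reference element $\widehat{T}=h_T^{-1}(T-\bvec{x}_T)$ of unit diameter, then pull back to $T$ through the affine map. A direct scaling computation gives
\[
\norm[\vLeb(T)]{\bvec{g}_{\uvec{v}_T}}+h_T\norm[\vLeb(T)]{\CURL\bvec{g}_{\uvec{v}_T}}
\lesssim h_T^{\nicefrac32}\|\widehat{\bvec{g}}_{\uvec{v}_T}\|_{\vSob{1}(\widehat{T})}
\lesssim h_T^{\nicefrac12}\norm[\vLeb(\partial T)]{\bvec{g}_{\partial T}}+h_T^{\nicefrac32}\seminorm[\Sob{1/2}(\partial T)]{\bvec{g}_{\partial T}},
\]
where the last step uses the scale-invariance of the $\Sob{1/2}$-seminorm on surfaces. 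It therefore suffices to control both terms on the right-hand side by $\tnorm[\CURL,T]{\uvec{v}_T}+\tnorm[\DIV,T]{\uCT\uvec{v}_T}$.

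For the $\vLeb$-part, I would combine the decomposition $\RcurlF\uvec{v}_F=\bvec{\phi}_{\uvec{v}_F}+\GRAD_F(\varpi_F r_F)$ with the $\vLeb(F)$-bound \eqref{eq:est.phiv} on $\bvec{\phi}_{\uvec{v}_F}$ and an inverse estimate on the polynomial factor $r_F$ in conjunction with \eqref{eq:RcurlF.est.rF}, so that
\[
\norm[\vLeb(F)]{\RcurlF\uvec{v}_F}\lesssim \norm[\Leb(F)]{\CF\uvec{v}_F}+\tnorm[\CURL,F]{\uvec{v}_F}.
\]
Multiplying by $h_T^{\nicefrac12}\simeq h_F^{\nicefrac12}$, squaring, summing over $F\in\FT$, and recalling the definitions of $\tnorm[\CURL,T]{{\cdot}}$ and $\tnorm[\DIV,T]{{\cdot}}$ together with \eqref{eq:bound:cCT} yields the required bound $h_T^{\nicefrac12}\norm[\vLeb(\partial T)]{\bvec{g}_{\partial T}}\lesssim\tnorm[\CURL,T]{\uvec{v}_T}+\tnorm[\DIV,T]{\uCT\uvec{v}_T}$.

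The hard part, and where I expect most of the technical work, is obtaining the analogous bound for the $\Sob{1/2}$-seminorm. My plan is to estimate $\seminorm[\Sob{1/2}(F)]{\RcurlF\uvec{v}_F}$ by interpolation between the $\vLeb(F)$-bound just discussed and an $\vSob{1}$-like bound obtained by applying Dauge's $\Sob{3/2-\epsilon}$-regularity estimates to $q_F$ and standard inverse estimates to the polynomial piece $\varpi_F r_F$, then control the contribution of interface jumps (which are only tangentially weakly continuous) by the edge mismatch terms already bounded in the proof of \eqref{eq:grad.trF.qE}. Summing the localised $\Sob{1/2}(F)$ estimates and incorporating the cross-face ``double'' contributions via a covering/weighted $\Sob{1/2}$-norm argument on a reference configuration (where the number of faces is bounded by mesh regularity) will then produce the scale-invariant bound $h_T^{\nicefrac32}\seminorm[\Sob{1/2}(\partial T)]{\bvec{g}_{\partial T}}\lesssim\tnorm[\CURL,T]{\uvec{v}_T}+\tnorm[\DIV,T]{\uCT\uvec{v}_T}$. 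Combined with the $\vLeb$-estimate above, this gives \eqref{eq:RcurlT.bound.g}.\qed
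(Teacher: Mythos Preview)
Your overall strategy is close to the paper's, but there are two concrete slips and one genuine structural difference worth noting.

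First, the claim that the $\Sob{1/2}$-seminorm is scale-invariant on surfaces is false: on a $2$-dimensional manifold the Gagliardo seminorm scales as $\seminorm[\Sob{1/2}(\partial\widehat{T})]{\widehat{\bvec g}}=h_T^{-1/2}\seminorm[\Sob{1/2}(\partial T)]{\bvec g}$, so the correct factor in your chain of inequalities is $h_T$, not $h_T^{3/2}$. You therefore have to prove the stronger bound $h_T\seminorm[\Sob{1/2}(\partial T)]{\bvec g_{\partial T}}\lesssim\tnorm[\CURL,T]{\uvec v_T}+\tnorm[\DIV,T]{\uCT\uvec v_T}$. Second, the ``interpolation between $\vLeb$ and $\vSob{1}$'' plan for $\bvec{\phi}_{\uvec v_F}=\GRAD_F q_F$ does not work as stated: Dauge's result gives $q_F\in\Sob{3/2+\epsilon}(F)$, hence $\bvec{\phi}_{\uvec v_F}\in\vSob{1/2+\epsilon}(F)$, which is exactly what you need for the $\Sob{1/2}$-seminorm \emph{directly}, but it does not give an $\vSob{1}$ bound to interpolate from. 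Also, $\varpi_F r_F$ is not polynomial, so ``standard inverse estimates'' must be replaced by the product rule together with the scaling $|\nabla^j\varpi_F|\lesssim h_F^{-j}$.

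The paper's proof avoids the second difficulty by splitting the lifting along the decomposition $\RcurlF\uvec v_F=\bvec{\phi}_{\uvec v_F}+\GRAD_F\psi_{\uvec v_F}$ and treating the two pieces differently. The part $\bvec{\phi}_{\uvec v_{\partial T}}$ is lifted via \cite[Theorem~3.1.3]{Assous.Ciarlet.ea:18} after checking, on the unit-size rescaled element and via Dauge's $\Sob{1/2+\epsilon}$-regularity (combined with inverse inequalities on the polynomial data $\CF\uvec v_F$ and $v_E$), that $\bvec{\phi}_{\uvec v_{\partial T}}\in\vSob{1/2}_{\varparallel}(\partial T)$ with the right bound; scaling back gives \eqref{eq:RcurlT.bound.g} for this piece. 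The part $\GRAD_{\partial T}\psi_{\uvec v_{\partial T}}$ is lifted \emph{explicitly}: each $\psi_{\uvec v_F}=\varpi_F r_F$ is extended to $\varpi_{TF}r_{TF}\in \rC^\infty_c(T\cup F)$ and one sets $\bvec g_{\uvec v_T,\psi}=\sum_{F\in\FT}\GRAD(\varpi_{TF}r_{TF})$. Being a gradient, this has $\CURL\bvec g_{\uvec v_T,\psi}=\bvec 0$, so no $\Sob{1/2}$-estimate is needed for it at all, and the $\vLeb$-bound follows from \eqref{eq:RcurlF.est.rF} and an inverse inequality on $r_{TF}$. This split is what makes the argument clean: you never have to control higher Sobolev norms of the non-polynomial cutoff $\varpi_F$.
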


\begin{proof}
  Recalling that
  \begin{equation}\label{eq:Rcurl.pT.dec}
    \RcurlF[\partial T]\uvec{v}_{\partial T}=\bvec{\phi}_{\uvec{v}_{\partial T}}+\GRAD_{\partial T}\psi_{\uvec{v}_{\partial T}},
  \end{equation}
  with obvious notations (each of these functions, restricted to a face $F\in\FT$, corresponds to the function obtained replacing $\partial T$ by $F$), we construct $\bvec{g}_{\uvec{v}_T}=\bvec{g}_{\uvec{v}_T,\bvec{\phi}}+\bvec{g}_{\uvec{v}_T,\psi}$, each addend corresponding to the addends in the decomposition \eqref{eq:Rcurl.pT.dec} of $\RcurlF[\partial T]\uvec{v}_{\partial T}$.
  \medskip\\
  \underline{1. \emph{Construction of $\bvec{g}_{\uvec{v}_T,\bvec{\phi}}$}.}
    We assume, for the moment, that $h_T=1$. By \cite[Corollary 23.5]{Dauge:88} and inverse inequalities on the polynomials $\CF\uvec{v}_F$ and $(v_E)_{E\in\FE}$ (recalling that $1=h_T\simeq h_F\simeq h_E$ for all $F\in\FT$ and $E\in\ET$), there exists $\epsilon\in (0,\nicefrac12)$ such that $\GRAD_F q_F\in \vSob{\nicefrac12+\epsilon}(F)$ and
  \[
  \begin{aligned}
  \norm[\vSob{\nicefrac12+\epsilon}(F)]{\GRAD_F q_F}\lesssim{}&
  \norm[\Leb(F)]{\CF\uvec{v}_F}+\sum_{E\in\EF}\norm[\Leb(E)]{v_E}\\
  \lesssim{}& \norm[\Leb(F)]{\CF\uvec{v}_F}+\tnorm[\CURL,F]{\uvec{v}_F}.
  \end{aligned}
  \]
  Above, when invoking \cite[Corollary 23.5]{Dauge:88}, we have used the fact that, since $\epsilon<\nicefrac12$, the $\Sob{\epsilon}(\partial F)$-norm is equivalent to the sum of the $\Sob{\epsilon}(E)$-norms over $E\in\FE$.
  By construction, $\bvec{\phi}_{\uvec{v}_{\partial T}}$ has strongly continuous tangential traces on the edges of $T$ so
  \[
  \begin{aligned}
    \seminorm[\vSob{\nicefrac12}_{\varparallel}(\partial T)]{\bvec{\phi}_{\uvec{v}_{\partial T}}}^2
    &\lesssim\sum_{F\in\FT}\seminorm[\vSob{\nicefrac12}(F)]{\bvec{\phi}_{\uvec{v}_F}}^2
    =\sum_{F\in\FT}\seminorm[\vSob{\nicefrac12}(F)]{\GRAD_F q_F}^2
    \\
    &\lesssim\sum_{F\in\FT}\norm[\vSob{\nicefrac12+\epsilon}(F)]{\GRAD_F q_F}^2
    \lesssim\sum_{F\in\FT}\left(
    \norm[\Leb(F)]{\CF\uvec{v}_F}+\tnorm[\CURL,F]{\uvec{v}_F}
    \right)^2.
  \end{aligned}
  \]
  Combined with \eqref{eq:est.phiv} and recalling that the local length scales are $\simeq 1$, this leads to
  \[
  \norm[\vLeb(\partial T)]{\bvec{\phi}_{\uvec{v}_{\partial T}}}
  +
  \seminorm[\vSob{\nicefrac12}_{\varparallel}(\partial T)]{\bvec{\phi}_{\uvec{v}_{\partial T}}}
  \lesssim \tnorm[\DIV,T]{\uCT\uvec{v}_T}+\tnorm[\CURL,T]{\uvec{v}_T}.
  \]
  Since $\bvec{\phi}_{\uvec{v}_{\partial T}}$ belongs to $\vSob{\nicefrac12}_{\varparallel}(\partial T)$, by \cite[Theorem 3.1.3]{Assous.Ciarlet.ea:18} there exists $\bvec{g}_{\uvec{v}_T,\bvec{\phi}}\in \vSob{1}(T)$ such that the tangential trace of $\bvec{g}_{\uvec{v}_T,\bvec{\phi}}$ is $\bvec{\phi}_{\uvec{v}_{\partial T}}$ and
  \begin{align*}
  \norm[\vLeb(T)]{\bvec{g}_{\uvec{v}_T,\bvec{\phi}}}+\norm[\vLeb(T)]{\CURL \bvec{g}_{\uvec{v}_T,\bvec{\phi}}}\lesssim{}&
  \norm[\vLeb(\partial T)]{\bvec{\phi}_{\uvec{v}_{\partial T}}}
  +\seminorm[\vSob{\nicefrac12}_{\varparallel}(\partial T)]{\bvec{\phi}_{\uvec{v}_{\partial T}}}\nonumber\\
  \lesssim{}&\tnorm[\DIV,T]{\uCT\uvec{v}_T}+\tnorm[\CURL,T]{\uvec{v}_T}.
  \end{align*}
  This was done under the assumption that $h_T=1$. Using a scaling argument, we infer from the estimate above that, for an element $T$ of generic diameter $h_T$,
  \begin{equation}
    \norm[\vLeb(T)]{\bvec{g}_{\uvec{v}_T,\bvec{\phi}}}+h_T\norm[\vLeb(T)]{\CURL \bvec{g}_{\uvec{v}_T,\bvec{\phi}}}
    \lesssim\tnorm[\DIV,T]{\uCT\uvec{v}_T}+\tnorm[\CURL,T]{\uvec{v}_T}.
    \label{eq:RcurlT.bound.gphi}
  \end{equation}
  \\
  \underline{2. \emph{Construction of $\bvec{g}_{\uvec{v}_T,\psi}$}.}
  By definition, $\bvec{g}_{\uvec{v}_T,\psi}$ is the lifting of $\GRAD_{\partial T}\psi_{\uvec{v}_{\partial T}}$.
  Recalling the construction of each $\psi_{\uvec{v}_F}=\varpi_F r_F$, for $F\in\FT$, we can extend $r_F$ into a polynomial $r_{TF}\in \Poly{k}(T)$ (for example, by making $r_{TF}$ independent of the coordinate perpendicular to $F$). We then have, by \eqref{eq:RcurlF.est.rF},
  \begin{equation}\label{eq:lift.psi.r} 
    \norm[\Leb(T)]{r_{TF}}\lesssim h_T^{\frac12}\norm[\Leb(F)]{r_F}\lesssim h_T \left(h_F^{\frac12}\tnorm[\CURL,F]{\uvec{v}_F}+h_F^{\frac12}\norm[\Leb(F)]{\CF\uvec{v}_F}\right).
  \end{equation}
  The smooth, compactly supported function $\varpi_F$ can be extended in $T$ into $\varpi_{TF}$ such that $0\le \varpi_{TF}\le 1$, $\varpi_{TF}$ has a compact support in a ball of radius $\simeq h_T$ that does not touch the faces in $\FT\backslash\{F\}$, and $|\GRAD\varpi_{TF}|\lesssim h_T^{-1}$. Then, for each $F\in\FT$, the chain rule yields
  \begin{equation}\label{eq:lift.psi.boundTF}
    \begin{aligned}
      \norm[\vLeb(T)]{\GRAD(\varpi_{TF}r_{TF})}
      &\lesssim \norm[\vLeb(T)]{\GRAD r_{TF}}+h_T^{-1}\norm[\Leb(T)]{r_{TF}}
      \\
      &\lesssim h_F^{\frac12}\tnorm[\CURL,F]{\uvec{v}_F}+h_F^{\frac12}\norm[\Leb(F)]{\CF\uvec{v}_F},
    \end{aligned}
  \end{equation}
  where the second inequality follows from an inverse inequality and \eqref{eq:lift.psi.r}. We then set
  \[
  \bvec{g}_{\uvec{v}_T,\psi}=\sum_{F\in\FT} \GRAD (\varpi_{TF}r_{TF})\in \vC{\infty}(\overline{T}).
  \]
  By choice of the supports of $(\varpi_{TF})_{F\in\FT}$, the tangential trace of $\bvec{g}_{\uvec{v}_T,\psi}$ on each face $F\in \FT$ is $\GRAD_F(\varpi_{TF}r_{TF})_{|F}=\GRAD_F\psi_{\uvec{v}_F}$. Moreover, the estimate \eqref{eq:lift.psi.boundTF} gives
  \begin{equation}\label{eq:RcurlT.bound.gpsi}
    \begin{aligned}
    \norm[\vLeb(T)]{\bvec{g}_{\uvec{v}_T,\psi}}\lesssim{}& \left[
      \sum_{F\in\FT}\left(
      h_F\tnorm[\CURL,F]{\uvec{v}_F}^2+h_F\norm[\Leb(F)]{\CF\uvec{v}_F}^2
      \right)
      \right]^{\frac12}\\
      \lesssim{}& \tnorm[\CURL,T]{\uvec{v}_T}+\tnorm[\DIV,T]{\uCT\uvec{v}_T}.
    \end{aligned}
  \end{equation}
  Since $\bvec{g}_{\uvec{v}_T,\psi}$ is a gradient, we also have $\CURL\bvec{g}_{\uvec{v}_T,\psi}=\bvec{0}$ and thus, combining \eqref{eq:RcurlT.bound.gphi} and \eqref{eq:RcurlT.bound.gpsi} yields the estimate \eqref{eq:RcurlT.bound.g} on $\bvec{g}_{\uvec{v}_T}=\bvec{g}_{\uvec{v}_T,\bvec{\phi}}+\bvec{g}_{\uvec{v}_T,\psi}$.\qed
\end{proof}

\begin{lemma}[Local Poincar\'e inequality for $\CURL$]
  With $(\GRAD \Sob{1}(T))^\perp$ defined by \eqref{def:Im.grad.perp}, it holds
  \begin{equation}\label{eq:Poincare.curlT}
    \norm[\vLeb(T)]{\bvec{w}}\lesssim h_T\norm[\vLeb(T)]{\CURL\bvec{w}}\qquad\forall \bvec{w}\in(\GRAD \Sob{1}(T))^\perp.
  \end{equation}
\end{lemma}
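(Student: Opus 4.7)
The plan is to reduce the inequality to the classical Poincaré inequality for the curl operator on Lipschitz simply connected domains and then rescale to extract the factor $h_T$, with the uniformity of the constant handled by a compactness argument similar in spirit to Lemma~\ref{lem:piS.piSc}.

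First, I would interpret the orthogonality condition as a pair of PDE-type constraints on $\bvec{w}$. Integrating by parts in $\int_T \bvec{w}\cdot\GRAD r = 0$ for arbitrary $r\in \Sob{1}(T)$ first with $r\in C_c^\infty(T)$ and then with general $r\in \Sob{1}(T)$ shows that $\DIV\bvec{w}=0$ in $T$ and that the normal trace $\bvec{w}\cdot\normal_{\partial T}$ vanishes on $\partial T$ (in $\Sob{-\nicefrac12}(\partial T)$). Hence $\bvec{w}$ lies in the ``tangential'' divergence-free space $X_N^0(T)\coloneq\{\bvec{v}\in\Hcurl{T}\st\DIV\bvec{v}=0,\ \bvec{v}\cdot\normal_{\partial T}=0\}$.

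Second, since $T$ is assumed to be a Lipschitz polyhedron with connected boundary that is additionally simply connected, classical results (see, e.g., \cite[Chapter 3]{Assous.Ciarlet.ea:18}) provide a compact embedding of $X_N^0(T)$, equipped with the graph norm, into $\vLeb(T)$. Combined with the fact that simple connectedness kills all harmonic Neumann fields, a standard contradiction argument then yields $\norm[\vLeb(T)]{\bvec{v}}\le C(T)\norm[\vLeb(T)]{\CURL\bvec{v}}$ for all $\bvec{v}\in X_N^0(T)$, with a constant $C(T)$ that depends on the shape of $T$.

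Third, to extract the correct scaling in $h_T$ and obtain a constant that only depends on the mesh regularity parameter $\rho$, I would apply this inequality on the rescaled element $\widehat{T}\coloneq h_T^{-1}(T-\bvec{x}_T)$, which satisfies $B(\rho)\subset\widehat{T}\subset B(1)$. Defining $\widehat{\bvec{w}}(\widehat{\bvec{x}})\coloneq\bvec{w}(\bvec{x})$, the scaling relations $\norm[\vLeb(T)]{\bvec{w}}=h_T^{\nicefrac{3}{2}}\norm[\vLeb(\widehat{T})]{\widehat{\bvec{w}}}$ and $\norm[\vLeb(T)]{\CURL\bvec{w}}=h_T^{\nicefrac{1}{2}}\norm[\vLeb(\widehat{T})]{\CURL\widehat{\bvec{w}}}$ transform the inequality on $\widehat{T}$ into $\norm[\vLeb(T)]{\bvec{w}}\le h_T\,C(\widehat{T})\norm[\vLeb(T)]{\CURL\bvec{w}}$. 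The remaining task, which I expect to be the main obstacle, is to establish $C(\widehat{T})\lesssim 1$ uniformly over the family of admissible reference elements. This would follow from a compactness/contradiction argument modelled on the proof of Lemma~\ref{lem:piS.piSc}: assuming a sequence $\widehat{\bvec{w}}_n\in X_N^0(\widehat{T}_n)$ with $\norm[\vLeb(\widehat{T}_n)]{\widehat{\bvec{w}}_n}=1$ and $\norm[\vLeb(\widehat{T}_n)]{\CURL\widehat{\bvec{w}}_n}\to 0$, one would extend the fields by zero to $B(1)$, pass to weak limits, and use the uniform $B(\rho)\subset\widehat{T}_n$ interior bound together with the compact embedding on the fixed ball $B(\rho)$ to get a curl- and divergence-free limit which must vanish, contradicting the unit norm. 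Care is required because the limit shape of the $\widehat{T}_n$ need not be Lipschitz or simply connected, so the argument must localise to the fixed ball $B(\rho)$ where all admissible fields share common regularity, rather than working on the full reference element.
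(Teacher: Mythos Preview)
Your first two steps are sound: the orthogonality condition does force $\DIV\bvec{w}=0$ and $\bvec{w}\cdot\normal_{\partial T}=0$, and on any fixed simply connected Lipschitz polyhedron the Poincar\'e inequality for the curl holds with some constant $C(T)$. The gap is in the uniformity step. The compactness argument of Lemma~\ref{lem:piS.piSc} worked because polynomial spaces are finite-dimensional, so bounded sequences converge strongly and the unit-norm constraint survives in the limit. Here you are in infinite dimensions: after extending the $\widehat{\bvec{w}}_n$ by zero (which preserves $\DIV=0$ thanks to the vanishing normal trace), you only get weak $\vLeb$-convergence on $B(1)$, and the curl of the extension picks up an uncontrolled surface term on $\partial\widehat{T}_n$, so no $\Hcurl{B(1)}$-bound is available. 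Restricting to $B(\rho)$ does not help either: the fields satisfy no boundary condition on $\partial B(\rho)$, so the compact embedding you need is unavailable, and nothing prevents the $\vLeb$-mass from concentrating in $\widehat{T}_n\setminus B(\rho)$. The contradiction never materialises.

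The paper bypasses this by a constructive argument that uses the orthogonality directly, without characterising $(\GRAD\Sob{1}(T))^\perp$. It invokes \cite[Theorem~3.4.1]{Assous.Ciarlet.ea:18}: any divergence-free field $\bvec{v}$ with zero mean normal flux on $\partial T$ admits a potential $\bvec{z}$ with $\CURL\bvec{z}=\bvec{v}$, zero mean, and $\norm[\vLeb(T)]{\bvec{z}}\le C_0\norm[\vLeb(T)]{\bvec{v}}$, where $C_0$ can be tracked through that explicit construction and scales as $h_T$ under the mesh regularity assumption. Applying this to $\bvec{v}=\CURL\bvec{w}_m$ (with $\bvec{w}_m$ smooth approximating $\bvec{w}$) yields $\bvec{z}_m$ with $\CURL(\bvec{w}_m-\bvec{z}_m)=\bvec{0}$, hence $\bvec{w}_m-\bvec{z}_m$ is a gradient; orthogonality of $\bvec{w}$ to gradients then gives $\int_T\bvec{w}\cdot\bvec{w}_m=\int_T\bvec{w}\cdot\bvec{z}_m\lesssim h_T\norm[\vLeb(T)]{\bvec{w}}\norm[\vLeb(T)]{\CURL\bvec{w}_m}$, and letting $m\to\infty$ concludes. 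Uniformity comes from one explicit construction, not from compactness over a family of domains.
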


\begin{proof}
  By \cite[Theorem 3.4.1]{Assous.Ciarlet.ea:18}, for all $\bvec{v}\in \Hdiv{T}$ such that $\DIV\bvec{v}=0$ and $\langle \bvec{v}\cdot\normal_T,1\rangle_{\partial T}=0$ (where $\langle\cdot,\cdot\rangle_{\partial T}$ is the $\Sob{-\frac12}(\partial T)$--$\Sob{\frac12}(\partial T)$ duality product and $\normal_T$ is the outer normal to $T$), there exists $\bvec{z}\in \Hcurl{T}$ such that $\int_T\bvec{z}=0$ and $\bvec{v}=\CURL\bvec{z}$. Moreover, $\norm[\vLeb(T)]{\bvec{z}}\le C_0\norm[\vLeb(T)]{\bvec{v}}=C_0\norm[\vLeb(T)]{\CURL \bvec{z}}$ and an inspection of the proof shows that $C_0\lesssim h_T$
  (this estimate is obtained via a scaling argument, and noticing that, if $h_T=1$, the constants appearing in the proof of \cite[Theorem 3.4.1]{Assous.Ciarlet.ea:18} do not depend on $T$ under our mesh regularity assumptions).

  Take $\bvec{w}\in (\GRAD \Sob{1}(T))^\perp$ and let $(\bvec{w}_m)_{m\in\Natural}$ be a sequence in $\vC{\infty}(\overline{T})$ which converges to $\bvec{w}$ in $\Hcurl{T}$, see \cite[Proposition 2.2.12]{Assous.Ciarlet.ea:18}. Apply the result above to $\bvec{v}=\CURL\bvec{w}_m$, which satisfies the requirements since, on each $F\in\FT$, we have $\CURL\bvec{w}_m\cdot\normal_{TF}=\ROT_F((\bvec{w}_m)_{t,F})$ (where $\normal_{TF}=(\normal_T)_{|F}$ and, as before, $(\bvec{w}_m)_{t,F}$ is the tangential trace of $\bvec{w}_m$ on $F$, oriented here according to $\normal_{TF}$), and $\bvec{w}_m$ is continuous on $\partial T$. This yields $\bvec{z}_m\in\Hcurl{T}$ such that $\CURL(\bvec{w}_m-\bvec{z}_m)=0$ and $\norm[\vLeb(T)]{\bvec{z}_m}\lesssim h_T \norm[\vLeb(T)]{\CURL\bvec{w}_m}$.
  In particular, since the second Betti number of $T$ is zero, $\bvec{w}_m-\bvec{z}_m\in \GRAD \Sob{1}(T)$, and thus $\int_T (\bvec{w}_m-\bvec{z}_m)\cdot\bvec{w}=0$. Hence,
  \[
  \int_T \bvec{w}_m\cdot\bvec{w}=\int_T \bvec{z}_m\cdot\bvec{w}\lesssim \norm[\vLeb(T)]{\bvec{w}}\norm[\vLeb(T)]{\bvec{z}_m}\lesssim \norm[\vLeb(T)]{\bvec{w}}h_T\norm[\vLeb(T)]{\CURL\bvec{w}_m}.
  \]
  The conclusion follows by letting $m\to\infty$ and simplifying by $\norm[\vLeb(T)]{\bvec{w}}$.\qed
\end{proof}

\section{Notations}\label{appen:notations}

The notations used in the paper follow these rules: polynomial spaces, subspaces and projections are in curly letters; functions and operators with values in $\Real^2$ or $\Real^3$ are in boldface; the exponents indicate the maximum polynomial degree of the space or operator; full discrete gradient and curl, which need to be projected to define the operators in the DDR sequence, are in sans serif (and boldface since they are $\Real^3$-valued); spaces, vectors and operators made of components attached to mesh entities of different dimensions are underlined.
Table \ref{tab:notations} lists the main notations used in the design and analysis of the DDR complex.

\begin{table}\centering
  \renewcommand{\arraystretch}{1.3}
  \begin{tabular}{>{\centering}m{.25\linewidth}|m{.45\linewidth}|m{.2\linewidth}}
    \toprule
    \textbf{Notation} & \multicolumn{1}{c|}{\textbf{Meaning}} & \textbf{Reference} \\
    \hline
    $\Goly{\ell}$, $\Roly{\ell}$, $\cGoly{\ell}$, $\cRoly{\ell}$ & Polynomial ranges of gradient/curl, and complements & \eqref{eq:spaces.F}, \eqref{eq:spaces.T}\\
    \hline
    $\Gproj{\ell}{X}$, $\Rproj{\ell}{X}$, $\Gcproj{\ell}{X}$, $\Rcproj{\ell}{X}$ & $\Leb$-projections on $\Goly{\ell}(X)$, $\Roly{\ell}(X)$, $\cGoly{\ell}(X)$, $\cRoly{\ell}(X)$& Section \ref{sec:polynomial.spaces}\\
    \hline
    $\NE{\ell}$, $\RT{\ell}$ & N\'ed\'elec and Raviart--Thomas spaces & \eqref{eq:NE.RT} \\
    \hline
    $\rec{S,S^\compl}{\cdot}{\cdot}$, $\Xrec{X}{Y}{\ell}{\cdot}{\cdot}$
    & Recovery operator & \eqref{eq:recovery.operator}, \eqref{eq:def.Xrec}\\
    \hline
    $\Xgrad{h}$, $\Xcurl{h}$, $\Xdiv{h}$ & Spaces of the DDR sequence & \eqref{eq:Xgrad.h}--\eqref{eq:Xdiv.h}\\
    \hline
    $\Igrad{h}$, $\Icurl{h}$, $\Idiv{h}$ & Interpolators on the DDR spaces & \eqref{eq:Igradh}--\eqref{eq:Idivh}\\
    \hline
    $G_E$, $\cGF$, $\cGT$ & Edge, face and element full gradients & \eqref{eq:GE}, \eqref{eq:cGF}, \eqref{eq:cGT}\\
    \hline
    $\CF$, $\cCT$ & Face and element full curl & \eqref{eq:CF}, \eqref{eq:cCT}\\
    \hline
    $\DT$ & Element divergence & \eqref{eq:DT}\\
    \hline
    $\uGh$, $\uCh$, $\Dh$ & Global discrete gradient, curl and divergence & \eqref{eq:uGh}, \eqref{eq:uCh}, \eqref{eq:Dh}\\
    \hline
    $\trF$, $\Pgrad$ & Scalar trace and potential on $\Xgrad{T}$ & \eqref{eq:trF}, \eqref{eq:PgradT}\\
    \hline
    $\trFt$, $\Pcurl$ & Tangential trace and potential on $\Xcurl{T}$ & \eqref{eq:trFt}, \eqref{eq:Pcurl}\\
    \hline
    $\Pdiv$ & Potential on $\Xdiv{T}$ & \eqref{eq:Pdiv}\\
    \hline
    $(\cdot,\cdot)_{\bullet,T}$, $(\cdot,\cdot)_{\bullet,h}$, $\norm[\bullet,h]{{\cdot}}$ & \makecell[l]{Discrete $\Leb$-products and norms\\ ($\bullet\in\{\GRAD,\CURL,\DIV\}$)} & Section \ref{sec:discrete.L2}\\
    \hline
    $\tnorm[\bullet,h]{{\cdot}}$ & Components norm ($\bullet\in\{\GRAD,\CURL,\DIV\}$) & Section \ref{sec:components.norms}\\
    \bottomrule
  \end{tabular}
  \caption{Notations for spaces and operators.}
  \label{tab:notations}
\end{table}


\bibliographystyle{plain}
\bibliography{ddr-variant}

\begin{thebibliography}{10}

\bibitem{Aghili.Boyaval.ea:15}
J.~Aghili, S.~Boyaval, and D.~A. Di~Pietro.
\newblock Hybridization of mixed high-order methods on general meshes and
  application to the {Stokes} equations.
\newblock {\em Comput. Meth. Appl. Math.}, 15(2):111--134, 2015.

\bibitem{Arnold:18}
D.~Arnold.
\newblock {\em Finite Element Exterior Calculus}.
\newblock SIAM, 2018.

\bibitem{Assous.Ciarlet.ea:18}
F.~Assous, P.~Ciarlet, and S.~Labrunie.
\newblock {\em Mathematical foundations of computational electromagnetism},
  volume 198 of {\em Applied Mathematical Sciences}.
\newblock Springer, Cham, 2018.

\bibitem{Beirao-da-Veiga.Brezzi.ea:18*2}
L.~Beir\~{a}o~da Veiga, F.~Brezzi, F.~Dassi, L.~D. Marini, and A.~Russo.
\newblock A family of three-dimensional virtual elements with applications to
  magnetostatics.
\newblock {\em SIAM J. Numer. Anal.}, 56(5):2940--2962, 2018.

\bibitem{Beirao-da-Veiga.Brezzi.ea:18*1}
L.~Beir\~{a}o~da Veiga, F.~Brezzi, F.~Dassi, L.~D. Marini, and A.~Russo.
\newblock Lowest order virtual element approximation of magnetostatic problems.
\newblock {\em Comput. Methods Appl. Mech. Engrg.}, 332:343--362, 2018.

\bibitem{Beirao-da-Veiga.Brezzi.ea:18}
L.~Beir\~{a}o~da Veiga, F.~Brezzi, F.~Dassi, L.~D. Marini, and A.~Russo.
\newblock Serendipity virtual elements for general elliptic equations in three
  dimensions.
\newblock {\em Chin. Ann. Math. Ser. B}, 39(2):315--334, 2018.

\bibitem{Beirao-da-Veiga.Brezzi.ea:16}
L.~Beir\~{a}o~da Veiga, F.~Brezzi, L.~D. Marini, and A.~Russo.
\newblock {$H(\mathrm{div})$} and {$H(\mathrm{curl})$}-conforming {VEM}.
\newblock {\em Numer. Math.}, 133:303--332, 2016.

\bibitem{Beirao-da-Veiga.Lipnikov.ea:14}
L.~Beir\~ao~da Veiga, K.~Lipnikov, and G.~Manzini.
\newblock {\em The mimetic finite difference method for elliptic problems},
  volume~11 of {\em MS\&A. Modeling, Simulation and Applications}.
\newblock Springer, Cham, 2014.

\bibitem{Beirao-da-Veiga.Mascotto:20}
L.~Beir\~ao~da Veiga and L.~Mascotto.
\newblock Interpolation and stability properties of low order face and edge
  virtual element spaces.
\newblock arXiv preprint, 2020.

\bibitem{Boffi.Brezzi.ea:13}
D.~Boffi, F.~Brezzi, and M.~Fortin.
\newblock {\em Mixed finite element methods and applications}, volume~44 of
  {\em Springer Series in Computational Mathematics}.
\newblock Springer, Heidelberg, 2013.

\bibitem{Bonelle.Di-Pietro.ea:15}
J.~Bonelle, D.~A. Di~Pietro, and A.~Ern.
\newblock Low-order reconstruction operators on polyhedral meshes: Application
  to {Compatible Discrete Operator} schemes.
\newblock {\em Computer Aided Geometric Design}, 35--36:27--41, 2015.

\bibitem{Bonelle.Ern:14}
J.~Bonelle and A.~Ern.
\newblock Analysis of compatible discrete operator schemes for elliptic
  problems on polyhedral meshes.
\newblock {\em ESAIM: Math. Model. Numer. Anal.}, 48:553--581, 2014.

\bibitem{Bonelle.Ern:15}
J.~Bonelle and A.~Ern.
\newblock Analysis of compatible discrete operator schemes for the {S}tokes
  equations on polyhedral meshes.
\newblock {\em IMA J. Numer. Anal.}, 2015.

\bibitem{Brenner.Guan.ea:17}
S.~C. Brenner, Q.~Guan, and L.-Y. Sung.
\newblock Some estimates for virtual element methods.
\newblock {\em Comput. Methods Appl. Math.}, 17(4):553--574, 2017.

\bibitem{Brezzi.Buffa.ea:09}
F.~Brezzi, A.~Buffa, and K.~Lipnikov.
\newblock Mimetic finite differences for elliptic problems.
\newblock {\em M2AN Math. Model. Numer. Anal.}, 43(2):277--295, 2009.

\bibitem{Brezzi.Lipnikov.ea:05}
F.~Brezzi, K.~Lipnikov, and M.~Shashkov.
\newblock Convergence of the mimetic finite difference method for diffusion
  problems on polyhedral meshes.
\newblock {\em SIAM J. Numer. Anal.}, 43(5):1872--1896, 2005.

\bibitem{Buffa.Rivas.ea:11}
A.~Buffa, J.~Rivas, G.~Sangalli, and R.~V\'{a}zquez.
\newblock Isogeometric discrete differential forms in three dimensions.
\newblock {\em SIAM J. Numer. Anal.}, 49(2):818--844, 2011.

\bibitem{Buffa.Sangalli.ea:14}
A.~Buffa, G.~Sangalli, and R.~V\'{a}zquez.
\newblock Isogeometric methods for computational electromagnetics: {B}-spline
  and {T}-spline discretizations.
\newblock {\em J. Comput. Phys.}, 257(part B):1291--1320, 2014.

\bibitem{Chen.Wang:17}
W.~Chen and Y.~Wang.
\newblock Minimal degree {$H({\rm curl})$} and {$H({\rm div})$} conforming
  finite elements on polytopal meshes.
\newblock {\em Math. Comp.}, 86(307):2053--2087, 2017.

\bibitem{Chin.Lasserre.ea:15}
E.~B. Chin, J.~B. Lasserre, and N.~Sukumar.
\newblock Numerical integration of homogeneous functions on convex and
  nonconvex polygons and polyhedra.
\newblock {\em Comput. Mech.}, 56(6):967--981, 2015.

\bibitem{Christiansen.Rapetti:16}
S.~H. Christiansen and F.~Rapetti.
\newblock On high order finite element spaces of differential forms.
\newblock {\em Math. Comp.}, 85(298):517--548, 2016.

\bibitem{Codecasa.Specogna.ea:07}
L.~Codecasa, R.~Specogna, and F.~Trevisan.
\newblock Symmetric positive-definite constitutive matrices for discrete
  eddy-current problems.
\newblock {\em IEEE Transactions on Magnetics}, 43:510--515, 2007.

\bibitem{Codecasa.Specogna.ea:09}
L.~Codecasa, R.~Specogna, and F.~Trevisan.
\newblock Base functions and discrete constitutive relations for staggered
  polyhedral grids.
\newblock {\em Comput. Methods Appl. Mech. Engrg.}, 198(9-12):1117--1123, 2009.

\bibitem{Codecasa.Specogna.ea:10}
L.~Codecasa, R.~Specogna, and F.~Trevisan.
\newblock A new set of basis functions for the discrete geometric approach.
\newblock {\em J. Comput. Phys.}, 19(299):7401--7410, 2010.

\bibitem{Dauge:88}
M.~Dauge.
\newblock {\em Elliptic boundary value problems on corner domains}, volume 1341
  of {\em Lecture Notes in Mathematics}.
\newblock Springer-Verlag, Berlin, 1988.
\newblock Smoothness and asymptotics of solutions.

\bibitem{Devloo.Duran.ea:19}
P.~R.~B. Devloo, O.~Dur\'{a}n, S.~M. Gomes, and M.~Ainsworth.
\newblock High-order composite finite element exact sequences based on
  tetrahedral-hexahedral-prismatic-pyramidal partitions.
\newblock {\em Comput. Methods Appl. Mech. Engrg.}, 355:952--975, 2019.

\bibitem{Di-Pietro.Droniou:18}
D.~A. Di~Pietro and J.~Droniou.
\newblock A third {Strang} lemma for schemes in fully discrete formulation.
\newblock {\em Calcolo}, 55(40), 2018.

\bibitem{Di-Pietro.Droniou:20}
D.~A. Di~Pietro and J.~Droniou.
\newblock {\em The {Hybrid High-Order} method for polytopal meshes}.
\newblock Number~19 in Modeling, Simulation and Application. Springer
  International Publishing, 2020.

\bibitem{Di-Pietro.Droniou:20*1}
D.~A. Di~Pietro and J.~Droniou.
\newblock An arbitrary-order method for magnetostatics on polyhedral meshes
  based on a discrete {de Rham} sequence.
\newblock {\em J. Comput. Phys.}, 429(109991), 2021.

\bibitem{Di-Pietro.Droniou:21}
D.~A. Di~Pietro and J.~Droniou.
\newblock A {DDR} method for the {Reissner--Mindlin} plate bending problem on
  polygonal meshes, May 2021.
\newblock \url{https://hal.archives-ouvertes.fr/hal-03234088}.

\bibitem{Di-Pietro.Droniou.ea:20}
D.~A. Di~Pietro, J.~Droniou, and F.~Rapetti.
\newblock Fully discrete polynomial {de Rham} sequences of arbitrary degree on
  polygons and polyhedra.
\newblock {\em Math. Models Methods Appl. Sci.}, 30(9):1809--1855, 2020.

\bibitem{Di-Pietro.Ern:17}
D.~A. Di~Pietro and A.~Ern.
\newblock Arbitrary-order mixed methods for heterogeneous anisotropic diffusion
  on general meshes.
\newblock {\em IMA J. Numer. Anal.}, 37(1):40--63, 2017.

\bibitem{Di-Pietro.Ern.ea:14}
D.~A. Di~Pietro, A.~Ern, and S.~Lemaire.
\newblock An arbitrary-order and compact-stencil discretization of diffusion on
  general meshes based on local reconstruction operators.
\newblock {\em Comput. Meth. Appl. Math.}, 14(4):461--472, 2014.

\bibitem{Droniou.Eymard:06}
J.~Droniou and R.~Eymard.
\newblock A mixed finite volume scheme for anisotropic diffusion problems on
  any grid.
\newblock {\em Numer. Math.}, 105:35--71, 2006.

\bibitem{Droniou.Eymard.ea:18}
J.~Droniou, R.~Eymard, T.~Gallou\"et, C.~Guichard, and R.~Herbin.
\newblock {\em The gradient discretisation method}, volume~82 of {\em
  Mathematics \& Applications}.
\newblock Springer, 2018.

\bibitem{Droniou.Eymard.ea:10}
J.~Droniou, R.~Eymard, T.~Gallou\"{e}t, and R.~Herbin.
\newblock A unified approach to mimetic finite difference, hybrid finite volume
  and mixed finite volume methods.
\newblock {\em Math. Models Methods Appl. Sci. (M3AS)}, 20(2):1--31, 2010.

\bibitem{Duran.Devloo.ea:19}
O.~Dur\'{a}n, P.~R.~B. Devloo, S.~M. Gomes, and F.~Valentin.
\newblock A multiscale hybrid method for {D}arcy's problems using mixed finite
  element local solvers.
\newblock {\em Comput. Methods Appl. Mech. Engrg.}, 354:213--244, 2019.

\bibitem{Eymard.Gallouet.ea:10}
R.~Eymard, T.~Gallou\"{e}t, and R.~Herbin.
\newblock Discretization of heterogeneous and anisotropic diffusion problems on
  general nonconforming meshes. {SUSHI}: a scheme using stabilization and
  hybrid interfaces.
\newblock {\em IMA J. Numer. Anal.}, 30(4):1009--1043, 2010.

\bibitem{Gillette.Rand.ea:16}
A.~Gillette, A.~Rand, and C.~Bajaj.
\newblock Construction of scalar and vector finite element families on
  polygonal and polyhedral meshes.
\newblock {\em Comput. Methods Appl. Math.}, 16(4):667--683, 2016.

\bibitem{Hiptmair:02}
R.~Hiptmair.
\newblock Finite elements in computational electromagnetism.
\newblock {\em Acta Numer.}, 11:237--339, 2002.

\bibitem{Kuznetsov.Lipnikov.ea:04}
Y.~Kuznetsov, K.~Lipnikov, and M.~Shashkov.
\newblock Mimetic finite difference method on polygonal meshes for
  diffusion-type problems.
\newblock {\em Comput. Geosci.}, 8:301--324, 2004.

\bibitem{Lipnikov.Shashkov.ea:06}
K.~Lipnikov, M.~Shashkov, and D.~Svyatskiy.
\newblock The mimetic finite difference discretization of diffusion problem on
  unstructured polyhedral meshes.
\newblock {\em J. Comput. Phys.}, 211(2):473--491, 2006.

\bibitem{Spanier:94}
Edwin~H. Spanier.
\newblock {\em Algebraic topology}.
\newblock Springer-Verlag, New York, 1994.
\newblock Corrected reprint of the 1966 original.

\bibitem{Whitney:57}
H.~Whitney.
\newblock {\em Geometric integration theory}.
\newblock Princeton University Press, Princeton, N. J., 1957.

\end{thebibliography}

\end{document}